\newcommand{\red}{\color[rgb]{0.7,0,0}}
\newcommand{\leqp}{%
  \mathrel{\raisebox{-0.5ex}{$\scriptscriptstyle($}}%
  \leq
  \mathrel{\raisebox{-0.5ex}{$\scriptscriptstyle)$}}%
}
\theoremstyle{plain}
\newtheorem{theorem}{Theorem}[section]
\newtheorem{corollary}[theorem]{Corollary}
\newtheorem{proposition}[theorem]{Proposition}
\newtheorem{lemma}[theorem]{Lemma}
\newtheorem{conjecture}[theorem]{Conjecture}
\theoremstyle{definition}
\newtheorem{definition}[theorem]{Definition}
\newtheorem{remark}[theorem]{Remark}
\newtheorem{example}[theorem]{Example}
\newtheorem{question}[theorem]{Question}
\numberwithin{equation}{theorem}
\newcommand{\ZZ}{\boldsymbol{\mathbb{Z}}}
\newcommand{\XX}{\mathbb{P}^1 \times \mathbb{P}^1}
\newcommand{\cc}[1]{\mathcal{#1}}
\newcommand{\bb}[1]{\mathbb{#1}}
\newcommand{\OO}{\mathcal{O}}
\newcommand{\VV}{\mathcal{V}}
\newcommand{\PP}{\mathbb{P}}
\newcommand{\surj}{\twoheadrightarrow}
\newcommand{\inj}[1]{\xhookrightarrow{#1}}
\DeclarePairedDelimiter\floor{\lfloor}{\rfloor}
\newcommand{\codim}{\textnormal{codim}}
\newcommand{\ext}{\textnormal{ext}}
\newcommand{\Ext}{\textnormal{Ext}}
\newcommand{\Hom}{\textnormal{Hom}}
\newcommand{\Pic}{\textnormal{Pic}}
\newcommand{\DLP}{\textnormal{DLP}}
\newcommand{\rk}{\textnormal{rk}}
\newcommand{\gr}{\textnormal{gr}}
\newcommand{\Char}{\textnormal{Char}}
\newcommand\dual{\raise0.2ex\hbox{$\scriptscriptstyle\vee$}}
\newcommand{\sslash}{\mathbin{/\mkern-6mu/}}
\newcommand*{\sheafhom}{\mathscr{H}\text{\kern -3pt {\calligra\large om}}\,}
\newcommand{\bv}{{\bf v}}
\newcommand{\bw}{{\bf w}}
\newcommand{\be}{{\bf e}}
\begin{document}

\sloppy

\date{\today}
\author[D. Pedchenko]{Dmitrii Pedchenko}
\address{Department of Mathematics, The Pennsylvania State University, University Park, PA 16802}
\email{dzp5326@psu.edu}

\subjclass[2010]{Primary: 14J60, 14C22, 14J26. Secondary: 14D20, 14F05}
\keywords{Moduli spaces of sheaves, Picard group, quadric surface}

\title{The Picard group of the moduli space of sheaves on a quadric surface}
\thanks{During the preparation of this article the author was partially supported by the NSF FRG grant DMS-1664303}

\begin{abstract}
In this paper, we study the Picard group of the moduli space of semistable sheaves  on a smooth quadric surface. We polarize the surface by an ample divisor close to the anticanonical class. We focus especially on moduli spaces of sheaves of small discriminant, where we observe new and interesting behavior. Our method relies on constructing certain resolutions for semistable sheaves  and applying techniques of geometric invariant theory to the resulting families of sheaves.\end{abstract}

\maketitle

%+Contents
\tableofcontents
%-Contents

\section{Introduction} In this paper, we are concerned with the calculation of the Picard group of the moduli space of semistable sheaves on the quadric surface $\XX$.

Let $Y$ be a smooth complex projective surface and let $H$ be an ample divisor on $Y$. Consider the moduli space of sheaves $M(\bv)$ parameterizing $S$-equivalence classes of $H$-Gieseker semistable sheaves with Chern character $\bv$ on $Y$. These moduli spaces have been intensively studied over the years, but many basic questions about their geometry remain open. For that matter, calculating the Picard group  is the first necessary step towards understanding the birational geometry of these spaces.

The starting point of our investigations is \cite{Dr88} where Dr\'{e}zet computes the Picard group of the moduli space of semistable sheaves on $\PP^2$.  Recall that for a smooth surface $Y$ the \emph{total slope} and \emph{discriminant} of a Chern character $\bv \in K(Y)$ of rank $r$ are defined by $$\nu=\frac{c_1}{r}, \quad  \ \Delta=\frac{1}{2}\nu^2-\frac{ch_2}{r}.$$

Let $Y=\PP^2$. By \cite{DL85}, there is a fractal-like curve $\DLP(\nu)$ in the $(\nu, \Delta)$-plane, which we call the \emph{Dr\'{e}zet-Le Potier curve}, such that the moduli space $M(\bv)$ is positive dimensional if and only if $\Delta \geq \DLP(\nu)$, where $\bv=(r, \nu, \Delta)$. The Dr\'{e}zet-Le Potier curve is comprised of branches $\DLP_{E}(\nu)$ indexed by all exceptional bundles $E$ on $\PP^2$ where for each branch the value $\DLP_E(\nu)$ is calculated using the numerical invariants of $E$.  Dr\'{e}zet \cite{Dr88} shows that 
\begin{enumerate}
\item If $\Delta>\DLP(\nu)$, then  $\Pic(M(\bv)) \cong \ZZ^2$,

\item If $\Delta=\DLP(\nu)$, then  $\Pic(M(\bv)) \cong \ZZ$.
\end{enumerate}
This way, the Picard number of $M(\bv)$ is determined by the position of $\bv=(r,\nu, \Delta)$ relative to the $\DLP$-curve.

Now, let $Y=\XX$ with a generic polarization $H$ close to the anticanonical one. Rudakov \cite{Rudakov94}  constructed a fractal-like surface $\DLP(\nu)$ in the $(\nu, \Delta)$-space, \emph{the Dr\'{e}zet-Le Potier surface}, such that again the moduli space $M(\bv)$ is positive dimensional if and only if $\Delta\geq \frac{1}{2}$ and $\Delta \geq \DLP(\nu)$, where $\bv=(r, \nu,\Delta)$. This surface is also comprised of branches $\DLP_{E}(\nu)$ indexed by all exceptional bundles $E$ on $\XX$ with $r(E)<r$ where for each branch the value $\DLP_{E}(\nu)$ is calculated using the numerical invariants of $E$. The main new feature compared  with the $\PP^2$ case is that now there exist integral Chern characters with positive dimensional moduli space $M(\bv)$ which lie on the \emph{intersection of two} branches of the $\DLP$-surface.

%Continuing our comparison with the $\PP^2$ case,
This way, we see that for $\XX$  character $\bv=(r, \nu, \Delta)$ with positive dimensional moduli space $M(\bv)$ can be positioned in three different ways with respect to the $\DLP$-surface: (1) $\bv$  lies above the $\DLP$-surface, (2) $\bv$  lies on a single branch of the $\DLP$-surface, and (3) $\bv$ lies on the intersection of two branches of the $\DLP$-surface.

Our key finding is that contrary to the $\PP^2$ case the Picard number $\rho$ of $M(\bv)$ is \emph{not} determined only by the position of $\bv$ relative to the $\DLP$-surface. The main results of this paper are  summarized in the following theorem.

\begin{theorem}[{See Theorems \ref{Main theorem} and \ref{Main theorem 2}}] Let ${\bf v}=(r, \nu, \Delta)\in K(\XX)$ be a character with $r \geq 2$ and $\Delta\geq \frac{1}{2}$. 

\begin{enumerate}\item If    $\bv'=(r, \nu, \Delta-\frac{1}{r})$ lies above the $\DLP$-surface, then $\rho(M(\bv))=3$.

\item If $\bv$ lies on a single branch of the $\DLP$-surface, then $$\rho(M(\bv))=2 \ \text{or} \ \rho(M(\bv))=1.$$

\item If ${\bf v}$ lies on the intersection of two branches of the $\DLP$-surface, then $\rho(M(\bv))=1$.
\end{enumerate}

Furthermore, if $\bv$ is a primitive character, then $\Pic(M(\bv))$ is a free abelian group of rank $\rho$. 
\end{theorem}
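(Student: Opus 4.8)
The plan is to derive the freeness from the geometric invariant theory presentation of $M(\bv)$ already used to compute $\rho$, with primitivity entering at the decisive moment. For a primitive character $\bv$ and a generic polarization $H$, every $H$-semistable sheaf with character $\bv$ is in fact stable, so $M(\bv)$ is a smooth projective \emph{geometric} quotient rather than merely a good quotient. By the resolution constructed earlier, every such stable sheaf $F$ fits into a short exact sequence $0 \to \cc{A} \to \cc{B} \to F \to 0$, where $\cc{A}, \cc{B}$ are fixed direct sums of (twists of) exceptional bundles determined by $\bv$. The morphisms inducing such resolutions form a $G$-invariant open subset $W^{ss} \subseteq \Hom(\cc{A}, \cc{B})$ of an affine space, on which $G = (\operatorname{Aut}(\cc{A}) \times \operatorname{Aut}(\cc{B}))/\mathbb{C}^*$ acts, with $G$ a product of general linear groups modulo the diagonal scalars and $M(\bv) = W^{ss} \sslash G$.

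First I would check that on the stable locus the stabilizer of a point of $W^{ss}$ inside $\operatorname{Aut}(\cc{A}) \times \operatorname{Aut}(\cc{B})$ reduces to the scalars $\mathbb{C}^*$, which become trivial after passing to $G$. Hence $G$ acts freely and, by Luna's slice theorem, $W^{ss} \to M(\bv)$ is a principal $G$-bundle. Descent for principal bundles then gives that pullback is an isomorphism onto the group of $G$-linearized line bundles,
$$\Pic(M(\bv)) \;\cong\; \Pic^G(W^{ss}).$$

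The heart of the argument is the computation of $\Pic^G(W^{ss})$. Since $W^{ss}$ is an open subset of the affine space $\Hom(\cc{A}, \cc{B})$, which is factorial, we have $\Pic(W^{ss}) = 0$: every line bundle on $W^{ss}$ is trivial. If in addition $\OO(W^{ss})^* = \mathbb{C}^*$, then a $G$-linearization of the trivial bundle is nothing but a character of $G$, so that $\Pic^G(W^{ss}) \cong \widehat{G}$. The character lattice $\widehat{G}$ is a free abelian group, its rank being one less than the number of general linear factors of $\operatorname{Aut}(\cc{A}) \times \operatorname{Aut}(\cc{B})$, which matches the value of $\rho$ recorded in parts (1)--(3). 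Combining this with the isomorphism above yields $\Pic(M(\bv)) \cong \widehat{G}$, a free abelian group of rank $\rho$.

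The main obstacle is precisely the hypothesis $\OO(W^{ss})^* = \mathbb{C}^*$, which I would establish by proving that the complement $\Hom(\cc{A}, \cc{B}) \setminus W^{ss}$ has codimension at least two; removing a closed subset of codimension $\geq 2$ from a factorial affine variety preserves both triviality of $\Pic$ and constancy of the units. This amounts to a dimension count ruling out divisorial components of the locus of morphisms $\cc{A} \to \cc{B}$ that fail to be injective or whose cokernel fails to be semistable, carried out using the numerical constraints imposed by the position of $\bv$ relative to the $\DLP$-surface. I expect this estimate to be the delicate point, especially in the boundary cases where $\bv$ lies on one or two branches and the resolution is tighter; once it is in place, the chain of isomorphisms above delivers both freeness and the asserted rank.
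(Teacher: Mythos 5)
Your reduction to $\Pic^G(W^{ss})$ runs into two problems that are not merely ``delicate estimates'' but points where the proposed statements are actually false, and they are precisely where the content of the theorem lives. First, the Gaeta-type resolution (Theorem \ref{Gaeta theorem}) is only guaranteed for a \emph{general} semistable sheaf of character $\bv$, not for every one; so $W^{ss}\to M(\bv)$ need not be surjective, and $M(\bv)$ is not presented as a quotient of $W^{ss}$. The paper only gets such a presentation in special situations (e.g.\ Example \ref{bad example} and Theorem \ref{Main theorem 2}, where a Beilinson-type argument shows \emph{every} semistable sheaf is resolved), and the general statement that the image of $T^{ss}$ has complement of codimension $\geq 2$ is Corollary \ref{cor2}, proved \emph{after} and \emph{from} the main theorem. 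The paper circumvents this by working with the Donaldson homomorphism $\lambda:\bv^{\perp}\to\Pic(M(\bv))$: Yoshioka's Theorem \ref{Yoshioka surjectivity} gives surjectivity of $\lambda$ (an external input your proposal does not replace), and the Gaeta family is used only to pull back along $\phi^{*}_{\VV_t|_{T^{ss}}}$ and control $\ker\lambda$. Second, your key hypothesis $\OO^{*}(W^{ss})=\bb{C}^{*}$, i.e.\ that the unstable locus has codimension $\geq 2$, fails exactly in cases (2) and (3): when $\bv$ lies on a branch of the $\DLP$-surface given by a higher-rank exceptional bundle $E$, the Shatz stratum of sheaves with Harder--Narasimhan filtration $0\subset E\subset\VV_t$ is a nonempty irreducible divisor (Proposition \ref{Shatz2} and Step 1 of Proposition \ref{sixth prop}), and for bad characters the $\Delta_i=\tfrac12$-strata are further divisors. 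The rank drop is \emph{not} accounted for by having fewer $GL$ factors (all four exponents can be nonzero while $\rho=2$ or $1$); it comes from the exact sequence $\OO^{*}(T')\to\Char(G)\to\Pic^G(T')\to 0$, which yields $\Pic^G(T')\cong\Char(G)/\bb{Z}\eta_f$ with $\eta_f$ the character of the defining equation of the divisor. Your proposal has no mechanism to produce these relations.

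Finally, even granting your chain of isomorphisms, freeness would not follow: a quotient $\Char(\overline G)/\bb{Z}\eta_f$ is free only if $\eta_f$ is primitive in the lattice, and in case (3) one needs the sublattice $\ZZ[\overline{E_1}]+\ZZ[\overline{E_2}]$ to be primitive in $\bv^{\perp}$. The paper establishes torsion-freeness for primitive $\bv$ by an entirely different route, namely Yoshioka's criterion (Theorem \ref{Maiorana}): for a generic $m=p/q$ one arranges $\gcd(r, c_1\cdot(qH_m),\chi)=1$, which forces $\Pic(M_{H_m}(\bv))$ to be torsion-free; in the exceptional-pair case it instead uses Zyuzina's completion of exceptional pairs to full exceptional collections to see the kernel is a primitive sublattice of $K(X)$. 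You would need to supply one of these arguments (or a substitute) to conclude freeness.
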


Let us explain the dichotomy in case (2) of the above theorem in greater detail. We split Chern characters into  two groups, calling the characters in the first group \emph{good} characters and the characters in the second group \emph{bad} characters, see Definition \ref{bad character}. For a good character $\bv$ lying on a single branch of the $\DLP$-surface the Picard number of $M(\bv)$ is equal to $2$, see Theorem \ref{Main theorem}. On the other hand, we construct  infinite sequences of bad Chern characters lying on a single branch of the $\DLP$-surface for which the Picard number of the moduli space drops to $1$, see Examples \ref{bad example}, \ref{bad example 2}. Moreover, we show that $\rho(M(\bv))=1$ for any bad character lying on a single branch of the $\DLP$-surface given by a \emph{line bundle}, see Theorem \ref{Main theorem 2}. When $\bv$ is a bad character of the smallest rank,  the moduli space $M(\bv)$ turns out to be isomorphic to a projective space, see Example \ref{bad example 3} and Question \ref{interesting question}. 

We emphasize that determining which statement of the above theorem applies to a given character ${\bf v}=(r, \nu, \Delta)$ is a finite computational procedure and therefore can be implemented on a computer: both the computation of $\DLP(\nu)$ and determining whether ${\bf v}$ is a good or a bad character are finite computational procedures.

The proofs of the above results rely on constructing resolutions for semistable sheaves of a given character $\bv$ and applying techniques of Mumford's geometric invariant theory to the resulting families of sheaves described by such resolutions. In the case of $\PP^2$ the most powerful tool for constructing resolutions of semistable sheaves is the Beilinson-type spectral sequence coming from a choice of a full exceptional collection (see \cite[\S 5]{67c4a1ff6f134980a50953a83ab223e7} for a detailed analysis).  The main difficulty is that  full exceptional collections on $\XX$ require four exceptional bundles instead of three in the case of $\PP^2.$ As a result, for most characters $\bv$ writing the associated Beilinson-type spectral sequence   no longer gives a resolution of a semistable sheaf $\VV$ of character $\bv$ on $\XX$ as a (co)kernel in a short exact sequence.  To circumvent this difficulty we instead use the so-called Gaeta-type resolutions of Coskun and Huizenga constructed in \cite{coskun_huizenga_2018}.

We conjecture that for all characters $\bv$ with positive-dimensional moduli space the Picard number of $M(\bv)$ is fully determined by the relative position of $\bv$ with respect to the $\DLP^{<r}$-surface and by whether the character is good or bad (see  Question \ref{question} and Conjecture \ref{conjecture}).

\iffalse
When we turn to $Y=\XX$, the situation becomes considerably more complicated. First, the geometry of the moduli space $M_H(\bv)$ now crucially depends on the choice of polarization $H$. Dr\'{e}zet shows in \cite{drezet:hal-01175951} that for certain polarizations $M_H(\bv)$ is not locally factorial. This includes the anticanonical polarization $H=-K_{\XX}$, which polarization might be our natural choice  since $\XX$ is Fano. Second,  full exceptional collections on $\XX$ require four exceptional bundles instead of three in the case of $\PP^2$. As a result, for most characters $\bv$ writing the associated Beilinson-type spectral sequence   no longer gives a resolution of a semistable sheaf $\VV$ of character $\bv$ on $\XX$ as a (co)kernel in a short exact sequence.
Third, the geometry of the loci of unstable sheaves in complete families is more delicate compared to the $\PP^2$ case. 

At the same time, important results on vector bundles and their moduli on $\XX$ have recently been obtained that  allow circumventing the first two of the above difficulties. Yoshioka {\cite{Yos96}} shows that for a generic polarization $H$ (avoiding a locally finite set of walls in the ample cone of $\XX$), the moduli space $M_{H}(\bv)$ \emph{is} locally factorial. Coskun and Huizenga \cite{coskun_huizenga_2018} construct the so-called Gaeta-type resolutions of semistable sheaves on $\XX$ that turn out to be a convenient replacement of the Beilinson-type spectral sequences for our purposes.
\fi

Finally, let us survey the previous results on   the Picard group of the moduli space of semistable sheaves on $\XX$. The Picard group of $M(\bv)$ was studied by Nakashima  \cite{Nak93} and Qin \cite{Qin92}  for characters $\bv=(r, \nu, \Delta)=(r, c_1, \chi) \in K(\XX)$ satisfying $r=2$ and $c_1 \cdot F=1$,
by Yoshioka \cite{Yoshioka1995} for characters $\bv$ satisfying $r=2$, and by Yoshioka  \cite{Yosh96} for characters $\bv$ satisfying $c_1 \cdot F=0$ with the asymptotic polarization $H_m=E+mF, \ m \gg 0$ (here $E$ and $F$ are the standard generators of $\Pic(\XX)$).   Yoshioka {\cite{Yos96}}
also  computed equivariant Picard groups of certain spaces closely related to the moduli space $M(\bv)$. The current paper is a natural continuation of this line of work. 

\subsection*{Organization of the paper} In \S\ref{prelim}, we recall the preliminary facts and survey the known results concerning vector bundles and moduli spaces of sheaves on $\XX$ needed in the rest of the paper.  

Sections \S3-5 form the technical core of the paper. In \S\ref{Shatz section}, we study the Shatz stratification of complete families of sheaves on $\XX$. We also prove the irreducibility of families parameterizing sheaves with a fixed Harder-Narasimhan filtration, which we later use to prove the irreducibility of Shatz strata in complete families of vector bundles admitting a Gaeta-type resolution. In \S\ref{Group act and Gaet}, we establish basic facts about group actions in the context of Gaeta-type resolutions. In \S\ref{main section}, we calculate the Picard group of $M(\bv)$ under the assumption that $\bv$ is a good character.

Finally in \S\ref{last section}, we study $\Pic(M(\bv))$ for bad characters $\bv$ that lie on a single branch of the $\DLP$-surface given by a line bundle. 

\subsection*{Acknowledgments} We would like to thank Jack Huizenga for his support, encouragement and helpful discussions throughout the project. We would also like to express gratitude to K\={o}ta Yoshioka, Jean-Marc Dr\'{e}zet and Daniel Levine for valuable discussions.

\section{Preliminaries}\label{prelim} 
In this section we recall basic facts, previous results and constructions concerning moduli spaces of sheaves that will be used in the rest of the paper. We will denote an arbitrary variety or an arbitrary projective surface by $Y$, while $X$ will be always reserved for the quadric surface $X=\XX$.

\subsection{Chern characters} Given a torsion-free sheaf $\VV$ on a surface $Y$ and an ample divisor $H$, the \emph{total slope} $\nu$, the $H$-\emph{slope} $\mu_H$ and the \emph{discriminant} $\Delta$ are defined by $$\nu(\VV)=\frac{ch_1(\VV)}{ch_0(\VV)}, \quad \mu_H(\VV)=\frac{ch_1(\VV)\cdot H}{ch_0(\VV)}, \quad \Delta(\VV)=\frac{1}{2}\nu^2 - \frac{ch_2(\VV)}{ch_0}.$$These quantities depend only on the Chern character of $\VV$ and not on the particular sheaf. Given a Chern character ${\bf v}\in K(Y)$, we define its total slope, $H$-slope and discriminant by the same formulae. We will often record Chern characters by the rank, total slope and discriminant. Note that one can recover the Chern classes from this data.

\subsection{Stability} We refer the reader to \cite{huizenga2017birational}, \cite{HL10} and \cite{LP97} for more detailed discussions. Let  $Y$ be a surface and $H$ be an ample divisor on it. A torsion-free coherent sheaf $\VV$ is called $\mu_H$-\emph{(semi)stable} (or \emph{slope} (semi)stable) if every proper subsheaf $0 \neq  \cc{W} \subsetneq \VV$ of smaller rank satisfies $$\mu_H(\cc{W}) \leqp \mu_H(\VV).$$Define the $H$-\emph{Hilbert polynomial} $P_{H,\VV}(m)$ and the \emph{reduced $H$-Hilbert polynomial} $p_{H,\VV}(m)$ of a torsion-free sheaf $\VV$ by $$P_{H,\VV}(m)=\chi(\VV(mH)), \ p_{H,\VV}(m)=\frac{P_{\VV}(m)}{r(\VV)}.$$A torsion-free sheaf $\VV$ is $H$-\emph{(semi)stable} (or \emph{Gieseker} (semi)stable) if for every proper subsheaf $\cc{W} \subset \VV$, we have $$p_{H,\cc{W}}(m) \leqp p_{H,\VV}(m) \ \text{for} \ m \gg 0.$$ Slope stability implies Gieseker stability and Gieseker semistability implies slope semistability.

Every torsion-free sheaf $\VV$ admits a \emph{Harder-Narasimhan} filtration with respect to both $\mu_H-$ and $H$-semistability, that is there is a finite filtration $$0=\VV_0 \subset \VV_1 \subset \VV_2 \subset ... \subset \VV_n=\VV,$$such that the quotients $\cc{W}_i=\VV_i/\VV_{i-1}$ are $\mu_H$ (respectively, $H$-Gieseker) semistable and $$\mu_H(\cc{W}_i)>\mu_H(\cc{W}_{i-1}) \ \ \ (\text{respectively,} \ p_{H,\cc{W}_i}(m) > p_{H,\cc{W}_{i-1}}(m) \ \text{for} \ m \gg 0)$$for $1 \leq i \leq n$. The Harder-Narasimhan filtration is unique. A semistable sheaf further admits a \emph{Jordan-H\"{o}lder} filtration into stable sheaves. Two semistable sheaves are called $S$-\emph{equivalent} if they have the same associated graded objects with respect to a Jordan-H\"{o}lder filtration. 

Our main object of study will be the moduli space $M_{H}({\bf v})$ parameterizing $S$-equivalence classes of $H$-Gieseker semistable sheaves of character ${\bf v}$ on $Y$. We refer the reader to \cite[\S 4.3]{HL10} for the details about the construction of $M_H(\bv)$ and its basic properties. 
 
\subsection{Choosing the polarization}\label{Polarization} For our purposes, we would like to work with a locally factorial moduli space. After recalling some definitions and results  from \cite[\S4.C]{HL10}, we show that if $Y$ is rational surface other than $\PP^2$, then it is always possible to vary the polarization $H$ slightly so that $M_H(\bv)$ becomes locally factorial.

Let $Y$ be a smooth projective surface. The intersection pairing defines a bilinear form on $Num(Y)$ and the Hodge Index Theorem implies that the extension of this bilinear form to $Num_{\bb{R}}(Y)$ defines the Minkowski metric on $Num_{\bb{R}}$ (that is the signature of the form is $(1,N)$).
Define the positive cone as $$K_+:=\{y \in Num_{\bb{R}}(Y) \ | \ y \cdot y >0 \ \text{and} \ y \cdot H >0 \ \text{for some ample divisor } H\},$$and note that it contains the positive span of ample divisors as an open subcone $Amp(Y)$. Since we can think of a polarization given by an ample divisor as a ray $\bb{R}_{>0} H \subset K_+$, it is convenient to introduce $\cc{H}$ as the set of rays in $K_+$. This set becomes a hyperbolic manifold if we make the identification $\cc{H} \cong \{ H \in K_+ \ | \ H\cdot H=1 \}$.

\begin{definition}[{\cite[Definition 4.C.1]{HL10}}] Let $r \geq 2$ be an integer and $\Delta >0$ a real number\footnote{Note that the definition of the discriminant $\Delta$ we are using in this paper differs from the definition of discriminant $\tilde{\Delta}$ in \cite{HL10}: $ \tilde{\Delta}=2r^2\Delta$. That is why some formulas in this subsection differ by a factor $2r^2$ compared to the formulas in \cite[\S 4.C]{HL10}. }. A class $\xi \in Num(X)$ is of {\it type} $(r, \Delta)$ if $-\frac{r^4}{2} \Delta \leq \xi \cdot \xi <0$.  The {\it wall} defined by $\xi$ is the real hypersuface $$W_{\xi} :=\{\bb{R}_{>0}H \in \cc{H} \ | \ \xi \cdot H=0 \} \subset \cc{H}.$$
\end{definition}

When $r\geq 2$ and $\Delta>0$, Lemma 4.C.2 in \cite{HL10} asserts that the set of walls of type $(r, \Delta)$ is locally finite in $\cc{H}$. It is therefore always possible to choose $H$ to not lie on any wall of type $(r, \Delta)$ by a small perturbation. 
\begin{lemma}[{\cite[Lemma 4.C.3]{HL10}}]\label{Huybrechts-Lehn} Let $H$ be an ample divisor, let $\VV$ be a $\mu_H$-semistable sheaf of rank $r$ and discriminant $\Delta$ on $Y$ and let $\VV' \subset \VV$ be a subsheaf of rank $r'$, $0<r'<r$, with $\mu_H(\VV')=\mu_H(\VV)$. Then $\xi:=r c_1(\VV')-r'c_1(\VV)$ satisfies $$\xi \cdot H=0 \ \text{and} \ -\frac{r^4}{2} \Delta \leq \xi \cdot \xi \leq 0$$and $$\xi \cdot \xi=0 \iff \xi =0.$$
\end{lemma}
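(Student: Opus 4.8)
The plan is to deduce the three assertions from two classical ingredients: the Hodge Index Theorem (already recorded above in the form that the intersection form on $Num_{\bb{R}}(Y)$ has Minkowski signature $(1,N)$) and the Bogomolov inequality for $\mu_H$-semistable torsion-free sheaves. The identity $\xi\cdot H=0$ is immediate: the hypothesis $\mu_H(\VV')=\mu_H(\VV)$ reads $r\,(c_1(\VV')\cdot H)=r'\,(c_1(\VV)\cdot H)$, which is exactly $\xi\cdot H=0$. For the upper bound, since $H$ is ample we have $H\cdot H>0$, so $H$ lies in the positive cone and the intersection form is negative definite on the hyperplane $H^{\perp}$. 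As $\xi\in Num(Y)$ lies in $H^{\perp}$, this gives $\xi\cdot\xi\leq 0$ with equality if and only if $\xi=0$, settling both the upper bound and the nondegeneracy statement at once.

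The content is therefore the lower bound $-\tfrac{r^4}{2}\Delta\leq \xi\cdot\xi$, and here the first step is to arrange that the Bogomolov inequality can be applied to the relevant sheaves. I first replace $\VV'$ by its saturation $\overline{\VV'}$ in $\VV$; since any torsion in $\VV/\VV'$ contributes an effective class with intersection zero against $H$, ampleness of $H$ forces that class to vanish, so the saturation changes neither $c_1(\VV')$ nor $\xi$ while making $\VV'':=\VV/\overline{\VV'}$ torsion-free. Next I check that both $\overline{\VV'}$ and $\VV''$ are $\mu_H$-semistable of slope $\mu_H(\VV)$: any subsheaf of $\overline{\VV'}$ is a subsheaf of the semistable sheaf $\VV$ and hence has slope $\leq\mu_H(\VV)=\mu_H(\overline{\VV'})$, while any quotient of $\VV''$ is a quotient of $\VV$ and hence has slope $\geq\mu_H(\VV)=\mu_H(\VV'')$ (the equality $\mu_H(\VV'')=\mu_H(\VV)$ following from additivity of $c_1\cdot H$). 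The Bogomolov inequality then yields $\Delta(\overline{\VV'})\geq 0$ and $\Delta(\VV'')\geq 0$, and of course $\Delta(\VV)=\Delta\geq 0$.

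The second step is the discriminant decomposition formula. Writing $r''=r-r'$ and using only additivity of the Chern character along $0\to\overline{\VV'}\to\VV\to\VV''\to 0$ together with the identity $\xi=r''c_1(\overline{\VV'})-r'c_1(\VV'')$, a direct computation (completing the square in $c_1$) gives
$$r\,\Delta \;=\; r'\,\Delta(\overline{\VV'})+r''\,\Delta(\VV'')-\frac{\xi\cdot\xi}{2\,r\,r'\,r''},$$
equivalently $\xi\cdot\xi=2rr'r''\bigl(r'\Delta(\overline{\VV'})+r''\Delta(\VV'')-r\Delta\bigr)$. Discarding the nonnegative terms $\Delta(\overline{\VV'}),\Delta(\VV'')$ gives $\xi\cdot\xi\geq -2r^2r'r''\Delta$, and since $r'+r''=r$ the AM--GM inequality bounds $r'r''\leq r^2/4$; as $\Delta\geq 0$ this yields $\xi\cdot\xi\geq -\tfrac{r^4}{2}\Delta$, as required.

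The routine part is the algebra of the decomposition formula; the step that genuinely carries the geometry is the application of the Bogomolov inequality, and the only real subtlety is the bookkeeping that the passage to the saturation $\overline{\VV'}$ leaves $\xi$ unchanged while simultaneously making the quotient torsion-free and keeping both factors semistable, so that Bogomolov applies to all three sheaves at once.
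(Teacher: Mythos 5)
Your proof is correct, and the paper itself gives no argument here --- it simply cites \cite[Lemma 4.C.3]{HL10}, whose proof is exactly the route you take (Hodge index on $H^{\perp}$ for the upper bound, saturation plus Bogomolov applied to $\overline{\VV'}$, $\VV''$ and $\VV$, and the discriminant decomposition $\xi\cdot\xi=2rr'r''\bigl(r'\Delta(\overline{\VV'})+r''\Delta(\VV'')-r\Delta\bigr)$ for the lower bound). Your constants also translate correctly under the paper's normalization $\tilde{\Delta}=2r^2\Delta$ noted in its footnote, so $-\tfrac{r^4}{2}\Delta$ is the right bound.
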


From this we can prove that if $H$ does not lie on a wall, then the quotients in a Jordan-H\"{o}lder filtration all have the same numerical invariants.
\begin{lemma}{\label{walls}}Given a Chern character ${\bf v}=(r, \nu, \Delta) \in K(Y)$ with $r \geq 2$, choose an ample divisor $H$ not on a wall of type $(r, \Delta)$. Then for any $\mu_H$-semistable sheaf $\VV$ of Chern character ${\bf v}$ and a subsheaf $\VV' \subset \VV$ of rank $r'$, $0<r'<r$, we have  $$\mu_H(\VV')=\mu_H(\VV) \iff \nu(\VV') = \nu(\VV).$$
\end{lemma}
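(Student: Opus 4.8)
The statement repackages two facts about the single integral class
$$\xi := r\,c_1(\VV') - r'\,c_1(\VV) \in Num(Y).$$
My plan is to reduce both equalities in the lemma to conditions on $\xi$. Unwinding the definitions, $\nu(\VV') = \nu(\VV)$ says $c_1(\VV')/r' = c_1(\VV)/r$, which is exactly $\xi = 0$; and $\mu_H(\VV') = \mu_H(\VV)$ says $c_1(\VV')\cdot H/r' = c_1(\VV)\cdot H/r$, which, after clearing the positive denominator $r r'$, is exactly $\xi \cdot H = 0$. So the lemma is equivalent to the assertion that, for a $\mu_H$-semistable $\VV$ of the given type and an off-wall $H$, one has $\xi \cdot H = 0 \iff \xi = 0$ for the class $\xi$ attached to any such subsheaf.

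The direction $(\Leftarrow)$ is then immediate: if $\nu(\VV') = \nu(\VV)$ then $\xi = 0$, so $\xi \cdot H = 0$ and the $\mu_H$-slopes agree. For the substantive direction $(\Rightarrow)$, I would assume $\mu_H(\VV') = \mu_H(\VV)$ and apply Lemma \ref{Huybrechts-Lehn} directly: since $\VV$ is $\mu_H$-semistable of rank $r$ and discriminant $\Delta$, and $\VV' \subset \VV$ has rank $r'$ with $0 < r' < r$ and equal $\mu_H$-slope, that lemma yields both $\xi \cdot H = 0$ and the numerical pinch
$$-\tfrac{r^4}{2}\,\Delta \ \leq \ \xi \cdot \xi \ \leq \ 0, \qquad \xi \cdot \xi = 0 \iff \xi = 0.$$

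The final step is a short contradiction argument against the off-wall hypothesis. Suppose $\xi \neq 0$. Then the last displayed equivalence forces $\xi \cdot \xi < 0$, and combined with the lower bound $-\tfrac{r^4}{2}\Delta \leq \xi \cdot \xi$ this says precisely that $\xi$ is a class of type $(r,\Delta)$ in the sense of the Definition preceding Lemma \ref{Huybrechts-Lehn}. But $\xi \cdot H = 0$ then means that the ray $\mathbb{R}_{>0}H$ lies on the wall $W_\xi$, contradicting the choice of $H$ off every wall of type $(r,\Delta)$. Hence $\xi = 0$, i.e.\ $\nu(\VV') = \nu(\VV)$, completing the proof. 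I do not expect a genuine obstacle here: all the geometric content is already encapsulated in Lemma \ref{Huybrechts-Lehn}, and the only role of the off-wall hypothesis is to exclude the case $\xi \neq 0$. The two points to state carefully are that the hypotheses of Lemma \ref{Huybrechts-Lehn} (in particular the equal-$\mu_H$-slope condition) are exactly what we assume in the forward direction, and that $\VV$ inherits the rank $r$ and discriminant $\Delta$ of the character $\bv$ so that "type $(r,\Delta)$" matches the wall structure relative to which $H$ was chosen.
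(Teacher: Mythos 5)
Your proof is correct and follows essentially the same route as the paper: the forward direction is handled by a contradiction argument that feeds the class $\xi = r\,c_1(\VV') - r'\,c_1(\VV)$ into Lemma \ref{Huybrechts-Lehn} to show $\xi$ would define a wall of type $(r,\Delta)$ containing $H$. The only (harmless) addition is your explicit treatment of the trivial reverse implication, which the paper leaves implicit.
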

\begin{proof} Suppose $\VV' \subset \VV$ with  $\mu_H(\VV')=\mu_H(\VV)$, but $\nu(\VV') \neq \nu(\VV)$, or equivalently ${\xi:=r c_1(\VV')-r'c_1(\VV) \neq 0}$. By Lemma \ref{Huybrechts-Lehn}, we get that $\xi$ satisfies $-\frac{r^4}{2} \Delta \leq \xi \cdot \xi < 0$. Since $\xi \cdot H=0$, we obtain that $H$ lies on a wall $W_{\xi}$ of type $(r,\Delta)$, contradicting our choice of $H$. 
\end{proof}
\begin{corollary}{\label{JH under generic polarization}} Given a Chern character ${\bf v}=(r, \nu, \Delta) \in K(X)$ with $r \geq 2$, choose an ample divisor $H$ not on a wall of type $(r, \Delta)$. Then for any  $H$-semistable sheaf $\VV$ of Chern character ${\bf v}$ its Jordan-H\"{o}lder factors $\gr_i(\cc{V})$ satisfy
$$\nu(\gr_i(\VV))=\nu \quad \text{and} \quad \Delta(\gr_i(\VV))=\Delta.$$
\end{corollary}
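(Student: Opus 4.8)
The plan is to extract both equalities $\nu(\gr_i(\VV))=\nu$ and $\Delta(\gr_i(\VV))=\Delta$ from the single piece of structural information that the Jordan--H\"older factors of a Gieseker-semistable sheaf share the reduced $H$-Hilbert polynomial of $\VV$, translating this polynomial identity into statements about numerical invariants via Riemann--Roch, and using Lemma \ref{walls} to bridge the gap between the $H$-slope and the total slope.

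First, recall that by construction a Jordan--H\"older filtration $0=\VV_0 \subset \VV_1 \subset \cdots \subset \VV_n=\VV$ has stable quotients $\gr_i(\VV)=\VV_i/\VV_{i-1}$ with $p_{H,\gr_i(\VV)}=p_{H,\VV}$ (see \cite[\S1.5]{HL10}). Applying Riemann--Roch on $Y$ to $\chi(\VV(mH))$, I would expand the reduced Hilbert polynomial as a quadratic in $m$ whose coefficients are $\tfrac{1}{2}H^2$, then $\mu_H(\VV)-\tfrac12 H\cdot K_Y$, and finally a constant expressible through $ch_2/r$, $\nu\cdot K_Y$ and $\chi(\OO_Y)$. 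Comparing the \emph{linear} coefficients of $p_{H,\gr_i(\VV)}$ and $p_{H,\VV}$ immediately yields $\mu_H(\gr_i(\VV))=\mu_H(\VV)=:\mu_H$ for every $i$.

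The crux is to upgrade this $\mu_H$-equality to the equality of \emph{total} slopes $\nu$, and here the genericity of $H$ (through Lemma \ref{walls}) is essential, since $\mu_H$ only sees the $H$-degree of $c_1$. Because each factor has $\mu_H(\gr_j(\VV))=\mu_H$, summation over $j\le i$ gives $c_1(\VV_i)\cdot H=\mu_H\, r(\VV_i)$, that is $\mu_H(\VV_i)=\mu_H$ for each subsheaf $\VV_i\subset\VV$. Since $\VV$ is Gieseker-semistable, hence $\mu_H$-semistable, Lemma \ref{walls} applied to $\VV_i$ (of rank $0<r(\VV_i)<r$ for $1\le i\le n-1$) forces $\nu(\VV_i)=\nu$, whence $c_1(\VV_i)=r(\VV_i)\,\nu$. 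Telescoping $c_1(\gr_i(\VV))=c_1(\VV_i)-c_1(\VV_{i-1})$ then gives $c_1(\gr_i(\VV))=r(\gr_i(\VV))\,\nu$, i.e. $\nu(\gr_i(\VV))=\nu$.

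Finally, with $\nu(\gr_i(\VV))=\nu$ in hand, comparing the \emph{constant} coefficients of $p_{H,\gr_i(\VV)}$ and $p_{H,\VV}$ cancels the common terms $-\tfrac12\nu\cdot K_Y$ and $\chi(\OO_Y)$ and leaves $ch_2(\gr_i(\VV))/r(\gr_i(\VV))=ch_2/r$. Substituting both equalities into $\Delta=\tfrac12\nu^2-ch_2/r$ yields $\Delta(\gr_i(\VV))=\Delta$. The only genuinely nontrivial step is the slope upgrade in the third paragraph: everything else is Riemann--Roch bookkeeping, but the passage from $\mu_H$ to $\nu$ genuinely uses that $H$ avoids all walls of type $(r,\Delta)$.
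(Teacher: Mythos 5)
Your proof is correct and follows essentially the same route as the paper: both arguments pass from the equality of reduced Hilbert polynomials to $\mu_H(\VV_i)=\mu_H(\VV)$ for the filtration steps, invoke Lemma \ref{walls} to upgrade this to $\nu(\VV_i)=\nu$, telescope (the paper calls this the ``seesaw'' property) to get $\nu(\gr_i(\VV))=\nu$, and then extract the discriminant equality from $p_{H,\gr_i(\VV)}=p_{H,\VV}$ via Riemann--Roch. Your version merely spells out the Riemann--Roch coefficient comparison in more detail; there is no substantive difference.
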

\begin{proof} By the definition of a Jordan-H\"{o}lder filtration $0 \subset F_1 \subset F_2 \subset ... \subset F_l=\VV$, we have ${\mu_H(F_i)=\mu_H(\VV)}$.  Then by Lemma $\ref{walls}$ we get $\nu(F_i)=\nu$. We apply the "seesaw" property of the total slope to the short exact sequence $$0 \to F_{i-1} \to F_i \to \gr_i(\VV) \to 0$$to get $\nu(\gr_i(\VV))=\nu$. The statement about the discriminants then follows from the equality of reduced $H$-Hilbert polynomials $p_{H,  \gr_i(\VV)}=p_{H, \VV}$ and Riemann-Roch. 
\end{proof}

Now, let $(Y,H)$ be a polarized rational surface with $K_Y \cdot H<0$. Dr\'{e}zet \cite{drezet:hal-01175951} calls a point in $M_H(\bv)$ a \emph{type 2} point if the corresponding $S$-equivalence class $$[\VV_1 \oplus ... \oplus \VV_k]$$satisfies $$\nu_i\neq \nu_j \ \text{for some} \ 1 \leq i,j \leq k.$$The other points are called \emph{type 1} points.  Dr\'{e}zet shows in \cite[Theorem C]{drezet:hal-01175951}  that the moduli space $M_{H}({\bf v})$ is \emph{not locally factorial} at \emph{type 2} points.

Suppose further that  $Y$ is a rational surface other than $\PP^2$.  Then there is a morphism $Y \to \bb{P}^1$ such that the generic fiber is $\PP^1$. Let $F$ be the class of a fiber. Yoshioka {\cite{Yos96}} shows that if  $(K_Y +F)\cdot H<0$ and $\Delta(\bv)>\frac{1}{2}$, then  $M_H({\bf v})$ is locally factorial at points of type 1.
In light of Corollary \ref{JH under generic polarization}, we 
conclude that under these assumptions $M_H({\bf v})$ is locally factorial whenever $H$ is not on a wall of type $(r(\bv), \Delta(\bv))$.

\subsection{The Donaldson homomorphism.}\label{Don homo} The Donaldson homomorphism will be our main tool for constructing line bundles on the moduli space. We briefly recall the construction while referring the reader to \cite[\S 8.1]{HL10} and \cite[\S 18.2]{LP97} for full details. 

Let $\cc{U}/S$ be a flat family of semistable sheaves of Chern character $\bv$ on a smooth variety $Y$ parameterized by a variety $S$, and let $p: S \times Y \to S$ and $q:S \times Y \to Y$ be the two projections. The {\it Donaldson homomorphism} $\lambda_{\cc{U}}: K(Y) \to \Pic (S)$ is described as the composition
$$K(Y) \stackrel{q*}{\longrightarrow} K^0(S \times Y) \stackrel{\cdot [\cc{U}]}{\longrightarrow}K^0(S\times Y) \stackrel{p_{!}}{\longrightarrow}K^0(S) \stackrel{\det}{\longrightarrow} \Pic (S).$$
Functorial properties of $\lambda_{\cc{U}}$ are summarized in the following lemma.

\begin{lemma}[{\cite[Lemma 8.1.2.]{HL10}} and {\cite[Lemma 18.2.1]{LP97}}] {\label{computation}} Let $\lambda_{\cc{U}}: K(Y) \to \Pic(S)$ be the Donaldson homomorphism constructed above. 
\begin{enumerate}
\item If $\cc{U}$ is an $S$-flat family and $f: S' \to S$ a morphism, then for any ${\bf u}\in K(Y)$ one has ${\lambda_{f_{Y}^*\cc{U}} ({\bf u})=f^*\lambda_{\cc{U}}({\bf u})}$.
\item If $S$ is equipped with an action of an algebraic group $G$ and $\cc{U}$ is a $G$-linearized family over $S$, then $\lambda_{\cc{U}}$ factors through the group $\Pic^G(S)$ of isomorphism classes of $G$-linearized line bundles on $S$.
\item If $0 \to \cc{U}' \to \cc{U} \to \cc{U}'' \to 0$ is a short exact sequence of $S$-flat families of $G$-linearized coherent, sheaves then $\lambda_{\cc{U}}({\bf u})=
 \lambda_{\cc{U}'}({\bf u})\otimes \lambda_{\cc{U}''}({\bf u})$ in $\Pic^G(S)$.
\end{enumerate}
\end{lemma}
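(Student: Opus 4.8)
The plan is to verify each of the three assertions by tracking classes through the four-step composition
$$K(Y) \stackrel{q^*}{\longrightarrow} K^0(S \times Y) \stackrel{\cdot [\cc{U}]}{\longrightarrow} K^0(S \times Y) \stackrel{p_!}{\longrightarrow} K^0(S) \stackrel{\det}{\longrightarrow} \Pic(S)$$
defining $\lambda_{\cc{U}}$, checking at each stage that the step is compatible with the operation in question. In effect the proof is just the assembly of four standard functoriality properties of these operations, and the only genuinely delicate point is base change for the pushforward $p_!$.

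For part (3) I would start in the middle of the composition. A short exact sequence $0 \to \cc{U}' \to \cc{U} \to \cc{U}'' \to 0$ of $S$-flat families gives $[\cc{U}] = [\cc{U}'] + [\cc{U}'']$ in $K^0(S \times Y)$, so multiplication by $[\cc{U}]$ splits as the sum of the multiplications by $[\cc{U}']$ and $[\cc{U}'']$; here the $S$-flatness is exactly what makes each term a legitimate class on which $p_!$ is defined. Since $p_!$ is additive (it is the alternating sum of derived pushforwards) and $\det$ carries addition in $K^0(S)$ to the tensor product in $\Pic(S)$, the whole composition sends the sum $[\cc{U}'] + [\cc{U}'']$ to the product $\lambda_{\cc{U}'}({\bf u}) \otimes \lambda_{\cc{U}''}({\bf u})$, which is the claim. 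For part (2) I would observe that when $S$ carries a $G$-action and $Y$ is given the trivial one, both projections $p$ and $q$ are $G$-equivariant; consequently $q^*$ of any class is $G$-linearized, multiplication by the $G$-linearized class $[\cc{U}]$ preserves $G$-linearizations, $p_!$ of a $G$-linearized class is again $G$-linearized, and $\det$ sends a $G$-linearized class in $K^0(S)$ to a $G$-linearized line bundle. Reading off the composition one stage at a time shows that $\lambda_{\cc{U}}({\bf u})$ inherits a canonical $G$-linearization, i.e. $\lambda_{\cc{U}}$ factors through $\Pic^G(S)$.

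For part (1), functoriality, let $f_Y = f \times \mathrm{id}_Y \colon S' \times Y \to S \times Y$ and let $p', q'$ be the projections from $S' \times Y$. Again I would proceed step by step: the factorization $q' = q \circ f_Y$ gives $(q')^* = f_Y^* q^*$; flatness of $\cc{U}$ over $S$ yields $[f_Y^* \cc{U}] = f_Y^*[\cc{U}]$, so pullback commutes with multiplication by the structure class; and $\det$ plainly commutes with $f^*$. The heart of the argument, and the step I expect to be the main obstacle, is the base-change identity $f^* \circ p_! = p'_! \circ f_Y^*$ for the cartesian square with vertical maps $p, p'$ and horizontal maps $f, f_Y$. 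This requires Tor-independence of the square, which is supplied precisely by the $S$-flatness hypothesis: flatness forces the higher base-change contributions to vanish, so the derived pullback of $Rp_*(\,\cdot\,)$ agrees with $Rp'_*$ of the derived pullback, and the alternating sums defining $p_!$ and $p'_!$ match. Combining these four compatibilities gives $\lambda_{f_Y^*\cc{U}}({\bf u}) = f^* \lambda_{\cc{U}}({\bf u})$, completing the argument.
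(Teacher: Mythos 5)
The paper gives no proof of this lemma at all: it is quoted directly from \cite[Lemma 8.1.2]{HL10} and \cite[Lemma 18.2.1]{LP97}, so there is no in-paper argument to compare against. Your proposal is a correct reconstruction of the standard proof from those references --- checking compatibility of each of the four stages $q^*$, $\cdot[\cc{U}]$, $p_!$, $\det$ with pullback, $G$-linearization, and additivity, with the Tor-independent base change $f^*\circ p_! = p'_!\circ f_Y^*$ (supplied by $S$-flatness of $\cc{U}$ together with flatness of the projection $p$) being the only nontrivial input.
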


Using the last property we can construct line bundles on the moduli space of (semi)stable sheaves $M_{H}({\bf v})$. Informally, realize $M_H({\bf v})$ as a (good) quotient $R\sslash G$ of a subvariety $R$ of a Quot scheme. The $G$-linearized universal family of quotient sheaves $\cc{U}/R$ gives a map $\lambda_{\cc{U}}: K(X) \to \Pic^G(R)$ and we want to descend the $G$-linearized line bundles in the image along the quotient map $R \to R \sslash G=M_H({\bf v})$. 

For this construction to work we, however, need to restrict the domain of $\lambda_{\cc{U}}$. Denote by ${\bf v}^{\perp} \subset K(Y)$ the complement of ${\bf v}$ with respect to the Euler pairing $\chi(\_\cdot\_)$. 
%and by $\bm{v}^{\perp}_H:=\bm{v}^{\perp} \cap \{1, h, h^2,...,h^n\}^{\perp %\perp} \subset K(X)$, where $h=[\cc{O}_H]\in K(X)$ for the very ample divisor %$H$ we chose to construct $M(\bm{v})=M_H(\bm{v})$. 
We then get the following theorem, which shows that the above construction always produces line bundles on the stable locus $M^s_H({\bf v})$ and is compatible with the universal property of the moduli space  $M^s_H({\bf v})$.

\begin{theorem} \cite[Theorem 8.1.5]{HL10} Let ${\bf v}$ be a class in $K(Y)$. Then there exists a group homomorphism $\lambda^s: {\bf v}^{\perp} \to \Pic (M^s_H({\bf v}))$ with the following property:  

If $\cc{U}$ is a flat $G$-linearized family of stable sheaves of class ${\bf v}$ parameterized by a $G$-scheme $S$, and if the classifying morphism $\phi_{\cc{U}}: S \to M^s_H({\bf v})$ is $G$-equivariant, then the following diagram commutes : 
\begin{center}\begin{tikzcd}
{\bf v}^{\perp} \arrow[r, "\lambda^s"] \arrow[d, hook] & \Pic(M^s_H({\bf v})) \arrow[d, "\phi_{\cc{U}}^*"] \\
K(Y) \arrow[r, "\lambda_{\cc{U}}"]          & \Pic^G(S).                  
\end{tikzcd} \end{center}
% If $\cc{U}$ is a flat $G$-linearized family of semistable sheaves of class %$\bm{v}$ parameterized by a $G$-scheme $S$, and if the classifying morphism %$\phi_{\cc{U}}: S \to M(\bm{v})$ is $G$-equivariant, then $\lambda$ and %$\lambda_{\cc{U}}$ are compatible with $\phi_{\cc{U}}$ in the following sense: 
%$$\begin{tikzcd}
%\bm{v}^{\perp}_H \arrow[r, "\lambda"] \arrow[d, hook] & Pic(M) \arrow[d, %"\phi_{\cc{U}}^*"] \\
%K(X) \arrow[r, "\lambda_{\cc{U}}"]          & Pic^G(S)                  
%\end{tikzcd}$$
\end{theorem}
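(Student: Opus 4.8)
The statement is the standard output of the GIT construction of the moduli space, so the plan is to realize $M^s_H(\bv)$ as an orbit space for a group action on a Quot scheme and to descend the line bundles produced by the Donaldson homomorphism along the quotient map. \textbf{Step 1 (the quotient presentation).} First I would fix a sufficiently large integer $m$, set $P=P_{H,\bv}$ and $V=k^{P(m)}$, and form the Quot scheme $Q=\mathrm{Quot}(V\otimes\OO_Y(-m),P)$. Inside $Q$ there is a $\mathrm{GL}(V)$-invariant open subscheme $R^s$ parametrising quotients $q\colon V\otimes\OO_Y(-m)\twoheadrightarrow F$ for which $F$ is $H$-stable and $V\xrightarrow{\sim}H^0(F(m))$. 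The center $\bb{G}_m\subset\mathrm{GL}(V)$ acts trivially on $R^s$ (rescaling $q$ does not change its kernel), the induced $\mathrm{PGL}(V)$-action is free, and $\pi\colon R^s\to M^s_H(\bv)$ is a geometric quotient, in fact a principal $\mathrm{PGL}(V)$-bundle. I would let $\widetilde{\cc{U}}$ be the restriction to $R^s\times Y$ of the universal quotient; it is canonically $\mathrm{GL}(V)$-linearized, with the center acting on its fibres by scalars of weight $1$.

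\textbf{Step 2 (descent of the line bundles).} Next I would apply the Donaldson homomorphism $\lambda_{\widetilde{\cc{U}}}\colon K(Y)\to\Pic^{\mathrm{GL}(V)}(R^s)$ of \S\ref{Don homo}. The crucial computation is the weight with which the center $\bb{G}_m$ acts on the fibres of $\lambda_{\widetilde{\cc{U}}}({\bf u})$: since $\widetilde{\cc{U}}$ has weight $1$ and $\lambda=\det\circ\,p_!$ turns this weight into a fibrewise Euler characteristic, the weight equals the Euler pairing $\chi(\bv\cdot{\bf u})$, which by definition vanishes exactly on $\bv^{\perp}$. Hence for ${\bf u}\in\bv^{\perp}$ the center acts trivially on $\lambda_{\widetilde{\cc{U}}}({\bf u})$, and by Kempf's descent criterion for the principal $\mathrm{PGL}(V)$-bundle $\pi$ the line bundle $\lambda_{\widetilde{\cc{U}}}({\bf u})$ descends to a line bundle $\lambda^s({\bf u})$ on $M^s_H(\bv)$ satisfying $\pi^*\lambda^s({\bf u})=\lambda_{\widetilde{\cc{U}}}({\bf u})$. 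Additivity of $\lambda$ (part (3) of Lemma \ref{computation}) makes ${\bf u}\mapsto\lambda^s({\bf u})$ a group homomorphism.

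\textbf{Step 3 (the universal property).} Finally, given a $G$-linearized family $\cc{U}/S$ of stable sheaves with $G$-equivariant classifying map $\phi_{\cc{U}}$, I would pass to $\tilde S:=S\times_{M^s_H(\bv)}R^s$, with projections $a\colon\tilde S\to S$ and $b\colon\tilde S\to R^s$; here $a$ is the pullback of $\pi$, so it is a faithfully flat principal $\mathrm{PGL}(V)$-bundle and $a^*$ is injective on equivariant Picard groups. On $\tilde S$ the two families $a_Y^*\cc{U}$ and $b_Y^*\widetilde{\cc{U}}$ induce the same morphism $\phi_{\cc{U}}\circ a=\pi\circ b$ to $M^s_H(\bv)$, so simplicity of stable sheaves (the local universal property) gives a line bundle $L$ on $\tilde S$ with $a_Y^*\cc{U}\cong b_Y^*\widetilde{\cc{U}}\otimes c^*L$, where $c\colon\tilde S\times Y\to\tilde S$. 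Applying $\lambda_{(-)}({\bf u})$ and using parts (1) and (3) of Lemma \ref{computation}, the twist contributes $L^{\otimes\chi(\bv\cdot{\bf u})}$, which is trivial for ${\bf u}\in\bv^{\perp}$; thus $a^*\lambda_{\cc{U}}({\bf u})=b^*\lambda_{\widetilde{\cc{U}}}({\bf u})=b^*\pi^*\lambda^s({\bf u})=a^*\phi_{\cc{U}}^*\lambda^s({\bf u})$ in $\Pic^G(\tilde S)$, and cancelling the injective $a^*$ yields the asserted equality $\lambda_{\cc{U}}({\bf u})=\phi_{\cc{U}}^*\lambda^s({\bf u})$.

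\textbf{Main obstacle.} The step I expect to be most delicate is the weight computation together with the descent bookkeeping: one must verify that the center acts with weight exactly $\chi(\bv\cdot{\bf u})$ and that Kempf's criterion applies correctly in the $\mathrm{GL}(V)$ versus $\mathrm{PGL}(V)$ distinction, since $M^s_H(\bv)$ is only an orbit space for $\mathrm{PGL}(V)$ while the linearization naturally lives over all of $\mathrm{GL}(V)$. The twist ambiguity in Step 3, reflecting the possible absence of a genuine universal family on $M^s_H(\bv)$, is precisely what is neutralized by restricting the domain of $\lambda^s$ to $\bv^{\perp}$, which kills the $L$-contribution.
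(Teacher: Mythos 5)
The paper does not prove this statement itself — it is quoted verbatim from \cite[Theorem 8.1.5]{HL10} — and your proposal is precisely the argument given there: realize $M^s_H(\bv)$ as a geometric $\mathrm{PGL}(V)$-quotient of the stable locus $R^s$ of the Quot scheme, check that the center acts on $\lambda_{\widetilde{\cc{U}}}({\bf u})$ with weight $\chi(\bv\cdot{\bf u})$ so that the Descent Lemma applies for ${\bf u}\in\bv^{\perp}$, and verify compatibility by comparing the two families over $S\times_{M^s_H(\bv)}R^s$, where the line-bundle twist coming from simplicity of stable sheaves contributes $L^{\otimes\chi(\bv\cdot{\bf u})}=\OO$. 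Your outline is correct and coincides with the cited proof.
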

In general, for a polarized variety $(Y,H)$ one needs to further restrict the domain of the Donaldson homomorphism in order to obtain line bundles on the full locus $M_H({\bf v})$ (see the rest of \cite[Theorem 8.1.5]{HL10}). However, when $Y$ is a surface the analysis of the proof of \cite[Theorem 8.1.5]{HL10} shows that for a polarization which does not lie on a wall of type $(r({\bf v}), \Delta({\bf v}))$ we do not need to further shrink the domain.

\begin{proposition}{\label{Donaldson}} Let $Y$ be a smooth projective surface. Let ${\bf v}=(r,\nu, \Delta)$ be a class in $K(Y)$ and let $H$ be an ample divisor not lying on a wall of type $(r, \Delta)$. Then there exists a group homomorphism $\lambda: {\bf v}^{\perp} \to \Pic (M_H({\bf v}))$ with the following property:  

If $\cc{U}$ is a flat $G$-linearized family of $H$-semistable sheaves of class ${\bf v}$ parameterized by a $G$-scheme $S$, and if the classifying morphism $\phi_{\cc{U}}: S \to M_H({\bf v})$ is $G$-equivariant, then the following diagram commutes: \begin{center}
\begin{tikzcd}
{\bf v}^{\perp} \arrow[r, "\lambda"] \arrow[d, hook] & \Pic(M_H({\bf v})) \arrow[d, "\phi_{\cc{U}}^*"] \\
K(X) \arrow[r, "\lambda_{\cc{U}}"]          & \Pic^G(S).                  \end{tikzcd} 
\end{center}  
\end{proposition}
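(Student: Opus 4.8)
The plan is to follow the analysis of the proof of \cite[Theorem 8.1.5]{HL10}, which produces $\lambda$ on the subgroup of ${\bf v}^\perp$ appropriate for a general polarized variety, and to observe that for a surface off the walls this subgroup is already all of ${\bf v}^\perp$. First I would recall the geometric invariant theory construction of the moduli space: after fixing a large twist so that every $H$-semistable sheaf of character ${\bf v}$ is $m$-regular, one realizes $M_H({\bf v})$ as a good quotient $R \sslash G$, where $R$ is the locally closed subscheme of a suitable Quot scheme parameterizing the relevant quotients, $G = GL(V)$ with $\dim V = P_{H,{\bf v}}(m)$, and $R$ carries a $G$-linearized universal family $\cc{U}/R$. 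Applying the Donaldson construction to $\cc{U}$ gives $\lambda_{\cc{U}} \colon K(Y) \to \Pic^G(R)$, whose image consists of $G$-linearized line bundles by Lemma \ref{computation}(2). The entire problem then reduces to descending $\lambda_{\cc{U}}({\bf u})$ along the quotient map $\pi \colon R \to R \sslash G = M_H({\bf v})$ for every ${\bf u} \in {\bf v}^\perp$.

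By Kempf's descent lemma (as invoked in the proof of \cite[Theorem 8.1.5]{HL10}), a $G$-linearized line bundle on $R$ descends to $R \sslash G$ precisely when, for each point $x \in R$ whose $G$-orbit is closed in the semistable locus, the stabilizer $G_x$ acts trivially on the fiber over $x$. Such closed orbits correspond exactly to polystable sheaves, so I would take $x$ to parameterize $E = \bigoplus_i V_i \otimes E_i$ with the $E_i$ pairwise non-isomorphic stable sheaves; since the $E_i$ are simple and mutually $\Hom$-orthogonal, the reductive stabilizer is $G_x \cong \prod_i GL(V_i)$, embedded block-diagonally in $GL(V)$. Using the additivity of $\lambda_{\cc{U}}$ in short exact sequences (Lemma \ref{computation}(3)) applied to the decomposition of $E$, one computes, exactly as in \cite[\S 8.1]{HL10}, that $G_x$ acts on the fiber $\lambda_{\cc{U}}({\bf u})|_x$ through the character $(g_i)_i \mapsto \prod_i \det(g_i)^{\chi({\bf v}_i \cdot {\bf u})}$, where ${\bf v}_i$ denotes the Chern character of $E_i$ and $\chi(\_\cdot\_)$ is the Euler pairing.

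The crux is to show that these characters are trivial for every ${\bf u} \in {\bf v}^\perp$, and this is where the off-wall hypothesis is indispensable. Because $H$ does not lie on a wall of type $(r, \Delta)$, Corollary \ref{JH under generic polarization} guarantees that each stable factor $E_i$ satisfies $\nu(E_i) = \nu$ and $\Delta(E_i) = \Delta$. As a Chern character is determined by its rank together with $\nu$ and $\Delta$, this forces $\mathrm{ch}(E_i) = \frac{r_i}{r}\,\mathrm{ch}({\bf v})$ with $r_i = \rk(E_i)$, so that ${\bf v}_i$ is a rational multiple of ${\bf v}$. Since the Euler pairing factors through the Chern character and the Todd class and is $\bb{Q}$-bilinear there, we obtain $\chi({\bf v}_i \cdot {\bf u}) = \frac{r_i}{r}\,\chi({\bf v}\cdot {\bf u}) = 0$ for all ${\bf u} \in {\bf v}^\perp$. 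Hence every $G_x$ acts trivially, each $\lambda_{\cc{U}}({\bf u})$ descends, and this defines the homomorphism $\lambda \colon {\bf v}^\perp \to \Pic(M_H({\bf v}))$.

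The asserted compatibility with an arbitrary $G$-equivariant classifying morphism $\phi_{\cc{U}} \colon S \to M_H({\bf v})$ then follows formally, just as in \cite[Theorem 8.1.5]{HL10}: one pulls back the descended bundle, and the commutativity of the diagram is a consequence of the base-change functoriality of the Donaldson homomorphism (Lemma \ref{computation}(1)) together with the universal property of the good quotient $\pi$. I expect the main obstacle to be the triviality of the stabilizer characters on all of ${\bf v}^\perp$, which is exactly the step that fails for a general polarization: without the genericity of $H$ the factors $E_i$ could carry distinct discriminants, and one would be forced to restrict $\lambda$ to the proper subgroup of classes orthogonal to every admissible factor character rather than to ${\bf v}$ alone.
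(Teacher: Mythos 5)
Your proposal is correct and follows essentially the same route as the paper's proof: realize $M_H({\bf v})$ as a GIT quotient of a Quot-scheme locus, apply the descent lemma at polystable points, compute the stabilizer character as $\prod_i\det(A_i)^{\chi({\bf u}\cdot{\bf v}_i)}$, and use Corollary \ref{JH under generic polarization} (the off-wall hypothesis) to force ${\bf v}_i=\frac{r_i}{r}{\bf v}$ and hence the vanishing of all exponents for ${\bf u}\in{\bf v}^{\perp}$. The concluding compatibility argument via Lemma \ref{computation}(1) also matches the paper.
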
 
\begin{proof} We follow the notation used in the proof of  \cite[Theorem 8.1.5]{HL10}. Let $R \stackrel{\pi}{\surj} M_H({\bf v})$ be the quotient morphism   in the GIT construction of the moduli space, where $R$ is a subvariety of the Quot scheme with a universal family of quotients $\cc{U}$. For ${\bf u} \in {\bf v}^{\perp}$, we would like to descend a $GL(V)$-linearized line bundle $L=\lambda_{\cc{U}}({\bf u})$ to $M({\bf v})$ along the quotient map $\pi$. According to the Descent Lemma \cite[Theorem 4.2.15]{HL10}, we need to make sure that for any point $[q:\cc{H} \surj F] \in R$ in a closed $GL(V)$-orbit the stabilizer $GL(V)_{[q]}$ acts trivially on the fiber $L|_{[q]}$ of $L$ over the point $[q]$. 

The orbit of $[q:\cc{H} \surj F] \in R$ is closed if and only if $F$ is a polystable sheaf. Thus $$F \cong \oplus_i (F_i \otimes W_i)$$ with distinct stable Jordan-H\"{o}lder factors $F_i$ and vector spaces $W_i$. The stabilizer of $[q]$ is then isomorphic to $Aut(F) \cong \prod_i GL(W_i)$, and an element $(A_1, ..., A_l) \in \prod_i GL(W_i)$ acts on the fiber $L|_{[q]}$ via multiplication by 
\begin{equation}{\label{exponents}} \prod_i \det(A_i)^{\chi({\bf u} \cdot [F_i])}.\end{equation}

Let $\bv_i=[F_i]$ and $r_i=r(F_i)$. According to Corollary \ref{JH under generic polarization}, for $H$ not on a wall of type $(r,\Delta)$ we get that $\bv_i=\frac{r_i}{r} {\bf v}$ for all $i$, and therefore, the exponents in \eqref{exponents} all vanish: $$\chi({\bf u} \cdot \bv_i)=\chi({\bf u} \cdot \frac{r_i}{r} {\bf v})=0 \ \text{for} \ {\bf u} \in {\bf v}^{\perp}.$$ It follows that $GL(V)_{[q]}$ acts trivially on $L|_{[q]}$. \end{proof}

When $Y$ is a rational surface other than $\PP^2$, Yoshioka {\cite{Yos96}}  analyzes the equivariant Picard group of a subvariety of the Quot scheme parameterizing a certain family of $\OO(0,1)$-prioritary sheaves (see the next subsection for a review of prioritary sheaves) and proves the following result as a consequence of this analysis. 
\begin{theorem}[{\cite[Corollary 3.4]{Yos96}}]{\label{Yoshioka surjectivity}} Let $Y$ be a rational surface other than $\PP^2$ and let ${{\bf v}=(r,\nu, \Delta)\in K(Y)}$ be a Chern character with $\Delta>1/2$. If $H$ is a generic polarization with $(K_{Y}+2F)\cdot H<0$ and if $M^{s}_{H}({\bf v})$ is not empty, then the Donaldson homomorphism gives a surjection $$\lambda: {\bf v}^{\perp} \surj \Pic (M_H({\bf v})).$$
\end{theorem}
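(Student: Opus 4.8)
The plan is to realize $M_H({\bf v})$ as a GIT quotient and reduce the surjectivity of $\lambda$ to a computation of an equivariant Picard group of the parameter space, exploiting that all semistable sheaves here are prioritary. Concretely, write $M_H({\bf v}) = R \sslash G$ with $G = GL(V)$, where $R$ is the locally closed subscheme of a suitable Quot scheme $\textnormal{Quot}(\cc{H}, {\bf v})$, $\cc{H} = V \otimes \OO_Y(-n)$, parameterizing semistable quotients $\cc{H} \surj \VV$ inducing $H^0(\VV(n)) \xrightarrow{\sim} V$, and let $\pi: R \surj M_H({\bf v})$ be the good quotient with universal family $\cc{U}/R$. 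Since $\pi$ is surjective, $\pi^* : \Pic(M_H({\bf v})) \hookrightarrow \Pic^G(R)$ is injective, and by construction $\pi^* \circ \lambda = \lambda_{\cc{U}}|_{{\bf v}^\perp}$. Hence it suffices to show that $\lambda_{\cc{U}}\colon {\bf v}^\perp \to \Pic^G(R)$ has image containing $\pi^*\Pic(M_H({\bf v}))$; in fact I will argue that its image is all of $\Pic^G(R)$.

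The first step is to enlarge $R$ to the parameter scheme $\tilde R \supseteq R$ of \emph{$F$-prioritary} quotients, i.e. those whose cokernel $\VV$ satisfies $\Ext^2(\VV, \VV(-F)) = 0$. The hypothesis $(K_Y + 2F)\cdot H < 0$ implies $(K_Y + F)\cdot H < 0$, and then Serre duality $\Ext^2(\VV, \VV(-F)) \cong \Hom(\VV, \VV(K_Y + F))^\vee$ together with $\mu_H$-semistability forces this $\Ext^2$ to vanish; so the hypothesis more than suffices to guarantee that every semistable quotient is $F$-prioritary, and $R$ is open in $\tilde R$. By Walter's irreducibility theorem for prioritary sheaves on ruled surfaces, the stack of $F$-prioritary sheaves is smooth and irreducible, so $\tilde R$ is smooth and irreducible; moreover, using $\Delta > \tfrac12$ one checks that the non-stable locus $\tilde R \setminus R$ has $\codim \geq 2$. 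Consequently restriction gives an isomorphism $\Pic^G(\tilde R) \xrightarrow{\sim} \Pic^G(R)$, and it is enough to work on $\tilde R$.

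The second step is the computation $\Pic^G(\tilde R) \cong \hat G$. A general prioritary sheaf of character ${\bf v}$ admits a two-term resolution by fixed sums of line bundles (a Gaeta-type presentation in the sense of \cite{coskun_huizenga_2018}), which exhibits $\tilde R$, up to removing a closed subset of codimension $\geq 2$, as an open subset of an affine space of homomorphisms. In particular $\tilde R$ is rational with $\Pic(\tilde R) = 0$ and only constant invertible functions, so the standard exact sequence for equivariant Picard groups yields $\Pic^G(\tilde R) \cong \hat G \cong \ZZ$, the character group of $GL(V)$ generated by $\det$. It then remains to compute $\lambda_{\cc{U}}$ as a map ${\bf v}^\perp \to \hat G$: evaluating the determinant-of-cohomology formula on the scalars, exactly as in \eqref{exponents}, shows that for ${\bf u} \in {\bf v}^\perp$ the induced character is a power of $\det$ whose weight is a linear functional of ${\bf u}$. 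As ${\bf u}$ ranges over the corank-one lattice ${\bf v}^\perp$, nondegeneracy of the Euler pairing shows this weight attains a generator of the subgroup of $\hat G$ cut out by the Kempf descent condition, which by the proof of Proposition \ref{Donaldson} is precisely where $\pi^*\Pic(M_H({\bf v}))$ lives. Surjectivity of $\lambda$ follows.

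The crux, and the step I expect to be hardest, is the geometric input of the second step: showing that $\tilde R$ has trivial Picard group and that $\tilde R \setminus R$ has codimension at least two. The former rests on an explicit resolution of prioritary sheaves combined with Walter's irreducibility theorem, while the latter is a dimension count that genuinely relies on $\Delta > \tfrac12$ — it degenerates exactly in the small-discriminant regime where, as the main theorem records, the Picard number drops. By contrast, the descent formalism, the Serre-duality verification of the prioritary condition, and the final weight computation are comparatively formal once this geometric picture is established.
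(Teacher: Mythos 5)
First, a point of comparison: the paper does not prove Theorem \ref{Yoshioka surjectivity} at all --- it is quoted from Yoshioka \cite[Corollary 3.4]{Yos96}, with the remark that Yoshioka's proof analyzes the equivariant Picard group of the locus of $\OO(0,1)$-prioritary quotients inside the Quot scheme. So your overall strategy (pass from $R$ to the larger prioritary locus $\tilde R$, compute $\Pic^{G}(\tilde R)$, and descend) is the right one in spirit. The execution, however, contains a fatal error.

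The gap is the claim that $\tilde R$ is, up to a closed subset of codimension $\geq 2$, an open subset of an affine space, whence $\Pic(\tilde R)=0$ and $\Pic^{G}(\tilde R)\cong\Char(GL(V))\cong\ZZ$. This cannot be right: $\pi^*\colon \Pic(M_H(\bv))\hookrightarrow\Pic^{G}(R)$ is injective and, since $\tilde R$ is smooth, $\Pic^{G}(R)$ is a quotient of $\Pic^{G}(\tilde R)$; your computation would therefore force $\rho(M_H(\bv))\leq 1$ for \emph{every} $\bv$, contradicting Theorem \ref{Main theorem}(1) and defeating the entire purpose of the surjection $\lambda\colon\bv^{\perp}\surj\Pic(M_H(\bv))$ (which is used to bound $\rho$ by $\rk\,\bv^{\perp}=3$). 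The error comes from conflating two different atlases of the moduli stack: the Gaeta-type resolution presents the stack of prioritary sheaves, up to codimension $2$, as $[U/G']$ with $U$ open in the affine space $\bb{H}$ and $G'=\prod_i GL(n_i)$ the group acting on the terms of the resolution; the Quot-scheme locus $\tilde R$ is a $GL(V)$-atlas for the \emph{same stack}, but $\tilde R$ itself is nothing like an open subset of an affine space. Quantitatively, $\Pic^{GL(V)}(\tilde R)$ is the Picard group of the prioritary stack, which receives $K(X)\cong\ZZ^4$ injectively under $\lambda$ (restrict to the Gaeta chart and apply Proposition \ref{Donaldson on Gaeta}); hence it has rank at least $4$, and so $\Pic(\tilde R)$ has rank at least $3$. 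A secondary error: the assertion that $\tilde R\setminus R$ has codimension $\geq 2$ whenever $\Delta>\tfrac12$ is false --- Section \ref{Shatz section} is devoted precisely to the divisorial unstable strata that do occur when $\bv$ lies on the $\DLP$-surface or is a bad character --- though for surjectivity alone this is harmless, since deleting divisors only shrinks $\Pic^{G}(R)$. Finally, the concluding step ("nondegeneracy of the Euler pairing shows this weight attains a generator\ldots") is not a proof of surjectivity even granting your (incorrect) computation of $\Pic^{G}(\tilde R)$: what must be shown is that every $G$-linearized line bundle on $R$ that descends to $M_H(\bv)$ already lies in $\lambda_{\cc{U}}(\bv^{\perp})$, and that requires identifying $\Pic^{G}(R)$ in terms of determinant line bundles and boundary divisor classes, which is exactly the nontrivial content of Yoshioka's cited result.
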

Note that  Proposition \ref{Donaldson}  ensures that for a generic polarization $H$ the Donaldson homomorphism is defined as a map $\lambda: {\bf v}^{\perp} \to \Pic (M_H({\bf v}))$.
Therefore, under these assumptions we have a bound on the Picard number of $M_H(\bv)$ and the computation of $\Pic(M_{H}({\bf v}))$ boils down to the computation of the kernel of the Donaldson homomorphism.

\subsection{Prioritary sheaves}\label{Walter's stuff}
It is often the case that the sheaves in a complete family obtained by considering various resolutions and extensions enjoy an extra cohomological property which, in particular, makes the analysis of the locus of semistable sheaves in the family much more tractable.

\begin{definition} Let $\cc{L}$ be a line bundle on a projective surface $Y$. A torsion-free sheaf $\cc{V}$ on $Y$ is called $\cc{L}$-\emph{prioritary} if $$\Ext^2(\VV, \VV \otimes \cc{L}^{\dual})=0.$$ \end{definition}

Let $D$ be an effective Cartier divisor on a projective surface $Y$. Denote the stack of torsion-free sheaves on $Y$ and the stack of coherent sheaves on $D$ with fixed numerical invariants by $TF_Y(r, c_1, c_2)$ and $Coh_D(r, c_1 \cdot D)$ respectively. The next result shows that the restriction of $\cc{O}(D)$-prioritary sheaves from $Y$ to $D$ behaves nicely in families.

\begin{lemma}\cite[Lemma 4.]{Walter1993IrreducibilityOM}\label{Walter} If $\cc{V}$ is an $\OO(D)$-prioritary sheaf, then the restriction map $$TF_Y(r, c_1, c_2) \to Coh_D(r, c_1 \cdot D)$$is smooth (and therefore open) in a neighborhood of $[\VV]$.\end{lemma}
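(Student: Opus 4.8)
The plan is to prove that the restriction morphism $r\colon TF_Y(r,c_1,c_2)\to Coh_D(r,c_1\cdot D)$, $[\VV]\mapsto[\VV|_D]$, is smooth at $[\VV]$ by checking formal smoothness through the infinitesimal lifting criterion, and then to identify the relevant relative obstruction group with the group that the prioritary hypothesis kills. Concretely, for a small extension $A'\surj A$ of local Artinian rings with square-zero kernel $I$, I must solve the following relative lifting problem: given an $A$-flat deformation $\VV_A$ of $\VV$ on $Y$ together with an $A'$-flat lift $\mathcal{W}_{A'}$ on $D$ of its restriction $\mathcal{W}_A:=\VV_A|_D$, produce an $A'$-flat deformation $\VV_{A'}$ of $\VV_A$ with $\VV_{A'}|_D\cong\mathcal{W}_{A'}$.

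First I would record the restriction sequence and a comparison of $\Ext$-groups. Because $D$ is an effective Cartier divisor and $\VV$ is torsion-free, multiplication by a local equation of $D$ is injective on $\VV$, so twisting $0\to\OO_Y(-D)\to\OO_Y\to\OO_D\to 0$ by $\VV$ remains exact:
$$0\to\VV(-D)\to\VV\to\VV|_D\to 0.$$
The same input shows $\mathcal{T}or_1^{\OO_Y}(\VV,\OO_D)=0$, hence $Li^*\VV=\VV|_D$ for the inclusion $i\colon D\hookrightarrow Y$; adjunction between $Li^*$ and $i_*$ then yields isomorphisms $\Ext^j_Y(\VV,\VV|_D)\cong\Ext^j_D(\VV|_D,\VV|_D)$ for all $j$. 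Feeding these into the long exact sequence obtained by applying $\Hom_Y(\VV,-)$ to the restriction sequence gives
$$\cdots\to\Ext^j_Y(\VV,\VV(-D))\to\Ext^j_Y(\VV,\VV)\xrightarrow{\rho_j}\Ext^j_D(\VV|_D,\VV|_D)\to\Ext^{j+1}_Y(\VV,\VV(-D))\to\cdots,$$
where $\rho_1$ is the differential of $r$ on tangent spaces and $\rho_2$ is the induced map on obstruction spaces.

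I would then solve the lifting problem in two steps. The obstruction to lifting $\VV_A$ to an $A'$-flat sheaf on $Y$ is a class in $\Ext^2_Y(\VV,\VV)\otimes I$ whose image under $\rho_2$ is the obstruction to lifting $\mathcal{W}_A$ on $D$; the latter vanishes because the lift $\mathcal{W}_{A'}$ is given. Since the prioritary hypothesis reads $\Ext^2_Y(\VV,\VV\otimes\OO(D)^{\dual})=\Ext^2_Y(\VV,\VV(-D))=0$, the long exact sequence shows that $\rho_2$ is injective and $\rho_1$ is surjective. Injectivity of $\rho_2$ forces the obstruction on $Y$ to vanish, so $\VV_A$ admits some $A'$-flat lift $\widetilde{\VV}_{A'}$. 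The set of such lifts is a torsor under $\Ext^1_Y(\VV,\VV)\otimes I$, and restriction to $D$ carries this torsor to that of the lifts of $\mathcal{W}_A$, equivariantly along $\rho_1\otimes I$; by surjectivity of $\rho_1$ I can modify $\widetilde{\VV}_{A'}$ by a class in $\Ext^1_Y(\VV,\VV)\otimes I$ so that its restriction becomes the prescribed $\mathcal{W}_{A'}$. This solves the relative lifting problem, establishing formal smoothness of $r$ at $[\VV]$; together with local finite presentation of the two stacks this gives smoothness in a neighborhood of $[\VV]$, and smooth morphisms are open.

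I expect the main obstacle to be the careful bookkeeping of the relative obstruction theory: justifying that the obstruction to lifting $\VV_A$ is compatible, via $\rho_2$, with the obstruction to lifting its restriction (equivalently, identifying the cone of $R\Hom_Y(\VV,\VV)\to R\Hom_D(\VV|_D,\VV|_D)$ with $R\Hom_Y(\VV,\VV(-D))$), and that the torsor structures on the two sets of lifts match under restriction. The two clean ingredients that make this work are the exactness of the twisted restriction sequence and the adjunction isomorphism $\Ext^\bullet_Y(\VV,\VV|_D)\cong\Ext^\bullet_D(\VV|_D,\VV|_D)$, both of which rest on torsion-freeness of $\VV$. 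I would emphasize that only the \emph{relative} obstruction $\Ext^2_Y(\VV,\VV(-D))$ needs to vanish; in particular no smoothness of the target stack $Coh_D$ is required, which is what allows $D$ to be singular.
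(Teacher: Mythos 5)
Your proof is correct. The paper offers no proof of this lemma — it is quoted verbatim from Walter \cite[Lemma 4]{Walter1993IrreducibilityOM} — and your argument (exactness of $0\to\VV(-D)\to\VV\to\VV|_D\to 0$ from torsion-freeness, the adjunction identification $\Ext^{\bullet}_Y(\VV,\VV|_D)\cong\Ext^{\bullet}_D(\VV|_D,\VV|_D)$, and the resulting long exact sequence showing that $\Ext^2_Y(\VV,\VV(-D))=0$ makes the relative obstruction vanish and the tangent map surjective) is precisely Walter's original deformation-theoretic proof.
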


\subsection{The quadric surface}\label{quadric} We specialize some of the above discussion to the case $X=\XX$. 

The surface $X$ comes with two natural projections to the $\PP^1$ factors. Let $F$ denote the class $[pr_1^*(pt)]$ and $E$ denote the class $[pr_2^*(pt)]$. The Picard group of $X$ and the intersection pairing is then given by $$\Pic(X)=\ZZ E \oplus \ZZ F, \ \ E^2=F^2=0, \ \ E \cdot F =1.$$The canonical class of $X$ is $$K_X=-2E-2F.$$A divisor  class $H=aE+bF$ is ample if and only if $a,b>0$. For $m \in \bb{Q}$, we consider the $\bb{Q}$-divisor class $$H_m=E+mF.$$Note that every ample divisor on $X$ is an integer multiple of some $H_m$ with $m>0$.

For character ${\bf v}=(r, \nu, \Delta)=(r, \varepsilon E+\varphi F, \Delta)$ on $X=\XX$, the Riemann-Roch Theorem gives $$\chi({\bf v})=r(P(\nu)-\Delta),$$where $$P(\nu)=(\varepsilon+1)(\varphi+1).$$ Given two sheaves $\VV, \cc{W}$, let $\ext^i(\VV, \cc{W})$ denote $\dim \Ext^i(\VV, \cc{W})$. The Riemann-Roch Theorem says that $$\chi(\VV, \cc{W})=\sum_{i=0}^{2} (-1)^i \ext^i(\VV, \cc{W})=r(\VV)r(\cc{W})( P(\nu(\cc{W})-\nu(\VV))-\Delta(\VV)-\Delta(\cc{W}) ).$$

Note that on $X$ with an ample divisor $H$  every $H$-semistable  sheaf $\cc{V}$ of character ${\bf v}$ is both $\OO(1,1)$- and $\OO(0,1)$-prioritary:
\begin{equation*} \begin{aligned}
\Ext^2(\cc{V}, \cc{V}(-1,-1))&=\Hom(\cc{V}, \cc{V}(-1,-1))^{\dual}=0, \ \\
 \Ext^2(\cc{V}, \cc{V}(0,-1))&=\Hom(\cc{V}, \cc{V}(-2,-1))^{\dual}=0 \end{aligned} \end{equation*}by Serre duality and semistability. Thus, if we denote the stack of $\cc{L}$-prioritary sheaves by $\cc{P}_{\cc{L}}(\bv)$, then we have a chain of open substacks $$\cc{M}_{H}({\bf v}) \subset \cc{P}_{\OO(1,1)}({\bf v}) \subset \cc{P}_{\OO(0,1)}({\bf v}).$$

Walter's Theorem \cite{Walter1993IrreducibilityOM} asserts that the stack $\cc{P}_{\OO(0,1)}(\bv)$ is irreducible and smooth (if nonempty). This implies that the moduli space $M_{H}({\bf v})$ is irreducible as well. Furthermore, if $r({\bf v})\geq 2$, then the general member of $\cc{P}_{\OO(0,1)}(\bv)$ is locally free. Additionally, Walter shows that $M_H(\bv)$ is unirational. 

We also have the following useful result of Yoshioka.

\begin{theorem}[{\cite[Proposition 5.1]{Yoshioka1995}}]\label{Maiorana} Let $\bv=(r,\nu,\Delta)=(r, c_1, \chi) \in K(X)$ be a primitive $H$-semistable Chern character. 

If the polarization  $H$  satisfies $$\gcd (r, c_1 \cdot H,\chi)=1,$$ then $\Pic(M_H(\bv))$ is torsion-free.
\end{theorem}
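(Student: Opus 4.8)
The plan is to derive torsion-freeness from the fact that, under the coprimality hypothesis, $M_H(\bv)$ is a \emph{smooth} projective variety which is rationally connected, hence simply connected. The role of the gcd condition is precisely to force $M_H(\bv)$ to be smooth by eliminating strictly semistable sheaves, while the unirationality recorded in \S\ref{quadric} supplies rational connectedness. The cohomological consequences of rational connectedness then kill all torsion in the Picard group.

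\textbf{Step 1: semistable equals stable, and smoothness.} First I would show that every $H$-semistable sheaf $\VV$ of character $\bv$ is in fact $H$-stable. If $\VV$ were strictly semistable, there would be a proper subsheaf $\VV' \subset \VV$ of rank $r'$ with $0 < r' < r$ and equal reduced $H$-Hilbert polynomials $p_{H,\VV'} = p_{H,\VV}$. Comparing the three coefficients of the polynomial identity $r' P_{H,\VV} = r P_{H,\VV'}$ on the surface $X$ forces the integer vector $(r', c_1(\VV')\cdot H, \chi(\VV'))$ to equal $\tfrac{r'}{r}(r, c_1 \cdot H, \chi)$; since $0 < r' < r$, this exhibits $(r, c_1\cdot H, \chi)$ as non-primitive, contradicting $\gcd(r, c_1\cdot H, \chi)=1$. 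Hence $M_H(\bv) = M^s_H(\bv)$. For a stable sheaf $\VV$, Serre duality gives $\Ext^2(\VV,\VV) = \Hom(\VV, \VV\otimes K_X)^{\dual}$; since $K_X = -2E-2F$ has negative degree, any nonzero homomorphism $\VV \to \VV\otimes K_X$ between stable sheaves of equal rank but strictly decreasing slope is impossible, so $\Ext^2(\VV,\VV)=0$. The deformations are therefore unobstructed, and $M_H(\bv)$ is a smooth projective variety.

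\textbf{Step 2: rational connectedness and its cohomological consequences.} By Walter's Theorem, as recorded in \S\ref{quadric}, the moduli space $M_H(\bv)$ is unirational, and a unirational variety is rationally connected. I would then invoke the theorem of Campana and Koll\'ar--Miyaoka--Mori: over $\mathbb{C}$, a smooth projective rationally connected variety $M$ is simply connected and satisfies $H^i(M, \OO_M) = 0$ for all $i \geq 1$. \textbf{Step 3: conclusion.} From $H^1(M,\OO_M) = 0$ we obtain $\Pic^0(M) = 0$, so $\Pic(M) = \mathrm{NS}(M)$ and the first Chern class $c_1 : \Pic(M) \to H^2(M,\ZZ)$ is injective. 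From $\pi_1(M) = 0$ we obtain $H_1(M,\ZZ) = 0$, so by the universal coefficient theorem the torsion subgroup of $H^2(M,\ZZ)$, which is isomorphic to $\mathrm{Tor}(H_1(M,\ZZ))$, vanishes. Thus $\Pic(M)$ embeds into the torsion-free group $H^2(M,\ZZ)$ and is itself torsion-free.

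The main obstacle is concentrated in Step 1: one must verify carefully that the coprimality of $(r, c_1\cdot H, \chi)$ is exactly the arithmetic condition ruling out destabilizing subsheaves of equal reduced Hilbert polynomial, and that the resulting identification $M_H(\bv) = M^s_H(\bv)$ of the full moduli space with the smooth stable locus is where the hypothesis is genuinely used. By contrast, Steps 2 and 3 are essentially a black-box application of rational-connectedness theory and standard algebraic topology, requiring no new input beyond the unirationality already available on $X=\XX$.
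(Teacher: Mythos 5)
The paper offers no proof of this statement at all --- it is imported directly from Yoshioka as \cite[Proposition 5.1]{Yoshioka1995} --- so there is no in-paper argument to measure yours against; I can only assess your proof on its own terms, and it is correct. Step 1 is the standard coprimality argument: writing out $P_{H,\VV}(n)=\tfrac{rH^2}{2}n^2+(c_1\cdot H-\tfrac{r}{2}K_X\cdot H)n+\chi$ via Riemann--Roch, equality of reduced Hilbert polynomials forces $(r',c_1(\VV')\cdot H,\chi(\VV'))=\tfrac{r'}{r}(r,c_1\cdot H,\chi)$; integrality of the left-hand side together with $\gcd(r,c_1\cdot H,\chi)=1$ forces $r\mid r'$, impossible for $0<r'<r$. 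Combined with $\Ext^2(\VV,\VV)=\Hom(\VV,\VV\otimes K_X)^{\dual}=0$ (a nonzero map from $\VV$ to the Gieseker-stable sheaf $\VV\otimes K_X$ of strictly smaller reduced Hilbert polynomial is impossible), this does make $M_H(\bv)=M^s_H(\bv)$ smooth and projective, which is indeed where the hypothesis is spent. Steps 2--3 then feed Walter's unirationality (recorded in \S\ref{quadric}) through Serre's theorem and the exponential sequence; note that for killing the torsion of $H^2(M,\ZZ)$ you only need $H_1(M,\ZZ)=0$, which already follows from Serre's triviality of the algebraic fundamental group of a smooth projective unirational variety together with finite generation of $H_1$, so the full Campana/Koll\'ar--Miyaoka--Mori machinery is not required. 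Two points you should make explicit if you write this up: the degenerate cases ($M_H(\bv)$ empty or zero-dimensional) are trivially covered, and Walter's irreducibility is what lets you speak of ``the'' unirational variety $M_H(\bv)$ in the first place. Be aware that Yoshioka's own argument is organized differently (through the equivariant Picard group of the parametrizing Quot scheme, in the spirit of Theorem \ref{Yoshioka surjectivity}), so your route is an independent and somewhat softer derivation rather than a reconstruction of the cited proof.
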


 In this paper, we will be concerned with the calculation of the Picard group of the moduli space  $M_{H_m}({\bf v})$  of $H_m$-semistable sheaves on $X$ when $m \in \bb{Q}$ is sufficiently close to $1$:$$H_m=E+mF, \ m=1+\epsilon, \ 0<|\epsilon| \ll 1.$$ The reason for doing so is twofold. On the one hand, as we explained above  the genericity  assumption makes $M_{H_m}({\bf v})$ into a locally factorial variety with a known bound on the Picard number. On the other hand,  in the next subsection we recall that when $H_m$ is close to the anticanonical class, there is a complete classification of Chern characters ${\bf v}$ for which the moduli space $M_{H_m}({\bf v})$ is nonempty or positive-dimensional.

\subsection{Exceptional bundles and existence of semistable sheaves}\label{existence}Let $X=\XX$ polarized by an ample divisor $H$. The question of when $M_{H}({\bf v})$ is nonempty was studied by Rudakov in \cite{Rudakov94} and Coskun and Huizenga in \cite{coskun2019existence} (where they studied the existence question for all Hirzebruch surfaces). We follow \cite{coskun2019existence} in this subsection.
\begin{definition} A sheaf $\cc{V}$ on  $X$ is 
\begin{enumerate}
\item \emph{simple}, if $\Hom(\VV, \VV)=\bb{C}$;
\item \emph{rigid}, if $\Ext^1(\VV, \VV)=0$;
\item \emph{exceptional}, if it is simple, rigid, and $\Ext^2(\VV, \VV)=0$;
\item \emph{semiexceptional}, if it is a direct sum of copies of an exceptional sheaf.
\end{enumerate}

We call a character ${\bf v}\in K(X)$ of positive rank \emph{potentially exceptional} if $\chi({\bf v, v})=1$, and \emph{(semi)exceptional} if there is a (semi)exceptional torsion-free sheaf of character ${\bf v}$. We also say that character ${\bf v}$ is $H$-\emph{(semi)stable} (resp. $\mu_H$-\emph{(semi)stable}) if there is an $H$-(semi)stable (resp., $\mu_H$-(semi)stable) sheaf of character ${\bf v}$.
\end{definition}

Any exceptional torsion-free sheaf is locally free and $\mu_{-K_X}$-stable by \cite{Gorodentsev_1989} and therefore, remains $\mu_{H_m}$-stable for $m\in \bb{Q}$ sufficiently close to $1$ by the openness of slope stability in the polarization. We reproduce a part of \cite[Lemma 6.7]{coskun2019existence} that further characterizes (potentially) exceptional bundles and characters.

\begin{lemma}[{\cite[Lemma 6.7]{coskun2019existence}}]\label{exceptional stuff} Let ${\bf v} \in K(X)$ be a potentially exceptional character of rank $r$. 
\begin{enumerate}
\item The rank of $\bv$ is odd and the discriminant of ${\bf v}$ is $$\Delta=\frac{1}{2}-\frac{1}{2r^2}.$$
\item The character ${\bf v}$ is primitive.
 
\item If $m$ is generic and $\VV$ is an $H_m$-semistable sheaf of discriminant $\Delta(\VV)<\frac{1}{2}$, then $\VV$ is semiexceptional.
\end{enumerate}
\end{lemma}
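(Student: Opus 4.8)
The plan is to treat the three assertions in increasing order of difficulty: parts (1) and (2) are purely formal consequences of the bilinear Euler form, while part (3) carries all of the geometric content.

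For part (1), I would start from the Riemann--Roch expression for $\chi({\bf v},{\bf v})$ recorded above. Since the two arguments have equal slope, $P(\nu(\cc{W})-\nu(\VV)) = P(0) = 1$, so $\chi({\bf v},{\bf v}) = r^2(1-2\Delta)$. Setting this equal to $1$, the defining property of a potentially exceptional character, and solving gives $\Delta = \tfrac12 - \tfrac1{2r^2}$ at once. For the parity claim, I would write $c_1 = aE+bF$, so that $\Delta = \tfrac{ab}{r^2} - \tfrac{ch_2}{r}$; substituting the value of $\Delta$ just found and clearing denominators yields $r^2 - 1 = 2(ab - r\,ch_2)$. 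The right-hand side is even, forcing $r^2$, hence $r$, to be odd. Part (2) is even shorter: if ${\bf v} = k\,{\bf w}$ for an integral character ${\bf w}$ and an integer $k>1$, then bilinearity of the Euler pairing gives $1 = \chi({\bf v},{\bf v}) = k^2\,\chi({\bf w},{\bf w})$, which is impossible since $\chi({\bf w},{\bf w}) \in \ZZ$ and $k^2 \geq 4$. Hence ${\bf v}$ is primitive.

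The real work is in part (3). Let $\VV$ be $H_m$-semistable with $\Delta(\VV) < \tfrac12$ for generic $m$, and consider its Jordan--H\"older filtration $0 = F_0 \subset \cdots \subset F_n = \VV$ with stable quotients $\gr_i(\VV)$. Because $m$ avoids the walls of type $(r,\Delta)$, Corollary \ref{JH under generic polarization} shows that every $\gr_i(\VV)$ has total slope $\nu$ and discriminant $\Delta < \tfrac12$. The key intermediate claim is that \emph{any} Gieseker-stable sheaf $G$ with $\Delta(G) < \tfrac12$ is exceptional. Indeed, stability gives $\hom(G,G)=1$; Serre duality gives $\ext^2(G,G) = \hom(G, G\otimes K_X) = 0$ because $K_X$ is anti-ample and $G \otimes K_X$ is semistable of strictly smaller slope; and then $\chi(G,G) = 1 - \ext^1(G,G) = r(G)^2(1-2\Delta) > 0$ forces $\ext^1(G,G) = 0$. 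Thus each $\gr_i(\VV)$ is exceptional. To upgrade this to semiexceptionality of $\VV$ itself, I would apply part (1) to each exceptional factor to get $\Delta = \tfrac12 - \tfrac1{2 r(\gr_i)^2}$; since all the $\gr_i$ share the same $\Delta$, they share a common rank, and together with the common slope $\nu$ they all have one and the same character ${\bf w}$. For $i \neq j$ the pairing $\chi(\gr_i,\gr_j) = \chi({\bf w},{\bf w}) = 1$ is positive, while if $\gr_i \not\cong \gr_j$ the same $\Hom$- and $\Ext^2$-vanishing argument as above gives $\chi(\gr_i,\gr_j) = -\ext^1(\gr_i,\gr_j) \leq 0$, a contradiction; hence all factors are isomorphic to a single exceptional bundle $E$. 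Finally, since $E$ is rigid, every extension $0 \to E^{\oplus(i-1)} \to F_i \to E \to 0$ in the filtration is classified by a class in $\Ext^1(E,E)^{\oplus(i-1)} = 0$ and therefore splits, so by induction $\VV \cong E^{\oplus n}$ is semiexceptional.

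The main obstacle is concentrated in part (3): the two places where stability is leveraged to force $\ext^1$-vanishing, first to show the stable factors are exceptional and then to show they are mutually isomorphic and that the filtration splits. Each step rests on the positivity $1 - 2\Delta > 0$ together with the vanishing of $\Hom$ and $\Ext^2$ between semistable sheaves of equal slope, so the crux is verifying these vanishings carefully, in particular the anti-ampleness of $K_X$ entering the $\Ext^2$ computation, rather than any delicate estimate.
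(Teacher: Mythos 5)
The paper does not prove this lemma at all: it is imported verbatim as a citation to Coskun--Huizenga, so there is no in-paper argument to compare against. Your proof is correct and self-contained, and it is essentially the standard argument one finds in the cited source: parts (1) and (2) follow formally from $\chi(\bv,\bv)=r^2(1-2\Delta)$ and the integrality of $ch_2=ab-c_2$ on $\XX$, while part (3) combines Corollary \ref{JH under generic polarization} (equal invariants of the Jordan--H\"older factors for generic $m$), the vanishing $\Hom(G,G\otimes K_X)=0$ from stability and anti-ampleness of $K_X$, and the positivity $\chi(\bw,\bw)=1>0$ to force all factors to be a single rigid bundle $E$ and the filtration to split. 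The only point worth flagging is cosmetic: you should note explicitly that a nonzero map between Gieseker-stable sheaves with the same reduced Hilbert polynomial is an isomorphism (which justifies $\Hom(\gr_i,\gr_j)=0$ for non-isomorphic factors), but this is standard and does not affect correctness.
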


Heuristically, $\mu_H$-stable exceptional bundles give strong bounds on the possible numerical invariants of $\mu_H$-semistable sheaves. In particular, if $E$ is a $\mu_H$-stable exceptional bundle and $\VV$ is a $\mu_H$-semistable sheaf with $$\frac{1}{2}K_X \cdot H \leq \mu_H(\VV)-\mu_H(E)<0,$$then $$\Hom(E, \VV)=0 \quad \text{and} \quad \Ext^2(E, \VV)=\Hom(\VV, E(K_X))^{\dual}=0$$by semistability and Serre duality. Therefore, $\chi(E,\VV)\leq0$.  By the Riemann-Roch Formula, this inequality can be viewed as a lower bound on $\Delta(\VV)$: $$\Delta(\VV)\geq P(\nu(\VV)-\nu(E))-\Delta(E).$$Likewise, if instead $$0<\mu_H(\VV)-\mu_H(E)\leq -\frac{1}{2}K_X\cdot H,$$then the inequality $\chi(\VV, E)\leq0$ provides a lower bound $$\Delta(\VV)\geq P(\nu(E)-\nu(\VV))-\Delta(E)$$ on $\Delta(\VV)$. 

This motivates the following definition.

\begin{definition}[{\cite[Definition 6.13]{coskun2019existence}}]{\label{DLP less than r}} For a $\mu_H$-stable exceptional bundle $E$,  define a function  $$\DLP_{H,E}(\nu) =
\left\{
        \begin{array}{ll}
                P(\nu-\nu(E))-\Delta(E)  &  \text{if} \ \frac{1}{2}K_X \cdot H \leq (\nu-\nu(E))\cdot H <0 \\
                P(\nu(E)-\nu)-\Delta(E)  &  \text{if} \ 0<  (\nu-\nu(E))\cdot H \leq -\frac{1}{2}K_X \cdot H \\
\max\{P(\pm(\nu-\nu(E)))-\Delta(E)\} & \text{if} \ (\nu-\nu(E))\cdot H=0
        \end{array}
\right.$$on the strip of slopes $\nu=\varepsilon E + \varphi F=(\varepsilon, \varphi)\in \bb{Q}^2$ satisfying $$|(\nu-\nu(E))\cdot H|\leq -\frac{1}{2}K_X\cdot H.$$
Let $\bb{E}_H$ be the set of $\mu_H$-stable exceptional bundles on $X$. Further define a function $$\DLP_H^{<r}(\nu)=\sup_{\substack{E\in\bb{E}_H \\ |(\nu-\nu(E))\cdot H| \leq -\frac{1}{2}K_X\cdot H \\ r(E)<r}} \DLP_{H,E}(\nu),$$where this time $\nu=(\varepsilon, \varphi)$ could be any point in $\bb{Q}^2$. We refer to the above functions as the \emph{Dr\'{e}zet-Le-Potier functions}, or $\DLP$-\emph{functions}, for short.
\end{definition}
One can see the graph of $\Delta=\DLP_{H_m}^{<r}(\varepsilon E + \varphi F)$ in the $(\varepsilon, \varphi, \Delta)$-space in Figure \ref{fig:example} below (for $m=1$). In the rest of the paper we will call the graph of $\Delta=\DLP_{H_m}^{<r}(\varepsilon E + \varphi F)$   the \emph{Dr\'{e}zet-Le-Potier surface}, the \emph{$\DLP^{<r}_{H_m}$- surface}, or the \emph{$\DLP^{<r}$-surface} for short. 

\begin{figure}[h]%
    \centering
    \subfloat[Top-down view. Reproduced from {\cite[Figure 5]{coskun2019existence}}.]{{\includegraphics[width=7.4cm]{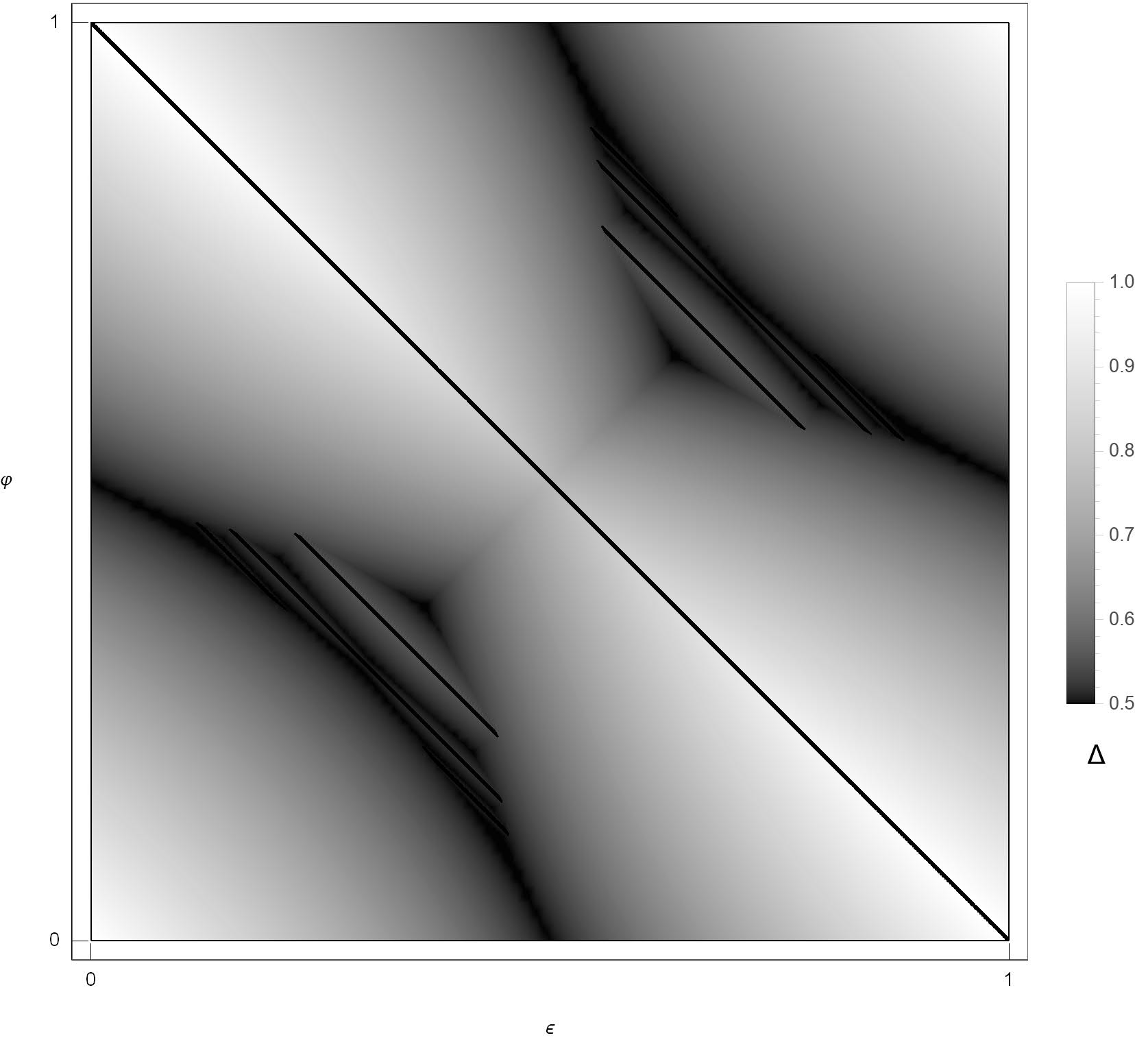} }}%
    \qquad
    \subfloat[View from the side]{{\includegraphics[height=6.6cm, width=7.4cm]{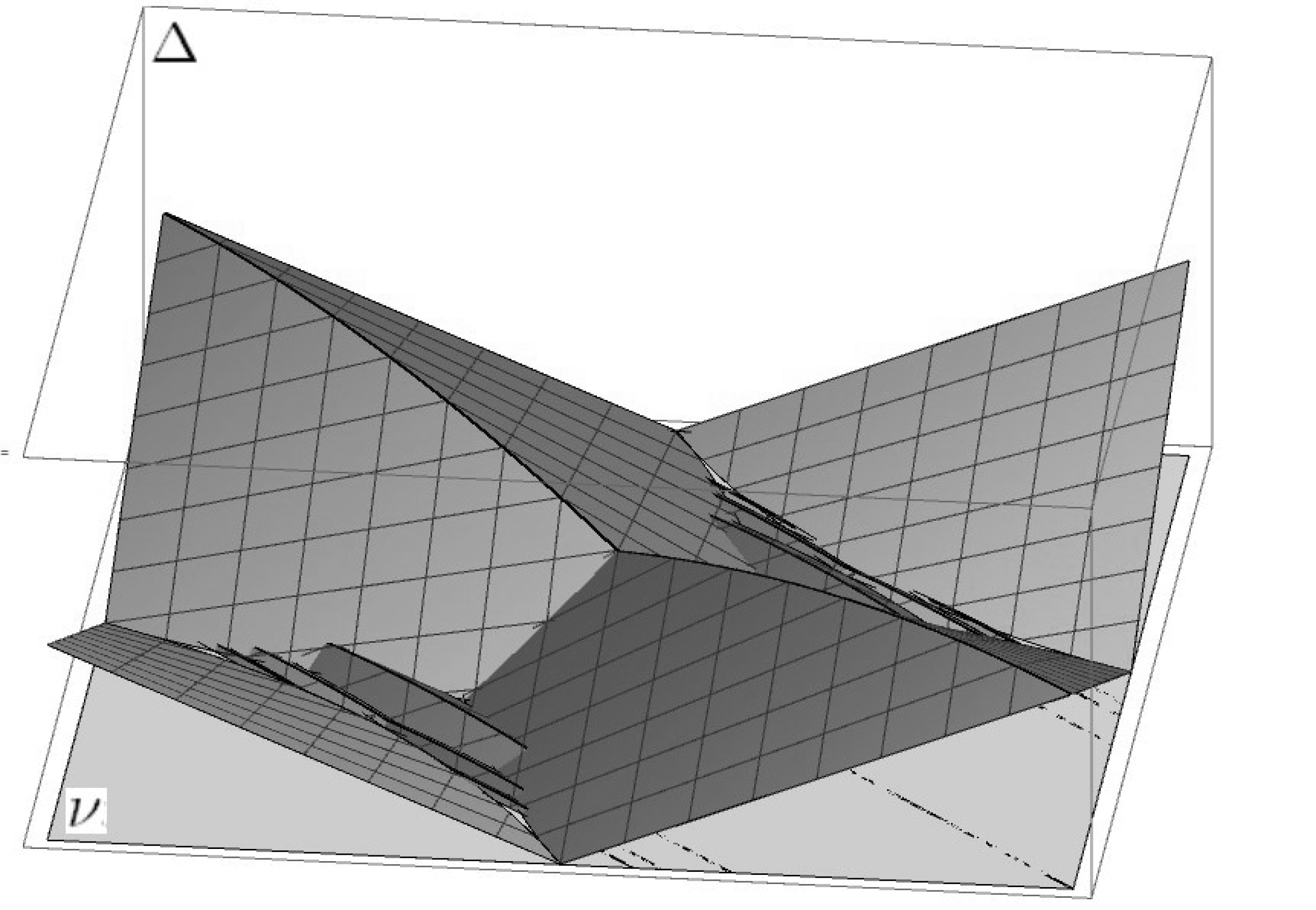} }}%
    \caption{The $\DLP^{<r}$-surface: graph of $\Delta=\DLP_{H_1}^{<8}(\varepsilon E+ \varphi F ).$}%
    \label{fig:example}%
\end{figure}
The discussion before Definition \ref{DLP less than r} shows that if there is a $\mu_H$-semistable sheaf of total slope $\nu$ and discriminant $\Delta$, then $\Delta\geq \DLP_{H}^{<r}(\nu)$. The next theorem shows that for a generic polarization close to the anticanonical class  such inequalities also provide sufficient conditions for the existence of Gieseker semistable sheaves of total slope $\nu$ and discriminant $\Delta$.  
\begin{theorem}[{\cite[Corollary 7.6]{coskun2019existence}}]{\label{existence theorem}} Let ${\bf v}=(r, \nu, \Delta)\in K(X)$ be a character with positive rank. Let $\epsilon \in \bb{Q}$ be sufficiently small (depending on $r$), $0 < |\epsilon| \ll 1$, and set $m=1+\epsilon$.
\enumerate
\item If ${\bf v}$ is potentially exceptional, then it is exceptional if and only if $$\Delta \geq \DLP_{H_m}^{<r}(\nu).$$

\item If ${\bf v}$ is not semiexceptional, there is an $H_m$-semistable sheaf of character ${\bf v}$ if and only if $$\Delta \geq \DLP_{H_m}^{<r}(\nu).$$ \end{theorem}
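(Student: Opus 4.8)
The plan is to prove the two implications separately, since one direction is essentially the motivation recorded just before Definition \ref{DLP less than r} and the other requires genuine construction. For the forward direction (necessity of $\Delta\geq\DLP_{H_m}^{<r}(\nu)$), I would argue exactly as in that discussion. Given an $H_m$-semistable sheaf $\VV$ of slope $\nu$ and discriminant $\Delta$, and any $\mu_{H_m}$-stable exceptional bundle $E$ with $r(E)<r$ lying in the admissible slope strip, semistability of $\VV$ together with stability of $E$ and Serre duality forces $\Hom(E,\VV)=\Ext^2(E,\VV)=0$ when $(\nu-\nu(E))\cdot H_m<0$, or $\Hom(\VV,E)=\Ext^2(\VV,E)=0$ when $(\nu-\nu(E))\cdot H_m>0$, so that $\chi(E,\VV)\leq 0$ or $\chi(\VV,E)\leq 0$. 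By Riemann--Roch each such inequality rearranges precisely into $\Delta\geq\DLP_{H_m,E}(\nu)$, and taking the supremum over all admissible $E$ of rank $<r$ yields $\Delta\geq\DLP_{H_m}^{<r}(\nu)$. Since an exceptional sheaf is in particular $\mu_{H_m}$-semistable, the same bound gives the forward direction of part (1).

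For the reverse direction I would work inside the stack $\cc{P}_{\OO(0,1)}(\bv)$ of $\OO(0,1)$-prioritary sheaves, which by Walter's Theorem (\S\ref{quadric}) is irreducible and smooth whenever nonempty, with general member locally free when $r\geq 2$. The goal is to show that if $\Delta\geq\DLP_{H_m}^{<r}(\nu)$ and $\bv$ is not semiexceptional, then the general member of $\cc{P}_{\OO(0,1)}(\bv)$ is $\mu_{H_m}$-semistable; after perturbing $m$ off the walls of type $(r,\Delta)$ (Lemma \ref{walls}) this general sheaf is in fact $\mu_{H_m}$-stable away from the exceptional loci, hence Gieseker semistable via Corollary \ref{JH under generic polarization}. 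I would induct on the rank $r$. Suppose the general prioritary sheaf $\VV$ were not $\mu_{H_m}$-semistable. Its maximal destabilizing subsheaf $\VV_1$ has rank $r_1<r$ and slope $\nu_1$ with $\mu_{H_m}(\nu_1)>\mu_{H_m}(\nu)$, while the quotient $\VV/\VV_1$ has rank $r_2=r-r_1$ and slope $\nu_2$; both pieces are again $\OO(0,1)$-prioritary of smaller rank, so by the inductive hypothesis their discriminants are bounded below by the corresponding $\DLP^{<r_i}$-values. The Harder--Narasimhan type being constant on a dense open substack, I would compare $\dim\cc{P}_{\OO(0,1)}(\bv)=r^2(2\Delta-1)$ with the dimension of the locus of extensions $0\to\VV_1\to\VV\to\VV/\VV_1\to 0$ of such semistable pieces; combined with the additivity relation $\Delta(\VV)=\tfrac{r_1}{r}\Delta(\VV_1)+\tfrac{r_2}{r}\Delta(\VV/\VV_1)-\tfrac{r_1r_2}{2r^2}(\nu_1-\nu_2)^2$, this comparison should force $\Delta<\DLP_{H_m}^{<r}(\nu)$, contradicting the hypothesis.

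For part (1) and the base cases of the induction the content shifts to the explicit construction of exceptional bundles. Here I would use the braid-group action of mutations on the standard full exceptional collection $\{\OO,\OO(1,0),\OO(0,1),\OO(1,1)\}$ on $X$ to produce a $\mu_{H_m}$-stable exceptional bundle realizing any potentially exceptional character not excluded by the $\DLP$ bound; by Lemma \ref{exceptional stuff} such a character is automatically primitive with $\Delta=\tfrac12(1-\tfrac1{r^2})$, so the condition $\Delta\geq\DLP_{H_m}^{<r}(\nu)$ reduces to a condition on $\nu$ cut out by the exceptional slopes. These bundles then seed the induction: a character lying strictly above $\DLP$ can be reached from a boundary sheaf by generic elementary modifications at points, which raise $\Delta$ by $\tfrac1r$ while preserving prioritariness and (generic) $\mu_{H_m}$-stability.

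The main obstacle is the reverse direction, and within it the behaviour at the boundary $\Delta=\DLP_{H_m}^{<r}(\nu)$, where $\bv$ lies on a branch of the surface and the general prioritary sheaf is an extremal, typically non-split extension determined by an exceptional bundle $E$ with $\chi(E,\VV)=0$. Verifying that these extremal sheaves both exist and are genuinely semistable, and controlling the Harder--Narasimhan strata of $\cc{P}_{\OO(0,1)}(\bv)$ precisely enough to run the dimension count across this boundary, is where the real difficulty lies; tracking the genericity of $m$ so that $\mu_{H_m}$-(semi)stability matches Gieseker (semi)stability is a further technical point that must be maintained throughout.
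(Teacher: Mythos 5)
This theorem is not proved in the paper at all: it is quoted verbatim from \cite[Corollary 7.6]{coskun2019existence}, so there is no internal argument to measure your proposal against. Judged against the strategy of that reference (which the present paper replays in miniature in \S\ref{Shatz section}), your outline is broadly faithful: the necessity of $\Delta \geq \DLP_{H_m}^{<r}(\nu)$ is exactly the $\Hom$/$\Ext^2$-vanishing plus Riemann--Roch computation recorded before Definition \ref{DLP less than r}, and sufficiency is indeed obtained by analyzing the general member of Walter's irreducible stack of prioritary sheaves with an induction on rank, with part (1) resting on the mutation-theoretic classification of exceptional bundles on $\XX$ (Rudakov), which is a substantial imported input rather than something your sketch actually establishes. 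The one place where your argument is genuinely loose is the ``dimension comparison'' in the unstable case: the correct mechanism is not a raw count of extension parameters but the statement that the Harder--Narasimhan stratum of the general member must be dense, hence of codimension zero, and that this codimension equals $-\sum_{i<j}\chi(\gr_i,\gr_j)$ with each summand $\ext^1(\gr_i,\gr_j)$ nonnegative (the $\Ext^1_+$ spectral sequence of \S\ref{spectral sequences}); this forces $\chi(\gr_i,\gr_j)=0$ for all $i<j$, and it is Riemann--Roch applied to these vanishings, together with Lemma \ref{exceptional stuff}(3), that produces a semiexceptional factor and the contradiction $\Delta < \DLP_{H_m}^{<r}(\nu)$. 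You should also be more careful in passing from slope semistability of the general prioritary sheaf to Gieseker semistability for non-primitive characters at the boundary, where only strictly semistable sheaves exist and one must assemble them from stable sheaves of proportional character rather than perturb $m$. With those two points made precise, the sketch is a correct account of the cited proof.
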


One can also easily tell when ${\bf v}$ is semiexceptional. Write ${\bf v}=N{\bf v'}$ with $N\in \bb{N}$ and a primitive character ${\bf v'}$  of rank $r'=r/N$. Then using (1) from the above theorem, we see that ${\bf v}$ is semiexceptional if and only if ${\bf v'}$ is potentially exceptional and $\Delta \geq \DLP_{H_m}^{<r'}(\nu)$. 

Taken together, these statements provide a finite inductive computational procedure for determining whether the moduli space $M_{H_m}({\bf v})$ is nonempty for a given character ${\bf v}$. Let us  remark that for a sufficiently small $\epsilon$ we actually have $\DLP^{<r}_{H_m}(\nu)=\DLP^{<r}_{H_1}(\nu)$ by {\cite[Lemma 7.8]{coskun2019existence}}, so one can keep using Figure \ref{fig:example} to gain insight into $\DLP^{<r}_{H_m}(\nu)$ for $m$ close to $1$.

Also note that since we are concerned with calculating the Picard group of the moduli space, we will only be interested in those characters ${\bf v}$ for which the moduli space $M_{H_m}({\bf v})$ is nonempty and positive dimensional. Recall (\cite[\S 4.5]{HL10}) that the expected dimension of the moduli space is given by$$\exp\dim M_{H_m}({\bf v})=r^2(2\Delta-1)+1.$$ This shows that the expected dimension is positive if and only if $\Delta\geq 1/2$. Lemma \ref{exceptional stuff} (1) implies that such characters are not semiexceptional.

Next, we introduce  useful terminology describing the position of character ${\bf v}$ relative to the $\DLP^{<r}$-surface. 

\begin{definition}\label{associated exceptional} Let ${\bf v}=(r, \nu, \Delta)\in K(X)$ be an $H_m$-semistable character with $\Delta \geq \frac{1}{2}$ and ${\Delta=\DLP^{<r}_{H_m}(\nu)}$, where $m=1+\epsilon$ and $\epsilon \in \bb{Q}$ is a  sufficiently small number depending on $r$, $0 < |\epsilon| \ll 1$.  

We say that an exceptional bundle $E$ is \emph{associated} to ${\bf v}$ if 
\begin{equation}
\begin{aligned} 
r(E)&<r,\\
|(\nu-\nu(E))\cdot H_m | & \leq -\frac{1}{2}K_X\cdot H_m, \quad \text{and} \\
 \ \Delta=\DLP^{<r}(\nu)&=\DLP_{H_m, E}(\nu).
\end{aligned}
\end{equation}

Character ${\bf v}=(r,\nu,\Delta)$ can be positioned in three different ways relative to the $\DLP^{<r}$-surface (see Figure \ref{fig:example2}):
\begin{enumerate}
\item If $\Delta>\DLP^{<r}_{H_m}(\nu)$, then we will say that character ${\bf v}$ lies \emph{above} the $\DLP^{<r}$-surface,
\item If $\Delta=\DLP^{<r}_{H_m}(\nu)$ and there is a single exceptional bundle $E$ associated to ${\bf v}$, then we will say that character ${\bf v}$ lies \emph{on a single branch} of the $\DLP^{<r}$-surface given by the exceptional bundle $E$, 
\item If $\Delta=\DLP^{<r}_{H_m}(\nu)$ and there are at least two different exceptional bundles $E_1,E_2$ associated to ${\bf v}$, then we will say that character ${\bf v}$ lies \emph{on the intersection of  branches} of the $\DLP^{<r}$-surface given by exceptional bundles $E_1$ and $E_2$.
\end{enumerate}
\end{definition}

\begin{figure}[H]%
    \centering
    \subfloat[$\bv$ above $\DLP^{<r}$-surface]{{\includegraphics[height=4.6cm, width=5.1cm]{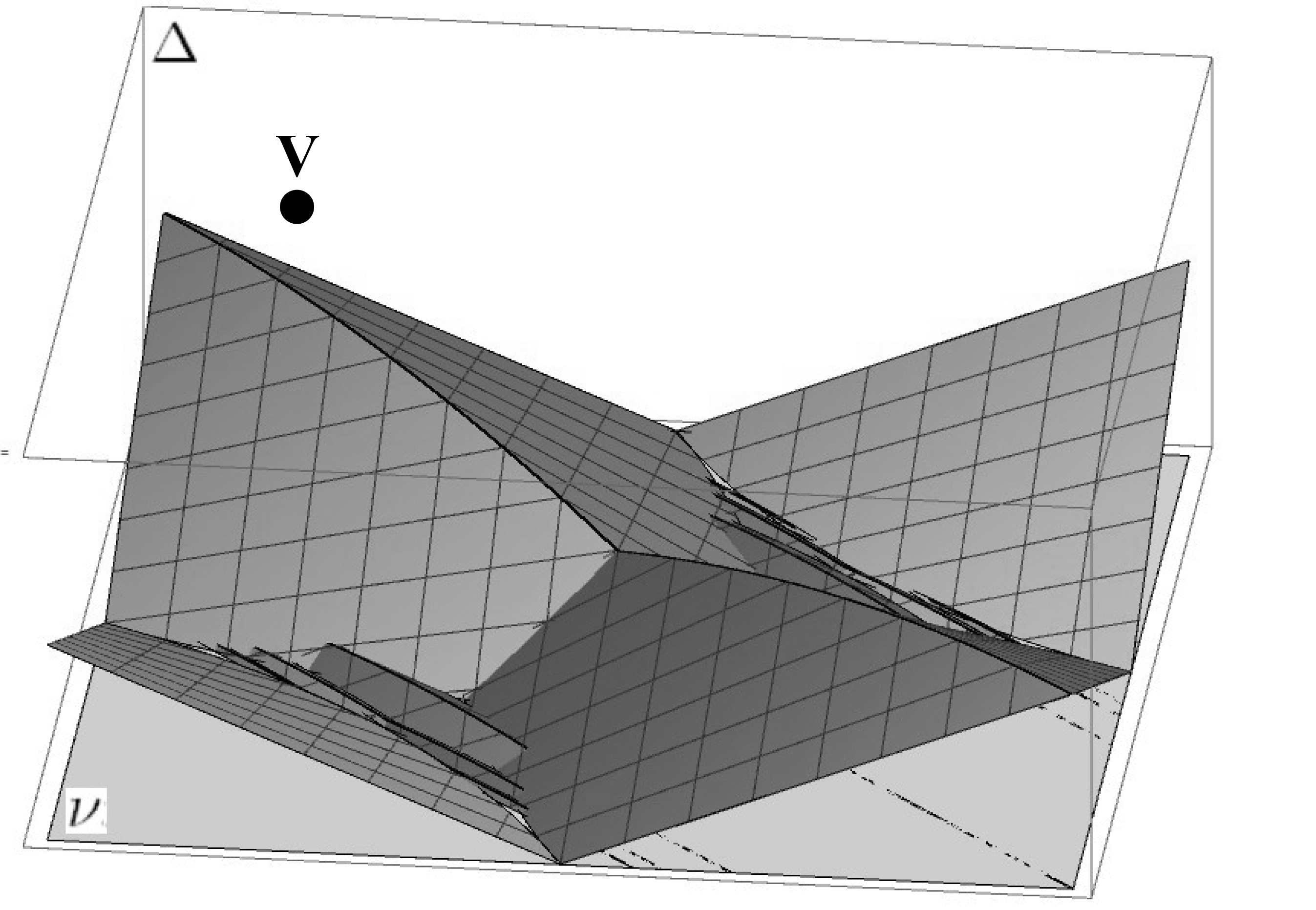} }}%
    \quad
    \subfloat[$\bv$ on a single branch of $\DLP^{<r}$-surface]{{\includegraphics[height=4.6cm, width=5.1cm]{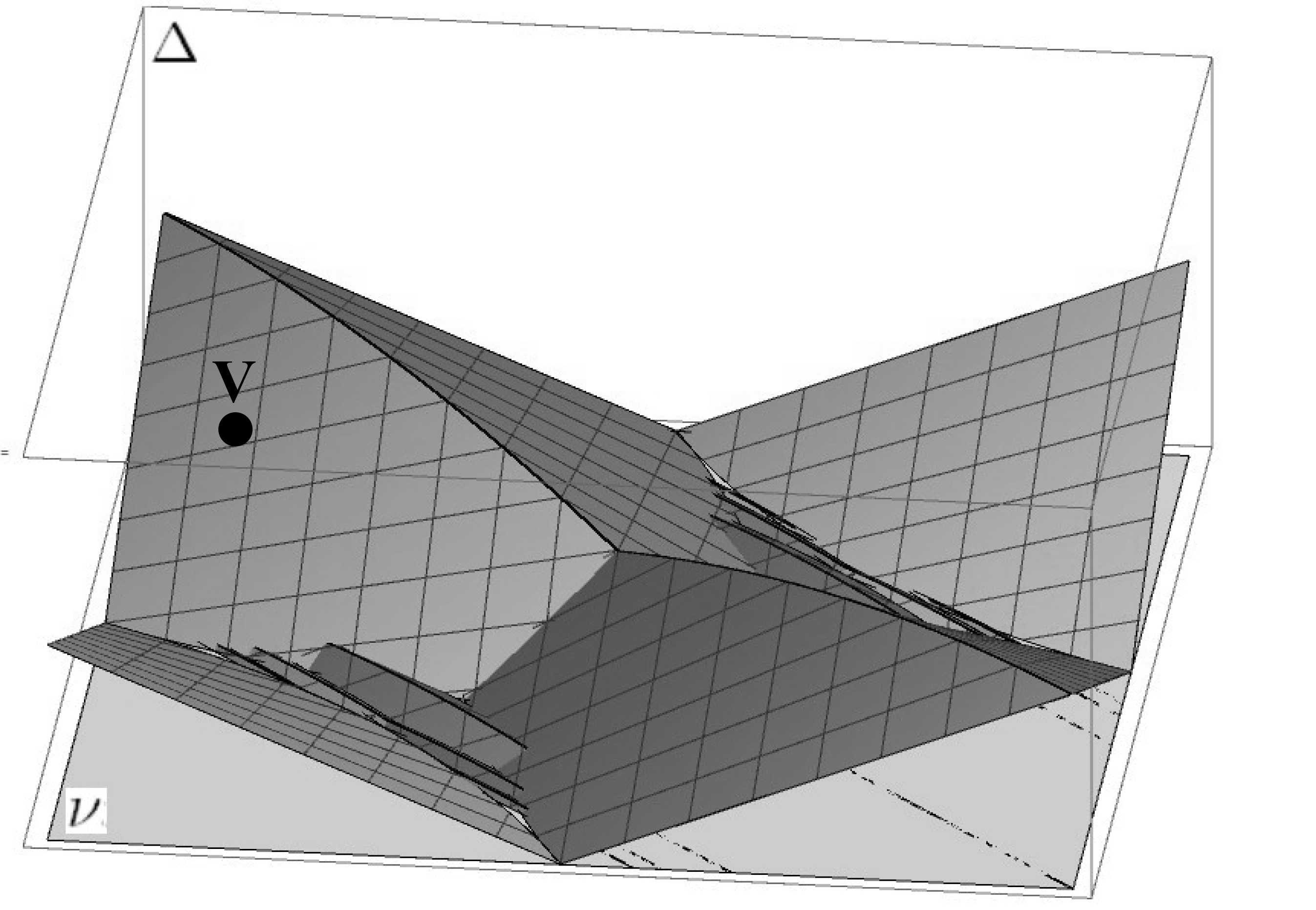} }}%
    \quad
     \subfloat[$\bv$ on the intersection of two branches of $\DLP^{<r}$-surface]{{\includegraphics[height=4.6cm, width=5.1cm]{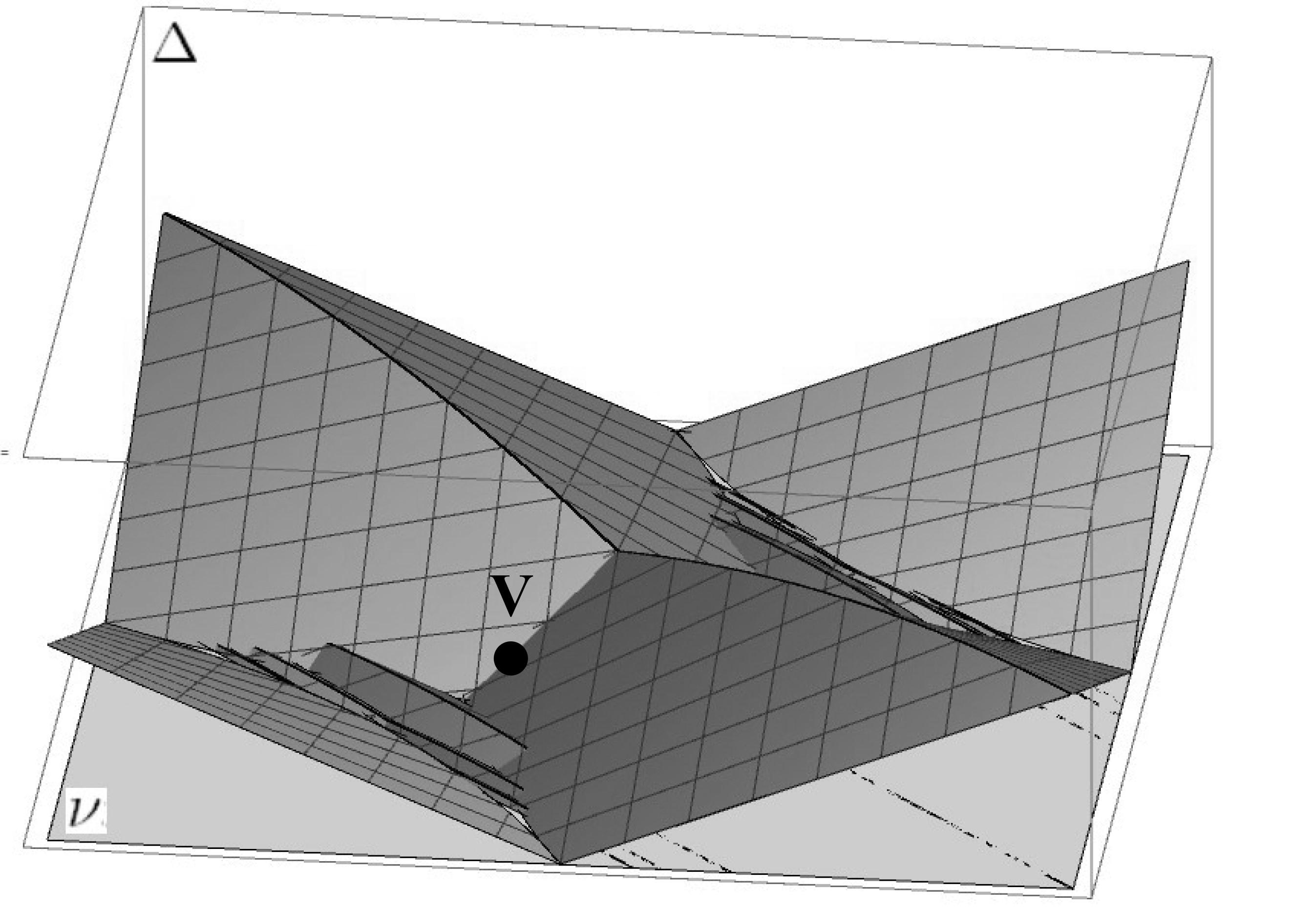} }}%
    \caption{Position of $\bv$ relative to the $\DLP^{<r}$-surface.}%
    \label{fig:example2}%
\end{figure}

We finish this subsection with a useful result about the existence of  stable bundles. 

\begin{proposition}[{\cite[Propositions 9.5 \& 9.6]{coskun2019existence}}]{\label{changing polarization}} Suppose $m\in \bb{Q}$ is generic and $\bv=(r,\nu,\Delta)$ is an integral Chern character.\begin{enumerate} \item If $\Delta>\frac{1}{2}$ and there are $H_m$-semistable sheaves of character ${\bf v}$, then  there are $H_m$-stable sheaves of of character ${\bf v}$.
\item If there are $H_m$-stable sheaves of of character ${\bf v}$, then there are $\mu_{H_m}$-stable sheaves of character ${\bf v}$. 

\end{enumerate}
    
\end{proposition}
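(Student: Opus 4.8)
The plan is to work throughout in the stacks $\cc{P}(\bw):=\cc{P}_{\OO(0,1)}(\bw)$ of $\OO(0,1)$-prioritary sheaves, which by Walter's theorem (\S\ref{quadric}) are irreducible and smooth with $\dim\cc{P}(\bw)=-\chi(\bw,\bw)$; in particular $\dim\cc{P}(\bv)=r^2(2\Delta-1)$. Since every $\mu_{H_m}$-semistable sheaf is $\OO(0,1)$-prioritary, the $H_m$- and $\mu_{H_m}$-(semi)stable loci are open in $\cc{P}(\bv)$, hence dense whenever nonempty, so each assertion reduces to showing that a suitable locus of non-stable sheaves fails to be dense. The main tool I would use is a dimension estimate for strata of filtered sheaves. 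Given a splitting $\bv=\bv'+\bv''$ into characters of positive ranks $r',r''$ with $\nu(\bv')=\nu(\bv'')=\nu$, let $\cc{F}$ be the stack of short exact sequences $0\to\VV'\to\VV\to\VV''\to 0$ with $\VV'\in\cc{P}(\bv')$ and $\VV''\in\cc{P}(\bv'')$ both $\mu_{H_m}$-semistable. Because $\VV''(K_X)$ has strictly smaller $\mu_{H_m}$-slope than $\VV'$, Serre duality gives $\Ext^2(\VV'',\VV')=\Hom(\VV',\VV''(K_X))^\dual=0$; computing the fiber of $\cc{F}\to\cc{P}(\bv')\times\cc{P}(\bv'')$ as the quotient stack $[\Ext^1(\VV'',\VV')/\Hom(\VV'',\VV')]$ and adding dimensions yields
\[ \dim\cc{F}=\dim\cc{P}(\bv)+\chi(\bv',\bv''),\qquad \chi(\bv',\bv'')=r'r''\big(1-\Delta(\bv')-\Delta(\bv'')\big), \]
the last equality by Riemann-Roch. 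Hence the image of the forgetful morphism $\cc{F}\to\cc{P}(\bv)$ is a proper substack once $\Delta(\bv')+\Delta(\bv'')>1$, and can be dense only if $\Delta(\bv')+\Delta(\bv'')\le 1$.

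For part (1), a strictly $H_m$-semistable sheaf has a saturated subsheaf $\VV'$ with $p_{H_m,\VV'}=p_{H_m,\VV}$; by Corollary \ref{JH under generic polarization} (genericity of $H_m$) both $\VV'$ and $\VV''=\VV/\VV'$ then have total slope $\nu$ and discriminant exactly $\Delta$. Applying the estimate with $\Delta(\bv')=\Delta(\bv'')=\Delta$ gives $\chi(\bv',\bv'')=r'r''(1-2\Delta)<0$ since $\Delta>\tfrac12$, so each such stratum is a proper closed substack. As there are only finitely many splittings, the strictly semistable locus is a proper closed substack of the dense semistable locus, and the general $H_m$-semistable sheaf is $H_m$-stable.

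For part (2), if $\Delta<\tfrac12$ then any $H_m$-stable $\VV$ is $\mu_{H_m}$-semistable of discriminant $<\tfrac12$, hence semiexceptional by Lemma \ref{exceptional stuff}(3); stability forces $\VV$ to be a single exceptional bundle, which is $\mu_{H_m}$-stable. So assume $\Delta\ge\tfrac12$ and, for contradiction, that no $\mu_{H_m}$-stable sheaf of character $\bv$ exists. Then the general member of $\cc{P}(\bv)$ is $H_m$-stable but only $\mu_{H_m}$-semistable; taking a slope Jordan-H\"older filtration and shrinking to a dense open substack on which the type is constant, I obtain a fixed splitting $\bv=\bv'+\bv''$ with the general $\VV$ sitting in $0\to\VV'\to\VV\to\VV''\to0$, where $\VV'$ is $\mu_{H_m}$-semistable and $\VV''$ is $\mu_{H_m}$-stable of slope $\nu$. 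Gieseker stability of $\VV$ forces $p_{H_m,\VV'}<p_{H_m,\VV}$, i.e. $\Delta(\bv')>\Delta$, while density of this stratum forces $\Delta(\bv')+\Delta(\bv'')\le1$; hence $\Delta(\bv'')<1-\Delta(\bv')<1-\Delta\le\tfrac12$. Thus $\VV''$ is a $\mu_{H_m}$-stable sheaf of discriminant $<\tfrac12$, so by Lemma \ref{exceptional stuff}(3) it is an exceptional bundle with $r(\VV'')<r$ and $\nu(\VV'')=\nu$. This bundle contributes the branch $\DLP_{H_m,\VV''}(\nu)=1-\Delta(\bv'')$ to $\DLP^{<r}_{H_m}(\nu)$ (Definition \ref{DLP less than r}), so the existence of $H_m$-semistable sheaves of character $\bv$ yields $\Delta\ge\DLP^{<r}_{H_m}(\nu)\ge1-\Delta(\bv'')$, i.e. $\Delta+\Delta(\bv'')\ge1$. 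This contradicts $\Delta+\Delta(\bv'')<\Delta(\bv')+\Delta(\bv'')\le1$.

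The main obstacle is part (2): unlike (1), the codimension $\chi(\bv',\bv'')$ of the destabilizing stratum need not be positive, since the $\mu_{H_m}$-stable quotient may have small discriminant, and so a pure dimension count is inconclusive. The decisive extra input is that a $\mu_{H_m}$-stable sheaf of discriminant $<\tfrac12$ must be exceptional (Lemma \ref{exceptional stuff}(3)); this pins down $\Delta(\bv'')$ and, through the lower bound defining the $\DLP^{<r}$-surface together with the existence theorem (Theorem \ref{existence theorem}), converts the density of the stratum into a numerical contradiction. Some care is also required to justify the vanishing $\Ext^2(\VV'',\VV')=0$ and the constancy of the Jordan-H\"older type over a dense open substack.
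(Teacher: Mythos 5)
This proposition is not proved in the paper: it is imported verbatim from \cite[Propositions 9.5 \& 9.6]{coskun2019existence}, so there is no internal proof to compare against. Judged on its own terms, your reconstruction follows the route one would expect (and which the cited source essentially takes): dimension counts in the irreducible smooth stack of prioritary sheaves, the extension-stack formula $\dim\cc{F}=\dim\cc{P}(\bv)+\chi(\bv',\bv'')$ with $\chi(\bv',\bv'')=r'r''(1-\Delta'-\Delta'')$, and the identification of low-discriminant semistable sheaves as semiexceptional via Lemma \ref{exceptional stuff}(3). Part (1) is correct: genericity of $H_m$ (Lemma \ref{walls}) forces $\nu'=\nu''=\nu$ and $\Delta'=\Delta''=\Delta$ for the strictly semistable strata, so each has codimension $-r'r''(1-2\Delta)>0$, and there are finitely many. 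In part (2), the key chain --- Gieseker stability forces $\Delta'>\Delta$, density forces $\Delta'+\Delta''\le 1$, hence $\Delta''<\tfrac12$ so the $\mu$-stable quotient is exceptional, and then $\chi(E,\VV_0)\le 0$ for an actual $H_m$-stable $\VV_0$ gives $\Delta+\Delta''\ge 1$, a contradiction --- is sound, including the needed vanishings $\Hom(E,\VV_0)=0$ (a nonzero map is injective and would Gieseker-destabilize $\VV_0$) and $\Ext^2(E,\VV_0)=0$.

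The one unjustified step is in your treatment of the case $\Delta<\tfrac12$ of part (2): you assert that the single exceptional bundle $\VV$ obtained there ``is $\mu_{H_m}$-stable.'' Exceptional bundles are only known a priori to be $\mu_{-K_X}$-stable (Gorodentsev), which yields $\mu_{H_m}$-stability for $m$ near $1$ but not for an arbitrary generic $m$, and Gieseker stability does not formally imply slope stability. The gap is fillable by the same mechanism you already use: if $E$ is $H_m$-stable but not $\mu_{H_m}$-stable, the last quotient $E_0$ of its slope Jordan--H\"older filtration is $\mu_{H_m}$-stable with $\nu(E_0)=\nu(E)$ (Lemma \ref{walls}) and $\Delta(E_0)<\Delta(E)<\tfrac12$ by Gieseker stability; then $\Hom(E_0,E)=\Ext^2(E_0,E)=0$ forces $\chi(E_0,E)\le 0$, i.e. $\Delta(E_0)+\Delta(E)\ge 1$, contradicting that both discriminants are below $\tfrac12$. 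With that patch (or simply restricting to $m=1+\epsilon$, the only regime the paper uses), the argument is complete.
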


\subsection{Gaeta-type resolutions}{\label{Gaeta section}} These resolutions are special resolutions of sheaves on $X$ by direct sums of line bundles. Their advantage is that they are simple enough to work with and provide unirational parameterizations of moduli spaces of sheaves. Gaeta-type resolutions were studied in  \cite{coskun_huizenga_2018} for all Hirzebruch surfaces $\bb{F}_e$, but we will only need the case  $X=\bb{F}_0=\XX$.

\begin{definition}{\label{Gaeta}} Let $L$ be a line bundle on $X$. An $L$-{\it Gaeta-type resolution} of a sheaf $\VV$ on $X$ is a resolution of $\VV$ of the form
\begin{equation}{\label{L Gaeta}}0 \to L(-1,-1)^{\alpha} \to L(-1,0)^{\beta} \oplus L(0,-1)^{\gamma} \oplus L^{\delta} \to \VV \to 0\end{equation}where $\alpha, \beta, \gamma, \delta$ are nonnegative integers. We say a sheaf $\VV$ has a {\it Gaeta-type resolution} if it admits an $L$-Gaeta-type resolution for some line bundle $L$. 
\end{definition}

The results of \cite[\S 4]{coskun_huizenga_2018} we will need are summarized in the following statement.

\begin{theorem}\label{Gaeta theorem} If ${\bf v}$ is a $\mu_H$-semistable Chern character with $\Delta({\bf v})\geq \frac{1}{4}$, then a general $\mu_H$-semistable sheaf $\VV$ admits a Gaeta-type resolution.
More specifically: 
\begin{enumerate} 
\item Suppose $L$ is a line bundle such that the inequalities 
\begin{equation}
 \label{Gaeta equation}
 \begin{aligned}
  \alpha=-\chi({\bf v}\otimes L^{\dual}(-1,-1)) \geq 0 \\
  \beta=-\chi({\bf v}\otimes L^{\dual}(-1,0)) \geq 0 \\
  \gamma=-\chi({\bf v}\otimes L^{\dual}(0,-1)) \geq 0 \\
  \delta=\chi({\bf v}\otimes L^{\dual}) \geq 0   
  \end{aligned} 
\end{equation}
are satisfied. Then not all of the integers in \eqref{Gaeta equation} are zero and  a general  $\mu_H$-semistable sheaf $\VV$ admits an $L$-Gaeta-type resolution with integers in \eqref{Gaeta equation} giving the exponents in  \eqref{L Gaeta}. 
\item A line bundle satisfying inequalities \eqref{Gaeta equation} always exists.
\end{enumerate}
\end{theorem}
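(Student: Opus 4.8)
The plan is to read the Gaeta resolution off the Beilinson spectral sequence associated to the full strong exceptional collection $\OO(-1,-1),\ \OO(-1,0),\ \OO(0,-1),\ \OO$ on $X=\XX$, and to secure the cohomology vanishing that makes the spectral sequence collapse by a genericity argument resting on the irreducibility of the prioritary stack. Since a general $\VV$ admits an $L$-Gaeta resolution if and only if a general $\VV\otimes L^{\dual}$ admits an $\OO$-Gaeta resolution (and the numerical conditions \eqref{Gaeta equation} transform accordingly), I may assume $L=\OO$ and aim at \eqref{L Gaeta} with $\OO$ in place of $L$.

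\emph{Existence of $L$ (part 2).} Writing $\nu=\varepsilon E+\varphi F$, $L=\OO(a,b)$, and using $\chi(\VV(s,t))=r\bigl(P(\nu+(s,t))-\Delta\bigr)$ with $P(x,y)=(x+1)(y+1)$, the four quantities in \eqref{Gaeta equation} become $r$ times $\Delta-uw$, $\Delta-u(w+1)$, $\Delta-(u+1)w$, and $(u+1)(w+1)-\Delta$, where $u=\varepsilon-a$ and $w=\varphi-b$. As $u$ and $w$ range over the cosets $\varepsilon+\ZZ$ and $\varphi+\ZZ$, choosing $L$ amounts to sliding an axis-aligned unit cell so that the hyperbola $xy=\Delta$ separates its three lower corners from its upper-right corner. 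An elementary analysis shows such a placement exists as soon as $\Delta\geq\tfrac14$, the threshold being attained at half-integer slopes, where all four corner products equal $\tfrac14$ when $\Delta=\tfrac14$; this is the only place the hypothesis $\Delta\geq\tfrac14$ is used. Finally $\beta+\gamma+\delta-\alpha=r>0$ by Riemann--Roch, so the four integers cannot all vanish.

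\emph{The resolution (part 1).} Tensoring the two Koszul resolutions of the diagonals of the $\PP^1$-factors yields the resolution of the diagonal
$$0 \to \OO(-1,-1)\boxtimes\OO(-1,-1) \to [\OO(-1,0)\boxtimes\OO(-1,0)]\oplus[\OO(0,-1)\boxtimes\OO(0,-1)] \to \OO\boxtimes\OO \to \OO_{\Delta}\to 0.$$
Convolving $\VV$ against it produces a spectral sequence converging to $\VV$ whose $E_1$-terms are the bundles $\OO(-1,-1),\OO(-1,0),\OO(0,-1),\OO$ with multiplicities $h^q$ of the matching twist of $\VV$. I would show that for a general $\mu_H$-semistable $\VV$ the vanishings $H^0(\VV(-1,-1))=H^2(\VV(-1,-1))=0$, $H^0(\VV(-1,0))=H^2(\VV(-1,0))=0$, $H^0(\VV(0,-1))=H^2(\VV(0,-1))=0$, and $H^1(\VV)=H^2(\VV)=0$ all hold. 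Several follow from $\mu_H$-semistability, Serre duality, and the slope positioning forced by the choice of $L$ (cf.\ the slope estimates preceding Definition \ref{DLP less than r}); the rest are open conditions, so by Walter's irreducibility of $\cc{P}_{\OO(0,1)}(\bv)$ (see \S\ref{quadric}) it suffices to exhibit one $\OO(0,1)$-prioritary sheaf satisfying them and invoke upper semicontinuity. Granting the vanishings, only the columns in homological degrees $-1$ and $0$ survive, the spectral sequence degenerates, and—since $\VV$ is a sheaf, so that nothing survives in degree $-1$—its remaining $d_1$ differential is automatically injective with cokernel $\VV$; this is precisely the map of \eqref{L Gaeta}, with $\alpha=h^1(\VV(-1,-1))=-\chi(\VV(-1,-1))$ and similarly for $\beta,\gamma,\delta$, matching \eqref{Gaeta equation}. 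As the $\mu_H$-semistable locus is open and nonempty in the irreducible stack $\cc{P}_{\OO(0,1)}(\bv)$, it meets the dense open vanishing locus, so the general $\mu_H$-semistable sheaf admits the resolution.

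\emph{Main obstacle.} The entire weight of part (1) rests on the cohomology vanishing for a general $\VV$. The vanishing of the $H^0$ of the negative twists and, via Serre duality, of the corresponding $H^2$ follow once $L$ places the relevant slopes in the stable range—these are essentially the inequalities $\alpha,\beta,\gamma,\delta\geq 0$. The genuinely delicate conditions are $H^1(\VV)=0$ together with the twist-$H^2$ vanishings that crude slope bounds miss: these are not forced pointwise and must be obtained generically. Carrying this out means producing a single $\OO(0,1)$-prioritary sheaf realizing the full vanishing list; the natural candidate is the cokernel of a general map of the prescribed line-bundle sums, whose cohomology is computed directly from the defining complex, but one must then confirm that this cokernel is genuinely torsion-free (indeed locally free for $r\geq 2$), of the right Chern character, and prioritary. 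This is where the global generation of $\OO(0,1)$, $\OO(1,0)$, $\OO(1,1)$ on $X$ and a transversality estimate for the degeneracy loci of the general map are needed, and it is the step I expect to be the hardest.
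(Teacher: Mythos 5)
The paper does not actually prove Theorem \ref{Gaeta theorem}: it is imported wholesale from \cite[\S 4]{coskun_huizenga_2018}, with Proposition \ref{Gaeta parameterizations} (their Theorem 2.10) supplying the real technical content. So your proposal has to be measured against that source rather than against anything argued here. Your route is viable but genuinely different from, and ultimately reducible to, the cited one. The source gets part (1) with no spectral sequence: once the cokernels of general maps $L(-1,-1)^{\alpha}\to L(-1,0)^{\beta}\oplus L(0,-1)^{\gamma}\oplus L^{\delta}$ are known to form a nonempty \emph{complete} family of $\OO(1,1)$-prioritary sheaves of character $\bv$ (Proposition \ref{Gaeta parameterizations}), the induced map to the irreducible stack $\cc{P}_{\OO(0,1)}(\bv)$ is smooth, hence open, so its image meets the dense open semistable locus and the general semistable sheaf \emph{is} such a cokernel, tautologically admitting \eqref{L Gaeta}. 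Your Beilinson mechanism is correct (the K\"unneth resolution of the diagonal is the right one, and your eight vanishings do collapse the spectral sequence to \eqref{L Gaeta} with the exponents of \eqref{Gaeta equation}), but the single example you need to seed the genericity argument is itself a general cokernel whose cohomology is read off the defining complex; at that point the completeness-plus-irreducibility argument already finishes, and the spectral sequence is a redundant second pass. The step you correctly flag as the main obstacle --- nonemptiness of the locus of injective maps with torsion-free cokernel, prioritariness, and completeness of the resulting family --- is exactly the content of Proposition \ref{Gaeta parameterizations}, which neither your sketch nor this paper proves from scratch. Your unit-cell-versus-hyperbola analysis for part (2), with threshold $\Delta\geq\tfrac14$ attained at half-integer slopes, matches the source.

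One inaccuracy worth correcting: the $H^0$ and $H^2$ vanishings of the negative twists are not ``essentially the inequalities $\alpha,\beta,\gamma,\delta\geq0$,'' nor are they forced by semistability in general. The sign conditions \eqref{Gaeta equation} constrain Euler characteristics, not slopes; for large $\Delta$ the admissible cell sits far from the origin, so that for instance $\mu_H(L(1,1))$ can lie below $\mu_H(\bv)$ and $\Hom(L(1,1),\VV)=0$ is then itself only a generic (weak Brill--Noether) vanishing rather than a pointwise consequence of semistability. This does not break your argument, since all eight vanishings are open conditions and hold simultaneously on any cokernel with the prescribed exponents, but more of your list is ``generic only'' than you suggest.
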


Gaeta-type resolutions allow us to build complete families of $\cc{O}(1,1)$-prioritary sheaves.

\begin{proposition}{\label{Gaeta parameterizations}} Let $\alpha, \beta,\gamma,\delta$ be nonnegative integers satisfying $$r:=\beta+\gamma+\delta-\alpha >0.$$ For a line bundle $L$ consider the open subset$$U \subset \mathbb{H}:=\Hom \left(L(-1,-1)^{\alpha},  L(-1,0)^{\beta} \oplus L(0,-1)^{\gamma} \oplus L^{\delta} \right)$$ parameterizing injective sheaf maps with torsion-free cokernel.  For $\psi_u \in U$, let $\VV_{u}$ be the cokernel:
$$0 \to  L(-1,-1)^{\alpha} \xrightarrow{\psi_u} L(-1,0)^{\beta} \oplus L(0,-1)^{\gamma} \oplus L^{\delta} \to \VV_{u} \to 0.$$

If $r\geq 2$, then $U$ is nonempty, $\codim_{\mathbb{H}} \ (\mathbb{H}\setminus U) \geq 2$, and the family $\VV_{u}/U$ is a complete family of $\OO(1,1)$-prioritary sheaves.

\end{proposition}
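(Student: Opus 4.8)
The plan is to establish the four claims in turn: nonemptiness of $U$, the codimension bound, completeness, and the $\OO(1,1)$-prioritary property. I would begin by fixing a line bundle $L$ and the integers $\alpha, \beta, \gamma, \delta$ with $r=\beta+\gamma+\delta-\alpha>0$, and observe that by Theorem \ref{Gaeta theorem} one may choose these so that a general $\mu_H$-semistable $\VV$ of the resulting character admits an $L$-Gaeta-type resolution; in particular $U$ is nonempty because such a resolution \emph{is} an injective map $\psi_u$ with torsion-free cokernel. Twisting by $L^{\dual}$ reduces everything to the case $L=\OO$, which I would do at the outset to lighten notation, since all the relevant vanishing and stack-theoretic statements are twist-invariant.

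For the completeness and prioritary claims I would work with the cokernel sheaves $\VV_u$ and compute $\Ext^2(\VV_u, \VV_u(-1,-1))$ directly from the resolution. Applying $\Hom(-,\VV_u(-1,-1))$ to the short exact sequence defining $\VV_u$, the obstruction $\Ext^2(\VV_u, \VV_u(-1,-1))$ is controlled by $\Ext^2$ and $\Ext^1$ groups between the line-bundle terms $L(-1,-1)^\alpha$ and $L(-1,0)^\beta\oplus L(0,-1)^\gamma\oplus L^\delta$, twisted by $(-1,-1)$. Each such $\Ext^i$ between line bundles on $\XX$ is a sum of cohomology groups $H^i(\OO(a,b))$, and one checks by the Künneth formula and the explicit cohomology of $\OO_{\PP^1}$ that all the relevant $H^2$ terms vanish for the degrees that arise here. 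This gives $\Ext^2(\VV_u,\VV_u(-1,-1))=0$, i.e. every $\VV_u$ is $\OO(1,1)$-prioritary. Completeness then follows: the family $\VV_u/U$ dominates (indeed surjects onto) an open substack of $\cc{P}_{\OO(1,1)}(\bv)$, and since by Walter's Theorem (via $\cc{P}_{\OO(1,1)}(\bv)\subset\cc{P}_{\OO(0,1)}(\bv)$) this stack is irreducible, a general member of it admits such a resolution by Theorem \ref{Gaeta theorem}; the classifying map from $U$ to the stack is smooth because deformations of $\psi_u$ surject onto deformations of $\VV_u$, which is again a cohomological computation from the resolution.

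The codimension bound $\codim_{\mathbb{H}}(\mathbb{H}\setminus U)\geq 2$ is the step I expect to be the main obstacle, since it requires controlling two distinct degeneracy loci: the maps $\psi$ that fail to be injective (drop of rank), and the injective maps whose cokernel has torsion. For the first, a map $\psi\colon L(-1,-1)^\alpha \to \cc{G}$ with $\cc{G}=L(-1,0)^\beta\oplus L(0,-1)^\gamma\oplus L^\delta$ fails to be fiberwise injective on a locus whose codimension in $\mathbb{H}$ I would estimate by a standard determinantal/incidence-variety dimension count, using that the target has rank $r+\alpha>\alpha$ so generically the map is injective with large excess; the bound $\geq 2$ comes out provided $r\geq 2$, which is exactly the hypothesis. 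For the torsion locus, I would argue that a general injective $\psi$ has torsion-free (indeed locally free) cokernel — this is consistent with Walter's statement that the general member of $\cc{P}_{\OO(0,1)}(\bv)$ is locally free when $r\geq2$ — and again bound the codimension of the sublocus where torsion appears by a degeneration count, noting that torsion in the cokernel forces the map to drop rank along a curve, a condition of codimension at least $2$. Assembling these two estimates yields the claim; the delicate point is ensuring the counts are sharp enough to give $\geq 2$ rather than merely $\geq 1$, and here the assumption $r\geq 2$ is what makes the argument go through.
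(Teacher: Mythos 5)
Your overall strategy is the same as the paper's: the paper cites \cite{coskun_huizenga_2018} (Theorem 2.10 there) for nonemptiness, completeness and the prioritary property, and proves only the codimension bound, via exactly the incidence correspondence $\Sigma=\{(p,\psi)\ |\ \psi|_p \text{ not injective}\}\subset X\times\mathbb{H}$ and the global generation of the relevant $\sheafhom$ bundle that your "standard determinantal count" implicitly requires. Your cohomological verifications of $\Ext^2(\VV_u,\VV_u(-1,-1))=0$ and of completeness directly from the resolution are correct in outline and reproduce what the cited reference does.

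Two points need repair. First, deducing nonemptiness of $U$ from Theorem \ref{Gaeta theorem} only works when $(\alpha,\beta,\gamma,\delta)$ are the exponents attached to a $\mu_H$-semistable character with $\Delta\geq \frac{1}{4}$; the proposition allows arbitrary nonnegative exponents with $r\geq 2$, so nonemptiness must come out of the dimension count itself (it does, since that count exhibits $\mathbb{H}\setminus U$ as a proper closed subset). Second, and more seriously, your codimension count does not close as stated. Global generation gives that for each $p\in X$ the fiber $\Sigma_p$ has codimension $r+1$ in $\mathbb{H}$, hence $\dim\Sigma=\dim\mathbb{H}+1-r$, and the locus $D$ of maps failing fiberwise injectivity \emph{somewhere} has codimension only $r-1$ in general --- that is $1$ when $r=2$, not the $\geq 2$ you claim for this first degeneracy locus. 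The saving observation is that $\mathbb{H}\setminus U$ is strictly smaller than $D$: an injective map of vector bundles on a smooth surface has cokernel of projective dimension $\leq 1$, hence depth $\geq 1$, at every point, so the cokernel can acquire torsion only along a divisor and never from isolated degeneracies. Therefore $\psi\in U$ precisely when its degeneracy locus is finite, and $\mathbb{H}\setminus U$ is exactly the locus over which the fiber of $\Sigma\to\mathbb{H}$ has dimension $\geq 1$; semicontinuity of fiber dimension then gives $\dim(\mathbb{H}\setminus U)\leq \dim\Sigma-1=\dim\mathbb{H}-r$, i.e. codimension $\geq r\geq 2$. Your second count ("torsion forces a rank drop along a curve") is the right idea, but without the depth argument showing that point degeneracies are harmless, the two loci you bound do not cover the complement of $U$ with the required codimension.
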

\begin{proof} Only the statement about the codimension requires a proof as the other statements are  proved in  \cite[Theorem 2.10]{coskun_huizenga_2018}.

The statement about the codimension follows from the standard analysis of the incidence correspondence $$\Sigma:=\{(p, \psi) \ | \ \psi|_p \ \text{is not injective}\} \subset X \times \Hom \left(L(-1,-1)^{\alpha},  L(-1,0)^{\beta} \oplus L(0,-1)^{\gamma} \oplus L^{\delta} \right)$$using the fact that $$\sheafhom \left(L(-1,-1)^{\alpha},  L(-1,0)^{\beta} \oplus L(0,-1)^{\gamma} \oplus L^{\delta} \right)$$is a globally generated vector bundle. See \cite[pp. 238-239]{LP97} and the proof of \cite[Theorem 4.7]{DL85} for details.
\end{proof}
We finish this subsection by introducing the "dual version" of a Gaeta-type resolution. Specifically, this is a resolution of the form \begin{equation}\label{dual Gaeta}0 \to \VV  \to L(1,0)^{\alpha} \oplus L(0,1)^{\beta} \oplus L^{\gamma} \to L(1,1)^{\delta} \to 0. \end{equation} 
We have the following analogue of Proposition \ref{Gaeta parameterizations}.

\begin{proposition}\label{dual Gaeta family}Let $\alpha, \beta,\gamma,\delta$ be nonnegative integers satisfying $$r:=\alpha+\beta+\gamma-\delta >0.$$ For a line bundle $L$ consider the open subset$$U \subset \mathbb{H}:=\Hom \left(L(1,0)^{\alpha} \oplus L(0,1)^{\beta} \oplus L^{\gamma} ,  L(1,1)^{\delta} \right)$$ parameterizing surjective sheaf maps. For $\psi_u \in U$, let $\VV_{u}$ be the kernel:
$$0 \to  \VV_u \to  L(1,0)^{\alpha} \oplus L(0,1)^{\beta} \oplus L^{\gamma} \xrightarrow{\psi_u} L(1,1)^{\delta}\to 0.$$

 If $r\geq 2$, then $U$ is nonempty, $\codim_{\mathbb{H}} \ (\mathbb{H}\setminus U) \geq 2$, and the family $\VV_{u}/U$ is a complete family of $\OO(1,1)$-prioritary vector bundles.
\end{proposition}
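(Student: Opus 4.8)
The plan is to deduce the statement from its companion, Proposition~\ref{Gaeta parameterizations}, by dualizing. Suppose $\psi_u$ is surjective; then $\VV_u=\ker\psi_u$ is the kernel of a morphism of locally free sheaves, hence reflexive, hence locally free on the smooth surface $X$, and surjectivity is exactly what fixes its Chern character to $\mathrm{ch}(B)-\mathrm{ch}(C)$, where $B=L(1,0)^{\alpha}\oplus L(0,1)^{\beta}\oplus L^{\gamma}$ and $C=L(1,1)^{\delta}$. Applying $\sheafhom(-,\OO_X)$ to the short exact sequence $0\to\VV_u\to B\to C\to 0$ keeps it exact and produces
\[
0\to L^{\dual}(-1,-1)^{\delta}\to L^{\dual}(-1,0)^{\alpha}\oplus L^{\dual}(0,-1)^{\beta}\oplus (L^{\dual})^{\gamma}\to \VV_u^{\dual}\to 0,
\]
which is an $L^{\dual}$-Gaeta-type resolution of $\VV_u^{\dual}$ in the sense of Definition~\ref{Gaeta}, with exponents $(\delta,\alpha,\beta,\gamma)$ and the same rank $r$. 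Taking transposes gives a linear isomorphism $\mathbb{H}=\Hom(B,C)\xrightarrow{\sim}\Hom(C^{\dual},B^{\dual})=:\mathbb{H}'$, under which $U$ (surjective maps) is carried onto the locus of injective maps with \emph{locally free} cokernel inside the parameter space of Proposition~\ref{Gaeta parameterizations} for $\VV^{\dual}$.

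With this dictionary in place, three of the four assertions transfer through the anti-equivalence $\VV\mapsto\VV^{\dual}$. Nonemptiness of $U$ follows because, as recalled in \S\ref{quadric}, the general member of a Gaeta family of rank $r\geq 2$ is locally free, so the locally-free-cokernel locus corresponding to $U$ is a nonempty open set. For the prioritary property I would use Serre duality on $X$ with $K_X=\OO(-2,-2)$:
\[
\Ext^2(\VV_u,\VV_u(-1,-1))\cong \Hom(\VV_u,\VV_u(-1,-1))^{\dual}\cong H^0\!\big(\sheafhom(\VV_u,\VV_u)(-1,-1)\big)^{\dual},
\]
and since $\sheafhom(\VV_u,\VV_u)\cong\sheafhom(\VV_u^{\dual},\VV_u^{\dual})$ the identical computation for $\VV_u^{\dual}$ yields the same group; hence $\VV_u$ is $\OO(1,1)$-prioritary precisely when $\VV_u^{\dual}$ is, and the latter is guaranteed by Proposition~\ref{Gaeta parameterizations}. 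Completeness transfers likewise: dualizing induces isomorphisms $\Ext^1(\VV_u,\VV_u)\cong\Ext^1(\VV_u^{\dual},\VV_u^{\dual})$ compatible with the Kodaira--Spencer maps, so $\VV_u/U$ is complete iff the dual Gaeta family is.

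The main obstacle is the codimension estimate, which I would treat directly rather than through duality. Following the proof of Proposition~\ref{Gaeta parameterizations}, consider the incidence correspondence
\[
\Sigma=\{(p,\psi)\ |\ \psi|_p\ \text{is not surjective}\}\subset X\times\mathbb{H}.
\]
Because $\sheafhom(B,C)$ is a direct sum of copies of the globally generated line bundles $\OO(0,1)$, $\OO(1,0)$, $\OO(1,1)$, the evaluation $\mathbb{H}\to\Hom(B|_p,C|_p)$ is surjective for each $p$, so the fibre of $\Sigma$ over $p$ has the same codimension $r+1$ as the rank-deficient ($\mathrm{rk}\leq\delta-1$) locus of $\delta\times(\alpha+\beta+\gamma)$ matrices; projecting to $\mathbb{H}$ and estimating the fibre dimensions of $\mathrm{pr}_{\mathbb{H}}$ bounds $\codim_{\mathbb{H}}(\mathbb{H}\setminus U)$ from below. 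For $r\geq 3$ the naive count already gives codimension at least two. The delicate point, and precisely where the surjective case diverges from the injective one of Proposition~\ref{Gaeta parameterizations}, is that an isolated rank-drop still leaves the cokernel of $\psi^{\dual}$ torsion-free while it destroys surjectivity of $\psi$; the low-rank borderline therefore demands the refined analysis of $\mathrm{pr}_{\mathbb{H}}$ distinguishing divisorial from isolated degeneracy, which I would model on \cite[pp.~238--239]{LP97} and the proof of \cite[Theorem~4.7]{DL85}.
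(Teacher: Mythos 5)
Your duality reduction is sound: for a surjection $\psi_u$ the kernel is locally free, dualizing the short exact sequence produces an $L^{\dual}$-Gaeta-type resolution of $\VV_u^{\dual}$ with exponents $(\delta,\alpha,\beta,\gamma)$, the transpose identifies $U$ with the locally-free-cokernel locus inside the parameter space of Proposition \ref{Gaeta parameterizations}, and nonemptiness, the prioritary property and completeness all transfer through $\VV\mapsto\VV^{\dual}$ as you describe. (The paper offers no argument for this proposition at all --- it is simply announced as the ``analogue'' of Proposition \ref{Gaeta parameterizations} --- so your reduction is, if anything, more explicit than the source.)

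The genuine gap is the codimension estimate at $r=2$, and it cannot be closed the way you propose. Your incidence count gives $\dim\Sigma=\dim\mathbb{H}-r+1$ and hence only $\codim_{\mathbb{H}}(\mathbb{H}\setminus U)\geq r-1$, because here $\mathbb{H}\setminus U$ is the \emph{full image} $\mathrm{pr}_{\mathbb{H}}(\Sigma)$: a single point of non-surjectivity already expels $\psi$ from $U$. The ``refined analysis distinguishing divisorial from isolated degeneracy'' of \cite[pp.~238--239]{LP97} and \cite[Theorem 4.7]{DL85} buys the extra unit of codimension precisely because, in the injective/torsion-free setting, isolated degeneracy is harmless and one only bounds the locus where the degeneracy scheme is positive-dimensional; that mechanism is unavailable in the surjective setting, as you yourself observe. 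In fact the claim fails for $r=2$ in general: take $L=\OO$ and $\alpha=\beta=\gamma=\delta=1$, so that $\mathbb{H}=H^0(\OO(0,1))\oplus H^0(\OO(1,0))\oplus H^0(\OO(1,1))$ is $8$-dimensional and $\psi=(s_1,s_2,s_3)$ is non-surjective exactly when the three sections share a zero. For generic nonzero $s_1,s_2$ the curves $V(s_1)$ and $V(s_2)$ meet in a single point $p$, and $s_3(p)=0$ is one linear condition on $H^0(\OO(1,1))\cong\bb{C}^4$; the resulting locus has dimension $2+2+3=7$, so $\mathbb{H}\setminus U$ contains a divisor and the codimension is $1$. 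Your argument therefore establishes the proposition only for $r\geq 3$; for $r=2$ one must either restrict the admissible exponents or weaken the codimension claim, and any downstream use of $\OO^*(T)=\bb{C}^*$ and $\Pic(T)=0$ for a rank-$2$ dual Gaeta family requires separate justification.
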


\section{Study of the Shatz stratification}\label{Shatz section}

In this section we use the techniques from \cite[\S 1 \& 3]{DL85}, \cite[Chapter 15]{LP97} and \cite[\S 2.A]{HL10}  to detect strata of unstable sheaves of codimension one in complete families. 

\subsection{Generalities on the Shatz stratifiction}\label{spectral sequences} Given a complete family $\VV_t/T$ of torsion-free sheaves of character ${\bf v}$, we denote by $S_H(P_1,P_2,...,P_l) \subset T$ the Shatz stratum parameterizing sheaves $\VV_t$ with $H$-Harder-Narasimhan filtration having quotients with $H$-Hilbert polynomials $P_1,P_2,...,P_l$. If one further assumes that $T$ is smooth and for each $t\in T$ we have $\Ext^2(\VV_t, \VV_t)=0$, then the Shatz stratum $S_H(P_1, P_2,...,P_l)$ is a smooth locally closed subvariety of $T$ with the normal space at point $t \in S_H(P_1,P_2,..,P_l)$ given by $\Ext^1_+ (\VV_t, \VV_t)$. We refer the reader to   \cite[\S 1]{DL85} for the definition of $\Ext^1_+, \Ext^1_-$ and the general deformation theory of filtered sheaves. We instead review here the computational aspects.

%Copied from Jack
For $t\in S_H(P_1,P_2,...,P_l)$ equip $\VV_t$ with its $H$-Harder-Narasimhan filtration with quotients $\gr_{1,t},...,\gr_{l,t}$.    Then there is a spectral sequence with $E_1$-term given by $$E_1^{p,q} = \begin{cases} \bigoplus_i \Ext^{p+q}(\gr_{i,t},\gr_{i-p,t}) &\textrm{if $p<0$}\\ 0 & \textrm{if $p\geq 0$}\end{cases}$$ which abuts on $\Ext_+^{p+q}(\VV_t,\VV_t)$ in degree $p+q$.  Similarly, there is a spectral sequence with $E_1$-term given by $$E_1^{p,q} = \begin{cases} 0 &\textrm{if $p<0$}\\ \bigoplus_i \Ext^{p+q}(\gr_{i,t},\gr_{i-p,t}) & \textrm{if $p\geq 0$}\end{cases}$$ which abuts on $\Ext_-^{p+q}(\VV_t,\VV_t)$ in degree $p+q$. 

For our purposes, it would be convenient to work with a slightly refined notion of a Shatz stratum. Note that since $\Pic(X)$ is a \emph{discrete} algebraic group scheme, for points $t$ \emph{within a connected component} of a Shatz stratum $S_H(P_1,P_2,...,P_l)$ the $H$-Harder-Narasimhan quotients $\gr_{1,t},\gr_{2,t},...,\gr_{l,t}$ of $\VV_t$ not only have the same $H$-Hilbert polynomials $P_1,P_2,...,P_l$, but also the same \emph{numerical invariants} ${\bf v}_1, {\bf v}_2,...,{\bf v}_l$.  Thus, $S_H(P_1,P_2,...,P_l)$ breaks up into a disjoint union of strata $S_H({\bf v}_1, \bv_2,...,\bv_l)$, where each $S_H({\bf v}_1, \bv_2,...,\bv_l)$ parameterizes sheaves $\VV_t$ with $H$-Harder-Narasimhan filtration having quotients with numerical invariants ${\bf v}_1, \bv_2, ..., \bv_l$. Later, when we use the notion of Shatz stratum we will have $S_H({\bf v}_1, \bv_2,...,\bv_l)$ in mind instead of $S_H(P_1,P_2,...,P_l)$. The  discussion above applies to $S_H({\bf v}_1, \bv_2, ..., \bv_l)$ equally well, and we conclude that when $\VV_t/T$ is a smooth complete family of torsion-free sheaves\ satisfying $\Ext^2(\VV_t, \VV_t)=0$ for each $t\in T$ the stratum $S=S_H({\bf v}_1, \bv_2,...,\bv_l)$ is a smooth locally closed subvariety of $T$ with  the normal space at point $t$ described as $$N_{S/T}|_t \cong\Ext^1_+(\VV_t,\VV_t).$$
%Finally, there is a long exact sequence of the form $$...\to \Ext^q_-(\VV_t,\VV_t) %\to \Ext^q(\VV_t,\VV_t) \to \Ext^q_+(\VV_t,\VV_t) \to \Ext^{q+1}_-(\VV_t, %\VV_t) \to ...$$

%Since $\Hom(\gr_{i,s},\gr_{j,s}) = 0$ for $i<j$ by semistability, we see %that $\Ext_+^0(\cV_s,\cV_s)=0$.  Likewise, Lemma \ref{lem-HNclose} gives %$$\mu_{\max,H_m}(\cV_s)-\mu_{\min,H_m}(\cV_s) \leq 1 < -K_X\cdot H_m,$$ %so $$\ext^2(\gr_i(\cV_s),\gr_j(\cV_s))=\hom(\gr_j(\cV_s),\gr_i(\cV_s)\te %K_X) = 0$$ for any $i,j,$ and therefore both $\Ext^2_+(\cV_s,\cV_s)=0$ and %$\Ext^2_-(\cV_s,\cV_s)=0$.  So, the only nonzero terms in the spectral sequence %for $\Ext_+^{p+q}(\cV_s,\cV_s)$ have $p+q = 1$, and we conclude $$\ext_+^1(\cV_s,\cV_s) %= \sum_{i<j} \ext^1(\gr_i(\cV_s),\gr_j(\cV_s)) = -\sum_{i<j} \chi({\bf gr}_i,{\bf %gr}_j).$$ By using the scheme of relative flags we can then show that $S_{H_m}(P_1,\ldots,P_\ell)\subset %S$ is smooth at $s$ and the normal space has dimension $\ext^1_+(\cV_s,\cV_s)$; %see \cite[\S 15.4]{LePotier} for details.  Since $\chi({\bf gr}_i,{\bf gr}_j)\leq %0$ for all $i<j$, the result follows.

%Copied from Jack
\subsection{$\bf{\Delta_i=\frac{1}{2}}$-strata}Before we proceed with the estimates, let us introduce one more definition.  
Careful reading of \cite[Lemma 18.3.1]{LP97}, \cite[Proposition 2.4]{Dr88} and \cite[Lemma 4.8]{DL85} suggests that in the  $\PP^2$ case the codimension one Shatz strata occur in complete families of $\OO_{\PP^2}(1)$-prioritary sheaves  only  for characters ${\bf v}$ on the $\DLP_{\PP^2}$-curve and correspond to sheaves whose first or last Harder-Narasimhan quotient is semiexceptional. The next definition is created ad hoc to capture \emph{new} codimension one Shatz strata which did not exist in the $\PP^2$ case, but which  appear  in the $\XX$ case due to the presence of semistable Chern characters of discriminant $\frac{1}{2}$.
\begin{definition}{\label{ad hoc}}Let ${\bf v}=(r,\nu,\Delta)$ be an integral Chern character on $X=\XX$. Let $\VV_t/T$ be a complete family of torsion-free sheaves parameterized by a smooth variety $T$ with ${\bf v}(\VV_t)={\bf v}$.  We call Shatz stratum $S\subset T$ a $\mathit{\Delta_i=\frac{1}{2}}$-\emph{stratum} if $S$ parameterizes sheaves $\VV_t$ with the $H$-Harder-Narasimhan filtration of length $l=2$, $$S=S_H({\bf v}_1,\bv_2),$$ such that the numerical invariants $${\bf v}_1=(r_1, \nu_1, \Delta_1), {\bf v}_2=(r_2, \nu_2, \Delta_2)$$ of the $H$-Harder-Narasimhan quotients of $\VV_t$ satisfy  the following properties:
\begin{enumerate}

\item $\Delta_1, \Delta_2 \geq \frac{1}{2}$ with at least one $\Delta_i=\frac{1}{2}, \ i=1,2$,

\item $\nu_2-\nu_1=\frac{k}{r_1r_2}E-\frac{k}{r_1 r_2}F$ for some integer $k$ with $ 0<|k|\leq r_1r_2$,

\item $\chi({\bf v}_1,{\bf v}_2)=-1$.
\end{enumerate} 
\end{definition}

\subsection{Codimension of Shatz strata}
In this subsection, we present a study of Shatz stratification through a numerical analysis involving Riemann-Roch computations.

For the rest of this subsection we adopt the following convention. Consider a family $\VV_t/T$ of sheaves parameterized by a variety $T$. Suppose $\VV_t$ belongs to a Shatz stratum $S_{H_m}(\bv_1, \bv_2,...,\bv_l)\subset T$ with $H_m$-Harder-Narasimhan quotients $\gr_{1,t}, \gr_{2,t}, ... ,\gr_{l,t}$ having numerical invariants $\bv_1, \bv_2,...,\bv_l$. To improve readability we drop the subscript $t$ in $\gr_{i,t}$ if any confusion is unlikely. We further write $${\bf v}_i={\bf v}(\gr_i)=(r_i, \nu_i,\Delta_i).$$

We start with a couple of preparatory lemmas.
\begin{lemma}\label{preparatory}Let ${\bf v}=(r, \nu, \Delta)\in K(X)$ be a character with $r \geq 2$.  Let $\epsilon \in \bb{Q}$ be a sufficiently small number (depending on $r$), $0 < |\epsilon| \ll 1$, and set $m=1+\epsilon$. 

Consider a complete family $\VV_t/T$ of $\OO(1,1)$-prioritary sheaves with ${\bf v}(\VV_t)={\bf v}$ parameterized by a smooth variety $T$. If $\VV_t$ belongs to the Shatz stratum $S=S_{H_m}(\bv_1, \bv_2,...,\bv_l)$ and the  inequalities \begin{equation}\label{ineq}\mu_{\max, H}(\VV_t) - \mu_{\min, H}(\VV_t) \leq 2 \end{equation} are satisfied for $H=\OO(1,1), \OO(1,0)$ and $\OO(0,1)$, then $$|(\nu-\nu_i)\cdot H_m| \leq -\frac{1}{2}K_X \cdot H_m $$ and  $$\codim_{T,t}(S)=-\sum_{i<j} \chi(\bv_i,\bv_j).$$
\end{lemma}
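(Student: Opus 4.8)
The plan is to establish the two conclusions in turn, proving the slope bound first and then feeding it into a deformation-theoretic and Riemann--Roch computation of the codimension. For the slope bound I would work with the total slopes $\nu_i = \nu(\gr_i)$ and exploit the seesaw identity $\nu = \sum_j \tfrac{r_j}{r}\nu_j$, which gives $\nu - \nu_i = \sum_{j\neq i}\tfrac{r_j}{r}(\nu_j - \nu_i)$ and hence, by the triangle inequality,
\[
|(\nu-\nu_i)\cdot H_m| \;\le\; \Big(1 - \tfrac{r_i}{r}\Big)\,\max_{j\neq i}\,|(\nu_j - \nu_i)\cdot H_m| \;\le\; \Big(1 - \tfrac1r\Big)\,W,
\]
where $W := (\nu_1 - \nu_l)\cdot H_m = \mu_{H_m}(\gr_1) - \mu_{H_m}(\gr_l)$ is the extreme slope gap; the last inequality holds because the quotients are ordered by decreasing $\mu_{H_m}$, so every pairwise gap is at most $W$. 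It then remains to bound $W$. Writing $\nu_1 - \nu_l = pE + qF$, and observing that $\gr_1$ is a subsheaf while $\gr_l$ is a quotient of $\VV_t$, hypothesis \eqref{ineq} applied to each $H \in \{\OO(1,1), \OO(1,0), \OO(0,1)\}$ yields
\[
(\nu_1 - \nu_l)\cdot H = \mu_H(\gr_1) - \mu_H(\gr_l) \le \mu_{\max,H}(\VV_t) - \mu_{\min,H}(\VV_t) \le 2 .
\]
These read $p+q \le 2$, $q \le 2$ and $p \le 2$, and a short computation with $m = 1+\epsilon$ gives $W = pm + q \le 2 + 2|\epsilon|$. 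Since $-\tfrac12 K_X \cdot H_m = 2 + \epsilon$, the rank-averaging factor $(1-\tfrac1r)<1$ absorbs the extra $|\epsilon|$: the inequality $(1-\tfrac1r)(2+2|\epsilon|) \le 2+\epsilon$ holds once $\epsilon$ is small relative to $1/r$, which is the regime $0<|\epsilon|\ll 1$ (depending on $r$) allowed in the statement. This gives the first conclusion.

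For the codimension, recall from \S\ref{spectral sequences} that once $T$ is smooth and $\Ext^2(\VV_t,\VV_t)=0$ the stratum $S$ is smooth at $t$ with $\codim_{T,t}(S) = \ext^1_+(\VV_t,\VV_t)$, so the strategy is to compute $\ext^1_+$. I would first check $\Ext^2(\VV_t,\VV_t)=0$ by establishing $\Ext^2(\gr_i,\gr_j)=0$ for all $i,j$ and propagating through the long exact sequences of the Harder--Narasimhan filtration. By Serre duality $\Ext^2(\gr_i,\gr_j) = \Hom(\gr_j, \gr_i(K_X))^{\dual}$, and since each $\gr$ is $\mu_{H_m}$-semistable with $\mu_{H_m}(\gr_i(K_X)) = \mu_{H_m}(\gr_i) + K_X\cdot H_m = \mu_{H_m}(\gr_i) - (4+2\epsilon)$, the gap bound $\mu_{H_m}(\gr_i) - \mu_{H_m}(\gr_j) \le W < 4+2\epsilon$ forces $\mu_{H_m}(\gr_j) > \mu_{H_m}(\gr_i(K_X))$ in every case, so the Hom vanishes. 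The same semistability comparison gives $\Hom(\gr_i,\gr_j)=0$ whenever $i<j$, i.e. from a higher-slope to a lower-slope quotient.

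Finally I would read off $\ext^1_+$ from the spectral sequence with $E_1^{p,q} = \bigoplus_i \Ext^{p+q}(\gr_i,\gr_{i-p})$ for $p<0$ abutting to $\Ext^{p+q}_+(\VV_t,\VV_t)$. Comparing Euler characteristics of the $E_1$-page and the abutment and reindexing by $j=i-p$ gives $\sum_k (-1)^k \ext^k_+ = \sum_{i<j}\chi(\bv_i,\bv_j)$. By the vanishing above, every $E_1$-term in total degree $0$ is a $\Hom(\gr_i,\gr_j)$ with $i<j$ and every term in total degree $2$ is an $\Ext^2(\gr_i,\gr_j)$ with $i<j$, so $\Ext^0_+ = \Ext^2_+ = 0$ (and $\Ext^{\ge 3}_+=0$ since we are on a surface). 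Therefore $\ext^1_+ = -\sum_{i<j}\chi(\bv_i,\bv_j)$, which is the asserted codimension formula.

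The main obstacle is the slope bound: hypotheses \eqref{ineq} are phrased with respect to $\OO(1,1), \OO(1,0), \OO(0,1)$, whereas both the filtration and the target inequality are stated for $H_m$, so the argument must transfer between these polarizations and track the constant down to exactly $-\tfrac12 K_X\cdot H_m = 2+\epsilon$ rather than a weaker multiple such as $2+2\epsilon$. Getting this $\epsilon$-bookkeeping right uniformly for both signs of $\epsilon$ is the delicate point, and it is precisely what the seesaw averaging factor $(1-\tfrac1r)$, together with the smallness of $\epsilon$ relative to $r$, is there to handle.
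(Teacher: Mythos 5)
Your proposal is correct and follows essentially the same route as the paper: the slope bound comes from averaging the pairwise Harder--Narasimhan slope gaps (the paper phrases this as the strict inequality $|(\nu-\nu_i)\cdot H_1|<2$ plus a perturbation to $H_m$, which is exactly what your $(1-\tfrac1r)$ factor and $\epsilon$-bookkeeping make explicit), and the codimension formula comes from the $\Ext_+$ spectral sequence together with the $\Hom$-vanishing from semistability and the $\Ext^2$-vanishing from Serre duality. Your extra remark that $\Ext^2(\gr_i,\gr_j)=0$ for all $i,j$ also yields $\Ext^2(\VV_t,\VV_t)=0$, which the paper leaves implicit, is a welcome clarification of why the normal-space description of the stratum applies.
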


\begin{proof}
By the above inequalities \eqref{ineq}, for any subsheaf $\cc{W}\subset \VV_t$ and any quotient $\VV_t \surj \cc{E}$ of  $\VV_t$ the difference of the total slopes $$\nu(\cc{W})-\nu(\cc{E})$$  lies in a bounded region of the $(\varepsilon, \varphi)$-plane of total slopes. Furthermore,  inequality \eqref{ineq} for ${H=\OO(1,1)}$ implies $$|(\nu - \nu_i)\cdot H_1|<2=-\frac{1}{2}K_X \cdot H_1.$$ It follows that since $m$ is close enough to $1$,  we have   \begin{equation}{\label{slope1}}|(\nu-\nu_i)\cdot H_m| \leq -\frac{1}{2}K_X \cdot H_m. \end{equation} 
%and numerical invariants ${\bf gr_1}=(r_1, \nu_1, \Delta_i), ..., {\bf gr_l}=(r_l, %\nu_l, \Delta_l) $. 

Let  $\gr_{1},\gr_2,...,\gr_{l}$ be the quotients in the $H_m$-Harder-Narasimhan filtration of $\VV_t $.  
Since $$\codim_{T,t}(S)=\dim N_{S/T}|_t = \ext^1_+(\VV_t,\VV_t),$$we will use the spectral sequences for $\Ext^{\bullet}_+(\VV_t, \VV_t)$ and $\Ext^{\bullet}_{-}(\VV_t, \VV_t)$ from \S \ref{spectral sequences} to compute $\ext^1_+(\VV_t,\VV_t)$. 

Since $\Hom(\gr_{i}, \gr_{j})=0$ for $i<j$ by semistability, we see that $\Ext^0_+(\VV_t,\VV_t)=0.$ Likewise, by our bound \eqref{slope1} and semistability we have $$\Ext^2(\gr_{i}, \gr_{j})\cong \Hom(\gr_{j}, \gr_{i} \otimes K_X)^{\dual}=0 \quad \text{for any} \ \ i,j,$$so both $\Ext^2_+(\VV_t, \VV_t)=0$ and $\Ext^2_-(\VV_t,\VV_t)=0$. Therefore, the only nonzero terms in the spectral sequence for $\Ext^{p+q}_+(\VV_t, \VV_t)$ have $p+q=1$. We conclude $$  \ext^1_+(\VV_t, \VV_t)=\sum_{i<j}\ext^1(\gr_{i}, \gr_{j})=-\sum_{i<j} \chi(\gr_{i},\gr_{j})$$ with $\chi(\gr_{i}, \gr_{j})\leq 0$ for $i<j$. \end{proof}

\begin{lemma}{\label{slope distance}} Let $\VV_t/T$ be a complete family of $\OO(1,1)$-prioritary sheaves parameterized by a smooth variety $T$. Let $H$ be one of line bundles $\OO(1,1), \OO(1,0)$ or $\OO(0,1)$. Then the set of points $t\in T$ such that $$\mu_{\max,H}(\VV_t) - \mu_{\min,H}(\VV_t)>2$$
is a closed subset of codimension at least $2$ in $T$. 
\end{lemma}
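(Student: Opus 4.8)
The plan is to reduce the statement to an elementary codimension count on a rational curve, using the prioritary hypothesis together with Walter's Lemma~\ref{Walter}. First I would record that for each of the three classes $H \in \{\OO(1,1),\OO(1,0),\OO(0,1)\}$ the family $\VV_t/T$ is in fact $\OO(H)$-prioritary. For $H=\OO(1,1)$ this is the hypothesis; the inclusion $\cc{P}_{\OO(1,1)}(\bv)\subset \cc{P}_{\OO(0,1)}(\bv)$ recalled in \S\ref{quadric} gives the case $H=\OO(0,1)$, and applying the symmetry of $X=\XX$ that exchanges the two rulings (hence $\OO(0,1)\leftrightarrow\OO(1,0)$ and fixes $\OO(1,1)$) gives $H=\OO(1,0)$. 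I would also note that a general member $C\in|H|$ is a smooth rational curve in all three cases: the classes $\OO(1,0)$ and $\OO(0,1)$ are ruling classes with fibers $\cong\PP^1$, while for $\OO(1,1)$ adjunction gives $2g-2=H^2+H\cdot K_X=2-4=-2$, so $g=0$.

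The computation itself takes place on $C\cong\PP^1$. A torsion-free sheaf on $C$ splits as $\bigoplus_i\OO_C(a_i)$, and the locally closed substack of $Coh_C(r,c_1\cdot H)$ of a fixed splitting type $\vec a$ has codimension $\ext^1_C(\bigoplus_i\OO_C(a_i),\bigoplus_i\OO_C(a_i))=\sum_{i<j}\max(0,|a_i-a_j|-1)$. If $a_{\max}-a_{\min}>2$, then the extreme pair alone contributes at least $2$, so the closed locus $W\subset Coh_C(r,c_1\cdot H)$ of sheaves with $\mu_{\max}-\mu_{\min}>2$ has codimension at least $2$.

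It remains to transfer this bound back to $T$. By Walter's Lemma~\ref{Walter} the restriction morphism $TF_X\to Coh_C$ is smooth near each point $[\VV_t]$, since $\VV_t$ is $\OO(H)$-prioritary; and completeness of $\VV_t/T$ makes the classifying morphism $T\to TF_X$ smooth. Hence the composite $\Phi\colon T\to Coh_C$ is smooth, so $\codim_T\Phi^{-1}(W)=\codim_{Coh_C}W\geq 2$. To conclude I would check that the locus in question is contained in $\Phi^{-1}(W)$, which rests on the inequality $\mu_{\max,H}(\VV_t)-\mu_{\min,H}(\VV_t)\leq\mu_{\max}(\VV_t|_C)-\mu_{\min}(\VV_t|_C)$ for general $C$: restricting the maximal $\mu_H$-destabilizing subsheaf $\VV_1\subset\VV_t$ gives $\VV_1|_C\subset\VV_t|_C$ with $\mu(\VV_1|_C)=c_1(\VV_1)\cdot H/r(\VV_1)=\mu_{\max,H}(\VV_t)$, so $\mu_{\max}(\VV_t|_C)\geq\mu_{\max,H}(\VV_t)$, and dually for the minimal destabilizing quotient. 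Closedness of the bad locus follows from upper semicontinuity of the Harder--Narasimhan polygon in flat families.

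The main obstacle will be the genericity bookkeeping in this last step. The curve $C$ must be chosen general enough that restriction remains exact and preserves torsion-freeness simultaneously along the relevant Harder--Narasimhan strata --- of which only finitely many meet $T$, since $T$ is finite-dimensional and the codimension grows with the width --- and one must run the same restriction formalism for the degenerate slopes attached to the non-ample classes $\OO(1,0),\OO(0,1)$, where $\mu_H$-stability is stability along a ruling. Once the inequality is established on a dense open subset of each stratum, the codimension bound follows stratum by stratum, and the local finiteness of the stratification upgrades this to the global codimension estimate.
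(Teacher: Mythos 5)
Your argument is correct and is essentially the paper's proof written out in full: the paper simply points to \cite[Corollary 15.4.4]{LP97}, instructing the reader to replace the line on $\PP^2$ by a rational curve in $|H|$ and to invoke Lemma \ref{Walter} together with prioritariness (deducing $\OO(1,0)$- and $\OO(0,1)$-prioritariness from $\OO(1,1)$-prioritariness, as you do), which is exactly the restriction-to-$\PP^1$, splitting-type codimension count, and smoothness-transfer scheme you describe. The only cosmetic difference is that the paper cites \cite[Lemma 3.1]{coskun2019existence} for the implication between prioritariness conditions rather than your symmetry argument.
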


\begin{proof}To show the result for $H=\OO(1,1)$, one follows the proof of \cite[Corollary 15.4.4.]{LP97}, replacing a line $d$ on $\mathbb{P}^2$ with a rational curve from the complete linear series $|\OO(1,1)|$ on $X=\XX$ and using Lemma \ref{Walter} together with  the $\OO(1,1)$-prioritariness of sheaves in the family.

For $H=\OO(1,0)$ or $\OO(0,1)$, we recall that $\OO(1,1)$-prioritariness implies both $\OO(1,0)$- and $\OO(0,1)$-prioritariness (see \cite[Lemma 3.1]{coskun2019existence}). The same argument as above applies in this case too.
 \end{proof}
 
The next two propositions describe codimension one Shatz strata in complete families of $\OO(1,1)$-prioritary sheaves.

\begin{proposition}{\label{Shatz}}{\label{no divisorial Shatz strata}}
 Let ${\bf v}=(r, \nu, \Delta)$ be a Chern character satisfying $$\Delta>\DLP^{<r}_{H_m}(\nu),$$where $m=1+\epsilon$ and $\epsilon \in \bb{Q}$ is a sufficiently small number (depending on $r$), $0< |\epsilon| \ll1.$

Consider  a complete family $\VV_t/T$ of $\OO(1,1)$-prioritary sheaves with ${\bf v}(\VV_t)={\bf v}$ parameterized by a smooth variety $T$. Then the $H_m$-Shatz strata of codimension $1$ in this family are given by (nonempty) $\Delta_i=\frac{1}{2}$-strata.
\end{proposition}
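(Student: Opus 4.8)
The plan is to combine the codimension estimate from Lemma~\ref{preparatory} with the existence characterization from Theorem~\ref{existence theorem} to force the numerical invariants of any codimension-one stratum into the shape of a $\Delta_i=\tfrac12$-stratum. First I would restrict attention (up to the codimension-$\geq 2$ locus identified in Lemma~\ref{slope distance}) to the open set of $T$ where the slope inequalities \eqref{ineq} hold for $H=\OO(1,1),\OO(1,0),\OO(0,1)$; on this locus Lemma~\ref{preparatory} gives both the slope bound $|(\nu-\nu_i)\cdot H_m|\leq -\tfrac12 K_X\cdot H_m$ and the formula
$$\codim_{T,t}(S) = -\sum_{i<j}\chi(\bv_i,\bv_j),$$
with each summand $\chi(\bv_i,\bv_j)\leq 0$ for $i<j$.

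Next I would impose the codimension-one condition $-\sum_{i<j}\chi(\bv_i,\bv_j)=1$. Since each term is a nonpositive integer, exactly one pair $(i,j)$ with $i<j$ contributes, giving $\chi(\bv_i,\bv_j)=-1$, while all other pairs satisfy $\chi(\bv_p,\bv_q)=0$. I expect the main work to be showing this forces the Harder--Narasimhan filtration to have length $l=2$: I would argue that $\chi(\bv_i,\bv_j)=0$ for a pair $i<j$ of adjacent unstable quotients is incompatible with the strict slope inequalities of the HN filtration combined with the lower bound $\Delta_i\geq \DLP^{<r}_{H_m}(\nu_i)$ coming from the existence theorem, except in degenerate situations that collapse the filtration. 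More precisely, writing out $\chi(\bv_i,\bv_j)$ via Riemann--Roch as $r_ir_j\bigl(P(\nu_j-\nu_i)-\Delta_i-\Delta_j\bigr)$ and using the slope bound to sign $P$, a vanishing Euler characteristic pins down $\nu_j-\nu_i$ and $\Delta_i+\Delta_j$ tightly.

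Then, for the single pair with $\chi(\bv_i,\bv_j)=-1$, I would analyze the Riemann--Roch expression
$$\chi(\bv_i,\bv_j)=r_ir_j\bigl(P(\nu_j-\nu_i)-\Delta_i-\Delta_j\bigr)=-1$$
together with the discriminant lower bounds $\Delta_i,\Delta_j\geq \tfrac12$ (each HN quotient is semistable of the appropriate slope, so the existence theorem and $\Delta\geq\tfrac12$ threshold apply). The hypothesis $\Delta>\DLP^{<r}_{H_m}(\nu)$ is what rules out the "old" $\PP^2$-type strata, forcing the new behavior: I would show that an integral solution to this single Euler-characteristic equation with both $\Delta_i\geq\tfrac12$ requires at least one $\Delta_i=\tfrac12$ and constrains $\nu_j-\nu_i$ to be of the form $\tfrac{k}{r_ir_j}(E-F)$ with $0<|k|\leq r_ir_j$, matching conditions (1)--(3) of Definition~\ref{ad hoc}. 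Here the key arithmetic input is that $P(\varepsilon,\varphi)=(\varepsilon+1)(\varphi+1)$ takes value near $1$ only in a controlled way, and the slope bound confines $\nu_j-\nu_i$ to a compact strip.

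The hard part will be the length-reduction step: a priori the HN filtration could have $l>2$, and I must rule out the possibility that codimension one arises from several quotients with pairwise $\chi=0$ plus one pair with $\chi=-1$ in a way that does not reduce to a genuine two-step filtration. I would handle this by showing that $\chi(\bv_p,\bv_q)=0$ between consecutive semistable HN pieces of the \emph{same} discriminant $\geq\tfrac12$ and strictly decreasing slope is impossible under the slope bound \eqref{slope1} (the value of $P(\nu_q-\nu_p)$ on the relevant strip is too large relative to $\Delta_p+\Delta_q\geq 1$), so every adjacent pair in fact contributes strictly negatively; this collapses the filtration to length $2$ and produces exactly the claimed $\Delta_i=\tfrac12$-stratum. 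Nonemptiness of the resulting strata then follows from Theorem~\ref{existence theorem} applied to the characters $\bv_1,\bv_2$, since both lie on or above the relevant $\DLP$-surface by construction.
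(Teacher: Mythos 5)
Your overall skeleton (reduce to the locus where the slope inequalities hold, apply Lemma \ref{preparatory} to get $\codim = -\sum_{i<j}\chi(\bv_i,\bv_j)$, observe that codimension one forces exactly one pair with $\chi=-1$ and all others zero, then do Riemann--Roch arithmetic) matches the paper's. But there is a genuine gap at the step where you claim every Harder--Narasimhan quotient satisfies $\Delta_i \geq \tfrac12$ "since each HN quotient is semistable\ldots so the existence theorem and $\Delta\geq\tfrac12$ threshold apply." This is false: by Lemma \ref{exceptional stuff}(3), $H_m$-semistable sheaves of discriminant $\Delta=\tfrac12-\tfrac{1}{2r^2}<\tfrac12$ exist --- they are exactly the semiexceptional ones --- so an HN quotient can perfectly well have $\Delta_i<\tfrac12$. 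Your length-reduction argument ("$P(\nu_q-\nu_p)$ is too large relative to $\Delta_p+\Delta_q\geq 1$, so adjacent pairs contribute strictly negatively") and your treatment of the $\chi(\bv_i,\bv_j)=0$ pairs both rest on this false premise, so the case of semiexceptional quotients is simply not addressed. In fact the paper shows the opposite implication: $\chi(\gr_i,\gr_j)=0$ together with the HN ordering \emph{forces} one of $\gr_i,\gr_j$ to have discriminant $<\tfrac12$, hence to be semiexceptional, say $\cong E^k$.

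The missing mechanism --- and the place where the hypothesis $\Delta>\DLP^{<r}_{H_m}(\nu)$ actually does its work --- is a two-sided computation of $\chi(E,\bv)$: from the HN filtration one gets $\chi(\bv_1,\bv)=\chi(\gr_1,\gr_1)+\sum_j\chi(\gr_1,\gr_j)\geq \chi(\gr_1,\gr_1)-1\geq 0$, while semistability of a general sheaf $\VV$ of character $\bv$ gives $\hom(E,\VV)=\ext^2(E,\VV)=0$, hence $\chi(E,\bv)\leq 0$; together these force $\chi(E,\bv)=0$, i.e.\ $\Delta=\DLP_{H_m,E}(\nu)$, contradicting $\Delta>\DLP^{<r}_{H_m}(\nu)$. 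The same comparison (applied to an intermediate semiexceptional quotient $\gr_i$, comparing $\chi(\bv_i,\bv)$ or $\chi(\bv,\bv_i)$ computed from the filtration versus from semistability) is what eliminates $l\geq 3$. Without this step your proof does not use the standing hypothesis on $\Delta$ in any essential way at the points where it is needed. Separately, the final arithmetic you defer ("an integral solution requires at least one $\Delta_i=\tfrac12$ and constrains $\nu_2-\nu_1$") is correct in outcome but is a substantial multi-case computation in the paper (including an integrality argument via $\chi(\gr_1),\chi(\gr_2)\in\ZZ$ to kill the case $\nu_1=\nu_2$ with $\Delta_1=\tfrac12$), not a formality; and note the proposition only asserts that codimension-one strata, when they occur, are $\Delta_i=\tfrac12$-strata --- it does not claim nonemptiness, so your closing appeal to Theorem \ref{existence theorem} for nonemptiness is not needed and does not in any case establish nonemptiness of a Shatz stratum inside the given family.
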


\begin{proof} Since the proof is rather long, we split it into several steps for the reader's convenience.

\emph{Step 1.} We start by making some preliminary reductions. By Lemma \ref{slope distance} we can pass to an open subset of points $t \in T$ where  \begin{equation}\label{esti} \mu_{\max,H}(\VV_t) - \mu_{\min,H}(\VV_t)\leq2,  \end{equation} for $H=\OO(1,1), \OO(1,0)$ and $\OO(0,1)$. 

Suppose $S:=S_{H_m}(\bv_1,\bv_2,...,\bv_l) \subset T$ is a nonempty Shatz stratum of codimension $1$ in $T$. By Lemma \ref{preparatory} for $t \in S$ we have \begin{equation}\label{esti10} \begin{aligned} |(\nu-\nu_i)\cdot H_m| \leq -\frac{1}{2}K_X \cdot H_m \ \text{and} \\ \codim_{T,t}(S)=-\sum_{i<j} \chi(\bv_i,\bv_j)=1.\end{aligned} \end{equation} This implies that if $\gr_{1},\gr_{2},...,\gr_{l}$ are the $H_m$-Harder-Narasimhan quotients of $\VV_t$, then   we  have $$\chi(\gr_{1}, \gr_{l})=0 \ \text{or} \ \chi(\gr_{1}, \gr_{l})=-1.$$

Below, we analyze various possibilities for what the numerical invariants $\bv_1, \bv_2, ..., \bv_l$ of the $H_m$-Harder-Narasimhan quotients $\gr_{1}, \gr_{2}, ..., \gr_{l}$ of $\VV_t$  could be and show that they must necessarily satisfy conditions (1)-(3) of Definition \ref{ad hoc}, i.e. $S$ must be a $\Delta_i=\frac{1}{2}$-stratum.

{\it Step 2.} Suppose that $\chi(\gr_1,\gr_l)=0$ holds. By the Riemann-Roch Theorem, this gives $$\Delta_1+\Delta_l=P(\nu_l-\nu_1).$$ Since $m$ is sufficiently close to $1$, we have that $P(\nu_l-\nu_1)\leq 1$ with equality holding only when $\nu_1=\nu_l$. 

If $\nu_1 \neq \nu_l$, we get that $$\Delta_1+\Delta_l=P(\nu_l-\nu_1)<1,$$ and therefore $\Delta_1<\frac{1}{2}$ or  $\Delta_l< \frac{1}{2}$. 

If $\nu_1=\nu_l$, then since $\gr_i$ are the quotients in the Harder-Narasimhan filtration, we must have that $\Delta_l >\Delta_1$. Since $P(\nu_l-\nu_1)=1$ in the case, we get $\Delta_1< \frac{1}{2}$. 

In both of these cases, Lemma \ref{exceptional stuff} (3)  implies that $\gr_1$ or $\gr_l$ is semiexceptional and we can follow the argument of \cite[Lemma 4.8]{DL85}. Here we deal with the case where $\gr_1$ is semiexceptional. The argument for when $\gr_l$ is semiexceptional is similar. Write $\gr_{1} \cong E^{ k}$ with exceptional bundle $E$.  Then we get \begin{equation*}{\label{est}}
 \chi({\bf v}_1, {\bf v})=\chi(E^{ k}, \VV_t)=\chi(\gr_1, \VV_t)=\chi(\gr_1,\gr_1)+\sum_{2<j} \chi(\gr_1,\gr_j)\geq \chi(\gr_1, \gr_1)-1 \geq 0. \end{equation*}
On the other hand, for a semistable $\VV$ of character ${\bf v}$ we have $\hom(E, \VV)=\ext^2(E,\VV)=0$, showing that $\chi({\bf v}_1,\bv)=k\cdot\chi(E, \VV)\leq 0$. Thus we have $\chi(E,\VV)=0$. Inequality \eqref{esti10} gives    $$(\nu(E)-\nu)\cdot H_m \leq -\frac{1}{2}K_X \cdot H_m,$$ and we get $\Delta=\DLP_{H_m, E}(\nu)$. This contradicts our assumption that character ${\bf v}$ lies above the $\DLP^{<r}$-surface.

{\it Step 3.} Now we know $\chi(\gr_1,\gr_l)=-1$. If one of $\gr_1,\gr_l$ is semiexceptional, we arrive to a contradiction in the same way as above.  At this point, we have shown that for points $t\in S$ the $H_m$-Harder-Narasimhan quotients of $\VV_t$ satisfy  $\chi(\gr_1,\gr_l)=-1$ and $\Delta_1, \Delta_l \geq \frac{1}{2}$.

Next, we show that $l=2$. Assume on the contrary that we have  $l\geq 3$.  Since $\chi(\gr_i,\gr_j)=0$ for $i<j, (i,j) \neq (1,l)$, we in particular have  $$\chi(\gr_1,\gr_i)=0 \implies P(\nu_i-\nu_1)-\Delta_1-\Delta_i=0 $$ for $1<i<l$. Since $P(\nu_i-\nu_1)\leq1$ and we cannot have $\nu_1=\nu_i, \ \Delta_1=\Delta_i$, we get that $\Delta_i<\frac{1}{2}$ and $\gr_i$ is semiexceptional, $\gr_i \cong E^{k}$. If $\mu_{H_m}(\gr_i) \geq \mu_{H_m}(\VV_t)$, then for a semistable $\VV$ of character ${\bf v}$ we have $$\chi({\bf v}_i, \bv)=\chi(\gr_i,\VV)=k \cdot \chi(E, \VV)\leq0,$$ because $\Delta \geq \DLP_{H_m}^{<r}(\nu)$. On the other hand,$$\chi({\bf v}_i,\bv)=\chi(\gr_i,\VV_t)=\sum_{j=1}^{i-1} \chi(\gr_i,\gr_j)+\chi(\gr_i,\gr_i)+\sum_{j=i+1}^{l}\chi(\gr_i,\gr_j)$$
$$\geq\sum_{j=1}^{i-1} \chi(\gr_j,\gr_i)+\chi(\gr_i,\gr_i)+\sum_{j=i+1}^{l}\chi(\gr_i,\gr_j)=\chi(\gr_i,\gr_i)>0,$$
which is a contradiction. Here we used  that for $j<i$ we have  $\mu_{H_m}(\nu_j)\geq \mu_{H_m}(\nu_i)$ and since $m$ was chosen to be close enough to $1$, this implies $\mu_{H_1}(\nu_j)\geq \mu_{H_1}(\nu_i)$. Therefore $$P(\nu_j-\nu_i)\geq P(\nu_i-\nu_j) \implies \chi(\gr_i,\gr_j) \geq \chi(\gr_j, \gr_i) .$$If $\mu_{H_m}(\gr_i) \leq \mu_{H_m}(\VV_t)$, we get a contradiction by instead comparing $\chi(\VV, \gr_i)$ for a semistable $\VV$ to $\chi(\VV_t, \gr_i)$.
 
\emph{Step 4.} So from now on we use that $l=2$ and $\Delta_1, \Delta_2 \geq \frac{1}{2}$. Expanding $\chi(\gr_1,\gr_2)=-1$ by the Riemann-Roch Theorem we get
\begin{equation}{\label{estimate}}\Delta_1+\Delta_2=P(\nu_2-\nu_1)+\frac{1}{r_1r_2}.\end{equation}
 Below we eliminate various cases for what the values of $\Delta_i$ and $\nu_2-\nu_1$ could be.

{\it Case 1.} Suppose that $\Delta_1, \Delta_2>\frac{1}{2}.$ Here we follow the method in \cite[Proposition 2.4]{Dr88}. In this case, the expected dimension of the moduli spaces $M(\bv_i)$ for $i=1,2$ is $$\exp \dim \ M(\bv_i)=r_i^2(2\Delta_i-1)+1\geq 2, $$which allows us to write \begin{equation}{\label{inequality}}\Delta_i \geq \frac{1}{2}+\frac{1}{2r_i^2} \quad \text{for} \ \ i=1,2.\end{equation} Using this estimate in equation \eqref{estimate}, we get
$$P(\nu_2-\nu_1)+\frac{1}{r_1r_2} \geq 1+\frac{1}{2} \left( \frac{1}{r_1^2}+\frac{1}{r_2^2} \right),$$ which simplifies to 
$$1-P(\nu_2-\nu_1)+\frac{1}{2} \left(\frac{1}{r_1}-\frac{1}{r_2} \right)^2 \leq 0.$$ Since $P(\nu_2-\nu_1)\leq 1$, we in fact have
$$1-P(\nu_2-\nu_1)=\frac{1}{r_1}-\frac{1}{r_2}=0,$$and$$ r_1=r_2,\nu_1=\nu_2.$$Comparing equations \eqref{estimate} and \eqref{inequality}, we get $\Delta_1=\Delta_2$. This is a contradiction because $\gr_1$ and $\gr_2$ are quotients in the Harder-Narasimhan filtration.

Next we rule out the cases where one of $\Delta_1, \Delta_2$ is equal to $\frac{1}{2}$, but condition (2) of Definition \ref{ad hoc} does not hold. We consider the case when $\Delta_1=\frac{1}{2}, \Delta_2 \geq \frac{1}{2}$; the case  $\Delta_1\geq \frac{1}{2},\Delta_2=\frac{1}{2}$ is dealt with similarly.

\emph{Case 2.} Assume $\Delta_1=\frac{1}{2}, \Delta_2\geq \frac{1}{2}, \nu_1=\nu_2$. Since $\Delta_1=\frac{1}{2}$, the rank $r_1$ must be even: $r_1=2\bar{r}_1$. Equation \eqref{estimate} gives \begin{equation}{\label{Delta}}\Delta_2=\frac{1}{2}+\frac{1}{2 \bar{r}_1r_2}=\frac{\bar{r}_1 r_2+1}{2\bar{r}_1 r_2}.\end{equation}

If one of $\bar{r}_1$ or $r_2$ is even, then the right hand side of equation \eqref{Delta} is an irreducible fraction. This means that after cancelling all the common factors in the numerator and the denominator of \begin{equation}{\label{discriminant}}\Delta_2=\frac{c_2(\gr_2)}{r_2}-\frac{r_2-1}{r_2^2} \cdot \frac{c_1(\gr_2)^2}{2}=\frac{c_2(\gr_2)r_2 - (r_2-1) \cdot (c_1(\gr_2)^2/2)}{r_2^2}\end{equation}the resulting denominator should be equal to $2\bar{r_1}r_2=r_1r_2$. This implies that $r_2=kr_1$ and character $(r_2, \nu_1=\nu_2, \frac{1}{2})$ is equal to $k \cdot{\bf v}_1$, so it is an integral Chern character.  We can, therefore, write $$\Delta_2=\frac{\bar{r}_1 r_2+1}{2\bar{r}_1 r_2}=\frac{1}{2}+\frac{N}{r_2}$$for some integer $N$. This gives equalities $$\bar{r}_1r_2+1=\bar{r}_1r_2+2N\bar{r}_1 \iff 2N\bar{r}_1=1,$$ which is impossible.

Now assume both $\bar{r}_1$ and $r_2$ are odd. Then we can cancel $2$ in the numerator and the denominator of \eqref{Delta} and the result will be an irreducible fraction with denominator $\bar{r}_1r_2$. This time, we see from equation \eqref{discriminant} that $\bar{r}_1$ divides $r_2$, so we can write $r_2= \bar{r}_1d$ for an odd integer $d$. Since both $\chi(\gr_1)$ and $\chi(\gr_2)$ are integers, we have$$\chi(\gr_1)=r_1 \cdot \left(P(\nu_1)-\frac{1}{2}\right)=2\bar{r}_1 P(\nu_1)-\bar{r}_1 =2\bar{r}_1 P(\nu_2)-\bar{r}_1  \in \ZZ  \quad (\text{recall}  \ \nu_1=\nu_2),$$
 
$$\chi(\gr_2)=r_2\left(P(\nu_2)-\frac{1}{2}-\frac{1}{r_1r_2}\right)=\bar{r}_1dP(\nu_2)-\frac{\bar{r}_1d}{2}-\frac{1}{2\bar{r}_1}=\frac{\bar{r}_1d(2\bar{r}_1P(\nu_2)-\bar{r}_1 ) - 1}{2\bar{r}_1}=$$
$$=\frac{\bar{r}_1d \cdot \chi(\gr_1)-1}{2\bar{r}_1} \in \ZZ.$$ The last expression implies $\bar{r}_1=1$ and
we get that ${\bf v}_1=(2, \nu_1, \frac{1}{2})$. Now, since $r_2$ is odd, we can write $\nu_1=\nu_2=\frac{*}{\text{odd}}E+\frac{*}{\text{odd}}F$ with both coefficients being irreducible fractions. But explicit analysis of the $\DLP^{<r}_{H_m}$-surface shows that $H_m$-semistable Chern characters with $(r, \nu, \Delta)=(2, \nu, \frac{1}{2})$ can only have $\nu=\frac{2k+1}{2}E+lF$ or $\nu=kE+\frac{2l+1}{2}F$ with $k,l \in \ZZ$. Thus we cannot have $\nu_1=\nu_2$ under these assumptions.

\emph{Case 3.}  We turn our attention to those cases where $\Delta_1=\frac{1}{2}, \Delta_2\geq \frac{1}{2}$ and $\nu_1\neq\nu_2$. We can explicitly write $$\nu_2-\nu_1=\left(\frac{a_2}{r_2}-\frac{a_1}{r_1}\right)E+\left(\frac{b_2}{r_2}-\frac{b_1}{r_1}\right)F=\frac{a}{r_1r_2}E+\frac{b}{r_1r_2}F, \quad a,a_i,b,b_i \in \ZZ,$$  so that \begin{equation*}{\label{Hilber}}P(\nu_2-\nu_1)=\left(1+\frac{a}{r_1r_2}\right)\left(1+\frac{b}{r_1r_2}\right)=1+\frac{a+b}{r_1r_2}+\frac{ab}{(r_1r_2)^2}.\end{equation*}
Further cases depend on the values of $a$ and $b$. Note that by the definition of the Harder-Narasimhan filtration, we cannot have $a>0$ and $b>0$ simultaneously. Moreover, since  $\Delta_2 \geq \frac{1}{2}$, we get an inequality \begin{equation}{\label{estimate33}}1+\frac{a+b+1}{r_1r_2}+\frac{ab}{(r_1r_2)^2}=P(\nu_2-\nu_1)+\frac{1}{r_1r_2}=\Delta_1+\Delta_2=\frac{1}{2}+\Delta_2 \geq 1,\end{equation}which we use to eliminate certain potential values of $a$ and $b$. 

{\it Case 3.1.} Assume $ a<0, b<0.$ Rewrite \eqref{estimate33} as \begin{equation}{\label{estimatetwo}}\frac{ab}{(r_1r_2)^2} \geq \frac{-a-b-1}{r_1r_2} \iff ab \geq (-a-b-1)r_1r_2\iff a(b+r_1r_2)\geq(-b-1)r_1r_2.\end{equation}

If $b+r_1r_2<0$, we get $a \leq\frac{-b-1}{b+r_1r_2}r_1r_2=\frac{-b-1}{-b-r_1r_2}(-r_1r_2)\leq -r_1r_2$, so that $a+r_1r_2 \leq 0$. But then$$\mu_{H_1}(\gr_1)-\mu_{H_1}(\gr_2)=\frac{-a-b}{r_1r_2}>2,$$
contradicting  \eqref{esti}.

If $b+r_1r_2=0$, then, since $r_1r_2 \geq 2$, the last inequality in \eqref{estimatetwo} reads as $$0=a(l+r_1r_2) \geq(-b-1)r_1r_l >0,$$which is a contradiction.

If $1 \leq b+r_1r_2 \leq r_1r_2-1$, the last inequality in \eqref{estimatetwo} reads as $$a\geq\frac{-b-1}{b+r_1r_2}r_1r_2 \geq 0,$$now contradicting the assumption that $a<0$. 

{\it Case 3.2.} Assume $a<0, b>0, |a|>b$ or  $a>0, b<0, |b|>a$.  We estimate the left hand side in \eqref{estimate33}:
$$1+\frac{a+b+1}{r_1r_2}+\frac{ab}{(r_1r_2)^2} \leq 1+\frac{ab}{(r_1r_2)^2}<1,$$ and the necessary condition \eqref{estimate33} does not hold.

{\it Case 3.3.} Assume $a=0, b<0$ or $a<0, b=0$. Since the calculations are symmetric in $a$ and $b$, we only treat $a=0, b<0$.  The left hand side of \eqref{estimate33} now reads as
$$1+\frac{b+1}{r_1r_2}.$$ 

If $b\leq -2$, then again condition \eqref{estimate33} does not hold.

If $b=-1$, we get from \eqref{estimate33} $$\Delta_1+\Delta_2=P(\nu_2-\nu_1)+\frac{1}{r_1r_2}=1+\frac{b+1}{r_1r_2}=1,$$which gives $\Delta_1=\Delta_2=\frac{1}{2}$. This implies that now both $r_1$ and $r_2$ are even, $r_1=2\bar{r}_1, r_2=2 \bar{r}_2$, and  $$\nu_2-\nu_1=\left(\frac{a_2}{r_2}-\frac{a_1}{r_1}\right)E+\left(\frac{b_2}{r_2}-\frac{b_1}{r_1}\right)F=\left(\frac{2\bar{r}_1a_2-2\bar{r}_2a_1}{r_1r_2}\right)E+\left(\frac{2\bar{r}_1b_2-2\bar{r}_2b_1}{r_1r_2}\right)F,$$ which is never equal to $$\frac{a}{r_1r_2}E+\frac{b}{r_1r_2}F=\frac{-1}{r_1r_2}F.$$

\emph{Case 3.4.} At this point observe that we have ruled out all cases for possible values of $a$ and $b$, except for the case $a<0, b>0, |a|=b$ or $a>0, b<0, a=|b|$. 

If $|a|=|b|>r_1 r_2$, then inequality \eqref{estimate33} does not hold. 

Finally, if  $|a|=|b|\leq r_1r_2$, then this case  corresponds precisely to conditions (1)-(3) of Definition \ref{ad hoc}. This shows that the nonempty Shatz stratum $S$ of codimension $1$ in $T$ must be a $\Delta_i=\frac{1}{2}$-stratum.
\end{proof}

The proof of Proposition \ref{Shatz} can be readily modified to give an analogous statement for   characters ${\bf v}=(r,\nu, \Delta)$, which  lie on a single branch of the Dr\'ezet-Le Potier surface. 
\begin{proposition}{\label{Shatz2}}Let ${\bf v}=(r, \nu, \Delta)$ be a Chern satisfying $$\Delta=\DLP^{<r}_{H_m}(\nu)$$with a single exceptional bundle $E$ associated to ${\bf v}$, where $m=1+\epsilon$ and $\epsilon \in \bb{Q}$ is a sufficiently small number (depending on $r$), $0< |\epsilon| \ll1.$ 

Consider  a complete family $\VV_t/T$ of $\OO(1,1)$-prioritary sheaves parameterized by a smooth variety $T$ with ${\bf v}(\VV_t)={\bf v}$. Then $H_m$-Shatz strata of codimension $1$ are given by
\begin{itemize}
\item the stratum parameterizing sheaves $\VV_t$ with $H_m$-Harder-Narasimhan filtration \begin{equation*}\begin{aligned} 0 \subset & \ E \subset \cc{V}_t  &(\text{when} \ \mu_{H_m}(E)\geq \mu_{H_m}({\bf v})), &\text{ or}\\ 0 \subset \cc{F}_1 \subset \VV_t, &\quad \cc{V}_t/ \cc{F}_1 \cong E  &(\text{when} \ \mu_{H_m}(E)\leq \mu_{H_m}({\bf v})), &\end{aligned}\end{equation*} 
\item $\Delta_i=\frac{1}{2}$-strata,
\end{itemize}when these strata are nonempty for the family $\cc{V}_t/T$.
\end{proposition}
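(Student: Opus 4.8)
The plan is to mirror the four-step argument of Proposition~\ref{Shatz}, keeping most of it verbatim and only modifying Steps~2 and~3 to account for the fact that now $\Delta = \DLP^{<r}_{H_m}(\nu)$ exactly, with a single associated exceptional bundle $E$. First I would set up identically: by Lemma~\ref{slope distance} pass to the open locus where $\mu_{\max,H}(\VV_t)-\mu_{\min,H}(\VV_t)\leq 2$ for $H=\OO(1,1),\OO(1,0),\OO(0,1)$, and by Lemma~\ref{preparatory} deduce that on a codimension-one Shatz stratum $S=S_{H_m}(\bv_1,\dots,\bv_l)$ the bound $|(\nu-\nu_i)\cdot H_m|\leq -\tfrac{1}{2}K_X\cdot H_m$ holds and $-\sum_{i<j}\chi(\bv_i,\bv_j)=1$, so $\chi(\gr_1,\gr_l)\in\{0,-1\}$.

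The crucial change is in the analysis of the semiexceptional case. In the original proof, when some $\gr_i$ is semiexceptional, say $\gr_1\cong E'^{\,k}$ with $E'$ exceptional, one derives $\chi(E',\VV)=0$ together with the slope bound, hence $\Delta=\DLP_{H_m,E'}(\nu)$, which \emph{contradicted} the hypothesis that $\bv$ lies strictly above the surface. Here that conclusion is no longer a contradiction; instead it \emph{identifies} $E'$ as an exceptional bundle associated to $\bv$. Since by hypothesis $E$ is the unique exceptional bundle associated to $\bv$, I would conclude $E'=E$ (and $k=1$, since the associated exceptional bundle has rank $r(E)<r$ and the numerical invariants of $\gr_1$ are forced). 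A short computation using $\chi(E,\VV)=0$ and the Riemann--Roch identity for $\chi(\bv_i,\bv)$ then shows the other Harder--Narasimhan quotient is $\mu_{H_m}$-semistable of the complementary character, which forces $l=2$ and pins down the filtration to exactly the form $0\subset E\subset \VV_t$ (when $\mu_{H_m}(E)\geq \mu_{H_m}(\bv)$) or $0\subset \cc F_1\subset\VV_t$ with $\VV_t/\cc F_1\cong E$ (when $\mu_{H_m}(E)\leq\mu_{H_m}(\bv)$). This yields the first bullet of the statement.

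For the remaining possibility, that \emph{no} Harder--Narasimhan quotient is semiexceptional, I would observe that the slope bound \eqref{slope1} guarantees $\Delta_i\geq\DLP^{<r}_{H_m}(\nu_i)$, but since each $\gr_i$ is a genuine semistable sheaf with $r(\gr_i)<r$ and is not semiexceptional, Lemma~\ref{exceptional stuff}(3) forces $\Delta_i\geq\tfrac12$. From this point the argument is \emph{verbatim} Steps~3 and~4 of Proposition~\ref{Shatz}: one shows $l=2$ by the same comparison of $\chi(\gr_i,\VV)$ versus $\chi(\gr_i,\VV_t)$ (using that any intermediate $\gr_i$ with $\Delta_i<\tfrac12$ would be semiexceptional, already handled), and then the Case~1--Case~3.4 elimination of values of $a,b$ in $\nu_2-\nu_1=\tfrac{a}{r_1r_2}E+\tfrac{b}{r_1r_2}F$ goes through unchanged, leaving precisely the $\Delta_i=\tfrac12$-strata of Definition~\ref{ad hoc}. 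This gives the second bullet.

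The main obstacle I anticipate is the bookkeeping in the semiexceptional step: one must rule out that a \emph{second}, distinct exceptional bundle arises, and must verify that the identification $E'=E$ together with $\chi(E,\VV)=0$ genuinely forces length $l=2$ rather than merely bounding it, so that the Harder--Narasimhan filtration is exactly $0\subset E\subset\VV_t$ and not a longer filtration with $E$ as one graded piece. This requires carefully tracking which graded pieces can coexist: any additional intermediate quotient would itself have to be semiexceptional (by the $P(\nu_i-\nu_1)\leq 1$ bound) and hence associated to $\bv$, contradicting uniqueness of $E$. Handling the two slope regimes $\mu_{H_m}(E)\geq\mu_{H_m}(\bv)$ and $\mu_{H_m}(E)\leq\mu_{H_m}(\bv)$ symmetrically (via the dual pairing $\chi(\VV,E)$) is routine but must be stated. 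The elimination of $a,b$ cases, being identical to Proposition~\ref{Shatz}, I would simply cite rather than reproduce.
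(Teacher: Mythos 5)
Your proposal is correct and follows essentially the same route as the paper's proof: the paper likewise reduces to the setup of Proposition \ref{Shatz} via Lemmas \ref{slope distance} and \ref{preparatory}, reruns Steps 2--4 verbatim when neither $\gr_1$ nor $\gr_l$ is semiexceptional (producing only the $\Delta_i=\frac{1}{2}$-strata), and in the semiexceptional case uses the chain $0\le \chi(\gr_1,\gr_1)-1\le \chi(\gr_1,\bv)=k\,\chi(F,\bv)\le 0$ to force $k=1$, $F=E$, and then $l=2$. One small imprecision in your ``obstacles'' paragraph: an intermediate semiexceptional quotient $\gr_i$ (with $1<i<l$) is not shown to be \emph{associated} to $\bv$ --- the comparison yields the strict inequality $\chi(\bv_i,\bv)\ge\chi(\gr_i,\gr_i)>0$ against $\chi(\bv_i,\bv)\le 0$ from semistability, so such a quotient is excluded outright (exactly as in Step 3 of Proposition \ref{Shatz}, which you also cite as the mechanism) rather than by appealing to the uniqueness of $E$.
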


\begin{proof} Suppose $S:=S_{H_m}(\bv_1, \bv_2, ...,\bv_l)$ is a Shatz stratum of codimension $1$. Repeating step 1  of the proof of Proposition \ref{Shatz},    we  have for a point  point $t \in S$ \begin{equation}\label{esti11} \begin{aligned} |(\nu-\nu_i)\cdot H_m| \leq -\frac{1}{2}K_X \cdot H_m \ \text{and} \\ \codim_{T,t}(S)=-\sum_{i<j} \chi(\bv_i,\bv_j)=1.\end{aligned} \end{equation} Again, we analyze various possibilities for what the numerical invariants $\bv_1, \bv_2, ..., \bv_l$ of the $H_m$-Harder-Narasimhan quotients $\gr_1,\gr_2,...,\gr_l$ of $\VV_t$  could be. 

First assume that both $\gr_1$ and $\gr_l$ are not semiexceptional. Inspecting step 2 of the proof of Proposition \ref{Shatz} we see that we cannot have $\chi(\gr_1,\gr_l)=0$. We conclude that  $\chi(\gr_1,\gr_l)=-1$,  $\Delta_1,\Delta_l \geq \frac{1}{2}$ and $\chi(\gr_i,\gr_j)=0$ for $i<j, (i,j) \neq (1,l) $ with  $\Delta_1,\Delta_l \geq \frac{1}{2}$. Now note  that in steps 3 and 4  of the proof of Proposition \ref{Shatz}, where we dealt with the same configuration for the numerical invariants, we only used that $\Delta \geq \DLP^{<r}_{H_m}(\nu)$. We conclude that the only type of codimension $1$ Shatz strata which arise in this case are the $\Delta_i=\frac{1}{2}$-strata.

Now, we deal with the case when one of $\gr_1$ or $\gr_l$ is semiexceptional. As before, we show the proof for $\gr_1$ being semiexceptional, with the  case  where $\gr_l$ is semiexceptional being similar. Assume $\gr_1 \cong F^k$, where $F$ is an exceptional bundle.  We get an inequality \begin{equation}{\label{est2}}
k\cdot \chi(F, {\bf v})=\chi(F^k, {\bf v})=\chi(\gr_1, {\bf v})=\chi(\gr_1,\gr_1)+\sum_{j=2}^{l} \chi(\gr_1,\gr_j)\geq \chi(\gr_1,\gr_1)-1 \geq 0. \end{equation} On the other hand, for an $H_m$-semistable $\VV$ of character $\bv$  we have $$\Hom(F, \VV)=\Ext^2(F,\VV)=0$$ by semistability and Serre duality. This gives $\chi(F, {\bf v})\leq 0$. Thus we obtain that all the inequalities in \eqref{est2} must be equalities. We get $\chi(\gr_1,\gr_1)=1$, which is only possible when $k=1$ and $\gr_1 \cong F$. We also have $\chi(F, {\bf v})=0$, which forces $F=E$ since by \eqref{esti11} $$(\nu(F)-\nu)\cdot H_m \leq -\frac{1}{2}K_X \cdot H_m,$$and we are assuming that ${\bf v}$ has a \emph{single} associated exceptional bundle $E$. We also remark that in this case $\gr_l$ cannot be semiexceptional, because then again ${\bf v}$ would have a second associated exceptional bundle different from $E$. Therefore, $\Delta_2\geq \frac{1}{2}$.

It remains to show that $l=2$. Assume that $l>2$. If $\chi(\gr_1,\gr_l)=-1$, then we follow the argument in Step 3 of the proof of Proposition \ref{Shatz} with minor modifications. Specifically, for $(i,j) \neq (1,l), \ i<j$, we have $$\chi(\gr_i, \gr_j)=0.$$ Taking $j=l$ and using Riemann-Roch, we get$$\chi(\gr_i,\gr_l)=P(\nu_l-\nu_i)-\Delta_i-\Delta_l=0.$$ We cannot have $P(\nu_l-\nu_i)=1$ and $\Delta_l=\frac{1}{2}$ because then $\nu_i=\nu_l, \ \Delta_i=\Delta_j$ and this contradicts the fact that $\bv_i, \bv_l$ are quotients in the Harder-Narasimhan filtration. It follows that $P(\nu_l-\nu_i)<1$ or $\Delta_l > \frac{1}{2}$. In both cases we get that $\Delta_i<\frac{1}{2}$ and $\gr_i$ is semiexceptional, $\gr_i \cong (F'')^k$. If $\mu_{H_m}(\gr_i) \geq \mu_{H_m}(\VV_t)$, then for a semistable $\VV$ of character ${\bf v}$ we have $$\chi({\bf v}_i, \bv)=\chi(\gr_i,\VV)=k \cdot \chi(F'', \VV)\leq0$$ because $\Delta \geq \DLP^{<r}_{H_m}(\nu)$. On the other hand,  
$$\chi({\bf v}_i,\bv)=\chi(\gr_i,\VV_t)=\sum_{j=1}^{i-1} \chi(\gr_i,\gr_j)+\chi(\gr_i,\gr_i)+\sum_{j=i+1}^{l}\chi(\gr_i,\gr_j)$$
$$\geq\sum_{j=1}^{i-1} \chi(\gr_j,\gr_i)+\chi(\gr_i,\gr_i)+\sum_{j=i+1}^{l}\chi(\gr_i,\gr_j)=\chi(\gr_i,\gr_i)>0,$$
where the inequality holds for the same reason as in Step 3 of Proposition \ref{Shatz}. This is a contradiction. If $\mu_{H_m}(\gr_i) <\mu_{H_m}(\VV_t)$, we get instead a contradiction by comparing $\chi(\VV,\gr_i)$ for a semistable $\VV$ to $\chi(\VV_t, \gr_i)$.

Finally, we eliminate $l>2$ under the condition $\chi(\gr_1,\gr_l)=0$. If we can find $i\neq 1,l$ with ${\chi(\gr_i,\gr_l)=0}$, we arrive to a contradiction in the same way as in the previous paragraph. If not, then $l=3$ and the only nonzero $\chi(\gr_i,\gr_j)$ with $i<j$ is $\chi(\gr_2,\gr_3)=-1$. But then $$\chi({\bf v}_1,\bv)=\chi(\gr_1, \VV_t)=\chi({\bf v}_1, \bv_1+\bv_2+\bv_3)=\chi({\bf v}_1,\bv_1)=\chi(\gr_1,\gr_1)=1>0,$$contradicting the fact that ${\bf v}$ is on the branch of the $\DLP$-surface given by $\gr_1 \cong E$: for a semistable $\VV$ of character ${\bf v}$ we have $$\chi({\bf v}_1,\bv)=\chi(E, \VV) = 0.$$
\end{proof}

The last two propositions motivate the following definition.
\begin{definition}\label{bad definition} Let $\bv=(r,\nu,\Delta)\in K(X)$ be an $H_m$-semistable Chern character, where $m=1+\epsilon$ for a sufficiently small (depending on $r$) number $\epsilon \in \bb{Q}, \ 0<|\epsilon|\ll 1$. 

We call  ${\bf v}$ a \emph{bad} character  if we can find a decomposition $${\bf v}=\bv_1+\bv_2,$$where $\bv_1, \bv_2$ are $H_m$-semistable Chern characters satisfying
\begin{enumerate}
\item $p_{H_m,\bv_1} > p_{H_m,\bv_2}$, where $p_{H_m,\bv_i}$ is the reduced $H_m$-Hilbert polynomial of $\bv_i$,

\item $\Delta_1, \Delta_2 \geq \frac{1}{2}$ with at least one $\Delta_i=\frac{1}{2}, \ i=1,2$,

\item $\nu_2-\nu_1=\frac{k}{r_1r_2}E-\frac{k}{r_1 r_2}F, \ \text{for some integer} \ k \ \text{with} \ 0<|k|\leq r_1r_2$,
\item  $\chi(\bv_1, \bv_2)=-1.$
\end{enumerate}
Otherwise, we call an $H_m$-semistable Chern character ${\bf v}$ a \emph{good} character.
\end{definition}

\begin{remark}\label{primitive} Note that a bad character ${\bf v}$ is always primitive. Indeed, if $\Delta_1=\frac{1}{2}$, then $$\chi({\bf v}_1, \bv)=\chi(\bv_1, \bv_1+\bv_2)=\chi({\bf v}_1, \bv_2)=-1.$$ If instead $\Delta_2=\frac{1}{2}$, then $\chi({\bf v}, \bv_2)=-1$.  
\end{remark}

The point of this notion is that by Definition \ref{ad hoc} and Propositions \ref{Shatz}, \ref{Shatz2} for good $H_m$-semistable characters $\Delta_i=\frac{1}{2}$-strata do not appear in smooth complete families of $\OO(1,1)$-prioritary sheaves. This way, for good characters the study of the Shatz stratification yields results that are similar  to the $\PP^2$ case. On the other hand, when ${\bf v}$ is a bad Chern character,  we get a potentially nonempty divisorial $\Delta_i=\frac{1}{2}$-stratum $S_{H_m}({\bf v}_1, {\bf v}_2)$  in smooth complete families of $\OO(1,1)$-prioritary sheaves for every decomposition ${\bf v}={\bf v}_1 + {\bf v}_2$ as in Definition \ref{bad definition}.
 
To demonstrate this phenomenon we give an example of a bad Chern character ${\bf v}$ and a smooth complete family of $\OO(1,1)$-prioritary sheaves of character ${\bf v}$ for which a $\Delta_i=\frac{1}{2}$-stratum is nonempty.

\begin{example}\label{bad character} Consider the character ${\bf v}=(r,\nu,\Delta)=(4,-\frac{1}{4}E-\frac{1}{4}F, \frac{9}{16})$. We have $$\Delta({\bf v})=\frac{9}{16}=\DLP^{<4}_{ H_m}\left(-\frac{1}{4}E-\frac{1}{4}F\right)=\DLP_{H_m,\OO}\left(-\frac{1}{4}E-\frac{1}{4}F\right) \ \text{for} \ m=1+\varepsilon, \ 0<\varepsilon \ll 1,$$so that ${\bf v}$ is $H_m$-semistable and the line bundle $\OO$ is associated to ${\bf v}$. One checks that conditions of Definition \ref{bad definition} are met for the decomposition $${\bf v}={\bf v}_1+{\bf v}_2, \  \ {\bf v}_1=(2, -\frac{1}{2}F, \frac{1}{2}), {\bf v}_2=(2, -\frac{1}{2}E, \frac{1}{2}),$$where the semistability of ${\bf v}_1, \bv_2$ follows from $$\DLP_{H_m}^{<2}\left(-\frac{1}{2}E\right)=\DLP_{H_m}^{<2}\left(-\frac{1}{2}F\right)=\frac{1}{2}.$$ This shows that ${\bf v}$ is an example of a bad Chern character. 

The Beilinson-type spectral sequence (see \cite[Proposition 5.1]{drezet:hal-01175951}) allows one to resolve any  $\mu_{H_m}$-semistable sheaf $\VV$ of character ${\bf v}$
%\footnote{Any torsion-free $H_m$ semistable sheaf is in fact a vector bundle, %as there is no $H_m$-semistable character ${\bf v'}=(r, \nu, \Delta')$ with %$\Delta'<\Delta.$} 
as \begin{equation}{\label{main example}}0 \to \OO(-1,-1)^{2} \to \OO(-1,0)^{3} \oplus \OO(0,-1)^{3} \to \VV \to 0.\end{equation}
Note that this is precisely the $L$-Gaeta type resolution \eqref{L Gaeta} with $L=\OO$. Thus we consider the family $\VV_t/T$ of $\OO(1,1)$-prioritary sheaves admitting an $\OO$-Gaeta type resolution $$0 \to \OO(-1,-1)^{2} \xrightarrow{\psi_t} \OO(-1,0)^{3} \oplus \OO(0,-1)^{3} \to \VV_t \to 0,$$ where $$T\subset \mathbb{H}=\Hom \left(\OO(-1,-1)^{2},  \OO(-1,0)^{3} \oplus \OO(0,-1)^{ 3}  \right)$$ is the open subset parameterizing injective sheaf  maps with torsion-free cokernel. By  Proposition \ref{Gaeta parameterizations}, the subset $T$ is not empty, $\codim_{\bb{H}} (\bb{H} \setminus T)\geq 2$ and the family $\cc{V}_t/T$ is complete.  We conclude that any $H_m$-semistable $\VV \in M_{H_m}({\bf v})$ is equal to some $\cc{V}_t$ for $t\in T$. 

%Moreover, any torsion-free $H_m$-semistable sheaf of character ${\bf v}$ %is in fact a vector bundle, as there is no $H_m$-semistable character ${\bf %v'}=(r, \nu, \Delta')$ with $\Delta'<\Delta.$

We demonstrate that $S_{H_m}({\bf v}_1, {\bf v}_2)$ is nonempty in this complete family as follows. Note that we also have $$\DLP_{H_1}^{<2}\left(-\frac{1}{2}E\right)=\DLP_{H_1}^{<2}\left(-\frac{1}{2}F\right)=\frac{1}{2}.$$ We then take $$F_1\in M_{H_1}\left(2, -\frac{1}{2}F, \frac{1}{2}\right) \ \text{and} \ F_2 \in M_{H_1}\left(2, -\frac{1}{2}E, \frac{1}{2}\right),$$ and consider their direct sum $$F_1 \oplus F_2,$$ which  is a $\mu_{H_1}$-semistable sheaf. Since the Beilinson-type spectral sequence is insensitive to small variations in the polarization, this  sheaf  is still resolved by \eqref{main example} and, therefore, appears as $\cc{V}_{\tau}$ for some $\tau\in T$. For the $H_m$-polarization it is, however, no longer semistable: $$\mu_{H_m}(F_1)>\mu_{H_m}(F_1 \oplus F_2).$$ 

Note that $F_1$ and $F_2$ are in fact $\mu_{H_1}$-stable ($\bv_i$ is primitive), and since slope stability is open in the polarization, they remain $\mu_{H_m}$-stable by our choice of $H_m$. It follows that the $H_m$-Harder-Narasimhan filtration of $\cc{V}_{\tau}$ is $$0 \subset F_1 \subset \VV_{\tau},$$ so that  $\VV_{\tau}$ belongs to  $S_{H_m}({\bf v}_1, {\bf v}_2)$ and this divisorial $\Delta_i=\frac{1}{2}$-stratum  is nonempty in $T$.
\end{example} 

\begin{example}\label{other bad characters} Generalizing the previous example, we can generate an infinite sequence of bad $H_m$-semistable Chern characters ${\bf w}_k$ such that analogous complete families of $\OO(1,1)$-prioritary sheaves of character ${\bf w}_k$ arising from $\OO$-Gaeta type resolutions all contain a non-empty $\Delta_i=\frac{1}{2}$-stratum.

Set ${\bf w}_1:={\bf v}={\bf v}_1+{\bf v}_2$, where ${\bf v}, {\bf v}_1, {\bf v}_2$ are the characters from the previous example. Inductively define \begin{equation}\label{decomp}\bw_k:=\bv_1+\bw_{k-1} \ \text{for} \ k\geq 2.\end{equation}

One inductively checks that for $m=1+\varepsilon, \ 0<\varepsilon \ll 1,$ the character $\bw_k$ is $H_m$-semistable and all the conditions of Definition \ref{bad definition} are satisfied for the decomposition of $\bw_k$ as in \eqref{decomp}. Below we list characters $\bw_k=(r_k, \nu_k, \Delta_k)$ for small values of $k$ and plot their total slopes in the $(\varepsilon, \varphi)$-plane along with the top-down projection of various branches of the $\DLP$-surface (compare to Figure \ref{fig:example} (A)):

\begin{multicols}{2}
\begin{table}[H]
\begin{tabular}{c c c }

\\
\\
\\
\\

$k$ &  &  $ (r_k, \nu_k,\Delta_k)$ \\ \hline
$1$ &  & $(4, -1/4E-1/4F, 9/16)$ \\
$2$ & &  $(6,-1/6E -1/3F, 5/9)$ \\
$3$  &  & $(8, -1/8E-3/8F, 35/64)$ \\
$4$ & &$(10, -1/10E-2/5F, 27/50)$ \\
$5$ & &$(12, -1/12E-5/12F, 77/144)$  
\end{tabular}
\end{table}
\columnbreak
\begin{figure}[H]
    {{\includegraphics[width=7.5cm]{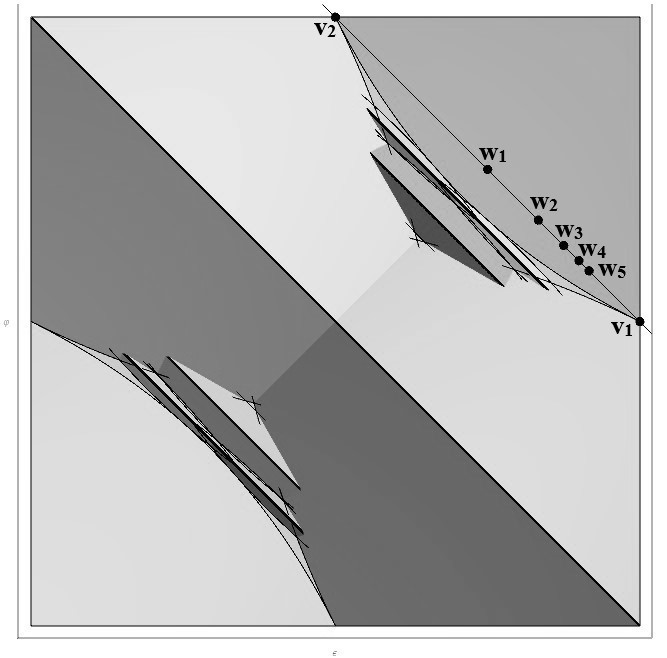} }}   
   \end{figure}
\end{multicols}

As in the previous example, the same Beilinson-type spectral sequence (\cite[Proposition 5.1]{drezet:hal-01175951}) allows one to resolve any  $\mu_{H_m}$-semistable sheaf $\cc{W}$ of character ${\bf w}_k$
%\footnote{Any torsion-free $H_m$ semistable sheaf is in fact a vector bundle, %as there is no $H_m$-semistable character ${\bf v'}=(r, \nu, \Delta')$ with %$\Delta'<\Delta.$} 
as \begin{equation*}0 \to \OO(-1,-1)^{\alpha} \to \OO(-1,0)^{\beta} \oplus \OO(0,-1)^{\gamma} \to \cc{W} \to 0\end{equation*}for some positive integers $\alpha, \beta, \gamma$. Arguing as above, one considers the complete family $\cc{W}_t/T$ of  $\OO(1,1)$-prioritary sheaves of character $\bw_k$ admitting an $\OO$-Gaeta type resolution $$0 \to \OO(-1,-1)^{\alpha} \xrightarrow{\psi_t} \OO(-1,0)^{\beta} \oplus \OO(0,-1)^{\gamma} \to \cc{W}_t \to 0,$$ where $$T\subset \mathbb{H}=\Hom \left(\OO(-1,-1)^{\alpha},  \OO(-1,0)^{\beta} \oplus \OO(0,-1)^{\gamma}  \right)$$ is the open subset parameterizing injective sheaf maps, and one shows that the $\Delta_i=\frac{1}{2}$-stratum $S_{H_m}(\bv_1, \bw_{k-1})$ is nonempty.

Finally, note that $$\chi(\OO, \bv_1)=\chi(\OO,\bv_2)=0 \implies \chi(\OO, \bw_k)=0 $$ and $$\Delta(\bw_k)=\DLP^{<r(\bw_k)}_{H_m}(\nu(\bw_k))=\DLP_{H_m,\OO}(\nu(\bw_k)),$$so that the characters $\bw_k$ all lie on the branch of the $\DLP$-surface given by the line bundle $\OO$ (depicted as the upper-right circular sector in the picture above).  
\end{example}

\begin{example}\label{other bad characters 2} Furthermore, we can repeat the constructions of the previous two examples starting with any pair of of primitive characters of the form $\bv_1=(r, \varphi E +\varepsilon F, \frac{1}{2}), \bv_2=(r, \varepsilon E +\varphi F, \frac{1}{2})$ that lie on the branch of the $\DLP$-surface given by the line bundle $\OO$ $$\mu_{H_m}(\OO) > \mu_{H_m}(\bv_i), \quad \DLP^{<r}_{H_m}(\nu(\bv_i))=\DLP_{H_m, \OO}(\nu(\bv_i))$$ and  satisfy conditions (1), (3) and (4) of Definition \ref{bad definition}. This way, for each choice of $\bv_1,\bv_2$ as above we will get new  infinite sequences $${\bf w}_1:={\bf v}={\bf v}_1+{\bf v}_2, \quad \bw_k:=\bv_1+\bw_{k-1} \ \text{for} \ k\geq 2$$of bad Chern characters with a nonempty $\Delta_i=\frac{1}{2}$-stratum in a similarly constructed family $\cc{W}_t/T$.  

Here is a computer-generated list of such characters $\bv_i$ for small rank $r(\bv_i)$:
\footnotesize
\begin{table}[H]
\begin{tabular}{c| c | c | c}
$r(\bv_i)$ &  $\bv_1$& $\bv_2$ & $\bw_1=\bv_1+\bv_2$\\ \hline
$2$ & $(2, -1/2F, -1/2)$ & $(2, -1/2E, -1/2)$ & $(4, -1/4E-1/4F, 9/16)$ \\
$12$ & $(12, -1/4E-1/3F, 1/2)$& $(12, -1/3E-1/4F, 1/2)$ & $(24, -7/24E-7/24F, 289/576)$ \\
$70$  &    $(70,-2/7E-3/10F,1/2)$ &  $(70,-3/10E-2/7E,1/2)$ & $(140,-41/140E-41/140F,9801/19600)$ \\
$408$&  $(408,-7/24E-5/17F,1/2)$& $(408,-5/17E-7/24F,1/2)$ & $(816,-239/816E-239/816F,332929/665856)$
\end{tabular}
\end{table}

\normalsize
Further still, one can easily replace the line bundle $\OO$ by an arbitrary line bundle $L$ and generate analogous infinite sequences of bad characters lying on the branch of the $\DLP$-surface given by $L$.

\end{example}

\begin{question}\label{question} Note that the characters from the previous three examples lie on a branch of the $\DLP^{<r}_{H_m}$-surface given by a line bundle. It remains an open question whether a character lying \begin{enumerate} \item  above the $\DLP^{<r}_{H_m}$-surface, or 
\item on the branch of the $\DLP^{<r}_{H_m}$-surface controlled by a higher-rank exceptional bundle,\end{enumerate}can be a bad character. The evidence coming from numerical calculations on a computer points to a negative answer to (1). The answer to (2) is most likely positive, though to construct an example one should search for a character ${\bf v}$ of a really high rank: computer calculations  show that the rank should be taken to be $r({\bf v})>4000$ to find such bad characters.\end{question}

%\subsection{Jordan-H\"{o}lder stratification}
%\begin{proposition}{\label{JH strata}} Codimension of JH strata.
%\end{proposition}

\subsection{Irreducible families}{\label{irreducible families}} Let $\bv_1, \bv_2, ...,\bv_l$ be $H$-semistable Chern characters with $$p_{H,\bv_1}> p_{H,\bv_2} > ... > p_{H, \bv_l}.$$ We conclude this section by discussing how to build an {\it irreducible} family of sheaves containing {\it all} torsion-free sheaves whose quotients in the $H$-Harder-Narasimhan filtration have invariants $\bv_1, \bv_2,...,\bv_l$. We later use these results to show the irreducibility of Shatz strata in certain complete families of $\OO(1,1)$-prioritary sheaves. The statements of this subsection are briefly mentioned in \cite{Yosh96} without proof, and the outline of the proof of Proposition \ref{Yoshioka irreducibility} was communicated to us by Yoshioka directly (also see the Appendix to \cite{Yoshioka1995} for some similar constructions). 

Given $H$-semistable Chern characters $\bv_1, \bv_2, ...,\bv_l$ with $$p_{H,\bv_1}> p_{H,\bv_2} > ... > p_{H, \bv_l},$$ consider the family $\mathbb{F}(\bv_1, \bv_2, ...,\bv_l)$ of isomorphism classes of torsion-free sheaves $\cc{V}$ whose $H$-Harder-Narasimhan filtration $$0 \subset \cc{F}_1 \subset \cc{F}_2 \subset ... \subset \cc{F}_l=\VV$$ is of length $l$ and whose quotients satisfy ${\bf v}(\gr_i)={\bf v}_i$. Note that when $l=1$, $\mathbb{F}({\bf v}_1)$ is just the family of isomorphism classes of $H$-semistable sheaves with Chern character ${\bf v}_1$. % With this notation we have the following lemma:

%\begin{lemma} $\mathbb{F}({\bf v_1,...,v_l})$ is a bounded family. \end{lemma}
%\begin{proof} We use induction on $l$. For $l=1$ this is just the statement %that the family of isomorphism classes of $H_m$-semistable sheaves is bounded.

%For the induction step, we assume that $\mathbb{F}({\bf v_1})$ and $\mathbb{F}({\bf %v_2,...,v_l})$ are bounded families. All of the sheaves $\cc{V} \in \mathbb{F}({\bf %v_1,...,v_l})$ fit into a short exact sequence $$0 \to \cc{F}_1 \to \cc{V} %\to \cc{V}/\cc{F}_1 \to 0$$with $\cc{F}_1 \in \mathbb{F}({\bf v_1})$ and %$\cc{V}/\cc{F}_1 \in\mathbb{F}({\bf v_2,...,v_l})$. By \cite[Lemma 1.7.6]{HL10} %there is a uniform bound on the Mumford-Castelnuovo regularity of sheaves %$\cc{F}_1$ and $\cc{V}/\cc{F}_1$ . Thus the same bound works for all sheaves %$\cc{V} \in \mathbb{F}({\bf v_1,v_2,...,v_l})$, and as their Hilbert polynomial %is fixed, \cite[Lemma 1.7.6]{HL10} shows that $\mathbb{F}({\bf v_1,v_2,...,v_l})$ %is a bounded family.
%\end{proof}%

We first recall how to construct irreducible families for $H$-semistable sheaves of Chern character ${\bf v}$. 
\begin{lemma}{\label{case one}} Let ${\bf v}=(r, \nu, \Delta)$ be an $H$-semistable Chern character. Then there exists a family $\VV_s/S$ of sheaves over an irreducible base $S$ with the following property: 
\begin{equation}\tag{P} \cc{V}_s \in  \mathbb{F}({\bf v}_1) \ \text{for every} \ s \in S, \ \text{and for any} \ \cc{V} \in  \mathbb{F}(\bv_1) \ \text{there exists} \ s \in S \ \text{with} \ \cc{V}=\cc{V}_s.\end{equation} 
\end{lemma}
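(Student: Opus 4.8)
The plan is to realise $\mathbb{F}(\bv_1)$ (where $\bv_1=\bv$) as the semistable locus inside a suitable Quot scheme and to transfer irreducibility from Walter's Theorem. Since the family of $H$-semistable sheaves of fixed character $\bv$ is bounded, I would first fix an integer $N \gg 0$ such that every such sheaf $\VV$ is $N$-regular; then $\VV(N)$ is globally generated, $H^i(\VV(N))=0$ for $i>0$, and $P:=h^0(\VV(N))=\chi(\bv(N))$ is independent of $\VV$. Fixing a vector space $V$ with $\dim V=P$, I would form the Quot scheme $\mathrm{Quot}(V\otimes\OO(-N),\bv)$ and let $S$ be the locally closed subscheme parameterising quotients $[q\colon V\otimes\OO(-N)\surj\cc{F}]$ for which $\cc{F}$ is $H$-semistable of character $\bv$ and the induced map $V\to H^0(\cc{F}(N))$ is an isomorphism. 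This is precisely the parameter scheme underlying the GIT construction of $M_H(\bv)$ (see \cite[\S4.3]{HL10}), and the restriction of the universal quotient on $\mathrm{Quot}\times X$ to $S\times X$ yields a family $\VV_s/S$.

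Next I would verify property (P). For $s\in S$ the sheaf $\VV_s$ is $H$-semistable of character $\bv$ by construction, hence $\VV_s\in\mathbb{F}(\bv_1)$. Conversely, given $\VV\in\mathbb{F}(\bv_1)$, the $N$-regularity ensures that $\VV(N)$ is globally generated; choosing an isomorphism $V\cong H^0(\VV(N))$ and twisting the evaluation map down by $N$ produces a surjection $V\otimes\OO(-N)\surj\VV$, that is, a point $s\in S$ with $\VV_s\cong\VV$. Thus the members of $\VV_s/S$ are exactly the isomorphism classes in $\mathbb{F}(\bv_1)$, and this verification is essentially formal.

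The crux is the irreducibility of $S$. The group $GL(V)$ acts on $S$ by precomposition with its action on $V$, and the standard GIT description of the moduli problem identifies the moduli stack of $H$-semistable sheaves of character $\bv$ with the quotient stack $\cc{M}_H(\bv)\cong[S/GL(V)]$; concretely, the fibre of the atlas $S\to\cc{M}_H(\bv)$ over a sheaf $\VV$ is the $GL(V)$-torsor of isomorphisms $V\cong H^0(\VV(N))$ (the scalar $\mathbb{G}_m\subset GL(V)$ acting trivially on $S$ accounts for the scalar automorphisms of sheaves). Now every $H$-semistable sheaf is $\OO(0,1)$-prioritary (see \S\ref{quadric}), so $\cc{M}_H(\bv)$ is a nonempty open substack of $\cc{P}_{\OO(0,1)}(\bv)$, which is irreducible by Walter's Theorem \cite{Walter1993IrreducibilityOM}; hence $\cc{M}_H(\bv)$ is irreducible. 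Finally, since $GL(V)$ is connected it permutes the irreducible components of $S$ trivially, so each component is $GL(V)$-invariant and descends to a component of $[S/GL(V)]=\cc{M}_H(\bv)$; irreducibility of the latter therefore forces $S$ to be irreducible, completing the proof. I expect this last transfer of irreducibility from the prioritary stack to the concrete parameter scheme $S$ to be the step requiring the most care, whereas the choice of $N$ and the verification of (P) are routine.
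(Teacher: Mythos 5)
Your proposal is correct and follows essentially the same route as the paper: take $S$ to be the open subset of the Quot scheme underlying the GIT construction of $M_H(\bv)$, and deduce its irreducibility from Walter's theorem on the stack of $\OO(0,1)$-prioritary sheaves. The only differences are cosmetic — the paper treats $r=1$ separately via the Hilbert scheme and simply cites Walter's proof of his Theorem 1 for the irreducibility of $S$, whereas you supply the (correct) transfer argument through the atlas $S\to[S/GL(V)]$ using connectedness of $GL(V)$.
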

\begin{proof} When $r=1$, the moduli space $M_{H}(\bv_1)$ for $\bv_1=(1,\nu, n)$ is a fine moduli space with a universal family $\cc{U}$.  For $\bv_1=(1,\nu, n)$ the moduli space $M_H(\bv_1)$ is isomorphic to the Hilbert scheme $X^{[n]}$ of $n$ points on $X$. Therefore, it is irreducible and we take $S:=M_H(\bv_1)$.  

When $r\geq 2$,    take $S$ to be an open subset of the Quot scheme parameterizing $H$-semistable quotients $V \otimes \OO(-NH_m) \surj \VV$ for $N \gg0$ as in the GIT construction of $M_H({\bf v})$ (\cite[paragraph 4.3]{HL10}). Walter shows in the proof of \cite[Theorem 1]{Walter1993IrreducibilityOM} that $S$ is irreducible as a consequence of his more general result which says that the stack of $\OO(0,1)$-prioritary sheaves is irreducible (see our discussion in \S \ref{quadric}). \end{proof}

Now we prove the analogous result for the family $\mathbb{F}(\bv_1, \bv_2, ...,\bv_l)$. %with an extra assumption on $\mu_{H_m} ({\bf v_1})-\mu_{H_m} ({\bf v_l})$%

\begin{proposition}{\label{Yoshioka irreducibility}} Let  $\bv_1, \bv_2, ..., \bv_l$ be $H$-semistable Chern characters with \begin{equation}{\label{slopes}}p_{H,\bv_1}> p_{H,\bv_2} > ... > p_{H, \bv_l}.\end{equation} 
%Assume \begin{equation}{\label{slopes}}\mu_{H_m} ({\bf v_1})-\mu_{H_m} ({\bf v_l})<-\mu_{H_m}(K_X).\end{equation}% 

Then there exists a family $\VV_s/S$ of sheaves over an irreducible base $S$ with the following property:
\small 
\begin{equation}\tag{P} \cc{V}_s \in  \mathbb{F}(\bv_1, \bv_2, ...,\bv_l) \ \text{for every} \ s \in S, \ \text{and for any} \ \cc{V} \in  \mathbb{F}(\bv_1, \bv_2, ...,\bv_l) \ \text{there exists} \ s \in S \ \text{with} \ \cc{V}=\cc{V}_s.\end{equation}
\end{proposition}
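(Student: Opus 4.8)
The plan is to build the irreducible parameter space $S$ by induction on the length $l$ of the Harder-Narasimhan filtration, using the base case $l=1$ supplied by Lemma \ref{case one}. The idea is that a sheaf $\VV \in \mathbb{F}(\bv_1, \bv_2, \ldots, \bv_l)$ with HN filtration $0 \subset \cc{F}_1 \subset \cc{F}_2 \subset \cdots \subset \cc{F}_l = \VV$ is determined by the data of its subsheaf $\cc{F}_{l-1}$ (which lies in $\mathbb{F}(\bv_1, \ldots, \bv_{l-1})$), its HN quotient $\gr_l = \VV/\cc{F}_{l-1}$ (an $H$-semistable sheaf of character $\bv_l$), and an extension class in $\Ext^1(\gr_l, \cc{F}_{l-1})$ realizing $\VV$. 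So I would assume by induction that there is an irreducible family $\cc{A}_a/A$ satisfying property (P) for $\mathbb{F}(\bv_1, \ldots, \bv_{l-1})$ and an irreducible family $\cc{B}_b/B$ satisfying (P) for $\mathbb{F}(\bv_l)$ (using the $l=1$ case), and then assemble the extensions over the product $A \times B$.

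First I would form the two families pulled back to $P := A \times B$ via the projections, giving $\cc{A}_P$ and $\cc{B}_P$ on $P \times X$, where the fiber over $(a,b)$ is the pair $(\cc{A}_a, \cc{B}_b)$. The slope inequality $p_{H,\bv_{l-1}} > p_{H,\bv_l}$ (equivalently the full chain \eqref{slopes}) guarantees by semistability that $\Hom(\cc{B}_b, \cc{A}_a)=0$ and, since the family is bounded, that $\Ext^2(\cc{B}_b, \cc{A}_a)=0$ after a Serre-duality/semistability check, so the sheaf $\mathcal{E}xt^1$ relative to the projection $p\colon P\times X \to P$ is locally free of constant rank; here $\dim \Ext^1(\cc{B}_b, \cc{A}_a)$ is constant because $\hom$ and $\ext^2$ vanish and $\chi$ is a numerical invariant. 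The relative $\Ext$-sheaf then gives a vector bundle $\cc{E} \to P$ whose fiber over $(a,b)$ is $\Ext^1(\cc{B}_b, \cc{A}_a)$; I would take $S$ to be the total space of this bundle (or the open locus thereof). Over $S$ there is a universal extension
\begin{equation*}
0 \to \cc{A}_S \to \cc{V}_S \to \cc{B}_S \to 0,
\end{equation*}
and $S$ is irreducible because it is a vector-bundle total space over the irreducible base $A \times B$.

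The two things I must then verify are that this family satisfies property (P). That every $\cc{V}_s$ lies in $\mathbb{F}(\bv_1, \ldots, \bv_l)$ requires checking that the HN filtration of the extension $\cc{V}_s$ is genuinely obtained by concatenating the HN filtration of $\cc{A}_a$ with the quotient $\cc{B}_b$; this follows from the slope gap $p_{H, \bv_{l-1}} > p_{H, \bv_l}$ together with uniqueness of the HN filtration, since the subsheaf $\cc{A}_a = \cc{F}_{l-1}$ is exactly the maximal subsheaf with reduced Hilbert polynomial $\geq p_{H,\bv_{l-1}}$. For surjectivity — that \emph{every} $\cc{V} \in \mathbb{F}(\bv_1, \ldots, \bv_l)$ appears as some $\cc{V}_s$ — I would take such a $\VV$, extract $\cc{F}_{l-1}$ and $\gr_l$ from its (unique) HN filtration, use the inductive (P) to find $a \in A$ with $\cc{A}_a = \cc{F}_{l-1}$ and $b \in B$ with $\cc{B}_b = \gr_l$, and then observe that $\VV$ itself defines a class in $\Ext^1(\gr_l, \cc{F}_{l-1}) = \cc{E}|_{(a,b)}$, i.e. a point $s \in S$ over $(a,b)$ with $\cc{V}_s \cong \VV$.

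The main obstacle I expect is the constancy of $\dim \Ext^1(\cc{B}_b, \cc{A}_a)$ over $P$, which is what makes the relative $\Ext$ into an honest vector bundle and hence keeps $S$ irreducible. This hinges on the vanishing of both $\Hom(\cc{B}_b, \cc{A}_a)$ and $\Ext^2(\cc{B}_b, \cc{A}_a)$ uniformly in $(a,b)$: the $\Hom$-vanishing is immediate from the slope inequality and semistability, but the $\Ext^2$-vanishing needs Serre duality $\Ext^2(\cc{B}_b, \cc{A}_a) \cong \Hom(\cc{A}_a, \cc{B}_b \otimes K_X)^{\dual}$ followed by a slope comparison showing $\mu_H(\cc{A}_a) > \mu_H(\cc{B}_b \otimes K_X)$ (again using semistability), which should hold on $X = \XX$ for the characters in question. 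If this vanishing fails at some locus, one restricts to the open (irreducible) subset where the rank is minimal, but I anticipate that the careful bookkeeping of these $\Ext$-vanishings, and the verification that the HN filtration of the universal extension is the expected one, is where the real work lies.
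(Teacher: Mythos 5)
Your overall strategy (induction on $l$, realizing $\VV$ as an extension of one Harder--Narasimhan piece by the complementary filtered part, and parameterizing extensions over the product of the two inductively-given irreducible bases) is essentially the paper's, up to the harmless variation that you split off the last HN quotient $\gr_l$ while the paper splits off the first piece $\cc{F}_1$. However, there is a genuine gap at exactly the point you flag as the ``main obstacle'': the claimed vanishing $\Hom(\cc{B}_b,\cc{A}_a)=0$ is false. Semistability kills $\Hom(A,B)$ only when $p_{\min}(A)>p_{\max}(B)$; here $A=\gr_l$ has the \emph{smallest} reduced Hilbert polynomial $p_{H,\bv_l}$ and $B=\cc{F}_{l-1}$ has $p_{\max}=p_{H,\bv_1}$, so the inequality points the wrong way and maps from $\gr_l$ into $\cc{F}_{l-1}$ are not excluded --- these are maps from smaller slope to larger slope, which are generally plentiful (e.g.\ $\Hom(\OO,\cc{F}_1)=H^0(\cc{F}_1)$ when $\gr_2\cong\OO$). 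Since $\ext^2(\cc{B}_b,\cc{A}_a)$ does vanish (your Serre-duality argument is correct) and $\chi$ is numerical, $\ext^1(\cc{B}_b,\cc{A}_a)=\hom(\cc{B}_b,\cc{A}_a)-\chi(\bv_l,\bv_1+\dots+\bv_{l-1})$ jumps exactly where $\hom$ jumps, so the relative $\Ext^1$ is not a vector bundle and its total space is not an irreducible variety in the way you need.

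Your proposed fallback --- restricting to the open locus where the rank is minimal --- is incompatible with property (P): any $\VV$ whose pair $(\cc{F}_{l-1},\gr_l)$ lies in the jumping locus would then never appear in the family, while stratifying by the value of $\hom$ destroys irreducibility. This is precisely the difficulty the paper's proof is built to circumvent: it chooses $N\gg0$ and a surjection $V\otimes\OO(-NH_m)\surj\cc{E}$ with kernel $A$, observes that $\Hom(A_u,\cc{F}_t)$ \emph{is} of constant dimension (all higher $\Ext$'s from $A_u$ and from $V\otimes\OO(-NH_m)$ into $\cc{F}_t$ vanish), and that $\Hom(A_u,\cc{F}_t)$ surjects onto $\Ext^1(\cc{E}_{\pi(u)},\cc{F}_t)$; the total space of this $\Hom$-bundle is the irreducible ``cover'' of the naive extension space over which the universal family is built. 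Without this (or an equivalent) device, your $S$ is not an irreducible base carrying all of $\mathbb{F}(\bv_1,\dots,\bv_l)$, so the argument as written does not close.
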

\normalsize

\begin{proof} We use induction on $l$. Case $l=1$ is Lemma \ref{case one}.

For $l\geq 2$, take $\cc{V}\in\mathbb{F}(\bv_1, \bv_2, ...,\bv_l)$. It fits into a short exact sequence \begin{equation}{\label{exts}} 0 \to \cc{F} \to \cc{V} \to \cc{E} \to 0\end{equation}with  $\cc{F} \in \mathbb{F}({\bf v}_1)$ and $\cc{E} \in\mathbb{F}(\bv_2, ...,\bv_l)$. By the induction assumption we have a family $\cc{F}_t/T$ over an irreducible base $T$ satisfying Property (P) with respect to $\mathbb{F}({\bf v}_1)$, and a family $\cc{E}_r/R$ over an irreducible base $R$ satisfying Property (P) with respect to $\mathbb{F}(\bv_2, ...,\bv_l)$. Intuitively, we want to build $S$ by taking all possible extensions of $\cc{E}_r$ by $\cc{F}_t$ for all possible $t\in T$ and $r \in R$. However, since $\ext^1(\cc{E}_r, \cc{F}_t)$ may not be constant for different $t\in T, \ r\in R$, we will have to enlarge $S$ in a certain sense.

To this end, since by the induction assumption $\mathbb{F}({\bf v}_1)$ and $\mathbb{F}(\bv_2, ...,\bv_l)$ are bounded families, we can choose $N \gg 0$ so that $$H^i(X, \cc{E}_r(N H_m))=0 \ \text{for} \  i>0 \ \text{and all} \  r \in R$$ and 
\begin{equation}{\label{vanishing}}H^i(X, \cc{F}_t(NH_m))=0 \ \text{for} \  i>0 \ \text{and all} \ t\in T.\end{equation} Taking $V$ to be a vector space of dimension $h^0(X, \cc{E}_r(N H_m))$, we have a surjection \begin{equation}{\label{surjection}}V \otimes \OO(-NH_m) \surj \cc{E}_r \ \text{for each} \ r\in R.\end{equation} Since $$\hom( V \otimes   \OO(-NH_m), \cc{E}_r)= h^0(X,V^{\dual} \otimes\cc{E}_r(N H_m))$$ is constant as a function of $r$, we get that $p_*(\sheafhom(V\otimes q^*\OO(-NH_m),\cc{E})$ is a vector bundle on $R$. 

Let $$\mathbb{V} \stackrel{\pi}{\to} R$$ be the corresponding geometric vector bundle. Note that $\mathbb{V}$ remains irreducible. On $\mathbb{V} \times X$ we have a universal morphism $$V \otimes \pi_X^* q^*\OO(-NH_m) \stackrel{\Phi}{\to} \pi_X^* \cc{E},$$and we let $\bb{U} \subset \mathbb{V}$ be the open subset parameterizing surjective morphisms. Observe that due to \eqref{surjection} $\bb{U} \stackrel{\pi}{\to} R$ remains surjective and that $\bb{U}$ is irreducible. Let $A=\ker(\Phi|_{\bb{U}\times X})$, \ $B=V \otimes \pi_X^* q^*\OO(-NH_m)$, and consider the exact sequence of sheaves over $\bb{U}\times X$: \begin{equation}{\label{shortsequence}}0 \to A \stackrel{\Xi}{\to} B \stackrel{\Phi}{\to} \pi^*_X\cc{E} \to 0.\end{equation}

By \eqref{vanishing} we have $\Ext^i(B_u, \cc{F}_t)=0$ for $i>0$ and all $u\in \bb{U}, t\in T$. By  \eqref{slopes} and semistability we also have that \begin{equation*}{\label{ext vanishing}}\Ext^2(\cc{E}_r, \cc{F}_t)=\Hom(\cc{F}_t, \cc{E}_r\otimes K_X))^{\dual}=0 \ \text{for all} \ r\in R, t\in T.\end{equation*}  Applying $\Hom(\_,\cc{F}_t)$ to \eqref{shortsequence} at point $u\in \bb{U}$ we get that $\Ext^i(A_u,\cc{F}_t)=0$ for $i>0$ and all $u\in \bb{U}, t\in T$. Thus $\hom(A_u, \cc{F}_t)$ is constant for all $u\in \bb{U}, t\in T$ and we have \small \begin{equation}{\label{covering of extensions}} 0 \to \Hom(\cc{E}_{\pi(u)}, \cc{F}_t) \to \Hom(B_u, \cc{F}_t) \to \Hom(A_u, \cc{F}_t) \to \Ext^1(\cc{E}_{\pi(u)}, \cc{F}_t) \to 0 \ \text{for all} \ u \in \bb{U}, t\in T. \end{equation} \normalsize 

Recall that in our intuitive explanation we mentioned that parameterising extensions of $\cc{E}_r$ by $\cc{F}_t$ might be problematic due to jumping values of $\ext^1(\cc{E}_r, \cc{F}_t)$ for different $r \in R, t \in T$. Now \eqref{covering of extensions} shows that  $\Hom(A_u, \cc{F}_t)$ is a vector space of constant dimension for different $u\in \bb{U}, t \in T$, so we can build an irreducible space parameterizing all homomorphisms $A_u \to \bb{F}_t$ for all $u\in \bb{U}, t \in T$. Since $\Hom(A_u, \cc{F}_t)$
 surjects onto $\Ext^1(\cc{E}_{\pi(u)}, \cc{F}_t)$, this  irreducible space will be a  "cover" for the naive "space of extensions." 

To this end, consider the natural projections \begin{equation*} \begin{aligned} &pr_{\bb{U}\times X}:&\bb{U} \times T \times X &\to \bb{U} \times X, \\ &pr_{T \times X}: &\bb{U} \times T \times X &\to T \times X, \\ &pr_{\bb{U} \times T}:&\bb{U} \times T \times X &\to \bb{U}  \times T. \end{aligned} \end{equation*} By the above discussion, $$pr_{\bb{U}  \times T, *} (\sheafhom(pr_{\bb{U}\times X}^* A, pr_{T \times X}^* \cc{F})$$ is a vector bundle over the irreducible base $\bb{U}  \times T$, therefore the associated geometric vector bundle $S \stackrel{\rho}{\to} \bb{U}  \times T$ is irreducible too. Consider the universal morphism over $S \times X$
$$\rho_X^* pr_{\bb{U} \times X}^*A \stackrel{\Psi}{\to} \rho_X^*pr_{T\times X}^*\cc{F},$$as well as the induced morphism $$\rho_X^* pr_{U \times X}^*A \inj{\rho_X^* pr_{U \times X}^*(\Xi)}\rho_X^* pr_{U \times X}^*B,$$where $\rho_X:=\rho \times Id_X$. Taking the direct sum of these maps and calling the resulting cokernel sheaf by $\mathcal{V}$, we obtain the following short exact sequence of sheaves on $S \times X$:
\begin{equation}{\label{finish}}0 \to \rho_X^* pr_{U \times X}^*A \xrightarrow{\rho_X^* pr_{U \times X}^*(\Xi) \oplus \Psi} \rho_X^* pr_{U \times X}^*B \oplus  \rho_X^*pr_{T\times X}^*\cc{F} \stackrel{\Omega}{\to} \cc{V} \to 0.\end{equation} For a point $s \in S$, this short exact sequence can be expanded into the following commutative diagram:
\begin{center}\begin{tikzcd}
            & 0 \arrow[d]           & & 0 \arrow[d]                    & 0 \arrow[d]           &   \\
0 \arrow[r] & 0 \arrow[rr] \arrow[d] & &\cc{F}_{pr_T(\rho(s))} \arrow[r] \arrow[d]          & \cc{F}_{pr_T(\rho(s))} \arrow[r] \arrow[d] & 0 \\
0 \arrow[r] & A_{pr_{\bb{U}} (\rho(s))} \arrow[rr,"\Xi_{pr_{\bb{U}} (\rho(s))}\oplus \Psi_s"] \arrow[d] &  & B_{pr_{\bb{U}} (\rho(s))} \oplus \cc{F}_{pr_T(\rho(s))} \arrow[r,"\Omega_s"] \arrow[d] & \cc{V}_s \arrow[r] \arrow[d] & 0 \\
0 \arrow[r] &  A_{pr_{\bb{U}} (\rho(s))} \arrow[rr,"\Xi_{pr_{\bb{U}} (\rho(s))}"] \arrow[d] & & B_{pr_{\bb{U}} (\rho(s))} \arrow[r,"\Phi_{pr_{\bb{U}} (\rho(s))}"] \arrow[d]          & \cc{E}_{\pi(pr_{\bb{U}} (\rho(s)))} \arrow[r] \arrow[d] & 0 \\
            & 0                     & & 0                              & 0                     &  
\end{tikzcd}
\end{center}

The row in the middle corresponds to \eqref{finish}, while the row at the bottom corresponds to \eqref{shortsequence}. The column in the middle is a trivial extension.
By construction, the fiber of $S$ over point $(u,t) \in \bb{U} \times T$ is the vector space $\Hom(A_u, \cc{F}_t)$ which by \eqref{covering of extensions} surjects onto $\Ext^1(\cc{E}_{\pi(u)}, \cc{F}_t)$. For a given $s \in S$ with corresponding $\Psi_s \in \Hom(A_u, \cc{F}_t)$, the resulting extension in $\Ext^1(\cc{E}_{\pi(u)}, \cc{F}_t)$ is displayed in the right column in the above diagram. This way, as $s$ varies over $S$, we parameterize all possible extensions \eqref{exts} and the Property (P) is satisfied.
\end{proof}

\section{Group actions and Gaeta-type resolutions}\label{Group act and Gaet} In this section, we recall some basic facts about the Picard group of $G$-linearized line bundles on a variety $Y$, and discuss how to compute with the Donaldson homomorphism when working with the family of $\OO(1,1)$-prioritary sheaves admitting an $L$-Gaeta type resolution constructed above in Propositions \ref{Gaeta parameterizations} and \ref{dual Gaeta family}.

\subsection{Characters and linearized line bundles}Let $G$ be an algebraic group acting on a variety $Y$. A \emph{crossed morphism} is a morphism of varieties $$\theta: G \times Y \to \bb{C}^*,$$ satisfying $$\theta(gg', y)=\theta(g, g'y) \theta(g', y) \ \ \text{for any} \ \ g,g'\in G, \  y \in Y.$$Crossed morphisms are in bijection with the linearizations of the trivial bundle $\OO_Y$. Indeed, given a crossed morphism $\theta$ define the action of $G$ on the total space $Y \times \bb{C}$ of $\OO_Y$ over the action of $G$ on $Y$ by $$g \cdot(y, a)=(g \cdot y, \theta(g,y)a).$$A crossed morphism $\theta$ is said to be \emph{principal} if there exists $f \in \OO^*(Y)$ such that $$\theta(g,y)=\frac{f(g\cdot y)}{f(y)} \ \ \text{for any} \ \ g\in G, y\in Y.$$Observe that for a principal crossed morphism $\theta$ coming from $f\in \OO^*(Y)$  the trivial line bundle $\OO_Y$ with a trivial linearization is isomorphic as $G$-bundles to  the bundle $(\OO_Y, \theta)$  via $$(y,a) \mapsto (y, f(y)a),$$which is easily seen to be a $G$-equivariant map. 

In summary, we get an exact sequence $$\OO^*(Y) \to \textnormal{CrMor}(Y,G) \to \Pic^G(Y) \to \Pic(Y)^G,$$where the second term is the group of crossed morphisms and the last term denotes $G$-invariant line bundles. 
Now note that any character $\eta\in \Char(G)$ can be viewed as a crossed morphism via $$\theta_{\eta}(g,y)=\eta(g).$$  Drez\'{e}t shows in \cite[Proposition 14]{drezet1987fibres} that for those algebraic groups $G$ for which any invertible function on $G$ can be written as a product of a constant and a character of $G$ we in fact have an isomorphism $$\Char(G) \to \textnormal{CrMor}(Y,G).$$ Therefore, for such groups we have the following result.

\begin{proposition}\label{crossed morphisms} Let $Y$ be an integral variety equipped with an action of an algebraic group $G$. Further suppose that any invertible function on $G$ can be written as a product of a constant and a character of $G$. Then we have the following exact sequence
\begin{equation}{\label{linearized line bundles}} \cc{O}^*(Y) \to \Char(G) \to \Pic^G(Y) \to \Pic(Y)^G. \end{equation}
\end{proposition}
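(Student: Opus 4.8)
The plan is to deduce the proposition directly from the four-term sequence
$$\OO^*(Y) \to \textnormal{CrMor}(Y,G) \to \Pic^G(Y) \to \Pic(Y)^G$$
recorded in the discussion just before the statement, by transporting its second term across Dr\'ezet's isomorphism $\Char(G) \xrightarrow{\sim} \textnormal{CrMor}(Y,G)$, $\eta \mapsto \theta_\eta$ with $\theta_\eta(g,y)=\eta(g)$. Since precomposing a map in an exact sequence with an isomorphism of groups preserves exactness, replacing $\textnormal{CrMor}(Y,G)$ by $\Char(G)$ yields exactly the sequence \eqref{linearized line bundles}. The map $\OO^*(Y)\to\Char(G)$ is then forced to be the composite of $f\mapsto\theta_f$, where $\theta_f(g,y)=f(g\cdot y)/f(y)$, with the inverse of Dr\'ezet's isomorphism, and $\Char(G)\to\Pic^G(Y)$ is $\eta\mapsto[(\OO_Y,\theta_\eta)]$.

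Most of the work is organizational: I first would make the three maps of the $\textnormal{CrMor}$-sequence precise and verify its exactness, which is the genuine content and is assembled from the bijections already set up. For exactness at $\textnormal{CrMor}(Y,G)$, a crossed morphism $\theta$ dies in $\Pic^G(Y)$ precisely when the linearized bundle $(\OO_Y,\theta)$ is $G$-equivariantly isomorphic to the trivially linearized $\OO_Y$; unwinding such an isomorphism over the integral variety $Y$ produces an $f\in\OO^*(Y)$ with $\theta=\theta_f$, i.e. $\theta$ is principal, which is exactly the image of $\OO^*(Y)$. For exactness at $\Pic^G(Y)$, the forgetful map kills a class $[(\cc{L},\mathrm{lin})]$ iff $\cc{L}\cong\OO_Y$ as an ordinary line bundle, and transporting the linearization across such an isomorphism realizes the class as coming from a crossed morphism on $\OO_Y$, hence from $\textnormal{CrMor}(Y,G)$; conversely every such class plainly maps to the trivial invariant bundle.

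The one point that deserves care — and the closest thing to an obstacle — is checking that the induced map $\OO^*(Y)\to\Char(G)$ is well defined, i.e. that for $f\in\OO^*(Y)$ the principal crossed morphism $\theta_f(g,y)=f(g\cdot y)/f(y)$ is genuinely of the form $\theta_\eta$ for a character $\eta$ that does not depend on $y$. This independence is not obvious a priori, but it is precisely what the surjectivity half of Dr\'ezet's isomorphism supplies under the running hypothesis that every invertible function on $G$ is a constant times a character: $\theta_f$ is a crossed morphism, hence equals $\theta_{\eta_f}$ for a unique $\eta_f\in\Char(G)$, and this $\eta_f$ is the image of $f$. With the isomorphism $\Char(G)\cong\textnormal{CrMor}(Y,G)$ in hand, the exactness verified above transfers verbatim, completing the proof.
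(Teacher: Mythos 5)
Your proposal is correct and follows essentially the same route as the paper: the paper's justification is precisely the preceding discussion establishing the four-term sequence with $\textnormal{CrMor}(Y,G)$ in the middle, followed by substitution along Dr\'ezet's isomorphism $\Char(G)\xrightarrow{\sim}\textnormal{CrMor}(Y,G)$ under the stated hypothesis on invertible functions on $G$. Your extra care about the well-definedness of $f\mapsto\eta_f$ matches the paper's remark after the proposition that $\eta_f(g)=f(g\cdot y)/f(y)$.
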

We remark that in the first map $(f \mapsto \eta_f)$ the resulting character $\eta_f$ is described by the equality $$\eta_f(g)=\frac{f(g\cdot y)}{f(y)} \quad \text{for any} \ g\in G, y\in Y.$$ 

\subsection{Characters of the general linear group}In the context of the Gaeta-type resolutions we will be interested in the action of the  general linear group and groups closely related to it. These groups will satisfy the assumption of Proposition \ref{crossed morphisms}. In view of exact sequence \eqref{linearized line bundles}, we now recall how to describe characters for such groups.

For a fixed positive integer $n$, consider the homomorphism $$\bb{Z} \to \Char(GL(n))$$
$$a \mapsto [\eta_a:A \mapsto \det(A)^a].$$Since the coordinate ring of $GL(\alpha)$ is the localization $\bb{C}[ \{x_{ij} \}]_{\det}$, the only invertible functions mapping $Id\in GL(n)$ to $1 \in \bb{C}$ are of the form $[A \to \det(A)^a]$ for some $a \in \bb{Z}$. It follows that the above homomorphism is in fact an isomorphism.

More generally, for $k$ positive integers $n_1,n_2,...,n_k$ let $G:=GL(n_1) \times GL(n_2) \times ... \times GL(n_k)$. We have an isomorphism \begin{equation}{\label{char groups}} \bb{Z}^k \to \Char(G)\end{equation} given by $$(a_1,a_2,...,a_k) \mapsto [\eta_{(a_1,a_2,...,a_k)}:(A_1,A_2,...,A_k) \mapsto \det(A_1)^{a_1} \det(A_2)^{a_2} ... \det(A_k)^{a_k}].$$

Finally, let $\overline{G}:=\left( GL(n_1) \times GL(n_2) \times ... \times GL(n_k) \right) /\ \bb{C}^*(Id, Id, ..., Id)$. Under the above isomorphism $\Char(\overline{G})$ can be described as\begin{equation}{\label{characters}}\Char(\overline{G})=\{(a_1,a_2,...,a_k) \in \bb{Z}^k \ | \ a_1 n_1+a_2 n_2+...+a_kn_k=0 \} \subset \bb{Z}^k.\end{equation}

\subsection{Natural action of $G$ on Gaeta-type resolutions}We return back to the case $X=\XX$. Consider the family $\cc{V}_t/T$ of $\OO(1,1)$-prioritary sheaves of Chern character ${\bf v}$ with $r(\bv) \geq2$ over  ${T=U \subset \bb{H}}$ admitting an $L$-Gaeta type resolution \eqref{L Gaeta} $$0 \to L(-1,-1)^{\alpha} \stackrel{\psi_t}{\to} L(-1,0)^{\beta} \oplus L(0,-1)^{\gamma} \oplus L^{\delta} \to \VV_t \to 0,$$ as in Proposition \ref{Gaeta parameterizations}. We first treat the case where all integers $\alpha, \beta,\gamma,\delta$ are not zero and  say how to modify the argument when some of the exponents vanish later.

In this case, there is a natural group action of $$G= GL(\alpha) \times GL(\beta) \times GL(\gamma) \times GL(\delta)$$ on $T$: for point $\psi_t \in T$ the point $$(g_{\alpha},g_{\beta}, g_{\gamma},g_{\delta}) \cdot \psi_t$$corresponds to the morphism $$(g_{\beta} \oplus g_{\gamma} \oplus g_{\delta}) \circ \psi_t \circ (g_{\alpha})^{-1}.$$

Note that since $$c(Id,Id,Id,Id) \cdot \psi_t=\psi_t, \ c \in \bb{C},$$ there is also an induced action of $$\overline{G}=( GL(\alpha) \times GL(\beta) \times GL(\gamma)\times GL(\delta) )  / \ \bb{C}^*(Id, Id, Id, Id)$$ on $T$. 

We extend both actions  onto $T \times X$. On $T \times X$, there is a universal short exact sequence of sheaves\begin{equation}{\label{universal Gaeta}}0 \to q^*(L(-1,-1))^{\alpha} \stackrel{\psi}{\to}q^*(L(-1,0))^{\beta} \oplus q^* (L(0,-1))^{\gamma} \oplus q^*L^{\delta} \to \cc{V} \to 0.\end{equation} We endow the trivial families with a natural $G$-linearization as follows. Let $g=(g_{\alpha},g_{\beta},g_{\gamma},g_{\delta}) \in G.$ The action of $g$ is described as
\begin{equation*}
\begin{aligned}  (q^*(L(-1,-1))^{\alpha})_t  &=& L(-1,-1)^{\alpha} &\stackrel{g_{\alpha}}{\longrightarrow} &L(-1,-1)^{\alpha} &=  (q^*(L(-1,-1))^{\alpha})_{g \cdot t} \\
(q^*(L(-1,0))^{\beta})_t &=& L(-1,0)^{\beta} &\stackrel{g_{\beta}}{\longrightarrow} &L(-1,0)^{\beta} &=(q^*(L(-1,0))^{\beta})_{g \cdot t} \\
( q^*(L(0,-1))^{\gamma} )_t &=& L(0,-1)^{\gamma} &\stackrel{g_{\gamma}}{\longrightarrow}& L(0,-1)^{\gamma}&=(q^*(L(0,-1))^{\gamma})_{g\cdot t} \\
(q^*L^{\delta})_t &=& L^{\delta} &\stackrel{g_{\delta}}{\longrightarrow}& L^{\delta}&= (q^*L^{\delta})_{g \cdot t}
\end{aligned}
\end{equation*}
There is then a unique $G$-linearization of $\VV$ making \eqref{universal Gaeta} a short exact sequence of $G$-{\it linearized} sheaves. For $g\in G, \psi_t\in T$ as above, it is described as the unique isomorphism $\Phi_{(g,t)}$ completing the diagram
\begin{center}
\begin{tikzcd}
0 \arrow[r] & L(-1,-1)^{\alpha} \arrow[r, "\psi_t"] \arrow[d, "g_{\alpha}"] & L(-1,0)^{\beta} \oplus L(0,-1)^{\gamma} \oplus L^{\delta} \arrow[r] \arrow[d, "g_{\beta} \oplus g_{\gamma} \oplus g_{\delta}"] & \cc{V}_t \arrow[r] \arrow[d, "\Phi_{(g,t)}", dashed] & 0 \\
0 \arrow[r] & L(-1,-1)^{\alpha} \arrow[r, "\psi_{g \cdot t}"]                & L(-1,0)^{\beta} \oplus L(0,-1)^{\gamma} \oplus L^{\delta} \arrow[r]                & \cc{V}_{g\cdot t} \arrow[r]                        & 0.
\end{tikzcd}
\end{center}

This allows us to use Lemma \ref{computation} (3) and compute the Donaldson homomorphism $$\lambda_{\VV_t}: K(X) \to \Pic^G(T)$$explicitly, taking into account that both $K(X)$ and $\Pic^G(T)$ are {\it free} $\bb{Z}$-modules. Specifically, we identify $K(X) \cong \bb{Z}^4$ by choosing the following $\bb{Z}$-basis $${\bf e}_1:=[L^{\dual}(-1,-1)],{\bf e}_2:=[L^{\dual}(-1,0)], {\bf e}_3:=[L^{\dual}(0,-1)], {\bf e}_4:=[L^{\dual}].$$  On the other hand, by Proposition \ref{Gaeta parameterizations} $$\codim_{\bb{H}}(\bb{H}\setminus T) \geq 2$$ and, since $\bb{H}$ is an affine space, it follows that $$\cc{O}^*(T)=\bb{C}^* \ \text{and} \ \Pic(T)=0.$$ Note that $G$ satisfies the assumptions of Proposition \ref{crossed morphisms}, so  we get \begin{equation}{\label{Pic-Char}}\ \Char(G) \stackrel{\sim}{\to} \Pic^G(T) \end{equation} and the former group was shown to be $\bb{Z}^4$ in \eqref{char groups}. 

\begin{proposition}{\label{Donaldson on Gaeta}} Consider the family $\cc{V}_t/T$ of $\OO(1,1)$-prioritary sheaves of character $\bv$ with $r(\bv)\geq 2$ admitting an $L$-Gaeta type resolution $$0 \to L(-1,-1)^{\alpha} \stackrel{\psi_t}{\to} L(-1,0)^{\beta} \oplus L(0,-1)^{\gamma} \oplus L^{\delta} \to \VV_t \to 0$$ where $$T \subset \mathbb{H}=\Hom \left(L(-1,-1)^{\alpha},  L(-1,0)^{\beta} \oplus L(0,-1)^{\gamma} \oplus L^{\delta} \right)$$ is the open subset parameterizing injective sheaf maps with torsion-free cokernel and all the exponents $\alpha,\beta,\gamma,\delta$ are nonzero. 

Then the Donaldson homomorphism $$\lambda_{\VV_t}: K(X) \to \Pic^G(T)$$ is an isomorphism, and the image of ${\bf v}^{\perp}$ is equal to $\Char(\overline{G}) \subset \Char(G) \cong \Pic^G(T).$
\end{proposition}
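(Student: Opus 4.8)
The plan is to compute $\lambda_{\VV_t}$ explicitly on the basis $\{{\bf e}_1,{\bf e}_2,{\bf e}_3,{\bf e}_4\}$ of $K(X)$, with values in $\Char(G)\cong\Pic^G(T)$ via \eqref{Pic-Char}, and then read off both assertions. Since $K(X)$ and $\Pic^G(T)$ are both free of rank $4$ (the latter by \eqref{Pic-Char} and \eqref{char groups}), proving that $\lambda_{\VV_t}$ is an isomorphism amounts to checking that its matrix in these bases is invertible over $\bb{Z}$. First I would reduce to trivial families: writing $\cc{A}=q^*(L(-1,-1))^{\alpha}$ and $\cc{B}=q^*(L(-1,0))^{\beta}\oplus q^*(L(0,-1))^{\gamma}\oplus q^*L^{\delta}$ for the outer terms of \eqref{universal Gaeta}, Lemma \ref{computation}(3) applied to $0\to\cc{A}\to\cc{B}\to\VV\to 0$ yields $\lambda_{\VV_t}({\bf u})=\lambda_{\cc{B}}({\bf u})\otimes\lambda_{\cc{A}}({\bf u})^{-1}$, and the additivity of $\lambda$ over the direct sum $\cc{B}$ (again Lemma \ref{computation}(3)) reduces the whole computation to that of $\lambda$ for a single trivial $G$-linearized family $q^*M\otimes W$, where $M$ ranges over the four line bundles and $W$ is the standard representation of the corresponding factor of $G$.

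The key local fact is $\lambda_{q^*M\otimes W}({\bf u})=\chi({\bf u}\cdot[M])\cdot[\det W]$, where $[\det W]\in\Char(G)$ is the determinant character of the relevant $GL$-factor. I would prove this using the projection formula together with cohomology and base change: because $G$ acts trivially in the $X$-direction, $p_!\bigl(q^*({\bf u}\cdot[M])\cdot p^*[W]\bigr)=\chi({\bf u}\cdot[M])\cdot[W]$ as a $G$-linearized class on $T$, and $\det$ turns this into the $\chi({\bf u}\cdot[M])$-th power of the determinant character of $W$. Plugging in the four line bundles and using $\chi(\OO(a,b))=(a+1)(b+1)$, every product ${\bf e}_j\cdot[M]$ collapses to a single line bundle because $L^{\dual}$ cancels $L$; the basis ${\bf e}_1,\dots,{\bf e}_4$ was chosen precisely so that only the diagonal Euler characteristic survives. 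The upshot is that the matrix of $\lambda_{\VV_t}$ is diagonal with entries $\pm1$, which proves the isomorphism claim.

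For the image of ${\bf v}^{\perp}$ I would argue structurally rather than recompute. By the reduction above, the four components of $\lambda_{\VV_t}({\bf u})=(a_1,a_2,a_3,a_4)$ are $a_1=-\chi({\bf u}\cdot[L(-1,-1)])$, $a_2=\chi({\bf u}\cdot[L(-1,0)])$, $a_3=\chi({\bf u}\cdot[L(0,-1)])$ and $a_4=\chi({\bf u}\cdot[L])$, the minus sign reflecting that $\cc{A}$ sits as the sub. Pairing against the ranks $(\alpha,\beta,\gamma,\delta)$ of the $GL$-factors and using additivity of $\chi({\bf u}\cdot-)$,
$$\alpha a_1+\beta a_2+\gamma a_3+\delta a_4=\chi\!\left({\bf u}\cdot\left(\beta[L(-1,0)]+\gamma[L(0,-1)]+\delta[L]-\alpha[L(-1,-1)]\right)\right)=\chi({\bf u}\cdot{\bf v}),$$
because the class in parentheses is precisely $[\VV]={\bf v}$. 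By \eqref{characters} the subgroup $\Char(\overline{G})$ is exactly the vanishing locus of the left-hand side, so $\lambda_{\VV_t}({\bf u})\in\Char(\overline{G})$ if and only if $\chi({\bf u}\cdot{\bf v})=0$, i.e.\ ${\bf u}\in{\bf v}^{\perp}$; since $\lambda_{\VV_t}$ is bijective this gives $\lambda_{\VV_t}({\bf v}^{\perp})=\Char(\overline{G})$.

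The hard part will be the equivariant bookkeeping in the local identity $\lambda_{q^*M\otimes W}({\bf u})=\chi({\bf u}\cdot[M])\cdot[\det W]$: one must follow the $G$-linearization through $q^*$, the multiplication by $[q^*M\otimes W]$, the pushforward $p_!$, and finally $\det$, and confirm that the output carries exactly the determinant character of the correct factor of $G$ and no spurious constant from $\cc{O}^*(T)=\bb{C}^*$ (which is harmless here because $\Pic(T)=0$ and the first map of \eqref{linearized line bundles} kills constants, giving the isomorphism \eqref{Pic-Char}). Once this is settled, the remaining steps are the routine Riemann--Roch table and the structural identity above.
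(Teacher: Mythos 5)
Your proposal is correct and follows essentially the same route as the paper: reduce via Lemma \ref{computation}(3) to the trivial $G$-linearized summands, compute $\lambda_{\VV_t}$ on the basis ${\bf e}_1,\dots,{\bf e}_4$ to obtain the diagonal matrix $\mathrm{diag}(-1,-1,-1,1)$, and identify $\lambda_{\VV_t}({\bf v}^{\perp})$ with $\Char(\overline{G})$ via the relation $\alpha a_1+\beta a_2+\gamma a_3+\delta a_4=\chi({\bf u}\cdot{\bf v})$ together with \eqref{characters}. The only (harmless) stylistic difference is that you obtain ${\bf v}=[\cc{B}]-[\cc{A}]$ directly from the resolution in $K(X)$, whereas the paper verifies this identity by pairing against the dual basis and using \eqref{Gaeta equation}.
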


\begin{proof} Let $$A=q^*(L(-1,1))^{\alpha}=q^*(L(-1,-1)) \otimes V_{\alpha}$$ and $$B=q^*(L(-1,0))^{\beta} \oplus q^* (L(0,-1))^{\gamma} \oplus q^*L^{\delta}=(q^*(L(-1,0))\otimes V_{\beta})\oplus (q^*(L(0,-1))\otimes V_{\gamma}) \oplus (q^*L\otimes V_{\delta}),$$where $V_{\alpha}, V_{\beta}, V_{\gamma}, V_{\delta}$ are vector spaces of dimension $\alpha, \beta, \gamma$ and $\delta$ respectively.  Since the universal short exact sequence \eqref{universal Gaeta} is a sequence of $G$-linearized sheaves, we have that for ${\bf u} \in K(X),$ $$\lambda_{\cc{V}}({\bf u})= \lambda_B ({\bf u}) \otimes \lambda_A({\bf u})^{\dual} \ \text{as elements of} \ \Pic^G(T),$$or
$$\lambda_{\cc{V}}({\bf u}) =\lambda_B ({\bf u}) - \lambda_A({\bf u}) \ \text{as elements of} \ \Char(G)$$under the isomorphism \eqref{Pic-Char}. 

Using this, one readily checks that $$p_! (B \otimes q^*L^{\dual}(-1,-1))=p_!((q^*\cc{O}(-1,-2)\otimes V_{\beta} )\oplus (q^*\cc{O}(-2,-1) \otimes V_{\gamma})\oplus (q^*\cc{O}(-1,-1) \otimes V_{\delta}))=0$$ and $$p_!(A \otimes q^*L^{\dual}(-1,-1))=p_!(q^*\cc{O}(-2,-2) \otimes V_{\alpha})=[\cc{O}_T \otimes V_{\alpha}],$$viewed as elements in  $K^G(X)$. Thus $$\lambda_{\cc{V}_t}({\bf e}_1)=\det(\cc{O}_T \otimes V_{\alpha})^{\dual},$$which corresponds to character $$\eta_{(-1,0,0,0)} \in \Char(G)$$under the isomorphism \eqref{Pic-Char}. Similar calculations show that $$ \begin{aligned} &\lambda_{\cc{V}}({\bf e}_2) \ \text{corresponds to}  \ \eta_{(0,-1,0,0)},\\ &\lambda_{\cc{V}}({\bf e}_3) \ \text{corresponds to}  \ \eta_{(0,0,-1,0)}, \\ &\lambda_{\cc{V}}({\bf e}_4) \ \text{corresponds to}  \ \eta_{(0,0,0,1)}.\end{aligned}  $$

In summary, the Donaldson homomorphism $\lambda_{\cc{V}_t}$ viewed as a map $K(X) \to \Char(G)$ is given by
\begin{equation}{\label{matrix}} {\bf u}=a_1 \be_1 + a_2 \be_2+a_3 \be_3+a_4\be_4  \mapsto \eta=\eta_{(-a_1, -a_2, -a_3,a_4)} \end{equation}
Alternatively, it has matrix $$\begin{bmatrix} -1 & 0 & 0 & 0 \\ 0 & -1 & 0 & 0 \\ 0 & 0 & -1 & 0 \\ 0 & 0 & 0 & 1 \end{bmatrix}$$ when viewed as a map $\bb{Z}^4 \to \bb{Z}^4$.

Now we turn to the second statement in the proposition. One checks that $${\bf v}=-\alpha [L(-1,-1)] +\beta[L(-1,0)]+\gamma[L(0,-1)]+\delta[L] \ \text{in}  \ K(X)$$ by applying $\chi(\_ \cdot \be_i)$ to both sides and using \eqref{Gaeta equation} along with the fact that $${\bf \overline{e}}_1=[L(-1,-1)], {\bf \overline{e}}_2=[L(-1,0)],{\bf \overline{e}}_3=[L(0,-1)],{\bf \overline{e}}_4=[L]$$ is a $\chi(\_ \cdot \_)$-orthogonal basis to ${\bf e}_1,{\bf e}_2,{\bf e}_3,{\bf e}_4$:$$ \chi({\bf \overline{e}}_i\cdot {\bf e}_j)=0$$ for $i\neq j$ and $$\chi({\bf \overline{e}}_1\cdot {\bf e}_1)=\chi({\bf \overline{e}}_4\cdot {\bf e}_4)=1,\chi({\bf \overline{e}}_2\cdot {\bf e}_2)=\chi({\bf \overline{e}}_3\cdot {\bf e}_3)=-1.$$ 

One further checks that the condition $${\bf u}\in {\bf v}^{\perp} \iff \chi ({\bf v} \cdot {\bf u})=0$$ is equivalent to $$-a_1\alpha  - a_2\beta  -a_3\gamma  + a_4 \delta =0.$$ By  \eqref{characters}, this last condition is precisely equivalent to   $\eta=\eta_{(-a_1, -a_2, -a_3,a_4)} \in \Char(\overline{G})$.  
\end{proof}

The above proof easily carries over to the case when one of the exponents in an $L$-Gaeta-type resolution is zero. In particular, we will later work with the case when  $\delta=0$. In this case, set \begin{equation}\label{missing exponent} \begin{aligned} G_{\hat{\delta}}&= GL(\alpha) \times GL(\beta) \times GL(\gamma) \\ \overline{G}_{\hat{\delta}}&=( GL(\alpha) \times GL(\beta) \times GL(\gamma) )  / \ \bb{C}^*(Id, Id, Id).\end{aligned}\end{equation}

\begin{proposition}\label{Donaldson on Gaeta 2} Consider the family $\VV_t/T$ of $\OO(1,1)$-prioritary sheaves of character $\bv$ with $r(\bv)\geq 2$ admitting an $L$-Gaeta-type resolution \begin{equation*}\label{seq} 0 \to L(-1,-1)^{\alpha} \stackrel{\phi_t}{\to} L(-1,0)^{\beta} \oplus L(0,-1)^{\gamma} \to \VV_t \to 0,\end{equation*}where  $$T\subset \mathbb{H}=\Hom \left(L(-1,-1)^{\alpha},  L(-1,0)^{\beta} \oplus L(0,-1)^{\gamma} \right)$$ is the open subset parameterizing injective sheaf maps with torsion-free cokernel and the exponents $\alpha, \beta, \gamma$ are nonzero. 

Then the Donaldson homomorphism $$\lambda_{\VV_t}: K(X) \to  \Pic^{G_{\hat{\gamma}}}(T)$$ is an epimorphism, and the image of ${\bf v}^{\perp}$ is equal to $\Char(\overline{G}_{\hat{\delta}}) \subset \Char(G_{\hat{\delta}}) \cong \Pic^{G_{\hat{\delta}}}(T).$
\end{proposition}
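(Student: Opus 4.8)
The plan is to run the argument of Proposition \ref{Donaldson on Gaeta} essentially verbatim, keeping careful track of the single structural change caused by the vanishing of the last exponent. As before, I would set $A = q^*(L(-1,-1))\otimes V_\alpha$ and $B = (q^*(L(-1,0))\otimes V_\beta)\oplus(q^*(L(0,-1))\otimes V_\gamma)$, where now $B$ has \emph{no} $q^*L$-summand. Since the universal resolution is a short exact sequence of $G_{\hat\delta}$-linearized sheaves, Lemma \ref{computation}(3) gives $\lambda_{\VV_t}(\mathbf{u}) = \lambda_B(\mathbf{u}) - \lambda_A(\mathbf{u})$ in $\Char(G_{\hat\delta})$ under the identification $\Char(G_{\hat\delta})\cong\Pic^{G_{\hat\delta}}(T)$ of Proposition \ref{crossed morphisms}, whose hypotheses hold for $G_{\hat\delta}$ and where $\OO^*(T)=\bb{C}^*$, $\Pic(T)=0$ because $\codim_{\mathbb{H}}(\mathbb{H}\setminus T)\geq 2$.

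The first claim then reduces to the same Riemann-Roch and K\"unneth computation of $p_!(B\otimes q^*\be_i)$ and $p_!(A\otimes q^*\be_i)$ on $X=\XX$, evaluated on the basis $\be_1,\dots,\be_4$. The computations for $\be_1,\be_2,\be_3$ are unchanged and yield $\lambda_{\VV_t}(\be_1)=\eta_{(-1,0,0)}$, $\lambda_{\VV_t}(\be_2)=\eta_{(0,-1,0)}$, $\lambda_{\VV_t}(\be_3)=\eta_{(0,0,-1)}$. The only genuinely new point is that, with the $q^*L$-summand removed, every summand of $B\otimes q^*\be_4$ is of the form $q^*\OO(-1,0)$ or $q^*\OO(0,-1)$ and hence has vanishing $p_!$, while $A\otimes q^*\be_4 = q^*\OO(-1,-1)\otimes V_\alpha$ also pushes forward to zero. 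Thus $\lambda_{\VV_t}(\be_4)=0$, so as a map $\bb{Z}^4\to\bb{Z}^3$ the homomorphism $\lambda_{\VV_t}$ has matrix $[-I_3\mid 0]$, which is visibly surjective. This is exactly where the isomorphism of Proposition \ref{Donaldson on Gaeta} degenerates into a mere epimorphism, with $\be_4$ spanning the kernel.

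For the second claim I would first re-derive, using the $\chi(\_\cdot\_)$-orthogonal dual basis $\overline{\be}_1,\dots,\overline{\be}_4$ and the defining equations \eqref{Gaeta equation} with $\delta=0$, the identity $\bv = -\alpha[L(-1,-1)]+\beta[L(-1,0)]+\gamma[L(0,-1)]$ in $K(X)$, where the $[L]$-coefficient is $\chi(\bv\cdot\be_4)=\delta=0$. Writing $\mathbf{u}=a_1\be_1+a_2\be_2+a_3\be_3+a_4\be_4$ and expanding $\chi(\bv\cdot\mathbf{u})$ against the orthogonality relations shows that $\mathbf{u}\in\bv^\perp$ is equivalent to $\alpha a_1+\beta a_2+\gamma a_3=0$. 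Since $\lambda_{\VV_t}(\mathbf{u})=\eta_{(-a_1,-a_2,-a_3)}$, comparing with the description \eqref{characters} of $\Char(\overline{G}_{\hat\delta})=\{(b_1,b_2,b_3)\colon b_1\alpha+b_2\beta+b_3\gamma=0\}$ gives $\lambda_{\VV_t}(\bv^\perp)\subseteq\Char(\overline{G}_{\hat\delta})$; the reverse inclusion follows because any such $(b_1,b_2,b_3)$ is realized by $\mathbf{u}=-b_1\be_1-b_2\be_2-b_3\be_3\in\bv^\perp$.

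The step that requires the most care — and the only place the $\delta=0$ case genuinely differs from Proposition \ref{Donaldson on Gaeta} — is the bookkeeping showing simultaneously that $\be_4$ maps to $0$ and that surjectivity onto $\Char(\overline{G}_{\hat\delta})$ is nevertheless preserved: one must check that dropping the generator $\be_4$ from the domain does not shrink the image of $\bv^\perp$, which works precisely because $\be_4$ already pairs trivially with $\bv$ (as $\delta=0$) so that its omission costs nothing. I do not expect any serious obstacle beyond verifying these cohomology vanishings on $\XX$.
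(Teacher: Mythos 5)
Your proposal is correct and follows essentially the same route the paper intends: the paper itself only remarks that the proof of Proposition \ref{Donaldson on Gaeta} "easily carries over" to the $\delta=0$ case, and your computation — $\lambda_{\VV_t}(\be_i)=\eta$ of the $i$-th negative coordinate character for $i=1,2,3$, $\lambda_{\VV_t}(\be_4)=0$ since $\chi(\OO(-1,0))=\chi(\OO(0,-1))=\chi(\OO(-1,-1))=0$, and the matching of $\bv^\perp$ (cut out by $\alpha a_1+\beta a_2+\gamma a_3=0$ once $\delta=0$) with $\Char(\overline{G}_{\hat\delta})$ — is exactly the required bookkeeping. The only cosmetic point is that the target group in the displayed statement should read $\Pic^{G_{\hat\delta}}(T)$ rather than $\Pic^{G_{\hat\gamma}}(T)$; you correctly work with $G_{\hat\delta}$ throughout.
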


Finally, we can repeat the  discussion of this subsection for the "dual version"   of a Gaeta-type resolution. Consider the family $\cc{V}_t/T$ of $\OO(1,1)$-prioritary sheaves of Chern character ${\bf v}$ with $r(\bv) \geq2$ over  ${T=U \subset \bb{H}}$ admitting an $L$-Gaeta type resolution \eqref{dual Gaeta} $$0 \to  \VV_t \to  L(1,0)^{\alpha} \oplus L(0,1)^{\beta} \oplus L^{\gamma} \xrightarrow{\psi_t} L(1,1)^{\delta}\to 0$$ as in Proposition \ref{dual Gaeta family}. There is a natural action of  $$G= GL(\alpha) \times GL(\beta) \times GL(\gamma) \times GL(\delta) \ \text{and} \ \overline{G}=( GL(\alpha) \times GL(\beta) \times GL(\gamma)\times GL(\delta) )  / \ \bb{C}^*(Id, Id, Id, Id)$$  on $T$ and $T\times X$ if all the exponents $\alpha, \beta, \gamma, \delta$ are nonzero,  and of  $$G_{\hat{\gamma}}= GL(\alpha) \times GL(\beta) \times GL(\delta) \ \text{and} \ \overline{G}_{\hat{\gamma}}=( GL(\alpha) \times GL(\beta) \times GL(\delta) )  / \ \bb{C}^*(Id, Id, Id)$$  if $\alpha, \beta, \delta>0$, but $\gamma=0$.

As before, the action of $G$ (resp. $G_{\hat{\gamma}}$) on $T \times X$ lifts to a linearization of the universal families of sheaves and we have the obvious analogues of Propositions \ref{Donaldson on Gaeta} and \ref{Donaldson on Gaeta 2}.

\section{The Picard group of the moduli space}\label{main section}

\subsection{Associated exceptional bundles and the Donaldson homomorphism}\label{easy kernel} 

Exceptional bundles associated to an $H_m$-semistable character ${\bf v}$ (see Definition \ref{associated exceptional}) give rise to easy-to-describe classes in the kernel of the the Donaldson homomorphism $\lambda: {\bf v}^{\perp} \to \Pic(M_{H_m}({\bf v}))$. 

Specifically, suppose $E$ is associated to a nonsemiexceptional $H_m$-semistable character ${\bf v}$ and $\mu_{H_m}(E) \geq \mu_{H_m}({\bf v}).$ By semistability and Serre duality, for any semistable $\VV$ of character ${\bf v}$ we have $$\Hom(E,\VV)=\Ext^2(E,\VV)=0,$$and since $\Delta(\VV)=\DLP_{H_m, E}(\nu(\VV))$, we also have $$\chi(E,\VV)=0 \quad \text{and} \quad \Ext^1(E,\VV)=0.$$ This way, we see that if $\cc{U}_r/R$ is the family of $H_m$-semistable sheaves parameterized by a subset $R$ of the Quot scheme used in the GIT construction of $M_{H_m}({\bf v})$, then in the notation of \S \ref{Don homo} we have $$p_!( q^* [E^{\dual}] \cdot [\cc{U}])=0.$$Proposition \ref{Donaldson} then shows that $$\lambda([E^{\dual}])=0.$$

Similarly, if $\mu_{H_m}(E) < \mu_{H_m}({\bf v}),$ then $$\lambda([E^{\dual}\otimes K_X])=0.$$

For that matter, we introduce the following uniform notation: for an exceptional bundle $E$ associated to character ${\bf v}$, we define the following class in $K(X)$
$$[\overline{E}]=
        \begin{cases}
  [E^{\dual}]  &  \text{if} \ \mu_{H_m}(E)\geq \mu_{H_m}({\bf v}), \\ 
[ E^{\dual} \otimes K_X ] & \text{if} \ \mu_{H_m}(E)< \mu_{H_m}({\bf v}).              
        \end{cases}
$$

\subsection{The main theorem} 
Finally, we are ready to state our first main result about the Picard group of the moduli space $M_{H_m}({\bf v})$. We recall that $\lambda: {\bf v}^{\perp} \to \Pic(M_{H_m}({\bf v}))$ denotes the Donaldson homomorphism constructed in Proposition \ref{Donaldson}.

\begin{theorem}{\label{Main theorem}} Let ${\bf v}=(r, \nu, \Delta)\in K(X)$ be a character with $r \geq 2$ and $\Delta\geq \frac{1}{2}$. Let $\epsilon \in \bb{Q}$ be sufficiently small (depending on $r$), $0 < |\epsilon| \ll 1$, and set $m=1+\epsilon$.

\begin{enumerate}\item If    $\Delta> \frac{1}{2}, \ \Delta > \DLP_{H_m}^{<r}(\nu)$ and either \begin{enumerate} \item  $\Delta -\frac{1}{r} \geq \DLP_{H_m}^{<r}(\nu)$ and $\Delta - \frac{1}{r}>\frac{1}{2}$, or \item $\Delta -\frac{1}{r} \geq \DLP_{H_m}^{<r}(\nu)$ and ${\bf v'}=\left(r, \nu, \Delta-\frac{1}{r}\right) \ \text{is primitive} ,$ or \item ${\bf v}$ is a good character, \end{enumerate} then $$\Pic(M_{H_m}({\bf v}))\cong \ZZ^3$$ and $\lambda$ is an isomorphism.

\item If ${\bf v}$ is a good character with $\Delta=\DLP^{<r}_{H_m}(\nu)>\frac{1}{2}$ with a single exceptional bundle $E$ associated to ${\bf v}$, then $$\Pic(M_{H_m}({\bf v})) \cong \ZZ^2$$ and $\lambda$ is an epimorphism with $$\ker \lambda =\ZZ [\overline{E}].$$ 

\item \begin{enumerate} \item If $\Delta=\DLP^{<r}_{H_m}(\nu)>\frac{1}{2}$ with two exceptional bundles $E_1, E_2$ associated to ${\bf v}$ and $$\bv \ \text{is primitive or} \ (E_1,E_2) \ \text{is an exceptional pair},$$  then $$\Pic(M_H({\bf v})) \cong \ZZ$$ and $\lambda$ is an epimorphism with $$\ker \lambda=\ZZ [\overline{E_1}] + \ZZ [\overline{E_2}].$$

\item If $\Delta=\frac{1}{2}$, then $M_{H_m}({\bf v})$ is a projective space and $$\Pic(M_{H_m}({\bf v})) \cong \ZZ.$$  
\end{enumerate}
\end{enumerate}
\end{theorem}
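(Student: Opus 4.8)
The plan is to reduce every case to a computation of $\ker\lambda$ for the Donaldson homomorphism $\lambda\colon \bv^{\perp}\to\Pic(M_{H_m}(\bv))$ of Proposition \ref{Donaldson}, and then to extract this kernel from the explicit description of $\lambda$ on a Gaeta-type family. First I would record that $\bv^{\perp}$ is a saturated rank-three sublattice of $K(X)\cong\ZZ^4$, hence $\bv^{\perp}\cong\ZZ^3$. For the cases with $\Delta>\tfrac12$ (parts (1), (2), (3a)), Theorem \ref{Yoshioka surjectivity} shows $\lambda$ is surjective, so $\Pic(M_{H_m}(\bv))\cong\ZZ^3/\ker\lambda$ and the whole problem becomes the determination of $\ker\lambda$ together with its saturation.

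To compute $\ker\lambda$ I would choose, via Theorem \ref{Gaeta theorem}, a line bundle $L$ for which the general $\mu_{H_m}$-semistable sheaf of character $\bv$ admits an $L$-Gaeta-type resolution, producing the complete family $\cc{V}_t/T$ of Proposition \ref{Gaeta parameterizations} with its $G$-action. Let $U\subset T$ be the open, $G$-invariant semistable locus; completeness makes the classifying map $\phi\colon U\to M_{H_m}(\bv)$ surjective and $G$-equivariant, and $\phi$ is a geometric quotient over the stable locus, so $\phi^{*}\colon \Pic(M_{H_m}(\bv))\to\Pic^{G}(U)$ is injective. By Proposition \ref{Donaldson} the square relating $\lambda$ and $\lambda_{\cc{V}}$ commutes, i.e. $\phi^{*}\circ\lambda = \mathrm{res}\circ\lambda_{\cc{V}}|_{\bv^{\perp}}$, where $\mathrm{res}\colon\Pic^{G}(T)\to\Pic^{G}(U)$ is restriction. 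Since Proposition \ref{Donaldson on Gaeta} (or \ref{Donaldson on Gaeta 2}) identifies $\lambda_{\cc{V}}\colon K(X)\xrightarrow{\sim}\Pic^{G}(T)\cong\Char(G)$ with $\bv^{\perp}\xrightarrow{\sim}\Char(\overline{G})$, injectivity of $\phi^{*}$ yields
\[
\ker\lambda \;=\; \bigl(\lambda_{\cc{V}}|_{\bv^{\perp}}\bigr)^{-1}\!\bigl(\ker\mathrm{res}\bigr).
\]
Because $T$ is smooth with $\Pic(T)=0$, the excision sequence for equivariant Picard groups shows $\ker\mathrm{res}$ is generated by the classes $[\OO_T(D_i)]\in\Char(G)$ of the codimension-one irreducible components $D_i$ of $T\setminus U$, that is, of the divisorial Shatz strata. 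Propositions \ref{Shatz} and \ref{Shatz2} (and their evident analogue for the two-branch case) enumerate these: the only candidates are the strata attached to exceptional bundles associated to $\bv$ and the $\Delta_i=\tfrac12$-strata of Definition \ref{ad hoc}, and Proposition \ref{Yoshioka irreducibility} guarantees each is irreducible, so each $D_i$ is a prime divisor with a well-defined character $[\OO_T(D_i)]$.

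With this machinery the three cases fall out by counting divisorial strata. In part (1) we have $\Delta>\DLP^{<r}_{H_m}(\nu)$, so there are no associated exceptional bundles and Proposition \ref{Shatz} leaves only $\Delta_i=\tfrac12$-strata; conditions (a), (b), (c) each force these to be empty — (c) is immediate from Definition \ref{bad definition}, while under (a) or (b) a short numerical computation with a decomposition $\bv=\bv_1+\bv_2$ (using $\chi(\bv_1,\bv_2)=-1$ and the discriminant identity $\Delta(\bv)=\tfrac{r_1\Delta_1+r_2\Delta_2}{r}+\tfrac{k^2}{r^2r_1r_2}$) contradicts $\Delta-\tfrac1r\geq\DLP^{<r}_{H_m}(\nu)$ together with the stated positivity/primitivity. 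Hence $T\setminus U$ has codimension $\geq2$, $\mathrm{res}$ is an isomorphism, $\ker\lambda=0$, and $\Pic\cong\ZZ^3$. In part (2) the single associated bundle $E$ gives a single divisorial stratum; since we already know $[\overline{E}]\in\ker\lambda$ from \S\ref{easy kernel}, its image $\lambda_{\cc{V}}([\overline{E}])$ must generate the rank-one group $\ker\mathrm{res}$, so $\ker\lambda=\ZZ[\overline{E}]$ and, by primitivity of the prime-divisor class, $\Pic\cong\ZZ^2$. Part (3a) is identical with two strata $D_1,D_2$ matched to $[\overline{E_1}],[\overline{E_2}]$, giving $\ker\lambda=\ZZ[\overline{E_1}]+\ZZ[\overline{E_2}]$ of rank two and $\Pic$ of rank one; the hypothesis that $\bv$ is primitive (invoke Theorem \ref{Maiorana} after arranging $\gcd(r,c_1\cdot H,\chi)=1$) or that $(E_1,E_2)$ is an exceptional pair is exactly what is needed to see the quotient is torsion-free, hence $\cong\ZZ$.

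Part (3b), where $\Delta=\tfrac12$, I would treat separately and directly: here Theorem \ref{Yoshioka surjectivity} does not apply, but the Gaeta-type (equivalently Beilinson-type) resolution of a character of discriminant $\tfrac12$ is rigid enough that $M_{H_m}(\bv)$ is realized as a GIT quotient of a single vector space of resolution maps modulo scaling, which I would identify with a projective space; then $\Pic\cong\ZZ$ is immediate. I expect the main obstacle to lie not in the bookkeeping but in the two places where freeness (rather than mere rank) is asserted: proving that $\ker\mathrm{res}$ is generated \emph{exactly} by the prime-divisor classes $[\OO_T(D_i)]$ and that these classes are primitive (part (2)) or span a saturated rank-two sublattice (part (3a)). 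This is where the irreducibility of the Shatz strata from Proposition \ref{Yoshioka irreducibility} and the explicit determinantal equations of the associated-exceptional strata are indispensable, and where the primitive/exceptional-pair hypotheses of part (3a) do the real work.
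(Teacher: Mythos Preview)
Your overall framework --- compute $\ker\lambda$ by identifying it with the characters cut out by divisorial Shatz strata in a Gaeta family --- is exactly the paper's method for parts (1.c) and (2.b), and your outline there is essentially correct. But for the other cases the paper takes routes that bypass the Shatz analysis entirely, and in those cases your proposed reductions have real gaps.

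\textbf{Cases (1.a), (1.b).} You claim that a short numerical computation shows conditions (a) or (b) force $\bv$ to be good, so that no $\Delta_i=\tfrac12$-strata can appear. The paper does not prove this and in fact explicitly leaves open (Question \ref{question}) whether a character above the $\DLP^{<r}$-surface can be bad; your discriminant identity does not obviously yield the contradiction you sketch. The paper instead proves (1.a), (1.b) by a completely different and much shorter argument: under (a) or (b) there exist $\mu_{H_m}$-stable bundles of character $\bv'=(r,\nu,\Delta-\tfrac1r)$, and elementary modifications of such a bundle produce a copy of $\ZZ\oplus\Pic(X)\cong\ZZ^3$ inside $\Pic(M_{H_m}(\bv))$. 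Since $\lambda$ is already surjective from $\bv^{\perp}\cong\ZZ^3$, it is an isomorphism.

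\textbf{Case (3.a).} You assume an ``evident analogue'' of Proposition \ref{Shatz2} for two associated bundles and then match two divisorial strata to $[\overline{E_1}],[\overline{E_2}]$. No such analogue is proved in the paper, and none is needed: the paper's argument uses no Shatz stratification at all. One simply observes $[\overline{E_1}],[\overline{E_2}]\in\ker\lambda$ from \S\ref{easy kernel}, so $\lambda$ factors through $\bv^{\perp}/(\ZZ[\overline{E_1}]+\ZZ[\overline{E_2}])$; the ample class forces Picard number $\geq 1$, hence exactly $1$. Torsion-freeness then comes either from Theorem \ref{Maiorana} (primitive case) or from Zyuzina's completion of an exceptional pair to a full collection, which makes $\ZZ[\overline{E_1}]+\ZZ[\overline{E_2}]$ a saturated sublattice.

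\textbf{Case (3.b).} Your sketch (``rigid enough that the GIT quotient is a projective space'') is not how the paper proceeds and is not obviously correct. The paper argues directly: for primitive $\bv$ with $\Delta=\tfrac12$ the moduli space is smooth, irreducible, unirational, of dimension $1$, hence $\cong\PP^1$; for $\bv=N\bv'$ one gets a bijective morphism $S^N(\PP^1)\cong\PP^N\to M_{H_m}(\bv)$ onto a normal target, hence an isomorphism.

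\textbf{Case (2.a) vs.\ (2.b).} One subtlety you glossed over: when the associated bundle is a line bundle $L$, the paper chooses the $L$-Gaeta family with $\delta=\chi(\bv\otimes L^{\dual})=0$, and in this family the exceptional stratum is \emph{empty} because $\Hom(L,\VV_t)=0$ for all $t$. The conclusion then comes from $\Char(\overline{G}_{\hat\delta})\cong\ZZ^2$ together with $[\overline{L}]\in\ker\lambda$, not from matching a divisorial stratum to $[\overline{L}]$. Your single-stratum argument is the right one for (2.b), where $r(E)>1$ and the stratum is genuinely present.
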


We draw the reader's attention to statements (1.a) and (2) of the above theorem which use the notion of a \emph{good} Chern character from Definition \ref{bad definition}. This assumption is substantial: as we show in Theorem \ref{Main theorem 2} below for  certain bad characters lying on a single branch of the $\DLP$-surface the Picard number drops to $1$. We also emphasize that determining which statement of the theorem applies to a given character ${\bf v}=(r, \nu, \Delta)$ is a finite computational procedure and, therefore, can be implemented on a computer: the computation of $\DLP^{<r}_{H_m}(\nu)$ is finite, and to check whether ${\bf v}$ is a good  character one needs to test finitely many candidate characters with discriminant $\frac{1}{2}$ for whether they give a decomposition ${\bf v}={\bf v_1}+{\bf v_2}$ as in Definition \ref{bad definition}. 

Let us mention that the classification  in the case $r=1$ also fits the above pattern in a certain sense. For ${\bf v}=(1, aE+bF, n)$ the moduli space $M_{H_m}({\bf v})$ is isomorphic to the Hilbert scheme of $n$ points $X^{[n]}$. Therefore, when $$n=1=\DLP_{H_m,\OO(a,b)}(aE+bF)$$ we have  $X^{[1]} \cong X\ \text{and} \ \Pic(X^{[1]}) \cong \bb{Z}^2.$ When $$n>1=\DLP_{H_m,\OO(a,b)}(aE+bF)$$ we have $\Pic(X^{[n]})\cong \bb{Z}^3$ by the Theorem of Fogarty \cite{Fogarty}.

The proof of Theorem \ref{Main theorem} occupies the rest of this section. For the convenience of the reader, we will prove the theorem in a series of propositions   according to how the theorem is stated.  Cases (1.a), (1.b) and (3) of the theorem have relatively simple proofs. We then prove a part of case (2) so that at that point the theorem will be proved for characters ${\bf v}$ in a large  region in the $(r, \nu, \Delta)$-space. The remaining characters ${\bf v}$ have their discriminant in a narrow range $\frac{1}{2}<\Delta<1$ and have no line bundles associated to them. These conditions allow us to deal with  case (1.c) and the remainder of case (2)  in a uniform fashion though the proofs become considerably more involved.

\subsection{Proof of the main theorem} We start by proving case (3.b) of the theorem to be able to assume $\Delta>\frac{1}{2}$ in the rest of the proof and use the surjectivity of the Donaldson morphism from Theorem \ref{Yoshioka surjectivity}.

\begin{proposition}\label{first prop} Let ${\bf v}=(r, \nu, \Delta)\in K(X)$ be a character with $r \geq 2$ and $\Delta = \frac{1}{2}$. Let $\epsilon \in \bb{Q}$ be sufficiently small (depending on $r$), $0 < |\epsilon| \ll 1$, and set $m=1+\epsilon$.

If ${\bf v}$ is $H_m$-semistable, then $M_{H_m}({\bf v})$ is a projective space and $$\Pic(M_{H_m}({\bf v})) \cong \ZZ.$$
\end{proposition}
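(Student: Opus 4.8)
The plan is to reduce everything to the primitive case and then identify the moduli space with a symmetric product of $\PP^1$. First I would record the numerics forced by $\Delta=\tfrac12$: Riemann--Roch gives $\chi(\bv,\bv)=r^2\bigl(P(0)-2\Delta\bigr)=0$, and for a stable sheaf $\VV$ of character $\bv$ stability and Serre duality give $\hom(\VV,\VV)=1$ and $\ext^2(\VV,\VV)=\hom(\VV,\VV\otimes K_X)^{\dual}=0$, so $\ext^1(\VV,\VV)=1$. I would then write $\bv=N\bv'$ with $\bv'$ primitive; since the discriminant is scaling-invariant, $\Delta(\bv')=\tfrac12$, and because $\DLP^{<r'}_{H_m}\le\DLP^{<r}_{H_m}$ for $r'\le r$, the character $\bv'$ is again $H_m$-semistable. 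It thus suffices to treat $M_{H_m}(\bv')$ and to see how $M_{H_m}(N\bv')$ is assembled from it.

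For the primitive case I would show that every $H_m$-semistable sheaf of character $\bv'$ is stable. By Corollary \ref{JH under generic polarization} every Jordan--H\"older factor has total slope $\nu$ and discriminant $\tfrac12$; in particular none has rank $1$, since line bundles have discriminant $0$. A short computation with the integrality of $c_2$ shows that every integral character of slope $\nu$ and discriminant $\tfrac12$ has rank divisible by the rank $r'$ of the primitive character $\bv'$, so no proper factor can occur and $\bv'$ is stable. Consequently $M_{H_m}(\bv')$ is everywhere smooth (the obstruction space $\ext^2(\VV,\VV)$ vanishes as above), projective, irreducible and unirational (the latter two by Walter's theorem, see \S\ref{quadric}), of dimension $\ext^1(\VV,\VV)=1$. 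A smooth projective unirational curve is rational by L\"uroth, hence $M_{H_m}(\bv')\cong\PP^1$ and $\Pic(M_{H_m}(\bv'))\cong\ZZ$.

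For the general case $\bv=N\bv'$ I would first rule out stable sheaves when $N\ge2$. The polystable sheaves $\bigoplus_{i=1}^{N}\VV_i$ with the $\VV_i$ stable of character $\bv'$ give $S$-equivalence classes forming a subset of $M_{H_m}(\bv)$ in bijection with $\mathrm{Sym}^N M_{H_m}(\bv')\cong\mathrm{Sym}^N\PP^1\cong\PP^N$, of dimension $N$. Since $M_{H_m}(\bv)$ is irreducible of a single dimension while a stable sheaf would be a smooth point of local dimension $\ext^1=1$, for $N\ge2$ there are no stable sheaves (otherwise $\dim M_{H_m}(\bv)=1<N$). Applying this to each $k\bv'$ with $k\ge2$ shows that no stable sheaf has character $k\bv'$, so every Jordan--H\"older factor of a semistable sheaf of character $\bv=N\bv'$ is stable of character exactly $\bv'$. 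Moreover $\ext^1(\VV_1,\VV_2)=-\chi(\bv',\bv')=0$ for non-isomorphic factors (as $\hom=\ext^2=0$), so every semistable sheaf is $S$-equivalent to such a direct sum. Passing to the semisimplification therefore defines a bijective morphism $M_{H_m}(\bv)\to\mathrm{Sym}^N\PP^1\cong\PP^N$.

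It remains to upgrade this bijection to an isomorphism. The space $M_{H_m}(\bv)$ is a normal projective variety, being a good quotient of the smooth parameter scheme furnished by Walter's theorem, and the morphism above is projective with finite fibres, hence finite; over $\bb{C}$ a bijective morphism of varieties is birational, and a finite birational morphism between normal varieties is an isomorphism. This gives $M_{H_m}(\bv)\cong\PP^N$ and $\Pic(M_{H_m}(\bv))\cong\ZZ$. I expect the genuine difficulty to lie in this non-primitive step---constructing the semisimplification morphism to the symmetric product and controlling the strictly semistable locus---whereas the primitive case is comparatively soft once the stability of $\bv'$ is in hand.
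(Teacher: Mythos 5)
Your primitive case is sound and essentially matches the paper's: the paper gets $M_{H_m}(\bv')=M^s_{H_m}(\bv')$ from a gcd condition on $(r,c_1\cdot(qH_m),\chi)$ for a suitable generic $m$, whereas you derive it from the rank-divisibility of integral characters with slope $\nu$ and discriminant $\tfrac12$; either way one lands on a smooth, projective, irreducible, unirational curve and concludes $\PP^1$ by L\"uroth. The problem is in the non-primitive step, and it is exactly where you suspected it would be. You assert that ``passing to the semisimplification defines a bijective morphism $M_{H_m}(\bv)\to S^N\PP^1$,'' but the semisimplification is only a set-theoretic operation on $S$-equivalence classes: Jordan--H\"older filtrations do not vary algebraically in families, and the associated graded object jumps, so there is no evident way to promote $\VV\mapsto\gr(\VV)$ to a morphism of schemes out of $M_{H_m}(\bv)$. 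As written, this step does not go through, and no construction is offered.

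The fix is to build the morphism in the opposite direction, which is what the paper does: since $\bv'$ is primitive (and, for suitable $m$, satisfies the gcd condition), $M_{H_m}(\bv')$ is a fine moduli space carrying a universal family $\cc{U}$; the $N$-fold exterior direct sum of pullbacks of $\cc{U}$ is a flat family of semistable sheaves of character $\bv$ on $M_{H_m}(\bv')^{\times N}$, whose classifying morphism to $M_{H_m}(\bv)$ is $S_N$-invariant and hence factors through $S^N(M_{H_m}(\bv'))\cong\PP^N\to M_{H_m}(\bv)$. Your observations that every Jordan--H\"older factor is stable of character exactly $\bv'$ then show this morphism is bijective on closed points, and your final normality argument applies verbatim to this arrow (the target $M_{H_m}(\bv)$ is normal as a good quotient of a normal scheme, and a bijective, hence finite and birational, morphism onto a normal variety is an isomorphism). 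With that reversal the proof is complete and coincides with the paper's; note also that the ruling out of stable sheaves for $N\ge 2$ is then not needed, since the bijectivity on closed points already follows from the description of the $S$-equivalence classes.
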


\begin{proof}  First, if ${\bf v}=(r,\nu, \frac{1}{2})=(r, c_1, \chi)$ is a primitive character, then for an appropriate (generic) choice of $m=\frac{p}{q}$ one checks that $$\gcd (r, c_1 \cdot (qH_m),\chi)=1.$$ In this case, $M_{H_m}({\bf v})=M_{H_m}^{s}({\bf v})$ is a smooth projective variety of dimension $$\dim M_{H_m}({\bf v})=\exp \dim M_{H_m}({\bf v})=1.$$Moreover, Walter shows in \cite{Walter1993IrreducibilityOM}  that $M^s_{H_m}({\bf v})$ is irreducible and unirational. It follows that in this case $M_{H_m}({\bf v}) \cong \PP^1$. 

Now, assume ${\bf v}$ is not primitive. We can write ${\bf v}=N{\bf v'}$ with $N \in \bb{N}$ and ${\bf v'}$ primitive. In this case $M_{H_m}({\bf v})$ consists of strictly semistable sheaves. But since $M_{H_m}({\bf v'})=M_{H_m}^s({\bf v'})$ carries a universal family of $H_m$-stable sheaves $\cc{U}$, we can take its $N$-fold sum to get a  morphism $$M_{H_m}({\bf v'}) \times M_{H_m}({\bf v'}) \times ... \times M_{H_m}({\bf v'}) \to M_{H_m}({\bf v}).$$ This morphism is surjective on closed points and invariant under permutation of factors, i.e. factors through the symmetric product \begin{equation}\label{symmetric} S^N(M_{H_m}({\bf v'})) \to M_{H_m}({\bf v}),\end{equation} which is now bijective at closed points. Note that since $M_{H_m}({\bf v'}) \cong \PP^1$, the symmetric product is just a projective space $$ S^N(M_{H_m}({\bf v'})) \cong \PP^N.$$ 

Now recall that $M_{H_m}({\bf v})$ is a good quotient  $R\sslash G$ of a smooth subvariety $R$ of the Quot scheme. In particular, $R$ is normal. Since normality is preserved under taking categorical quotients (see \cite[Page 5]{MR1304906}), $M_{H_m}({\bf v})$ is normal too. It follows that \eqref{symmetric} is an isomorphism. \end{proof}

From now on, we will be working with characters ${\bf v}$ with $\Delta(\bv)>\frac{1}{2}$ . Note that by Proposition \ref{changing polarization} (1) the stable locus $M_{H_m}^{s}(\bv)$ will be nonempty for $H_m$-semistable Chern characters with $\Delta(\bv)>\frac{1}{2}$.  Applying Theorem \ref{Yoshioka surjectivity}, we know that the Donaldson homomorphism is surjective  $$\lambda: {\bf v}^{\perp} \surj \Pic (M_{H_m}({\bf v})),$$
and we need to study its kernel. 

The next proposition corresponds to cases (1.a) and (1.b) of Theorem \ref{Main theorem}.

\begin{proposition}\label{second prop}Let ${\bf v}=(r, \nu, \Delta)\in K(X)$ be a character with $r \geq 2$ and $\Delta> \frac{1}{2}$. Let $\epsilon \in \bb{Q}$ be sufficiently small (depending on $r$), $0 < |\epsilon| \ll 1$, and set $m=1+\epsilon$.

If    $\Delta - \frac{1}{r} \geq \DLP_{H_m}^{<r}(\nu)$ and either $$\left(\Delta - \frac{1}{r}>\frac{1}{2}\right) \quad \text{or} \quad \left({\bf v'}=\left(r, \nu, \Delta-\frac{1}{r}\right) \ \text{is primitive} \right),$$ then $$\Pic(M_{H_m}({\bf v}))\cong \ZZ^3$$ and $\lambda$ is an isomorphism.
\end{proposition}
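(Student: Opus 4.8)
The plan is to prove that the Donaldson homomorphism $\lambda\colon \bv^{\perp}\to \Pic(M_{H_m}(\bv))$ is an isomorphism. It is already surjective by Theorem \ref{Yoshioka surjectivity} (applicable because $\Delta>\tfrac12$, so that $M^s_{H_m}(\bv)\neq\emptyset$ by Proposition \ref{changing polarization}), and $\bv^{\perp}$ is free of rank $3$; hence it suffices to show $\ker\lambda=0$. To detect the kernel I would realize $M_{H_m}(\bv)$ through a complete family $\VV_t/T$ of $\OO(1,1)$-prioritary sheaves carrying an $L$-Gaeta-type resolution, produced by Theorem \ref{Gaeta theorem} and Proposition \ref{Gaeta parameterizations}. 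Since $\bv$ lies strictly above the $\DLP^{<r}$-surface (note $\Delta>\Delta-\tfrac1r\geq \DLP^{<r}_{H_m}(\nu)$), I would choose $L$ so that all four exponents $\alpha,\beta,\gamma,\delta$ are strictly positive, placing us in the setting of Proposition \ref{Donaldson on Gaeta} rather than the degenerate Proposition \ref{Donaldson on Gaeta 2}; this is essential, since only with all four exponents nonzero does $\lambda_{\VV_t}$ restrict to an \emph{isomorphism} $\bv^{\perp}\xrightarrow{\sim}\Char(\overline{G})=\Pic^{\overline{G}}(T)\cong\ZZ^3$. Here $T$ is open in an affine space with $\codim_{\bb{H}}(\bb{H}\setminus T)\geq 2$, so $\OO^*(T)=\bb{C}^*$ and $\Pic(T)=0$.

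The reduction then runs as follows. Let $T^{ss}\subset T$ be the open, $\overline{G}$-invariant locus of $H_m$-semistable sheaves, with $\overline{G}$-equivariant classifying morphism $\phi\colon T^{ss}\to M_{H_m}(\bv)$. By Proposition \ref{Donaldson} together with the functoriality in Lemma \ref{computation}(1) applied to the open immersion $T^{ss}\hookrightarrow T$, for every ${\bf u}\in\bv^{\perp}$ one has $\phi^{*}\lambda({\bf u})=\lambda_{\VV_t}({\bf u})|_{T^{ss}}$. If I can show $\codim_{T}(T\setminus T^{ss})\geq 2$, then the restriction $\Pic^{\overline{G}}(T)\to\Pic^{\overline{G}}(T^{ss})$ is an isomorphism, so $\lambda({\bf u})=0$ forces $\lambda_{\VV_t}({\bf u})=0$ in $\Pic^{\overline{G}}(T)$, whence ${\bf u}=0$ by the injectivity of $\lambda_{\VV_t}$ on $\bv^{\perp}$. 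This gives injectivity of $\lambda$ and finishes the proof.

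Thus everything reduces to the codimension estimate, and this is where the hypotheses enter. Because $\bv$ lies strictly above the surface, Proposition \ref{Shatz} tells me that the only candidate codimension-one Shatz strata are the $\Delta_i=\tfrac12$-strata, and a nonempty such stratum furnishes a decomposition $\bv=\bv_1+\bv_2$ satisfying conditions (1)--(4) of Definition \ref{bad definition}, i.e.\ exhibits $\bv$ as a \emph{bad} character. So the crux is to rule out bad decompositions under (1.a) and (1.b). Suppose $\bv=\bv_1+\bv_2$ is such a decomposition with $\Delta_1=\tfrac12$ (the case $\Delta_2=\tfrac12$ being symmetric); note $r_1\geq 2$, since a rank-one torsion-free sheaf has integral discriminant. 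Substituting $\nu_2-\nu_1=\tfrac{k}{r_1r_2}(E-F)$ and $\chi(\bv_1,\bv_2)=-1$ into the Riemann--Roch expression $\Delta(\bv)=\tfrac{r_1\Delta_1+r_2\Delta_2}{r}-\tfrac{r_1r_2}{2r^2}(\nu_1-\nu_2)^2$ and simplifying, I expect to obtain
\[
\Delta(\bv)-\tfrac1r=\tfrac12-\tfrac1r+\frac{r\,r_1-k^2}{r^2r_1^2}\ \leq\ \tfrac12-\tfrac1{2r}\ <\ \tfrac12 ,
\]
using $r_1\geq 2$ and $k^2\geq 0$. Under (1.a) this contradicts $\Delta-\tfrac1r>\tfrac12$. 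Under (1.b) the same bound gives $\DLP^{<r}_{H_m}(\nu)\leq\Delta-\tfrac1r<\tfrac12-\tfrac1{2r^2}$, so by Theorem \ref{existence theorem}(2) and Lemma \ref{exceptional stuff}(3) the character $\bv'=(r,\nu,\Delta-\tfrac1r)$ is semiexceptional, while $\Delta(\bv')\neq\tfrac12-\tfrac1{2r^2}$ shows by Lemma \ref{exceptional stuff}(1) that it is not potentially exceptional; hence $\bv'$ is a nontrivial multiple of an exceptional character and is \emph{not} primitive, contradicting (1.b). Either way no bad decomposition exists, so the $\Delta_i=\tfrac12$-strata are empty and $\codim_T(T\setminus T^{ss})\geq 2$.

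The main obstacle is the last paragraph: pinning down the exact Riemann--Roch identity for $\Delta(\bv)-\tfrac1r$ and extracting from it the clean dichotomy --- discriminant dropping below $\tfrac12$ on the one hand, and forced semiexceptionality (hence non-primitivity of $\bv'$) on the other --- that disposes of (1.a) and (1.b) simultaneously. A secondary technical point to verify carefully is that $\bv$ lying strictly above the surface indeed permits a choice of $L$ with all four Gaeta exponents strictly positive, since this is exactly what makes $\lambda_{\VV_t}$ an isomorphism of rank-$4$ lattices and thus injective on $\bv^{\perp}$. The remaining ingredients --- surjectivity of $\lambda$, the explicit computation of $\lambda_{\VV_t}$ on the Gaeta family, and the codimension-two descent for equivariant Picard groups --- are assembled directly from the cited results and are comparatively routine.
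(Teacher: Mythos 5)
Your route is genuinely different from the paper's. The paper's proof of this proposition is short and avoids Gaeta resolutions entirely: the hypotheses ($\Delta-\tfrac1r\geq\DLP^{<r}_{H_m}(\nu)$ together with $\Delta-\tfrac1r>\tfrac12$ or $\bv'$ primitive) are used only to produce, via Theorem \ref{existence theorem} and Proposition \ref{changing polarization}, a $\mu_{H_m}$-stable \emph{vector bundle} $\VV'$ of character $\bv'=(r,\nu,\Delta-\tfrac1r)$; elementary modifications of $\VV'$ as in \cite[Example 8.1.7]{HL10} then embed $\ZZ\oplus\Pic(X)\cong\ZZ^3$ into $\Pic(M_{H_m}(\bv))$, and combining this lower bound with the surjection $\ZZ^3\cong\bv^{\perp}\surj\Pic(M_{H_m}(\bv))$ forces $\lambda$ to be an isomorphism. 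Your Riemann--Roch computation in the last paragraph is correct as far as I can check ($\Delta=\tfrac12+\tfrac{rr_1-k^2}{r^2r_1^2}$ with $r_1\geq 2$ does give $\Delta-\tfrac1r\leq\tfrac12-\tfrac1{2r}$), and it is actually of independent interest: it shows that characters satisfying (1.a) or (1.b) admit no bad decomposition, which is relevant to Question \ref{question}(1). The paper never needs this fact for Proposition \ref{second prop} because its proof does not pass through the Shatz stratification at all.

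However, your argument has a genuine gap at the step you dismiss as ``secondary'': the claim that $\Delta>\DLP^{<r}_{H_m}(\nu)$ permits a choice of $L$ with all four Gaeta exponents $\alpha,\beta,\gamma,\delta$ strictly positive. The paper establishes strict positivity only under the assumptions \eqref{assumptions} (Lemma \ref{aux lemma}), which require in particular $\tfrac12<\Delta<1$, that $\nu$ avoid the antidiagonal, and that no line bundle be associated to $\bv$ --- and cases (1.a)/(1.b) exist precisely to cover characters where these fail (e.g.\ integral slope forces $\Delta>1$, and the key inequality $\chi(\bv\otimes L_{\floor{\varepsilon},\floor{\varphi}}^{\dual})=r(1-\Delta)>0$ in the proof of Lemma \ref{aux lemma} breaks down). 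Theorem \ref{Gaeta theorem}(2) only guarantees a line bundle with all exponents $\geq 0$; if every admissible $L$ has some exponent equal to $0$, you are in the situation of Proposition \ref{Donaldson on Gaeta 2}, where $\lambda_{\VV_t}$ restricted to $\bv^{\perp}$ surjects onto a rank-$2$ lattice and hence has a rank-one kernel, so your diagram chase would only yield $\rho\geq 2$, not $\rho=3$. This positivity claim is a nontrivial combinatorial statement about integral points between the hyperbolas $P(\nu-\nu(L))=\Delta$ and $P(\nu-\nu(L)+E+F)=\Delta$ that neither you nor the paper proves; without it your proof is incomplete, whereas the paper's elementary-modification argument sidesteps the issue entirely.
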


\begin{proof}  Since $ \Delta - \frac{1}{r} \geq \DLP_{H_m}^{<r}(\nu)$, Theorem \ref{existence theorem} implies that for ${\bf v'}=(r, \nu, \Delta-\frac{1}{r})$ the moduli space $M_{H_m} ({\bf v'})$ is non-empty. If character ${\bf v}$ is primitive, then as in the proof of Proposition \ref{first prop}  above we get that $M_{H_m}({\bf v'})=M_{H_m}^s({\bf v'})$ and there are $H_m$-stable sheaves of character ${\bf v'}$. If 

$$\Delta({\bf v'})=\Delta-\frac{1}{r}>\frac{1}{2},$$
then Proposition  \ref{changing polarization} (1)  guarantees the existence of $H_m$-stable sheaves. In both cases, Proposition \ref{changing polarization} (2) implies that there are $\mu_{H_m}$-stable sheaves of character ${\bf v'}$.  By the results of Walter discussed in \S \ref{quadric}, we can find a  $\mu_{H_m}$-stable {\it vector bundle} $\cc{V}'$ of character ${\bf v'}$. 

Now, taking elementary modifications of $\VV'$ as described in \cite[Example 8.1.7]{HL10} one can show that $\Pic(M_{H_m}({\bf v}))$ contains $\ZZ\oplus \Pic(X)\cong \ZZ^3$. It follows that $$ \ZZ^3 \cong {\bf v}^{\perp} \stackrel{\lambda}{\surj} \Pic (M_{H_m}({\bf v}))$$is an isomorphism, for if it had a nontrivial kernel, Theorem \ref{Yoshioka surjectivity} would imply that the Picard number $\rho(M_{H_m}({\bf v}))\leq 2$, yielding a contradiction. \end{proof}

The next proposition corresponds to case (3.a) of Theorem \ref{Main theorem}.

\begin{proposition}\label{third prop}Let ${\bf v}=(r, \nu, \Delta)\in K(X)$ be a character with $r \geq 2$ and $\Delta> \frac{1}{2}$. Let $\epsilon \in \bb{Q}$ be sufficiently small (depending on $r$), $0 < |\epsilon| \ll 1$, and set $m=1+\epsilon$. 

If $\Delta=\DLP^{<r}_{H_m}(\nu)>\frac{1}{2}$ with two exceptional bundles $E_1, E_2$ associated to ${\bf v}$ and $$\bv \ \text{is primitive or} \ (E_1,E_2) \ \text{is an exceptional pair},$$ then $$\Pic(M_H({\bf v})) \cong \ZZ$$ and $\lambda$ is an epimorphism with $$\ker \lambda=\ZZ [\overline{E_1}] + \ZZ [\overline{E_2}].$$
\end{proposition}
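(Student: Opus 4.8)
The plan is to show that the surjection $\lambda$ has kernel of rank exactly $2$, generated by the two evident classes $[\overline{E_1}],[\overline{E_2}]$, and that the quotient lattice is $\ZZ$. First I would record the ambient lattice data. Since $X=\XX$ carries a full exceptional collection, $K(X)$ is free of rank $4$ and the Euler pairing $\chi(\_\cdot\_)$ is unimodular; hence $\bv^{\perp}$, being the kernel of the nonzero functional ${\bf u}\mapsto\chi({\bf u}\cdot\bv)$, is a saturated sublattice of rank $3$, so $\bv^{\perp}\cong\ZZ^3$. Because $\Delta>\tfrac12$ and $(K_X+2F)\cdot H_m=-2E\cdot H_m<0$, Proposition \ref{changing polarization} gives $M^s_{H_m}(\bv)\neq\varnothing$ and Theorem \ref{Yoshioka surjectivity} makes $\lambda\colon\bv^{\perp}\surj\Pic(M_{H_m}(\bv))$ surjective. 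By the discussion of \S\ref{easy kernel}, both associated bundles contribute kernel classes, so $[\overline{E_1}],[\overline{E_2}]\in\ker\lambda$.

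Second, I would bound the Picard rank. The classes $[\overline{E_1}],[\overline{E_2}]$ are linearly independent in $K(X)$: if $[\overline{E_2}]=c[\overline{E_1}]$ rationally, then comparing the self-Euler characteristics (which equal $1$ since $E_i$ is exceptional and both dualization and $\otimes\,\cc{O}(K_X)$ preserve $\chi(A,A)$) forces $c^2=1$; the case $c=1$ gives $E_1\cong E_2$ because exceptional bundles are determined by their class, while $c=-1$ is impossible on ranks. Hence $\rk\ker\lambda\ge2$ and $\rho(M_{H_m}(\bv))\le1$. Since $\langle[\overline{E_1}],[\overline{E_2}]\rangle\subseteq\ker\lambda$, the map $\lambda$ factors through a surjection
\[
\bar\lambda\colon L:=\bv^{\perp}\big/\langle[\overline{E_1}],[\overline{E_2}]\rangle\;\surj\;\Pic(M_{H_m}(\bv)).
\]
Everything now reduces to proving that $\langle[\overline{E_1}],[\overline{E_2}]\rangle$ is \emph{saturated} in $\bv^{\perp}$, i.e. that $L\cong\ZZ$. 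Granting this, $\bar\lambda$ is a surjection $\ZZ\surj\Pic(M_{H_m}(\bv))$ onto a cyclic group, and the projectivity of the positive-dimensional variety $M_{H_m}(\bv)$ (its dimension is $r^2(2\Delta-1)+1\ge2$) supplies an ample class of infinite order, forcing $\Pic(M_{H_m}(\bv))\cong\ZZ$, $\bar\lambda$ an isomorphism, and therefore $\ker\lambda=\langle[\overline{E_1}],[\overline{E_2}]\rangle$.

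Third, and this is the heart of the argument, I would establish saturation under each hypothesis. When $(E_1,E_2)$ is an exceptional pair, it extends to a full exceptional collection on $X$, so its classes form part of a $\ZZ$-basis of $K(X)$; applying the group automorphisms of $K(X)$ given by dualization and by $\otimes\,\cc{O}(K_X)$ (which is precisely how $[\overline{E_i}]$ is built from $[E_i]$, Serre duality matching the two branches $\mu_{H_m}(E_i)\gtrless\mu_{H_m}(\bv)$) shows that $[\overline{E_1}],[\overline{E_2}]$ also extend to a $\ZZ$-basis, whence their span is saturated. When $\bv$ is primitive, Theorem \ref{Maiorana} (for a generic $m=p/q$ with $\gcd(r,c_1\cdot qH_m,\chi)=1$) shows $\Pic(M_{H_m}(\bv))$ is torsion-free, so that $L$ has trivial torsion exactly when $\langle[\overline{E_1}],[\overline{E_2}]\rangle$ is saturated; I would then check the index $[\,\overline{\langle[\overline{E_1}],[\overline{E_2}]\rangle}:\langle[\overline{E_1}],[\overline{E_2}]\rangle\,]=1$ by a direct computation with the numerical invariants, primitivity of $\bv$ being exactly what removes a potential common factor.

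I expect the main obstacle to be this saturation/index step: the careful bookkeeping of the $\cc{O}(K_X)$-twists entering $[\overline{E_i}]$ against the exceptional-collection basis in the exceptional-pair case, and pinning the index down to $1$ from primitivity in the primitive case. These are finite, explicit numerical verifications, but they are where the two hypotheses of the proposition are genuinely used; once saturation is in hand, the conclusion $\Pic(M_{H_m}(\bv))\cong\ZZ$ together with $\ker\lambda=\ZZ[\overline{E_1}]+\ZZ[\overline{E_2}]$ follows formally as in the second paragraph.
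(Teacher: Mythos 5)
Your proposal follows the paper's architecture closely: surjectivity of $\lambda$ from Theorem \ref{Yoshioka surjectivity}, the two kernel classes from \S\ref{easy kernel}, the rank bound via the factorization through $\bv^{\perp}/(\ZZ[\overline{E_1}]+\ZZ[\overline{E_2}])$, completion of exceptional pairs to full collections (Zyuzina) in one case and torsion-freeness of $\Pic$ (Theorem \ref{Maiorana}) in the other. However, there are two concrete gaps. In the exceptional-pair case, your saturation argument applies a single automorphism of $K(X)$ (dualization, possibly composed with $\otimes K_X$) to a $\ZZ$-basis extending $\{[E_1],[E_2]\}$. This only works when $E_1$ and $E_2$ lie on the same side of $\mu_{H_m}(\bv)$: in the mixed case one has $[\overline{E_1}]=[E_1^{\dual}]$ but $[\overline{E_2}]=[E_2^{\dual}\otimes K_X]$, and no single automorphism of $K(X)$ carries $\{[E_1],[E_2]\}$ to $\{[\overline{E_1}],[\overline{E_2}]\}$, so you cannot conclude that the latter pair extends to a $\ZZ$-basis. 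The paper instead verifies that $(\overline{E_1},\overline{E_2})$ (or the reversed pair) is itself still an exceptional pair of bundles and applies Zyuzina's completion theorem directly to that pair.

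In the primitive case the gap is more serious, because you have made the entire conclusion hinge on saturation of $\ZZ[\overline{E_1}]+\ZZ[\overline{E_2}]$ inside $\bv^{\perp}$ and then deferred exactly that step to "a direct computation with the numerical invariants" which you neither carry out nor make plausible: primitivity of $\bv$ has no evident bearing on the index of the sublattice spanned by the two kernel classes. Moreover the inference "Theorem \ref{Maiorana} shows $\Pic(M_{H_m}(\bv))$ is torsion-free, so that $L$ has trivial torsion exactly when the span is saturated" conflates torsion-freeness of the quotient lattice $L$ with torsion-freeness of its quotient $\Pic(M_{H_m}(\bv))$ --- a surjection can kill torsion, so nothing about $L$ follows. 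The paper's route avoids the issue entirely: you already have $\rho\leq 1$ from the factorization and $\rho\geq 1$ from the ample class, and Theorem \ref{Maiorana} gives torsion-freeness of $\Pic(M_{H_m}(\bv))$ itself, whence $\Pic(M_{H_m}(\bv))\cong\ZZ$ with no saturation statement required. Reorganizing your second paragraph so that the isomorphism $\Pic(M_{H_m}(\bv))\cong\ZZ$ is extracted from rank one plus torsion-freeness, rather than from saturation, would close this gap.
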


\begin{proof}  In this case, the discussion in \S \ref{easy kernel} shows that the subgroup $\ZZ [\overline{E_1}] + \ZZ[\overline{E_2}]$ lies in the kernel of the Donaldson homomorphism, which now factors as $$ {\bf v}^{\perp} / \ZZ [\overline{E_1}] \oplus \ZZ[\overline{E_2}] \surj \Pic(M_{H_m}({\bf v})).$$  Since the ample bundle generates a free $\bb{Z}$-submodule inside $\Pic({M_{H_m}}({\bf v}))$, it follows that the Picard number is equal to one $$\rho(M_{H_m}({\bf v}))=1.$$ 

If $\bv$ is primitive, then for a generic choice of $m=\frac{p}{q}$ we have$$\gcd (r, c_1 \cdot (qH_m),\chi)=1.$$ Applying Theorem \ref{Maiorana} we get that $\Pic(M_{H_m}({\bf v}))$ is torsion-free and, therefore, $$\Pic(M_{H_m}({\bf v})) \cong \ZZ.$$

Now assume $(E_1,E_2)$ or $(E_2,E_1)$ forms an exceptional pair. One checks that $(\overline{E}_1, \overline{E}_2)$ or $(\overline{E_2},\overline{E_1})$ is still an exceptional pair. Zyuzina \cite{1994IzMat..42..163Z} shows that any exceptional pair on $\XX$ can be completed to a full exceptional collection.
Since a full exceptional collection forms a $\ZZ$-basis for $K(X)$, we see that  $\ZZ [\overline{E_1}] \oplus \ZZ[\overline{E_2}]$ is a primitive lattice inside ${\bf v}^{\perp} \subset K(X)$. This way, the Donaldson homomorphism induces $$\ZZ \cong {\bf v}^{\perp} / \ZZ [\overline{E_1}] \oplus \ZZ[\overline{E_2}] \surj \Pic(M_{H_m}({\bf v})).$$The result follows. \end{proof}

The arguments above worked equally well for both good and bad $H_m$-semistable Chern characters. However, for the remaining cases (1.c) and (2) of Theorem \ref{Main theorem} the assumption that character ${\bf v}$ is good is essential.  

It will be convenient to separate the proof of case (2) of Theorem \ref{Main theorem} into the following two subcases (keeping the notation and the assumptions of the theorem):

(2.a) \emph{If  ${\bf v}$ is a good character with  $\Delta=\DLP^{<r}_{H_m}(\nu)>\frac{1}{2}$ with a single exceptional bundle $L$ associated to ${\bf v}$ and $r(L)=1$, then $$\Pic(M_{H_m}({\bf v})) \cong \ZZ^2$$ and $\lambda$ is an epimorphism with $$\ker \lambda =\ZZ [\overline{L}].$$}

(2.b) \emph{If ${\bf v}$ is a good character with $\Delta=\DLP^{<r}_{H_m}(\nu)>\frac{1}{2}$ with a single exceptional bundle $E$ associated to ${\bf v}$ and $r(E)>1$, then $$\Pic(M_{H_m}({\bf v})) \cong \ZZ^2$$ and $\lambda$ is an epimorphism with $$\ker \lambda =\ZZ [\overline{E}].$$}

We prove the case (2.a) first. 
\begin{proposition}\label{fourth prop}Let ${\bf v}=(r, \nu, \Delta)\in K(X)$ be a character with $r \geq 2$ and $\Delta> \frac{1}{2}$. Let $\epsilon \in \bb{Q}$ be sufficiently small (depending on $r$), $0 < |\epsilon| \ll 1$, and set $m=1+\epsilon$.

If ${\bf v}$ is a good character with $\Delta=\DLP^{<r}_{H_m}(\nu)$ with a single exceptional bundle $L$ associated to ${\bf v}$ and $r(L)=1$, then $$\Pic(M_{H_m}({\bf v})) \cong \ZZ^2$$ and $\lambda$ is an epimorphism with $$\ker \lambda =\ZZ [\overline{L}].$$

\end{proposition}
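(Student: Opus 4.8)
The plan is to resolve the general semistable sheaf of character $\bv$ using the associated line bundle $L$ \emph{itself}, and to exploit the fact that in such a resolution $L$ can no longer destabilize. I will treat the case $\mu_{H_m}(L)\geq\mu_{H_m}(\bv)$ in detail; the case $\mu_{H_m}(L)<\mu_{H_m}(\bv)$ is parallel after replacing the $L$-Gaeta resolution by its dual version (Proposition \ref{dual Gaeta family}) and exchanging the roles of $[L^\dual]$ and $[L^\dual\otimes K_X]$. Since $L$ is associated to $\bv$ with $\mu_{H_m}(L)\geq\mu_{H_m}(\bv)$, the branch condition $\Delta=\DLP_{H_m,L}(\nu)$ is equivalent to $\chi(L,\bv)=0$, so the last exponent of the $L$-Gaeta resolution vanishes: $\delta=\chi(\bv\otimes L^\dual)=\chi(L,\bv)=0$. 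Thus Theorem \ref{Gaeta theorem} and Proposition \ref{Gaeta parameterizations} furnish a complete family $\VV_t/T$ of $\OO(1,1)$-prioritary sheaves, where $T\subset\mathbb{H}=\Hom(L(-1,-1)^\alpha,\,L(-1,0)^\beta\oplus L(0,-1)^\gamma)$ is the open locus of injective maps with torsion-free cokernel, $\codim_{\mathbb{H}}(\mathbb{H}\setminus T)\geq 2$, and (after checking $\alpha,\beta,\gamma>0$) $G=GL(\alpha)\times GL(\beta)\times GL(\gamma)$ acts as in \S\ref{Group act and Gaet}. Because $T$ differs from an affine space in codimension $\geq 2$, we have $\OO^*(T)=\bb{C}^*$ and $\Pic(T)=0$, hence $\Pic^G(T)\cong\Char(G)\cong\ZZ^3$.

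Next I would compute the Donaldson homomorphism of this family. By Proposition \ref{Donaldson on Gaeta 2} (the $\delta=0$ case), $\lambda_{\VV_t}\colon K(X)\to\Pic^G(T)\cong\Char(G)$ is an epimorphism carrying $\be_1,\be_2,\be_3$ to generators of $\Char(G)$ and killing $\be_4=[L^\dual]$; in particular $\lambda_{\VV_t}(\bv^\perp)=\Char(\overline{G})\cong\ZZ^2$ and $\lambda_{\VV_t}\vert_{\bv^\perp}$ has kernel exactly $\ZZ\be_4$. Under the present slope hypothesis $\be_4=[L^\dual]=[\overline{L}]$, and $\be_4\in\bv^\perp$ since $\chi(\bv\cdot\be_4)=\delta=0$, so the kernel of $\lambda_{\VV_t}\vert_{\bv^\perp}$ is precisely $\ZZ[\overline{L}]$.

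The heart of the argument is to show that the unstable locus $T\setminus T^{ss}$ has codimension $\geq 2$, so that restriction $\mathrm{res}\colon\Pic^G(T)\to\Pic^G(T^{ss})$ is an isomorphism. Because $\bv$ lies on a single branch given by $L$, Proposition \ref{Shatz2} guarantees that the only possible codimension-one Shatz strata are the stratum whose sheaves have Harder--Narasimhan filtration $0\subset L\subset\VV_t$ and the $\Delta_i=\frac{1}{2}$-strata. The latter are empty because $\bv$ is a good character (Definition \ref{bad definition}). For the former, I would apply $\Hom(L,-)$ to the resolution: using $H^\bullet(\OO(-1,-1))=0$ together with $H^0(\OO(-1,0))=H^0(\OO(0,-1))=0$ forces $\Hom(L,\VV_t)=0$ for every $t\in T$. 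Since membership in that stratum requires $L\hookrightarrow\VV_t$, the stratum is empty, and hence $T\setminus T^{ss}$ has codimension $\geq 2$ and $\mathrm{res}$ is an isomorphism.

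Finally I would assemble these facts. The $G$-linearized family $\VV_t/T^{ss}$ has a $G$-invariant classifying morphism $\phi\colon T^{ss}\to M_{H_m}(\bv)$, so Proposition \ref{Donaldson} gives $\phi^*\circ\lambda=\mathrm{res}\circ(\lambda_{\VV_t}\vert_{\bv^\perp})$; since $\mathrm{res}$ is an isomorphism, $\ker(\phi^*\circ\lambda)=\ker(\lambda_{\VV_t}\vert_{\bv^\perp})=\ZZ[\overline{L}]$. As $\phi^*$ is a homomorphism we automatically have $\ker\lambda\subseteq\ker(\phi^*\circ\lambda)=\ZZ[\overline{L}]$, while \S\ref{easy kernel} gives the reverse inclusion $\ZZ[\overline{L}]\subseteq\ker\lambda$; thus $\ker\lambda=\ZZ[\overline{L}]$. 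Combining this with the surjectivity of $\lambda$ from Theorem \ref{Yoshioka surjectivity} and the primitivity of $\be_4=[\overline{L}]$ in $\bv^\perp\cong\ZZ^3$ yields $\Pic(M_{H_m}(\bv))\cong\bv^\perp/\ZZ[\overline{L}]\cong\ZZ^2$ with $\lambda$ an epimorphism, as claimed. The step I expect to be most delicate is the emptiness of the destabilizing stratum: it is exactly the vanishing $\Hom(L,\VV_t)=0$ --- available only because the resolving line bundle is the associated bundle $L$ itself --- that distinguishes case (2.a) from the higher-rank case (2.b), where no Gaeta resolution by the associated bundle exists and the destabilizing stratum must be analyzed directly. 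A secondary technical point is verifying $\alpha,\beta,\gamma>0$ (and otherwise invoking the appropriate variant of Proposition \ref{Donaldson on Gaeta 2}).
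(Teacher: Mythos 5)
Your proposal is correct and follows essentially the same route as the paper's proof: the $L$-Gaeta resolution with $\delta=\chi(L,\bv)=0$, emptiness of the $\Delta_i=\tfrac12$-strata from goodness, emptiness of the $0\subset L\subset\VV_t$ stratum from $\Hom(L,\VV_t)=0$ read off the resolution, and the Donaldson-homomorphism diagram via Proposition \ref{Donaldson on Gaeta 2}. The only point you defer --- that $\alpha,\beta,\gamma>0$ --- is handled in the paper by the one-line observation that a vanishing exponent would make $L(1,1)$, $L(1,0)$, $L(0,1)$ or a Serre twist a second associated exceptional bundle, contradicting uniqueness.
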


\begin{proof}We treat the case $$\mu_{H_m}({\bf v}) \leq \mu_{H_m}(L)$$and say how to modify the argument in the other case at the end of the proof.

In this situation, the general $H_m$-semistable sheaf $\VV$ admits an $L$-Gaeta type resolution with exponents        \begin{equation*}
  \begin{aligned}
  \alpha=-\chi({\bf v}\otimes L^{\dual}(-1,-1)) > 0 \\
  \beta=-\chi({\bf v}\otimes L^{\dual}(-1,0)) > 0 \\
  \gamma=-\chi({\bf v}\otimes L^{\dual}(0,-1)) > 0 \\
  \delta=\chi({\bf v}\otimes L^{\dual}) = 0   
  \end{aligned} 
\end{equation*}Note that none of $\alpha, \beta, \gamma$ can be equal to $0$. For otherwise, one checks using the semistability of $\VV$ that one of the bundles $L(1,1), L(1,0), L(0,1)$ or their Serre twists would also be associated to ${\bf v}$,  contradicting our assumption.  

Consider the family $\VV_t/T$ of  $\OO(1,1)$-prioritary sheaves admitting  the $L$-Gaeta type resolution \begin{equation}\label{seq} 0 \to L(-1,-1)^{\alpha} \stackrel{\psi_t}{\to} L(-1,0)^{\beta} \oplus L(0,-1)^{\gamma} \to \VV_t \to 0,\end{equation}where  $$T \subset \mathbb{H}=\Hom \left(L(-1,-1)^{\alpha},  L(-1,0)^{\beta} \oplus L(0,-1)^{\gamma} \right)$$ is the open subset parameterizing injective sheaf maps with torsion-free cokernel from Proposition \ref{Gaeta parameterizations}. Since we are assuming that ${\bf v}$ is a good character, there is no $\Delta_i=\frac{1}{2}$ strata in this family by Proposition \ref{Shatz2}. The other potential divisorial Shatz stratum should consist of sheaves $\VV_t$ admitting the $H_m$-Harder-Narasimhan filtration $$0 \subset L \subset \VV_t.$$ Applying $\Hom(L, \_)$ to the short exact sequence \eqref{seq}, we see that $$\Hom(L,\VV_t)=0$$ for all $\psi_t \in T$, so this potential stratum is empty. Combined with Proposition \ref{Gaeta parameterizations}, we get that $$\codim_T \ (T \setminus T^{ss})\geq 2.$$ 

Functorial properties of the Donaldson homomorphism from Lemma \ref{computation} give the following commutative diagram (recall our notation from \eqref{missing exponent} and Proposition \ref{Donaldson on Gaeta 2}):
\begin{center}
\begin{tikzcd}
{\bf v}^{\perp} \arrow[d, hook] \arrow[rr,two heads, "\lambda"]& & \Pic(M_{H_m}({\bf v})) \arrow[d,"\phi_{\cc{V}_t|_{T^{ss}}}^*"] \\
K(X) \arrow[rrd, two heads, "\lambda_{\cc{V}_t}"'] \arrow[rr, two heads, "\lambda_{\cc{V}_t|_{T^{ss}}}"]  & & \Pic^{G_{\hat{\delta}}}(T^{ss})  \\
     &   & \Pic^{G_{\hat{\delta}}}(T).    \arrow[u,"\cong","res"']              
\end{tikzcd} 
\end{center}  
By Proposition \ref{Donaldson on Gaeta 2}  we know that the image of $(res^{-1} \circ \phi_{\cc{V}_t|_{T^{ss}}}^*)$ is $$\Char(\overline{G}_{\hat{\delta}})\subset \Char(G_{\hat{\delta}}) \subset \Pic^{G_{\hat{\delta}}}(T),$$which is a free $\ZZ$-module of rank $2$. On the other hand, by the discussion in \S \ref{easy kernel} we know that $[\overline{L}]$ lies in the kernel of the Donaldson homomorphism $\lambda$. Putting these together, we get that $\phi_{\cc{V}_t|_{T^{ss}}}^* \circ \lambda$ factors as $$\ZZ^2 \cong{\bf v}^{\perp}/\ZZ[\overline{L}] \surj \Pic(M_{H_m}({\bf v})) \surj \ZZ^2, $$so  both maps are isomorphisms and $\Pic(M_{H_m}({\bf v})) \cong \ZZ^2$.

In the other case when  $$\mu_{H_m}({\bf v}) >\mu_{H_m}(L),$$ one modifies the above proof by using the dual version of a Gaeta-type resolution $$0 \to \VV \to L(1,0)^{\beta}\oplus L(0,1)^{\gamma} \to L(1,1)^{\delta} \to 0$$and replacing Propositions \ref{Gaeta parameterizations} and \ref{Donaldson on Gaeta 2} by Proposition \ref{dual Gaeta family} and the dual version of Proposition \ref{Donaldson on Gaeta 2}.  \end{proof} 

At this point let us make a couple of useful observations. First, note that the previous cases fully establish Theorem \ref{Main theorem} for characters ${\bf v}=(r,\nu,\Delta)$ with  $r=2$.
Indeed, $(r=2, \Delta=\frac{1}{2})$ was covered by Proposition \ref{first prop} and for $(r=2, \Delta>\frac{1}{2})$  we have four different cases:
\begin{enumerate}
\item ${\bf v}=(2,\nu,\Delta)=(2,\varepsilon E+ \varphi F,\Delta)$ with $ \varepsilon, \varphi \in \ZZ$ and $$\Delta>\DLP^{<2}_{H_m}(\varepsilon E+ \varphi F)=\DLP_{H_m, \OO(\varepsilon, \varphi)}(\varepsilon E+ \varphi F)=1. $$ The inequality implies that $\Delta\geq \frac{3}{2}$. Therefore,$$\Delta-\frac{1}{2} \geq \DLP_{H_m, \OO(\varepsilon, \varphi)}(\varepsilon E+ \varphi F)=1$$and  this is covered by Proposition \ref{second prop}. 
\item ${\bf v}=(2,\nu,\Delta)=(2,\varepsilon E+ \varphi F,\Delta)$ with $ \varepsilon, \varphi \in \ZZ$ and $$\Delta=\DLP^{<2}_{H_m}(\varepsilon E+ \varphi F)=\DLP_{H_m, \OO(\varepsilon, \varphi)}(\varepsilon E+ \varphi F)=1.$$The line bundle $\OO(\varepsilon, \varphi)$ is the only exceptional bundle associated to ${\bf v}$ in this case. Since characters ${\bf v}$ with $r=2$ are always good characters (see Definition \ref{bad definition}),  this case is covered by Proposition \ref{fourth prop} above.  

\item ${\bf v}=(2,\nu,\Delta)=(2,\varepsilon E+ \varphi F,\Delta)$ with $\varepsilon \in (\ZZ[\frac{1}{2}] \setminus \ZZ)$, or $\varphi  \in (\ZZ[\frac{1}{2}] \setminus \ZZ)$ (or both) and $$\Delta>\DLP^{<2}_{H_m}(\varepsilon E+ \varphi F). $$ As before, one shows that then $$\Delta-\frac{1}{2} \geq \DLP^{<2}_{H_m}(\varepsilon E+ \varphi F) \quad \text{and} \quad \DLP^{<2}_{H_m}(\varepsilon E+ \varphi F) \geq \frac{1}{2}.$$ In case one of the inequalities is strict, we have $\Delta-\frac{1}{2}>\frac{1}{2}$. If $\Delta-\frac{1}{2}=\frac{1}{2}$, then ${\bf v}'=(2, \nu, \frac{1}{2})=(2, c_1, \chi)$ is primitive, because by our assumption $c_1=(2\varepsilon) E+(2\varphi )F$ has an odd component  and therefore is not divisible by $2$. We conclude that this case is covered by Proposition \ref{second prop} above.

\item ${\bf v}=(2,\nu,\Delta)=(2,\varepsilon E+ \varphi F,\Delta)$ with $\varepsilon \in (\ZZ[\frac{1}{2}] \setminus \ZZ)$, or $\varphi  \in (\ZZ[\frac{1}{2}] \setminus \ZZ)$ (or both) and $$\Delta=\DLP^{<2}_{H_m}(\varepsilon E+ \varphi F).$$ From Figure \ref{fig:example}, one sees that ${\bf v}$ lies on two branches of the $\DLP^{<2}_{H_m}$-surface $$\Delta=\DLP_{H_m, L_1}(\varepsilon E+ \varphi F)=\DLP_{H_m, L_2}(\varepsilon E+ \varphi F)=\frac{3}{4}$$with $L_1=\OO((\floor{\varepsilon}+1)E+\floor{\varphi}F)), L_2=\OO(\floor{\varepsilon}E+(\floor{\varphi}+1)F)$, so  there are two line bundles associated to ${\bf v}$ and they form an exceptional pair. This is covered by Proposition \ref{third prop} above.
\end{enumerate}

It remains to prove statements (1.c) and (2.b). By the above discussion we can assume that $r({\bf v}) \geq 3$.  We show next that for the remaining characters ${\bf v}=(r, \nu, \Delta)$ there is a strong upper bound on the discriminant $\Delta$.

\begin{lemma} Let ${\bf v}=(r, \nu, \Delta)\in K(X)$ be a character with $r \geq 3$ and $\Delta> \frac{1}{2}$. Let $\epsilon \in \bb{Q}$ be sufficiently small (depending on $r$), $0 < |\epsilon| \ll 1$, and set $m=1+\epsilon$. 

If ${\bf v}$ is a character satisfying either

\begin{itemize}
\item $ \Delta>\DLP^{<r}_{H_m}(\nu)$, and $$\left(\Delta-\frac{1}{r} <\DLP^{<r}_{H_m}(\nu)\right) \ \text{or} \ \left( \Delta-\frac{1}{r} \leq \frac{1}{2}\right),$$
\end{itemize}
or

\begin{itemize}
\item $\Delta=\DLP^{<r}_{H_m}(\nu)$ with a single exceptional bundle $E$ associated to ${\bf v}$ with $r(E)>1$,

\end{itemize}
then $\Delta<1.$

\end{lemma}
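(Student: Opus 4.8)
The plan is to reduce everything to a single elementary estimate on the function $P$ together with the integrality of Euler characteristics. First I would record the two ingredients used repeatedly: for an exceptional bundle $E$ one has $\Delta(E)=\frac12-\frac1{2r(E)^2}$ with $r(E)$ odd (its character is potentially exceptional, so Lemma \ref{exceptional stuff} applies), and $-\frac12 K_X\cdot H_m=(E+F)\cdot(E+mF)=m+1$. The heart of the matter is the claim that if $\delta\nu=xE+yF$ lies in the strip $|\delta\nu\cdot H_m|\le m+1$ on which $\DLP_{H_m,E}$ is defined, then
\begin{equation*}
P(\delta\nu)=(x+1)(y+1)\le \frac{(m+1)^2}{4m}=:P_{\max}=1+\frac{(m-1)^2}{4m}.
\end{equation*}
I would prove this by maximizing $(x+1)(y+1)$ along each line $mx+y=c$ with $|c|\le m+1$: this is a downward parabola of maximal value $\frac{(c+m+1)^2}{4m}$, largest at $c=0$. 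Since $m=1+\epsilon$, the quantity $P_{\max}$ differs from $1$ by $\frac{\epsilon^2}{4(1+\epsilon)}$, so for $\epsilon$ small (depending only on $r$) I may treat it as essentially $1$.

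With this in hand I dispose of the two configurations in which the controlling exceptional bundle has rank $\ge 3$. In case B, $\Delta=\DLP_{H_m,E}(\nu)=P\bigl(\pm(\nu-\nu(E))\bigr)-\Delta(E)$; since $r(E)>1$ and exceptional ranks are odd, $r(E)\ge 3$, so $\Delta(E)\ge \frac49$ and $\Delta\le P_{\max}-\frac49<1$ for $\epsilon$ small. In case A, the subcase $\Delta-\frac1r\le\frac12$ gives immediately $\Delta\le\frac12+\frac1r\le\frac56<1$ (using $r\ge 3$). In the remaining subcase $\Delta<\DLP^{<r}_{H_m}(\nu)+\frac1r$, I would pick an exceptional bundle $E_0$ with $r(E_0)<r$ realizing the supremum $\DLP^{<r}_{H_m}(\nu)=\DLP_{H_m,E_0}(\nu)$ (attained, by local finiteness of the exceptional bundles meeting the slope strip). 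If $r(E_0)\ge 3$, then $\DLP^{<r}_{H_m}(\nu)\le P_{\max}-\frac49$, whence $\Delta< P_{\max}-\frac49+\frac1r\le P_{\max}-\frac19<1$.

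The one genuinely different, and to my mind crucial, step is the remaining subcase of case A where $E_0=L$ is a line bundle ($r(L)=1$, $\Delta(L)=0$): here the naive bound only yields $\Delta<P_{\max}+\frac1r\approx 1+\frac1r$, and $P_{\max}$ can be as large as $\approx 1$ because line-bundle branches reach height $1$ at their peaks. I would instead exploit integrality. Since $\DLP^{<r}_{H_m}(\nu)=P\bigl(\pm(\nu-\nu(L))\bigr)$ and $\Delta(L)=0$, Riemann--Roch gives
\begin{equation*}
\chi(L,\bv)=r\bigl(P(\nu-\nu(L))-\Delta\bigr)\qquad\text{or}\qquad\chi(\bv,L)=r\bigl(P(\nu(L)-\nu)-\Delta\bigr),
\end{equation*}
according to the sign of $\mu_{H_m}(\bv)-\mu_{H_m}(L)$ (and both when the slopes coincide). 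As $L$ and $\bv$ are integral classes these Euler characteristics are integers, so $\DLP^{<r}_{H_m}(\nu)-\Delta\in\frac1r\ZZ$. But $\Delta>\DLP^{<r}_{H_m}(\nu)$ then forces $\Delta-\DLP^{<r}_{H_m}(\nu)\ge\frac1r$, contradicting $\Delta<\DLP^{<r}_{H_m}(\nu)+\frac1r$. Hence this subcase is vacuous, completing the proof. I expect this integrality argument to be the main obstacle to set up correctly: it is exactly what forbids $\Delta$ from sitting in the narrow window just above a line-bundle branch, which are precisely the branches whose peaks are too high for the crude estimate to suffice.
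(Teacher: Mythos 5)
Your proof is correct and follows essentially the same route as the paper's: the immediate bound $\Delta\le\frac12+\frac1r\le\frac56$ in the first subcase, the integrality of $\chi(L,\bv)=r(P(\nu-\nu(L))-\Delta)$ to show that $\Delta$ cannot sit in the window of width $\frac1r$ above a line-bundle branch, and the bound $\DLP_{H_m,E}(\nu)\le 1-\Delta(E)\le\frac59$ (via $r(E)\ge 3$) for the remaining cases. The only cosmetic difference is that you make the estimate $P\le P_{\max}=1+O(\epsilon^2)$ explicit and absorb the excess into the slack, whereas the paper uses $P\le 1$ directly; both are fine for $\epsilon$ small.
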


\begin{proof}

Suppose first $$\Delta-\frac{1}{r} \leq \frac{1}{2}.$$
We immediately get $$\Delta\leq \frac{1}{2}+\frac{1}{r}\leq \frac{1}{2}+\frac{1}{3}=\frac{5}{6}<1 \quad (\text{recall} \ r\geq 3).$$

Now suppose that $$\Delta>\DLP^{<r}_{H_m}(\nu)  \quad \text{and} \quad \left(\Delta-\frac{1}{r} <\DLP^{<r}_{H_m}(\nu)\right).$$ Note that then $\DLP^{<r}_{H_m}(\nu)$ cannot be equal to $\DLP_{H_m, L}(\nu)$ for a line bundle $L$. Indeed,  $$\Delta>\DLP_{H_m, L}(\nu)$$ is equivalent to $$ r(P(\nu - \nu(L))-\Delta)<0 \ \text{or} \ \ r(P(\nu(L) - \nu)-\Delta)<0.$$This implies $$r(P(\nu - \nu(L))-\Delta+\frac{1}{r})\leq0 \ \text{or} \ \ r(P(\nu(L) - \nu)-\Delta+\frac{1}{r})\leq0, $$which is equivalent to $$ \Delta - \frac{1}{r} \geq \DLP^{<r}_{H_m, L}(\nu),$$contradicting our assumption. Thus $\DLP^{<r}_{H_m}(\nu)=\DLP_{H_m, E}(\nu)$ with $r(E)>1$. By Lemma \ref{exceptional stuff} (1) \begin{equation}\label{est10} r(E)\geq 3 \quad \text{and} \quad  \DLP_{H_m, E}(\nu)\leq \DLP_{H_m, E}(\nu(E))=1-\Delta(E)=\frac{1}{2}+\frac{1}{2r(E)^2}\leq \frac{5}{9}, \end{equation}which  implies $$\Delta<\DLP^{<r}_{H_m}(\nu)+\frac{1}{r}=\DLP_{H_m, E}(\nu)+\frac{1}{r}\leq \frac{5}{9}+\frac{1}{3}=\frac{8}{9}<1.$$

Finally, suppose character ${\bf v}=(r,\nu,\Delta)$ satisfies  $$\Delta=\DLP^{<r}_{H_m}(\nu)=\DLP_{H_m, E}(\nu),$$where $r(E) \geq 3$. From \eqref{est10} we get $$\Delta=\DLP_{H_m, E}(\nu) \leq \frac{5}{9}<1.$$
\end{proof}

Let us make one last observation: for the remaining cases we can assume that in $\nu({\bf v})=\varepsilon E + \varphi F$ we have $$\varepsilon+\varphi\neq \floor{\varepsilon}+\floor{\varphi}+1.$$Indeed, otherwise the total slope $\nu$ would lie on the "antidiagonal" in the $(\varepsilon, \varphi)$-plane of total slopes along which the $\DLP$-surface is given by Line bundles (see Figure \ref{fig:example}). This is covered by Propositions \ref{second prop}, \ref{third prop} and \ref{fourth prop}. 
 
So in the proof of the remaining cases (1.c) and (2.b) of the theorem we can assume that $\bv=(r,\nu,\Delta)$ satisfies
\begin{equation}{\label{assumptions}} \begin{aligned} 
      (1)&  &r &\geq 3, \\
       (2)& &\frac{1}{2}<& \ \Delta<1, \\
        (3)& &(\floor{\varepsilon}+\floor{\varphi}\leq \varepsilon+\varphi< \floor{\varepsilon}+\floor{\varphi}+1) \ &\text{or} \ (\floor{\varepsilon}+\floor{\varphi}+1< \varepsilon+\varphi< \floor{\varepsilon}+\floor{\varphi}+2), \\
        (4)& &\text{There is no line} \ &\text{bundle associated to} \ {\bf v}. \end{aligned}\end{equation}

These assumptions make it possible to resolve a general $H_m$-semistable sheaf of character $\bv$ via  a Gaeta-type resolution with all exponents nonzero. \begin{lemma}\label{aux lemma}Let ${\bf v}=(r, \nu, \Delta)=(r, \varepsilon E+\varphi F, \Delta)\in K(X)$ be a character satisfying conditions \eqref{assumptions}. Let $\epsilon \in \bb{Q}$ be sufficiently small (depending on $r$), $0 < |\epsilon| \ll 1$, and set $m=1+\epsilon$. 

 If $$\floor{\varepsilon}+\floor{\varphi}\leq \varepsilon+\varphi< \floor{\varepsilon}+\floor{\varphi}+1,$$then a general $H_m$-semistable sheaf $\VV$ of character ${\bf v}$ admits an $L$-Gaeta-type resolution $$0 \to L(-1,-1)^{\alpha} \to L(-1,0)^{\beta} \oplus L(0,-1)^{\gamma} \oplus L^{\delta} \to \VV \to 0$$ where $L:=L_{\floor{\epsilon},\floor{\phi}}$ and all the exponents are nonzero.

If $$\floor{\phi}+1< \epsilon+\phi< \floor{\epsilon}+\floor{\phi}+2,$$then a general $H_m$-semistable sheaf $\VV$ of character ${\bf v}$ admits a dual $L$-Gaeta-type resolution $$0 \to \VV \to   L(1,0)^{\alpha}  \oplus L(0,1)^{\beta} \oplus L ^{\gamma}   \to   L (1, 1)^{\delta} \to 0$$ where $L:=L_{\floor{\epsilon},\floor{\phi}}$ and all the exponents are nonzero.
\end{lemma}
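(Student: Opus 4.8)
The plan is to obtain both resolutions directly from the Gaeta-type existence result Theorem~\ref{Gaeta theorem} (and, for the second statement, from its dual), so that the whole problem reduces to showing that the four exponents it produces are not merely nonnegative but \emph{strictly} positive. Recall that a general $H_m$-semistable sheaf is a general member of the irreducible stack $\cc{P}_{\OO(0,1)}(\bv)$ and is in particular $\mu_{H_m}$-semistable and locally free (see \S\ref{quadric}), so Theorem~\ref{Gaeta theorem} applies. Writing $x=\varepsilon-\floor{\varepsilon}\in[0,1)$ and $y=\varphi-\floor{\varphi}\in[0,1)$ and $L=\OO(\floor{\varepsilon},\floor{\varphi})$, the exponents \eqref{Gaeta equation} of the $L$-Gaeta-type resolution are evaluated by Riemann--Roch to
\[
\alpha=r(\Delta-xy),\quad \beta=r(\Delta-x(y+1)),\quad \gamma=r(\Delta-(x+1)y),\quad \delta=r((x+1)(y+1)-\Delta).
\]
Thus everything comes down to four strict inequalities between $\Delta$ and these products.

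In the first case $\floor{\varepsilon}+\floor{\varphi}\le\varepsilon+\varphi<\floor{\varepsilon}+\floor{\varphi}+1$ (that is, $x+y<1$) I would argue as follows. The inequality $\delta>0$ is immediate from assumption~\eqref{assumptions}(2), since $(x+1)(y+1)\ge1>\Delta$; and $\alpha>0$ follows from $xy\le\tfrac14(x+y)^2<\tfrac14<\tfrac12<\Delta$. The two remaining inequalities are the crux. I would identify $x(y+1)$, $(x+1)y$ (and also $xy$) as the values $\DLP_{H_m,L'}(\nu)$ of the rank-one bundles $L'=L(1,0),L(0,1),L(1,1)$: for each of these $(\nu-\nu(L'))\cdot H_m$ is negative and, because $x,y<1$ and $m$ is close to $1$, lies in the strip $|(\nu-\nu(L'))\cdot H_m|\le-\tfrac12K_X\cdot H_m$, so the first branch of Definition~\ref{DLP less than r} gives exactly $P(\nu-\nu(L'))$ equal to these products. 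Since $\bv$ is $H_m$-semistable and $r(L')=1<r$, we have $\Delta\ge\DLP^{<r}_{H_m}(\nu)\ge\DLP_{H_m,L'}(\nu)$. This inequality is in fact strict: if equality held throughout, then $\Delta=\DLP^{<r}_{H_m}(\nu)=\DLP_{H_m,L'}(\nu)$ would exhibit $L'$ as a line bundle associated to $\bv$ in the sense of Definition~\ref{associated exceptional}, which is excluded by assumption~\eqref{assumptions}(4) (equivalently, $\bv$ lies strictly above the surface, or its unique associated bundle has rank $>1$). Hence $\beta,\gamma>0$, and Theorem~\ref{Gaeta theorem} yields the resolution with all four exponents nonzero.

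For the second case $\floor{\varepsilon}+\floor{\varphi}+1<\varepsilon+\varphi<\floor{\varepsilon}+\floor{\varphi}+2$ (that is, $1<x+y<2$) I would pass to the dual sheaf. Since $\VV$ is locally free, $\VV\mapsto\VV^{\dual}$ sends the general member of $M_{H_m}(\bv)$ to the general member of $M_{H_m}(\bv^{\dual})$ with $\bv^{\dual}=(r,-\nu,\Delta)$. One checks that $\bv^{\dual}$ again satisfies \eqref{assumptions}: $r$ and $\Delta$ are unchanged, the duality $E\mapsto E^{\dual}$ on exceptional bundles shows no line bundle is associated to $\bv^{\dual}$, and the substitution $x\mapsto1-x,\ y\mapsto1-y$ (valid since $x,y>0$ here) turns $1<x+y<2$ into $0<(1-x)+(1-y)<1$, placing $\bv^{\dual}$ in the first case. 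Applying the first statement to $\bv^{\dual}$ gives an $L'$-Gaeta-type resolution of $\VV^{\dual}$ with nonzero exponents; dualizing this sequence of locally free sheaves (and using $\VV^{\dual\dual}\cong\VV$) produces a dual $L$-Gaeta-type resolution \eqref{dual Gaeta} of $\VV$ with the same nonzero exponents permuted. Equivalently, one may compute directly: for the dual resolution anchored at the upper corner the exponents are $r(\Delta-(1-x)(2-y))$, $r(\Delta-(2-x)(1-y))$, $r((2-x)(2-y)-\Delta)$ and $r(\Delta-(1-x)(1-y))$, whose positivity follows by the identical $\DLP$-plus-assumption-\eqref{assumptions}(4) mechanism together with $(2-x)(2-y)>1>\Delta$.

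I expect the main obstacle to be the bookkeeping in the strictness step rather than any single estimate: one must correctly determine the active branch of $\DLP_{H_m,L'}$ from the sign of $(\nu-\nu(L'))\cdot H_m$ as $m\to1$, verify the strip condition so that the $\DLP^{<r}$-bound is genuinely available, and upgrade $\Delta\ge\DLP_{H_m,L'}(\nu)$ to a strict inequality uniformly over the two configurations of the theorem (strictly above the surface, versus on a single branch of rank $>1$). A secondary subtlety, visible in the direct computation, is that the dual resolution must be anchored at $\OO(\floor{\varepsilon}+1,\floor{\varphi}+1)$: for the bundle $\OO(\floor{\varepsilon},\floor{\varphi})$ the exponent $r((1-x)(1-y)-\Delta)$ is forced to be nonpositive by semistability, so the anchoring line bundle is precisely the one dictated by the duality argument above.
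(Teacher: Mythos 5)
Your argument is correct and is essentially the paper's own proof: the paper obtains $\delta>0$ from $\Delta<1$ via Riemann--Roch, obtains $\alpha,\beta,\gamma\geq 0$ from the vanishing of $\Hom$ and $\Ext^2$ by semistability, and upgrades these to strict inequalities by observing that $\chi(L_{\floor{\varepsilon}+k,\floor{\varphi}+l},\bv)=0$ would make that line bundle (or a Serre twist) associated to $\bv$, contradicting \eqref{assumptions}(4) --- which is exactly your $\DLP$-based strictness step --- and it handles the second case by the same dualize--resolve--dualize maneuver you describe. Your closing ``secondary subtlety'' is also correct and worth recording: the resolution produced by dualizing is anchored at $\OO(\floor{\varepsilon}+1,\floor{\varphi}+1)$, and for the printed anchor $L=L_{\floor{\varepsilon},\floor{\varphi}}$ the exponent $\gamma=\chi(\VV,L)=r\left((1-x)(1-y)-\Delta\right)$ is forced to be negative whenever $x+y>1$ and $\Delta>\tfrac{1}{2}$, so the $L$ appearing in the second half of the statement should indeed be read as $L_{\floor{\varepsilon}+1,\floor{\varphi}+1}$.
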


\begin{proof} We prove the case \begin{equation}\label{cond}\floor{\varepsilon}+\floor{\varphi}\leq \varepsilon+\varphi< \floor{\varepsilon}+\floor{\varphi}+1,\end{equation} and say how to modify the argument for the other case at the end of the proof.

 With the notation of \S \ref{Gaeta section}, we use the bound \eqref{assumptions} (2) and formally compute using Riemann-Roch: $$\chi({\bf v}(-L_{\varepsilon, \varphi}))=r(1-\Delta)>0.$$Thus, in the $(a,b)$-plane $\bb{R}^2$ the point $(\varepsilon, \varphi)$ lies below the lower-left branch $Q$ of the hyperbola $\chi({\bf v}(-L_{a,b}))=0$. Therefore, the integral point $(\floor{\varepsilon}, \floor{\varphi})$ also lies below $Q$ and we have that \begin{equation*}{\label{k l zero}}\chi({\bf v}(-L_{\floor{\varepsilon},\floor{\varphi}}))>0.\end{equation*} 

For a sufficiently small $\epsilon$ condition \eqref{cond} translates into a condition on $\mu_{H_m}$-slopes: $$\mu_{H_m}(L_{\floor{\varepsilon},\floor{\varphi}})\leq\mu_{H_m}({\bf v})<\mu_{H_m}(L_{\floor{\varepsilon}+k,\floor{\varphi}+l})+1 \ \text{for} \ (k,l)\in \{(1,0),(0,1),(1,1)\}.$$Therefore, for $\cc{V} \in M_{H_m}({\bf v})$  we have $$\Hom(L_{\floor{\varepsilon}+k,\floor{\varphi}+l},\cc{V})=\Ext^2(L_{\floor{\varepsilon}+k,\floor{\varphi}+l},\cc{V})=0,$$resulting in $$-\chi({\bf v}(-L_{\floor{\varepsilon}+k,\floor{\varphi}+l}))=-\chi(L_{\floor{\varepsilon}+k,\floor{\varphi}+l},{\bf v})=\ext^1(L_{\floor{\varepsilon}+k,\floor{\varphi}+l},\cc{V})\geq0.$$In fact, the inequalities are strict, for otherwise $L_{\floor{\varepsilon}+k,\floor{\varphi}+l}$ (or their Serre twists) would be associated to ${\bf v}$ contradicting assumption \eqref{assumptions} (4).  This shows that the line bundle $L:=L_{\floor{\varepsilon},\floor{\varphi}}$ satisfies \eqref{Gaeta equation} of Theorem \ref{Gaeta theorem} with all  integers $\alpha, \beta, \gamma,\delta$ being nonzero. 

In the case $$\floor{\varepsilon}+\floor{\varphi}+1< \varepsilon+\varphi< \floor{\epsilon}+\floor{\phi}+2,$$
one can first pass to the dual character $\bv'$, resolve a generic $\mu_{H_m}$-stable vector bundle by a Gaeta-type resolution with all exponents nonzero as above, and then take the dual of the whole resolution. Here we use Proposition \ref{changing polarization} to guarantee the existence $\mu_{H_m}$-stable bundles.
\end{proof}

Thus, to study $M_{H_m}({\bf v})$ for $\bv$ satisfying conditions \eqref{assumptions} with $$\floor{\varepsilon}+\floor{\varphi}\leq \varepsilon+\varphi< \floor{\varepsilon}+\floor{\varphi}+1$$we consider the complete family $\VV_t/T$ of $\OO(1,1)$-prioritary sheaves admitting an $L$-Gaeta type resolution \begin{equation}{\label{working Gaeta}} 0 \to L(-1,-1)^{\alpha} \stackrel{\psi_t}{\to} L(-1,0)^{\beta} \oplus L(0,-1)^{\gamma} \oplus L^{\delta} \to \VV_t \to 0,\end{equation}where $L=L_{\floor{\varepsilon},\floor{\varphi}}$,  all the exponents are nonzero, and \begin{equation}\label{family}T\subset \mathbb{H}=\Hom \left(L(-1,-1)^{\alpha},  L(-1,0)^{\beta} \oplus L(0,-1)^{\gamma} \oplus L^{\delta} \right)\end{equation} is the open subset parameterizing injective sheaf maps with torsion-free cokernel. By Proposition \ref{Gaeta parameterizations} \begin{equation}\label{codim} \codim_{\bb{H}} (\bb{H} \setminus T)\geq 2.\end{equation}

Likewise, to study $M_{H_m}({\bf v})$ for $\bv$ satisfying conditions \eqref{assumptions} with $$\floor{\varphi}+1< \varepsilon+\phi< \floor{\varepsilon}+\floor{\varphi}+2$$we consider the complete family $\VV_t/T$ of $\OO(1,1)$-prioritary vector bundles admitting the dual version of an $L$-Gaeta type resolution \begin{equation*} 0 \to  \VV_t \to  L(1,0)^{\alpha} \oplus L(0,1)^{\beta} \oplus L^{\gamma} \xrightarrow{\psi_t} L(1,1)^{\delta}\to 0,\end{equation*}where $L=L_{\floor{\varepsilon},\floor{\varphi}}$,  all the exponents are nonzero, and \begin{equation*}T\subset \mathbb{H}=\Hom \left(L(1,0)^{\alpha} \oplus L(0,1)^{\beta} \oplus L^{\gamma} ,  L(1,1)^{\delta} \right)\end{equation*} is the open subset parameterizing surjective sheaf maps. By the dual version of Proposition \ref{dual Gaeta family} \begin{equation*}\codim_{\bb{H}} (\bb{H} \setminus T)\geq 2.\end{equation*}

Next, we analyze the Shatz stratification of $T$ and apply the irreducibility results of \S \ref{irreducible families} to compute the kernel of the Donaldson homomorphism in the remaining cases.  In what follows, we treat the case   $$\floor{\varepsilon}+\floor{\varphi}\leq \varepsilon+\varphi< \floor{\varepsilon}+\floor{\varphi}+1,$$ leaving the necessary modifications of the proof in the other case to the reader.
 
The next proposition proves case (1.c) of Theorem \ref{Main theorem}.

\begin{proposition}\label{fifth prop} Let ${\bf v}=(r, \nu, \Delta)\in K(X)$ be a character with $r \geq 3$ and $\Delta> \frac{1}{2}$. Let $\epsilon \in \bb{Q}$ be sufficiently small (depending on $r$), $0 < |\epsilon| \ll 1$, and set $m=1+\epsilon$.

If ${\bf v}$ is a good character with  $$\Delta>  \DLP^{<r}_{H_m}(\nu)$$ and $$\left(\Delta-\frac{1}{r} <\DLP^{<r}_{H_m}(\nu)\right) \ \text{or} \ \left(\Delta-\frac{1}{r} \leq \frac{1}{2}\right),$$ then $$\Pic(M_H({\bf v}))\cong \ZZ^3$$ and $\lambda$ is an isomorphism.

\end{proposition}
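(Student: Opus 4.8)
The plan is to mirror the strategy of Proposition \ref{fourth prop}, exploiting the Gaeta-type resolution with all four exponents nonzero produced by Lemma \ref{aux lemma} together with the explicit computation of the Donaldson homomorphism in Proposition \ref{Donaldson on Gaeta}. By the reductions recorded in \eqref{assumptions} I may assume $\floor{\varepsilon}+\floor{\varphi}\leq \varepsilon+\varphi< \floor{\varepsilon}+\floor{\varphi}+1$ and work with the complete family $\VV_t/T$ of $\OO(1,1)$-prioritary sheaves admitting the resolution \eqref{working Gaeta}, where $T\subset\mathbb{H}$ is the open locus \eqref{family} of injective maps with torsion-free cokernel, so that $\codim_{\mathbb{H}}(\mathbb{H}\setminus T)\geq 2$ by \eqref{codim}. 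Since $\mathbb{H}$ is an affine space and this complement has codimension at least two, I record that $\OO^*(T)=\bb{C}^*$ and $\Pic(T)=0$. The crucial structural feature of this case is that because $\bv$ lies \emph{strictly above} the $\DLP^{<r}$-surface, there is no exceptional bundle associated to $\bv$; hence, unlike in case (2), the discussion of \S\ref{easy kernel} forces no class into $\ker\lambda$, and I expect $\lambda$ to be injective.

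The first substantive step is to show that the unstable locus has high codimension, namely $\codim_T(T\setminus T^{ss})\geq 2$. Here I would invoke Proposition \ref{Shatz}: for a character lying above the $\DLP^{<r}$-surface, the only codimension-one $H_m$-Shatz strata in a complete family of $\OO(1,1)$-prioritary sheaves are the $\Delta_i=\frac{1}{2}$-strata. But the hypothesis that $\bv$ is a \emph{good} character means, by Definition \ref{bad definition}, that $\bv$ admits no decomposition $\bv=\bv_1+\bv_2$ of the required type with $\bv_1,\bv_2$ being $H_m$-semistable characters satisfying $p_{H_m,\bv_1}>p_{H_m,\bv_2}$; consequently no $\Delta_i=\frac{1}{2}$-stratum can be nonempty in the family, and every nonempty Shatz stratum of unstable sheaves has codimension at least two. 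I regard this as the main step, since it is precisely where the goodness hypothesis enters and where the Shatz-stratification machinery of \S\ref{Shatz section} is deployed.

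Granting this, I would assemble the commutative diagram used in the proof of Proposition \ref{fourth prop}, relating the surjection $\lambda:\bv^{\perp}\surj \Pic(M_{H_m}(\bv))$ (surjective by Theorem \ref{Yoshioka surjectivity}, as $\Delta>\frac{1}{2}$), the pullback $\phi^*_{\cc{V}_t|_{T^{ss}}}$ along the $G$-equivariant classifying morphism, and the equivariant Donaldson map $\lambda_{\cc{V}_t}$. Because both $\mathbb{H}\setminus T$ and $T\setminus T^{ss}$ have codimension at least two in the smooth ambient spaces, the restriction $res\colon\Pic^{G}(T)\to\Pic^{G}(T^{ss})$ is an isomorphism: removing a codimension-two locus changes neither $\OO^*$ nor $\Pic$, so both groups are identified with $\Char(G)\cong\bb{Z}^4$ through the exact sequence of Proposition \ref{crossed morphisms}. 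By Proposition \ref{Donaldson on Gaeta}, $\lambda_{\cc{V}_t}$ is an isomorphism $K(X)\xrightarrow{\sim}\Pic^{G}(T)$ carrying $\bv^{\perp}$ isomorphically onto $\Char(\overline{G})$, a free $\bb{Z}$-module of rank $3$.

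To conclude, I would read off from the diagram that $\phi^*_{\cc{V}_t|_{T^{ss}}}\circ\lambda = res\circ\lambda_{\cc{V}_t}|_{\bv^{\perp}}$, which is injective since $res$ is an isomorphism and $\lambda_{\cc{V}_t}|_{\bv^{\perp}}$ is injective. Hence $\lambda$ itself has trivial kernel; being also surjective with $\bv^{\perp}\cong\bb{Z}^3$, it is an isomorphism, giving $\Pic(M_{H_m}(\bv))\cong\bb{Z}^3$. The complementary slope range $\floor{\varphi}+1<\varepsilon+\varphi<\floor{\varepsilon}+\floor{\varphi}+2$ should be handled identically, replacing Propositions \ref{Gaeta parameterizations} and \ref{Donaldson on Gaeta} by Proposition \ref{dual Gaeta family} and the dual version of Proposition \ref{Donaldson on Gaeta} applied to the dual Gaeta-type resolution of Lemma \ref{aux lemma}.
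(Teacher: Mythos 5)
Your proposal is correct and follows essentially the same route as the paper's proof: reduce via the assumptions \eqref{assumptions} to the Gaeta-type family of \eqref{working Gaeta}, use Proposition \ref{no divisorial Shatz strata} together with goodness of $\bv$ to rule out codimension-one Shatz strata so that $\codim_{\bb{H}}(\bb{H}\setminus T^{ss})\geq 2$, and then conclude injectivity of $\lambda$ from the commutative diagram and Proposition \ref{Donaldson on Gaeta}. The only cosmetic difference is that you spell out the restriction-isomorphism and surjectivity steps in slightly more detail than the paper does.
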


\begin{proof} By the discussion above, we can assume that character ${\bf v}$ satisfies \eqref{assumptions} with  $$\floor{\varepsilon}+\floor{\varphi}\leq \varepsilon+\varphi< \floor{\varepsilon}+\floor{\varphi}+1.$$
Consider the family $T$ from \eqref{family}. Proposition \ref{no divisorial Shatz strata} says that potential divisorial Shatz strata of $T$ are given by the $\Delta_i=\frac{1}{2}$-strata. Since we are assuming that ${\bf v}$ is a good Chern character, it follows that there is no Shatz strata of codimension $1$ in $T$. Therefore, by \eqref{codim} the semistable locus $T^{ss}\subset T \subset \bb{H}$ satisfies \begin{equation*}{\label{codimension}}\codim_{\bb{H}}(\bb{H} \setminus T^{ss}) \geq 2.\end{equation*} 

The Donaldson morphism fits into the following commutative diagram 
\begin{center}
\begin{tikzcd}
{\bf v}^{\perp} \arrow[d, hook] \arrow[rr,two heads, "\lambda"]& & \Pic(M_H({\bf v})) \arrow[d,"\phi_{\cc{V}_t|_{T^{ss}}}^*"] \\
K(X) \arrow[rrd, "\lambda_{\cc{V}_t}"'] \arrow[rr, "\lambda_{\cc{V}_t|_{T^{ss}}}"]  & & \Pic^G(T^{ss})  \\
     &   & \Pic^G(T).    \arrow[u,"\cong","res"']              
\end{tikzcd} 
\end{center}
By Proposition \ref{Donaldson on Gaeta}, the bottom map is an isomorphism and it follows that $\lambda$ is injective. \end{proof}

Finally, we finish proving Theorem \ref{Main theorem} by considering the last remaining case (2.b).

\begin{proposition}\label{sixth prop}Let ${\bf v}=(r, \nu, \Delta)\in K(X)$ be a character with $r \geq 3$ and $\Delta> \frac{1}{2}$. Let $\epsilon \in \bb{Q}$ be sufficiently small (depending on $r$), $0 < |\epsilon| \ll 1$, and set $m=1+\epsilon$.

If ${\bf v}$ is a good character with $$\Delta=\DLP^{<r}_{H_m}(\nu)$$ with a single exceptional bundle $E$ associated to ${\bf v}$ and $r(E)>1$, then $$\Pic(M_{H_m}({\bf v})) \cong \ZZ^2$$ and $\lambda$ is an epimorphism with  $$\ker \lambda=\ZZ [\overline{E}].$$

\end{proposition}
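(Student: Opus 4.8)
The plan is to run the same strategy as in Proposition \ref{fourth prop}, but now accounting for the one genuine codimension-one Shatz stratum that appears precisely because $r(E)>1$. By the reductions recorded before the statement I may assume $\bv$ satisfies \eqref{assumptions} and, treating the case $\floor{\varepsilon}+\floor{\varphi}\leq \varepsilon+\varphi<\floor{\varepsilon}+\floor{\varphi}+1$ (the other being dual), I will work with the complete family $\VV_t/T$ of $\OO(1,1)$-prioritary sheaves admitting the $L$-Gaeta-type resolution \eqref{working Gaeta} with $L=L_{\floor{\varepsilon},\floor{\varphi}}$ and all exponents nonzero, furnished by Lemma \ref{aux lemma}; here $\codim_{\mathbb{H}}(\mathbb{H}\setminus T)\geq 2$. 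By Proposition \ref{Donaldson on Gaeta} the homomorphism $\lambda_{\VV_t}\colon K(X)\to \Pic^G(T)\cong \Char(G)\cong\ZZ^4$ is an isomorphism carrying $\bv^{\perp}$ isomorphically onto $\Char(\overline{G})\cong\ZZ^3$. The proof will consist of comparing $\lambda$ with $\lambda_{\VV_t}$ after restricting to the semistable locus $T^{ss}$ and identifying the single relation contributed by the unstable divisor.

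First I would pin down that divisor. Since $\bv$ is good, Proposition \ref{Shatz2} leaves a unique codimension-one Shatz stratum $S\subset T$: the locus of sheaves whose Harder-Narasimhan filtration has the associated bundle $E$ as maximal destabilizing subsheaf (if $\mu_{H_m}(E)\geq\mu_{H_m}(\bv)$) or as minimal quotient (if $\mu_{H_m}(E)\leq\mu_{H_m}(\bv)$); this is exactly where the argument diverges from Proposition \ref{fourth prop}, in which $E$ was a line bundle and the corresponding stratum was empty. Using Proposition \ref{Yoshioka irreducibility} for the family $\mathbb{F}([E],\bv-[E])$ I would argue that $S$ is irreducible, so $D:=\overline{S}$ is the unique codimension-one component of $T\setminus T^{ss}$. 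Combined with $\codim_{\mathbb{H}}(\mathbb{H}\setminus T)\geq 2$, the excision sequence for the $G$-invariant open immersion $T^{ss}\hookrightarrow T$ then reads
$$\ZZ\,[\OO_T(D)] \longrightarrow \Pic^G(T) \xrightarrow{\ res\ } \Pic^G(T^{ss}) \longrightarrow 0,$$
so that $\ker(res)=\ZZ\,[\OO_T(D)]$.

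The main step, and the one I expect to be the real obstacle, is to compute $[\OO_T(D)]$ and to show it equals $\pm\lambda_{\VV_t}([\overline{E}])$. When $\mu_{H_m}(E)\geq\mu_{H_m}(\bv)$, so $[\overline{E}]=[E^{\dual}]$, I would apply $R\sheafhom(q^*E,-)$ to the universal resolution \eqref{working Gaeta} and push forward along $p$; since $\chi(E,\bv)=0$ and $\Ext^2(E,\VV_t)$ vanishes off a locus of codimension at least two, this represents $Rp_*R\sheafhom(q^*E,\VV)$ by a $G$-equivariant two-term complex $[\cc{A}\xrightarrow{\rho}\cc{B}]$ of vector bundles of equal rank that is generically an isomorphism. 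By the very definition of the Donaldson homomorphism, $\lambda_{\VV_t}([E^{\dual}])=\det\big(p_{!}(q^*[E^{\dual}]\cdot[\VV])\big)=\det\cc{A}-\det\cc{B}$ in $\Pic^G(T)$, while $\{\det\rho=0\}$ is supported on $D$. The delicate point is multiplicity one: the generic member of $S$ has $\hom(E,\VV_t)=1$, because every map $E\to\VV_t$ factors through the maximal destabilizing subsheaf $E$, which is simple, so $\rho$ drops rank by exactly one along $D$ and $\det\rho$ vanishes to order one. This forces $[\OO_T(D)]=\det\cc{B}-\det\cc{A}=-\lambda_{\VV_t}([\overline{E}])$. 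In the case $\mu_{H_m}(E)\leq\mu_{H_m}(\bv)$ I would run the symmetric computation with $Rp_*R\sheafhom(\VV,q^*E)$, using $\chi(\bv,[E])=0$, the factorization of maps $\VV_t\to E$ through the minimal destabilizing quotient $E$, and relative Serre duality to recover $[\overline{E}]=[E^{\dual}\otimes K_X]$.

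To finish, I would note that $[\overline{E}]$ is a primitive class of $K(X)$ lying in the saturated sublattice $\bv^{\perp}$ (saturated because the Euler form on $K(X)$ is unimodular), hence primitive there, so $\lambda_{\VV_t}([\overline{E}])$ is primitive in $\Char(\overline{G})$ and $\ZZ\,[\OO_T(D)]=\ZZ\,\lambda_{\VV_t}([\overline{E}])\subseteq\Char(\overline{G})$. The functoriality of Lemma \ref{computation} gives $\phi_{\VV_t|_{T^{ss}}}^{*}\circ\lambda=res\circ\lambda_{\VV_t}$ on $\bv^{\perp}$, whence
$$\ker\big(\phi_{\VV_t|_{T^{ss}}}^{*}\circ\lambda\big)=(\lambda_{\VV_t}|_{\bv^{\perp}})^{-1}\big(\ker(res)\cap\Char(\overline{G})\big)=\ZZ\,[\overline{E}].$$
Since $\ZZ\,[\overline{E}]\subseteq\ker\lambda\subseteq\ker(\phi_{\VV_t|_{T^{ss}}}^{*}\circ\lambda)$ by \S\ref{easy kernel}, all three coincide, so $\ker\lambda=\ZZ\,[\overline{E}]$; as $\lambda$ is surjective by Theorem \ref{Yoshioka surjectivity}, this yields $\Pic(M_{H_m}(\bv))\cong\bv^{\perp}/\ZZ\,[\overline{E}]\cong\ZZ^2$. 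The crux, to reiterate, is the irreducibility of $S$ together with the multiplicity-one statement that forces $[\OO_T(D)]$ to coincide with $\lambda_{\VV_t}([\overline{E}])$ rather than with a proper multiple of it.
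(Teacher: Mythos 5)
Your proposal is correct and follows essentially the same route as the paper's proof: the same Gaeta-type family $\VV_t/T$, the identification via Proposition \ref{Shatz2} and goodness of the unique divisorial Shatz stratum, its irreducibility via Proposition \ref{Yoshioka irreducibility} (the paper spells out the intermediate Quot-scheme step, with $GL(\alpha)$-fibers, needed to transfer irreducibility from the abstract family $\mathbb{F}(\bv_1,\bv_2)$ to the stratum inside $T$), and the conclusion that the kernel of $\Pic^G(T)\to\Pic^G(T^{ss})$ is cyclic, generated by the class of the unstable divisor, so that primitivity of $[\overline{E}]$ pins down $\ker\lambda=\ZZ[\overline{E}]$ and $\Pic(M_{H_m}(\bv))\cong\bv^{\perp}/\ZZ[\overline{E}]\cong\ZZ^2$. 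The one place you go beyond the paper --- the determinantal identification of $[\OO_T(D)]$ with $-\lambda_{\VV_t}([\overline{E}])$ via the multiplicity-one claim --- is also the only shaky step (generic corank one of $\rho$ along $D$ does not by itself force $\det\rho$ to vanish to order one there), but it is redundant: since $\lambda_{\VV_t}([\overline{E}])$ is a primitive element of $\Char(G)$ lying in the cyclic group $\ker(res)=\ZZ[\OO_T(D)]$, it must generate it, which is all your final paragraph actually uses and is exactly how the paper argues (the explicit character of the divisor is only computed afterwards, in Remark \ref{explicit character}).
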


\begin{proof}  By the discussion above, we can again assume that character ${\bf v}$ satisfies \eqref{assumptions} with  $$\floor{\varepsilon}+\floor{\varphi}\leq \varepsilon+\varphi< \floor{\varepsilon}+\floor{\varphi}+1.$$We further assume $$ \mu_{H_m}({\bf v})\leq\mu_{H_m}(E),$$ leaving the necessary modifications of the proof in the other case to the reader. By the discussion in \S \ref{easy kernel}, we know that $[\overline{E}]$ lies in the kernel of the Donaldson homomorphism. We show that in fact $$\ker(\lambda)=\bb{Z} [E].$$

\emph{Step 1}. Once again we analyze the Shatz stratification of the family $\VV_t/T$ from \eqref{family} which parameterizes $\O(1,1)$-prioritary sheaves admitting an $L$-Gaeta type resolution \eqref{working Gaeta}: \begin{equation}\tag{\ref{working Gaeta}}0 \to L(-1,-1)^{\alpha} \stackrel{\psi_t}{\to} L(-1,0)^{\beta} \oplus L(0,-1)^{\gamma} \oplus L^{\delta} \to \VV_t \to 0.\end{equation}This time, Proposition \ref{Shatz2} says that there is at most one possible divisorial Shatz stratum $S_T$ since $\Delta_i=\frac{1}{2}$-strata are excluded by the assumption that ${\bf v}$ is good. In fact, it must be nonempty for our family $\VV_t/T$. For otherwise arguing as in the Proposition \ref{fifth prop}, we would show that $\lambda$ is injective, in contradiction to $\bb{Z}[\overline{E}] \subset \ker(\lambda).$ 

According to Proposition \ref{Shatz2}, this stratum $S_T=S_{T,H_m}({\bf v}_1,{\bf v}_2)$ consists of points $\psi_t\in T$ such that the corresponding sheaf $\VV_t$ admits the $H_m$-Harder-Narasimhan filtration of length $l=2$ \begin{equation}{\label{bad stratum}} 0 \subset E \subset \cc{V}_t,\end{equation} where $E$ is the exceptional bundle associated to ${\bf v}$ and ${\bf v}_1:={\bf v}(E), {\bf v}_2:={\bf v}(\cc{V}_t/E)$. 

\emph{Step 2}. We claim that this Shatz stratum is irreducible. 
Set $B= L(-1,0)^{\beta} \oplus L(0,-1)^{\gamma} \oplus L^{\delta}$. Consider the Quot scheme $Quot(B, {\bf v})$ parameterizing quotients $$q=[B \surj \cc{E}_q], \quad q \in Quot(B,{\bf v}),$$where ${\bf v}(\cc{E}_q)={\bf v}$. First, restrict the family $\{\cc{E}_q \}$ to the open subset of $Quot(B,{\bf v})$ parameterizing torsion-free  $\cc{E}_q$. Over this subset we have a universal family of sheaves $$0 \to K \to q^*B \to \cc{E} \to 0.$$ Note that by \eqref{working Gaeta} $${\bf v}(K_q)={\bf v}(L(-1,-1)^{\alpha}).$$We further restrict to the open subset $Q \subset Quot(B,{\bf v})$ parameterizing those quotients $q$ for which $K_q$ is a semistable vector bundle. Since ${\bf v}(L(-1,-1)^{\alpha})$ is a semiexceptional Chern character, $$K_q \cong L(-1,-1)^{\alpha} \ \ \text{for each} \ q \in Q.$$ 

By the universal property of Quot schemes the family of quotients $\{ B \to \cc{V}_t \}_{\psi_t \in  T}$ of \eqref{working Gaeta} gives a surjective morphism $$T \stackrel{\Omega}{\surj} Q \subset Quot(B, {\bf v}),$$whose fibers are isomorphic to $GL(\alpha)$. Denote the Shatz stratum of  points $q \in Q$ such that the corresponding $\cc{E}_q$ has the $H_m$-Harder-Narasimhan filtration $$0 \subset E \subset \cc{E}_q$$ by  $S_Q=S_{Q,H_m}({\bf v}_1,{\bf v}_2)$. From the Cartesian diagram 
\begin{center}
\begin{tikzcd}
S_T \arrow[r, hook] \arrow[d, two heads] & T \arrow[d,two heads, "\Omega"] \\
S_Q \arrow[r, hook]           & Q,         
\end{tikzcd}
\end{center}
it follows that  the irreducibility of $S_T$ is equivalent to the irreducibility of $S_Q$. Indeed,  $S_T$ is equidimensional and the fibers of $S_T \surj S_Q$ are all irreducible and isomorphic to $GL(\alpha)$. We can apply the following version of the irreducibility criterion: if  $Y \to X$ is a finite type  surjective morphism from an equidimensional Noetherian $\bb{C}$-scheme to an irreducible Noetherian $\bb{C}$-scheme, and all fibers  over the closed points are irreducible of the same dimension, then $Y$ is irreducible.

To show the irreducibility of $S_Q$, consider the family $\cc{W}_s/S$ over irreducible $S$ having Property (P) with respect to $\bb{F}({\bf v}_1,{\bf  v}_2)$ that was constructed in Proposition \ref{Yoshioka irreducibility} (we denote the sheaves in this family by $\cc{W}_s$ instead of $\VV_s$ to avoid confusion with the sheaves $\VV_t$ from \eqref{working Gaeta}). Recall that heuristically $\cc{W}_s/S$ parameterizes all torsion-free sheaves whose $H_m$-Harder Narasimhan filtration is of lenght $2$ and has quotients of characters ${\bf v}_1,{\bf  v}_2$, possibly with repetition. Intuitively, we are going to build a family of quotients over an irreducible base out of $\cc{W}_s/S$ that will surject onto $S_Q$ under the universal morphism to the Quot scheme $Q$.  

Note that for $\psi_t\in T$ the Gaeta-type resolution \eqref{working Gaeta} implies $$\Ext^i(B, \cc{V}_t)=0 \ \text{for} \ i >0 \implies \hom(B, \cc{V}_t)=\chi(B, {\bf v}).$$
Thus, consider the open subset $U \subset S$ parameterizing those $\cc{W}_s$ for which $$\Ext^i(B, \cc{W}_s)=0 \ \text{for} \ i>0.$$ It is non-empty because we concluded above that $S_T$ is non-empty, and irreducible. It follows that $$(p_U)_* \sheafhom(q^* B, \cc{W})$$is a vector bundle on $U$. Denote the corresponding geometric vector bundle by $$\bb{V} \stackrel{\pi}{\to} U,$$ so that over $\bb{V} \times X$ we have a universal morphism $$\pi^*q^*B \stackrel{\Psi}{\to} \pi^*\cc{W}.$$ 

We further restrict to an open subset $\bb{U} \subset \bb{V}$ parameterizing surjective maps with an $H_m$-semistable kernel, so that for $u \in \bb{U}$ we have an exact sequence $$0 \to L(-1,-1)^{\alpha} \to B \stackrel{\Psi_u}\to \cc{W}_{\pi(u)} \to 0.$$  By the universal property of Quot schemes, we obtain a map $$\bb{U} \to Q,$$whose image is equal to $S_Q$ because of the Property (P). Since $\bb{U}$ is irreducible, it follows that $S_Q$ is irreducible too.  We summarize the discussion in the following diagram
\begin{center}
\begin{tikzcd}
                       & GL(\alpha)                 & GL(\alpha) \\
                       & S_T \arrow[d, two heads] \arrow[loop above,distance=30]  \arrow[r, hook] & T \arrow[d, two heads] \arrow[loop above,distance=30] \\
\bb{U} \arrow[r, two heads] & S_Q \arrow[r, hook]           & Q.          
\end{tikzcd}
\end{center}

\emph{Step 3}. We return to the problem of describing the kernel of the Donaldson homomorphism. Let $T'=T \setminus \overline{S_T}$ and note that this is a $G$-invariant  open subset of $T$. Because of the irreducibility proved at the previous step, $\overline{S_T}=V(f)$ for some irreducible polynomial \begin{equation}{\label{poly}}f \in \bb{C}[\{x_{ij}\}],\end{equation} where $\bb{C}[\{x_{ij}\}]$ is the coordinate algebra of $\bb{H}$. The sequence \eqref{linearized line bundles} for $Y=T'$ \begin{equation}{\label{eta}}\{a f^k \ | \ a \in \bb{C}^*, k\in \bb{Z}\}=\OO^*(T') \xrightarrow{af^k \mapsto k \eta_f} \Char(G) \to \Pic^G(T') \to 0\end{equation}now implies that $$\Pic^G(T')\cong\Char(G) / \bb{Z}\cdot\eta_{f}.$$
Since $S_T$ was the only divisorial Shatz stratum, we have that $$\codim_{T'} (T' \setminus T^{ss}) \geq 2$$and we obtain the following commutative diagram

\begin{center}
\begin{tikzcd}
{\bf v}^{\perp} \arrow[d, hook] \arrow[rr,two heads, "\lambda"]& & \Pic(M_H({\bf v})) \arrow[d,"\phi_{\cc{V}_t|_{T^{ss}}}^*"] \\
K(X)  \arrow[rr, "\lambda_{\cc{V}_t|_{T^{ss}}}"] \arrow[d, equal]  & & \Pic^G(T^{ss})  \\
 K(X)  \arrow[rr, "\lambda_{\cc{V}_t|_{T'}}"] \arrow[d, equal] &   & \Pic^G(T').    \arrow[u,"\cong","res"']      \\
 
 K(X) \arrow[rr, "\lambda_{\cc{V}_t}","\cong"']  & & \Pic^G(T).  \arrow[u,two heads, "res"'] \end{tikzcd} 
\end{center}

Chasing this diagram shows that integer multiples of $[\overline{E}] \in {\bf v}^{\perp}$ are the only elements in the kernel of $\lambda$. For if there was ${\bf u} \in ({\bf v}^{\perp} \setminus \bb{Z} [\overline{E}])$ with $$\lambda({\bf u})=0,$$then going around the outer lower  part of the diagram would imply that two $\bb{Z}$-linearly independent elements, $\lambda_{\cc{V}_t}({\bf u})$ and $\lambda_{\cc{V}_t}([\overline{E}])$, lie in the kernel of the restriction $$\Pic^G(T) \stackrel{res}{\longrightarrow} \Pic^G(T').$$ But this then contradicts the fact that the kernel of this restriction is a cyclic subgroup, that could be seen from looking at sequence \eqref{linearized line bundles} for the inclusion $T'\subset T$:
 \begin{center}
\begin{tikzcd}
\bb{C}^*=\OO^*(T) \arrow[rr,"0"] \arrow[d, hook] & &\Char(G) \arrow[r] \arrow[d, equal] & \Pic^G(T)  \arrow[r] \arrow[d, two heads, "res"] & 0 \\
\{a f^k \ | \ a \in \bb{C}^*, k\in \bb{Z}\}=\OO^*(T') \arrow[rr,"af^k \mapsto k\eta_f"] & & \Char(G) \arrow[r]  & \Pic^G(T')  \arrow[r] & 0. 
\end{tikzcd}
\end{center}

This finishes the proof of statement (3.b) of the theorem.  \end{proof}

The main Theorem \ref{Main theorem} is now fully proved.

\begin{remark}\label{explicit character} Note that we can describe the polynomial $f$ appearing in \eqref{poly} in such a way so that one can explicitly compute the character  $\eta_f$ appearing in  \eqref{eta}. Recall that for $\psi_t \in S_T$ the corresponding sheaf $\cc{V}_t$ comes equipped with a filtration\begin{equation*} 0 \subset E \subset \cc{V}_t,\end{equation*} while for an $H_m$-semistable $\cc{V}_{\tau}$ we have \begin{equation*}\Hom(E, \cc{V}_{\tau})=\Ext^1(E, \VV_{\tau})=0.\end{equation*} Therefore, $S_T \subset \{ \psi_t\in T | \ \Hom(E, \cc{V}_t)\neq 0 \}.$ The long exact sequence in cohomology coming from \eqref{working Gaeta} $$0 \to \Hom(E, \VV_t) \to \Ext^1(E, L(-1,-1)^{\alpha}) \xrightarrow{(\psi_t)_*}  \Ext^1(E, B) \to \Ext^1(E, \cc{V}_t) \to 0$$ shows that $\{ \psi_t \in T | \ \Hom(E, \cc{V}_t)\neq 0 \}$ is a determinantal divisor given as the vanishing locus of $$\psi_t \mapsto \det((\psi_t)_*).$$ 

As $\overline{S_T}=\{\psi_t \in T | \ \Hom(E, \cc{V}_t)\neq 0 \}$, this describes $f$ as
\begin{equation*}\label{poly2}f(\psi_t)=\det((\psi_t)_*).\end{equation*}Since $\eta_f$ is defined by the equation $$f(g\cdot \psi_t)=\eta_f(g) f(\psi_t),$$ we can  explicitly recover $\eta_f$ from the following computation 
\small
\begin{equation*}
\begin{aligned} f(g \cdot \psi_t)&= \det((g \cdot \psi_t)_*) \\&= \det \left[ \left( (g_{\beta} \oplus g_{\gamma} \oplus g_{\delta}) \circ \psi_t \circ (g_{\alpha})^{-1} \right)_* \right] \\   &=\left[ \det((g_{\beta})_*)\det((g_{\gamma})_*)\det((g_{\gamma})_*)\det((g_{\alpha})_*)^{-1} \right] \det((\psi_t)_*) \\ &= \left[ \det (g_{\beta})^{-\chi(E, L(-1,0)^{\beta})}\det (g_{\gamma})^{-\chi(E, L(0,-1)^{\gamma})} \det (g_{\delta})^{-\chi(E, L^{\delta})} \det (g_{\alpha})^{\chi(E, L(-1,1)^{\alpha})} \right]\det((\psi_t)_*).
\end{aligned}
\end{equation*}
\normalsize
Thus $$\eta_f=\eta_{a,b,c,d}$$with 
\begin{equation*}
\begin{aligned} a&=\chi(E, L(-1,1)^{\alpha}),\\ b&=-\chi(E, L(-1,0)^{\beta}),\\ c& =-\chi(E, L(0,-1)^{\gamma}), \\ d&=-\chi(E, L^{\delta}).
\end{aligned}
\end{equation*}
\end{remark}

\subsection{Corollaries of Theorem \ref{Main theorem}}We conclude this section by exploring some immediate corollaries of  Theorem \ref{Main theorem}.

First, we can get rid of some of the assumptions in Proposition \ref{third prop} at the expense of loosing the information about  torsion in $\Pic(M_{H_m}(\bv))$.

\begin{corollary}Let ${\bf v}=(r, \nu, \Delta)\in K(X)$ be a character with $r \geq 2$ and $\Delta> \frac{1}{2}$. Let $\epsilon \in \bb{Q}$ be sufficiently small (depending on $r$), $0 < |\epsilon| \ll 1$, and set $m=1+\epsilon$. 

If $\Delta=\DLP^{<r}_{H_m}(\nu)$ with two exceptional bundles $E_1, E_2$ associated to ${\bf v}$, then $$\rho(M_H({\bf v})) \cong \ZZ$$ and $\lambda$ is an epimorphism.
\end{corollary}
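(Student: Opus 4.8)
The plan is to extract precisely the rank computation already carried out in the proof of Proposition \ref{third prop}, observing that it never used the two supplementary hypotheses there (that $\bv$ be primitive or that $(E_1,E_2)$ be an exceptional pair); those were needed only to remove torsion and upgrade the conclusion from $\rho=1$ to $\Pic\cong\ZZ$. Dropping them costs exactly the torsion information, as the corollary anticipates.

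First I would check that we are in the range where the Donaldson homomorphism is surjective. Since $\Delta=\DLP^{<r}_{H_m}(\nu)$, Theorem \ref{existence theorem} gives $H_m$-semistable sheaves of character $\bv$, and since $\Delta>\frac12$, Proposition \ref{changing polarization}(1) yields a nonempty stable locus $M^s_{H_m}(\bv)$. As $H_m$ is generic with $(K_X+2F)\cdot H_m=-2E\cdot H_m=-2m<0$, Theorem \ref{Yoshioka surjectivity} applies and $\lambda\colon \bv^{\perp}\surj\Pic(M_{H_m}(\bv))$ is an epimorphism. By the discussion in \S\ref{easy kernel} both associated exceptional bundles give classes $[\overline{E_1}],[\overline{E_2}]\in\ker\lambda\subseteq\bv^{\perp}$.

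The heart of the argument is that these two classes are $\ZZ$-linearly independent, so that $\ZZ[\overline{E_1}]+\ZZ[\overline{E_2}]$ is a rank-$2$ sublattice of the rank-$3$ lattice $\bv^{\perp}=\ker\big(\chi(\bv\cdot\_)\colon K(X)\cong\ZZ^4\to\ZZ\big)$. To see independence I would use that dualizing and twisting by $K_X$ preserve the Euler form, so that each $\overline{E_i}$ still satisfies $\chi(\overline{E_i},\overline{E_i})=1$; a relation $a[\overline{E_1}]+b[\overline{E_2}]=0$ then forces $\chi(\overline{E_1},\overline{E_2})\,\chi(\overline{E_2},\overline{E_1})=1$, hence $[\overline{E_1}]=\pm[\overline{E_2}]$, and positivity of ranks together with $E_1\neq E_2$ excludes both signs. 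Granting this, $\lambda$ factors through the rank-$1$ group $\bv^{\perp}/(\ZZ[\overline{E_1}]+\ZZ[\overline{E_2}])$, so $\Pic(M_{H_m}(\bv))$ has rank at most $1$; since the ample class spans a free $\ZZ$-submodule, the rank is exactly $1$ and $\rho(M_{H_m}(\bv))=1$.

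The only genuinely new point beyond transcribing Proposition \ref{third prop} is the linear-independence step, and this is where I expect the one subtlety: the Euler-form computation reduces everything to verifying $[\overline{E_1}]\neq[\overline{E_2}]$, i.e. to excluding the degenerate coincidence $\overline{E_1}\cong\overline{E_2}$ that could a priori arise when $E_2\cong E_1\otimes K_X$ and the two slopes straddle $\mu_{H_m}(\bv)$. This is exactly the configuration in which the two branches become tangent rather than transverse; ruling it out via the strip inequalities of Definition \ref{associated exceptional} (and using that such tangency can be avoided for generic $m=1+\epsilon$) is the main, if mild, obstacle. Everything else is a direct citation of the earlier argument with the torsion-refinement steps deleted.
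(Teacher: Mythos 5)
Your proposal is correct and is essentially the paper's own argument: the paper states this corollary without proof, as an immediate consequence of the first half of the proof of Proposition \ref{third prop}, which derives $\rho=1$ from the factorization $\bv^{\perp}/(\ZZ[\overline{E_1}]\oplus\ZZ[\overline{E_2}])\surj\Pic(M_{H_m}(\bv))$ before the primitivity/exceptional-pair hypotheses are ever used. Your explicit check that $[\overline{E_1}]$ and $[\overline{E_2}]$ are linearly independent fills in a step the paper leaves implicit in the notation $\ZZ[\overline{E_1}]\oplus\ZZ[\overline{E_2}]$, and your identification and exclusion of the degenerate case $E_2\cong E_1\otimes K_X$ (which for generic $m$ would force $\nu=\nu(E_1)-E-F$ and hence $\DLP_{H_m,E_1}(\nu)=-\Delta(E_1)<\Delta$, contradicting association) is sound.
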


The next corollary is concerned with the position of certain good $H_m$-semistable characters $\bv$ relative to the branches of the $\DLP$-surface given by exceptional bundles of rank higher than the rank of $\bv$.

\begin{corollary} Let ${\bf v}=(r, \nu, \Delta)$ be a good $H_m$-semistable Chern character with $\Delta > \frac{1}{2}$, where $m=1+\epsilon$ and $\epsilon \in \bb{Q}$ is a  sufficiently small number depending on $r$, $0 < |\epsilon| \ll 1$. 

If ${\bf v}$  lies above the $\DLP^{<r}_{H_m}$-surface or has a single exceptional bundle $E$ associated to $\bv$, then for any exceptional bundle $F$  with $r(F)>r$ satisfying $$|(\nu-\nu(F))\cdot H_m |  \leq -\frac{1}{2}K_X\cdot H_m$$ we have $$\Delta>\DLP_{H_m,F}(\nu).$$
\end{corollary}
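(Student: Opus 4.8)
The plan is to argue by contradiction. Since $F$ is an exceptional bundle it is $\mu_{H_m}$-stable (for $m$ close to $1$), and the slope bound $|(\nu-\nu(F))\cdot H_m|\leq -\tfrac12 K_X\cdot H_m$ together with the $H_m$-semistability of $\bv$ places us exactly in the situation of the discussion preceding Definition \ref{DLP less than r}. That discussion already gives the non-strict inequality $\Delta\geq \DLP_{H_m,F}(\nu)$, so it suffices to rule out equality. Hence I would assume $\Delta=\DLP_{H_m,F}(\nu)$ and derive a contradiction. Note that $\Delta>\tfrac12$ forces $\bv$ to be non-semiexceptional by Lemma \ref{exceptional stuff}(1), so the machinery of \S\ref{easy kernel} applies.

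Next I would show that this equality forces $F$ to be cohomologically orthogonal to \emph{every} $H_m$-semistable sheaf $\VV$ of character $\bv$, i.e. $\Hom(F,\VV)=\Ext^1(F,\VV)=\Ext^2(F,\VV)=0$. The crucial input here is the hypothesis $r(F)>r=r(\bv)$: any homomorphism $F\to\VV$ has image of rank $\leq r<r(F)$, hence is a proper quotient of the stable bundle $F$, so its slope strictly exceeds $\mu_{H_m}(F)$; when $\mu_{H_m}(F)=\mu_{H_m}(\VV)$ this contradicts the semistability of $\VV$, giving $\Hom(F,\VV)=0$, and the symmetric argument applied to $\Hom(\VV,F\otimes K_X)^\dual$ gives $\Ext^2(F,\VV)=0$. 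This is precisely the point where the rank inequality is essential: it lets me handle even the boundary case of coincident slopes, which the definition of the slope strip allows. Combined with $\chi(F,\VV)=0$ (equivalently $\chi(\VV,F)=0$ in the other slope regime), read off from the equality $\Delta=\DLP_{H_m,F}(\nu)$ via Riemann--Roch, this yields $\Ext^1(F,\VV)=0$. Exactly as in \S\ref{easy kernel}, it follows that $[\overline{F}]\in\bv^\perp$ and $\lambda([\overline{F}])=0$, so $[\overline{F}]\in\ker\lambda$. I expect establishing the vanishing at coincident slopes to be the main technical subtlety of the write-up.

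Finally I would invoke Theorem \ref{Main theorem}. If $\bv$ lies above the $\DLP^{<r}_{H_m}$-surface, then $\lambda$ is an isomorphism, so $\ker\lambda=0$; but $[\overline{F}]$ is a class of rank $r(F)>0$ and hence nonzero, a contradiction. If instead $\bv$ lies on the single branch given by $E$, then $\ker\lambda=\ZZ[\overline{E}]$, so $[\overline{F}]=n[\overline{E}]$ for some integer $n$. Applying the rank homomorphism gives $r(F)=n\,r(E)$, and since $r(E)<r<r(F)$ we get $n\geq 2$. I would then apply the Euler pairing to $[\overline{F}]=n[\overline{E}]$. Because dualizing and twisting by $K_X$ preserve the self-Euler characteristic of a locally free sheaf, one has $\chi([\overline{F}],[\overline{F}])=\chi(F,F)=1$ and $\chi([\overline{E}],[\overline{E}])=\chi(E,E)=1$, whence $1=\chi([\overline{F}],[\overline{F}])=n^2\chi([\overline{E}],[\overline{E}])=n^2$, contradicting $n\geq 2$. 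This numerical contradiction, forced by the exceptionality constraint $\chi(F,F)=\chi(E,E)=1$, is the heart of the argument and completes the proof.
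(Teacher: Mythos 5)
Your proof follows the paper's argument essentially step for step: assume $\Delta=\DLP_{H_m,F}(\nu)$, deduce that $[\overline{F}]$ lies in $\ker\lambda$ as in \S\ref{easy kernel} (your careful treatment of the coincident-slope case via $r(F)>r$ is a correct filling-in of that discussion), and then contradict the description of $\ker\lambda$ in Theorem \ref{Main theorem} in each of the two cases. The only divergence is cosmetic: in the second case the paper gets the contradiction from the primitivity of exceptional characters (Lemma \ref{exceptional stuff}(2)) applied to $[\overline{F}]=n[\overline{E}]$ with $n>1$, whereas you use $1=\chi(F,F)=n^2\chi(E,E)=n^2$; both exploit the same exceptionality constraint and are equally valid.
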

In other words, such $H_m$-semistable character ${\bf v}$ of rank $r$ does not lie on any branch of the DLP-surface given by an exceptional bundle of rank \emph{higher} than $r$. 
\begin{proof} Suppose on the contrary that $$\Delta=\DLP_{H_m, F}(\nu)$$for an exceptional bundle $F$ with $r(F)>r$ and  $$|(\nu-\nu(F))\cdot H_m |  \leq -\frac{1}{2}K_X\cdot H_m.$$  By \S \ref{easy kernel}, the class $[\overline{F}]$ lies in the kernel of the Donaldson homomorphism. 

\emph{Case 1}: $\Delta> \DLP^{<r}_{H_m}(\nu)$. Existence of $F$ as above then  contradicts the fact that by cases (1.c)  Theorem \ref{Main theorem} the Donaldson homomorphism $\lambda$ has a trivial kernel.

\emph{Case 2}:  $\Delta=\DLP^{<r}_{H_m}(\nu)$ with a single exceptional bundle $E$ associated to ${\bf v}$. According to case (2) of Theorem \ref{Main theorem} we get that $$[\overline{F}]=n [\overline{E}], \ \   r(\overline{E})<r.$$ Since $r(\overline{F})>r$, we conclude $n>1$. This shows that $[\overline{F}]$ is not a primitive Chern character. This is a contradiction, since $\overline{F}$ is an exceptional bundle and characters of exceptional bundles are primitive by Lemma \ref{exceptional stuff} (2).
\end{proof}

Finally, our last corollary states the conditions on character ${\bf v}$ under which the set of $H_m$-semistable sheaves of character ${\bf v}$ admitting a Gaeta-type resolution forms an open subset of the moduli space $M_{H_m}({\bf v})$ whose complement has codimension at least $2$.

% $$\left( \begin{aligned}  r \geq& 3, \\ \Delta=\DLP^{<r}_{H_m}(\nu)=\DLP&_{H_m,L}(\nu) %\ \text{for a single} \\   \text{exceptional bundle} \  L ,& \ \text{associated %to} \ {\bf v}, \\ \text{with} \ r(L)=1  \ \text{and} \ \mu&_{H_m}(L)\geq %\mu_{H_m}({\bf v}) \end{aligned} \right)  \ \text{or} \  \left(\begin{aligned} % r &\geq 3, \\ \frac{1}{2}<&\Delta<1, \\   \floor{\epsilon}+\floor{\phi}\leq %\epsilon&+\phi< \floor{\epsilon}+\floor{\phi}+1 \end{aligned}\right)$$ 

\begin{corollary}\label{cor2} Let ${\bf v}=(r, \nu, \Delta)=(r, \varepsilon E + \varphi F, \Delta)\in K(X)$ be a good $H_m$-semistable Chern character with $r\geq 2, \Delta>\frac{1}{2}$ satisfying either \begin{itemize} \item $\Delta=\DLP^{<r}_{H_m}(\nu)$ with a single exceptional bundle $L$ associated to ${\bf v}$ with $r(L)=1$ and ${\mu_{H_m}(L)\geq \mu_{H_m}({\bf v})}$,
or

\item conditions \eqref{assumptions} with $$\floor{\varepsilon}+\floor{\varphi}\leq \varepsilon+\varphi< \floor{\varepsilon}+\floor{\varphi}+1$$
\end{itemize}
where $m=1+\epsilon$ and $\epsilon \in \bb{Q}$ is a  sufficiently small number depending on $r$, $0 < |\epsilon| \ll 1$. 

Then one can choose a line bundle $L$ such that for the complete family $\cc{V}_t/T$ of $\OO(1,1)$-prioritary sheaves admitting an $L$-Gaeta type resolution \eqref{L Gaeta} \begin{equation*}0 \to L(-1,-1)^{\alpha} \stackrel{\psi_t}{\to} L(-1,0)^{\beta} \oplus L(0,-1)^{\gamma} \oplus L^{\delta} \to \VV_t \to 0,\end{equation*}over the open subset  $$T \subset \mathbb{H}=\Hom \left(L(-1,-1)^{\alpha},  L(-1,0)^{\beta} \oplus L(0,-1)^{\gamma} \oplus L^{\delta} \right)$$ parameterizing injective sheaf maps with torsion-free cokernel we have  $T^{ss}\neq \emptyset$ and the image of the classifying morphism $$T^{ss} \xrightarrow{\phi_{\cc{V}_t|_{T^{ss}}}} M_{H_m}({\bf v})$$ is an open set whose complement has codimension $\geq 2$. 

Similarly, if ${\bf v}=(r,\nu,\Delta)$  satisfies either \begin{itemize} \item $\Delta=\DLP^{<r}_{H_m}(\nu)$ with a single exceptional bundle $L$ associated to ${\bf v}$ with $r(L)=1$ and ${\mu_{H_m}(L)< \mu_{H_m}({\bf v})}$,
or

\item conditions \eqref{assumptions} with   $$\floor{\epsilon}+\floor{\phi}+1< \epsilon+\phi < \floor{\epsilon}+\floor{\phi}+2,$$
\end{itemize} then one can choose a line bundle $L$ such that for the complete family $\cc{V}_t/T$ of  $\OO(1,1)$-prioritary vector bundles admitting the dual version of  an $L$-Gaeta type resolution \eqref{dual Gaeta} \begin{equation*} 0 \to  \VV_t \to  L(1,0)^{\alpha} \oplus L(0,1)^{\beta} \oplus L^{\gamma} \xrightarrow{\psi_t} L(1,1)^{\delta}\to 0,\end{equation*}over the open subset \begin{equation*}T\subset \mathbb{H}=\Hom \left(L(1,0)^{\alpha} \oplus L(0,1)^{\beta} \oplus L^{\gamma} ,  L(1,1)^{\delta} \right)\end{equation*}  parameterizing surjective sheaf maps we have $T^{ss}\neq \emptyset$ and  the image of the classifying morphism $$T^{ss} \xrightarrow{\phi_{\cc{V}_t|_{T^{ss}}}} M_{H_m}({\bf v})$$ is an open set whose complement has codimension $\geq 2$. 
\end{corollary}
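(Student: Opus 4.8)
The plan is to identify the image of $\phi_{\VV_t|_{T^{ss}}}$ with the locus of $H_m$-semistable sheaves admitting the chosen $L$-Gaeta-type resolution, to see that this locus is open, and then to bound the codimension of its complement. I will carry this out for the first displayed case, where $\bv$ satisfies the conditions of the corollary with $\floor{\varepsilon}+\floor{\varphi}\leq \varepsilon+\varphi< \floor{\varepsilon}+\floor{\varphi}+1$; the second (dual) case is entirely symmetric, obtained by passing to the dual character, running the same argument with Proposition \ref{dual Gaeta family} and the dual of Proposition \ref{Donaldson on Gaeta} in place of Proposition \ref{Gaeta parameterizations} and Proposition \ref{Donaldson on Gaeta}, and dualizing throughout. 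First I would fix the line bundle $L$. In the range governed by \eqref{assumptions} this is exactly the output of Lemma \ref{aux lemma}, which gives $L=L_{\floor{\varepsilon},\floor{\varphi}}$ such that a general $H_m$-semistable sheaf of character $\bv$ admits the resolution \eqref{working Gaeta} with all exponents nonzero; in the case $\Delta=\DLP^{<r}_{H_m}(\nu)$ with a single associated line bundle $L$ of rank $1$ and $\mu_{H_m}(L)\geq\mu_{H_m}(\bv)$, the line bundle and the resolution (with $\delta=0$) are those constructed in the proof of Proposition \ref{fourth prop}. In either case the existence of such a general semistable sheaf shows $T^{ss}\neq\emptyset$.

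Next I would establish openness of the image. Since $\VV_t/T$ is a complete family of $\OO(1,1)$-prioritary sheaves, the classifying morphism to the stack of prioritary sheaves is smooth, hence so is its restriction over the open substack of semistable objects; a smooth morphism is open, and passing to the good quotient (on the dense stable locus, where $S$-equivalence classes are single isomorphism classes since $\Delta>\tfrac12$) the image $\phi_{\VV_t|_{T^{ss}}}(T^{ss})$ is open in $M_{H_m}(\bv)$. Concretely, the image is precisely the set of semistable sheaves arising as the cokernel in \eqref{working Gaeta}, i.e. those admitting the $L$-Gaeta-type resolution, which is a cohomological (hence open) condition. The complement is therefore the closed locus
$$Z=\{\,[\VV]\in M_{H_m}(\bv)\ :\ \VV \text{ admits no } L\text{-Gaeta-type resolution}\,\}.$$
By semistability and Serre duality all of $H^0,H^2$ of the four twists $\VV\otimes L^\dual(-1,-1),\,\VV\otimes L^\dual(-1,0),\,\VV\otimes L^\dual(0,-1),\,\VV\otimes L^\dual$ vanish (the slope computations in the proof of Lemma \ref{aux lemma} give $\mu_{H_m}(L(1,0)),\mu_{H_m}(L(0,1)),\mu_{H_m}(L(1,1))>\mu_{H_m}(\VV)$ and $\mu_{H_m}(L)\leq\mu_{H_m}(\VV)$), so the cohomology of every twist is concentrated in $H^1$, with the expected dimensions $\alpha,\beta,\gamma,\delta$. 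Consequently $Z$ is not an exponent-jumping locus but rather the degeneracy locus where the evaluation map $L(-1,0)^{\beta}\oplus L(0,-1)^{\gamma}\oplus L^{\delta}\to \VV$ fails to be surjective with semiexceptional kernel, i.e. where the two-term complex built from $L$ over the moduli space drops rank. By Theorem \ref{Gaeta theorem} (equivalently Lemma \ref{aux lemma}) this locus is a proper closed subset.

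The step I expect to be the main obstacle is the lower bound $\codim_{M_{H_m}(\bv)}Z\geq 2$, since a determinantal description of $Z$ only yields an \emph{upper} bound on its codimension and the genericity statement of Theorem \ref{Gaeta theorem} only gives $\codim Z\geq 1$. The plan is to rule out a divisorial component of $Z$ by a dimension count of the kind used by Coskun and Huizenga in \cite{coskun_huizenga_2018} and in the incidence-correspondence argument of \cite[pp.\ 238--239]{LP97} reproduced in Proposition \ref{Gaeta parameterizations}: a sheaf in $Z$ must admit a strictly larger (non-minimal) resolution, in which one of the exponents is increased, and the family of such sheaves is parameterized by extensions $0\to L(a,b)\to\VV\to\cc{Q}\to 0$ (with $L(a,b)$ one of $L(1,0),L(0,1)$) whose total dimension $\dim\{\cc{Q}\}+\ext^1(\cc{Q},L(a,b))$ falls short of $\dim M_{H_m}(\bv)=r^2(2\Delta-1)+1$ by at least the excess $1+\beta$ (respectively $1+\gamma$, $1+\delta$). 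Because Lemma \ref{aux lemma} (and the hypothesis $r(L)=1$) guarantee that all exponents are nonzero, each such excess is $\geq 2$, and the goodness of $\bv$ removes the $\Delta_i=\tfrac12$-contributions that would otherwise produce a divisorial stratum. Transporting these estimates through the equidimensional good quotient then gives $\codim Z\geq 2$, which together with the openness established above completes the proof; the dual case follows verbatim after dualizing.
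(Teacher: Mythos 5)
Your choice of $L$ and of the parameter space $T$ agrees with the paper's, and the openness/nonemptiness observations are fine, but the heart of the statement --- ruling out a divisorial component of the complement of the image --- is where your argument has a genuine gap. You correctly note that a determinantal description of $Z$ only bounds its codimension from above, but the replacement you propose is not actually an argument. The locus where a semistable $\VV$ fails to admit the $L$-Gaeta-type resolution is a cohomology-jumping locus (in the range of \eqref{assumptions} the only non-automatic vanishing is $\Ext^1(L,\VV)=0$; the $H^0$ and $H^2$ of the other twists vanish by semistability, and note that for the twist by $L^{\dual}$ the cohomology is concentrated in degree $0$, not degree $1$ as you wrote). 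It does not follow that such a $\VV$ "admits a strictly larger resolution" sitting in an extension $0\to L(a,b)\to\VV\to\cc{Q}\to 0$ with $L(a,b)\in\{L(1,0),L(0,1)\}$: nonvanishing of $\Ext^1(L,\VV)$ gives no such subsheaf (by Serre duality it gives classes in $\Ext^1(\VV,L(-2,-2))^{\dual}$, which is a different kind of datum), and the claimed deficit "$1+\beta$" in the dimension count is asserted without any computation. As written, the divisorial case is not excluded.

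The paper's own proof avoids all of this and you should compare it with your plan: under the standing genericity of $H_m$ the moduli space $M_{H_m}(\bv)$ is locally factorial (\S\ref{Polarization}), so an irreducible Weil divisor $Z$ in the complement of the image determines a line bundle $\OO(Z)$ which is nontrivial in $\Pic(M_{H_m}(\bv))$ (it has a section vanishing exactly on $Z\neq\emptyset$) yet restricts trivially to the open image, hence lies in the kernel of $\phi^*_{\cc{V}_t|_{T^{ss}}}:\Pic(M_{H_m}(\bv))\to\Pic^G(T^{ss})$. But the proofs of Propositions \ref{fourth prop}, \ref{fifth prop} and \ref{sixth prop} establish precisely that this pullback is injective (that is how $\Pic(M_{H_m}(\bv))$ was computed there, by sandwiching it between $\bv^{\perp}/\ker\lambda$ and $\Char(\overline{G})$ or a quotient thereof), a contradiction. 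If you want to keep a direct geometric argument along your lines, you would need to carry out the full incidence-correspondence analysis of the jumping locus as in \cite{DL85} and \cite{coskun_huizenga_2018}, which is substantially more work than the two-line deduction from the injectivity already proved.
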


\begin{proof}  Choose $L$ and $\VV_t/T$ as in the  proof of Proposition \ref{fourth prop} or \ref{fifth prop} or \ref{sixth prop} depending on which case we are considering. Suppose there is an irreducible Weil divisor $Z$ in the complement of the image. Then the corresponding line bundle $\OO(Z)$ lies in the kernel of $$\Pic(M_{H_m}({\bf v})) \xrightarrow{\phi^*_{\cc{V}_t|_{T^{ss}}}} \Pic^G(T^{ss}).$$ But in  the  proofs of Propositions \ref{fourth prop}, \ref{fifth prop}, \ref{sixth prop} we showed that the above map is injective, a contradiction. 
\end{proof}

\begin{remark} The restrictions on the numerical invariants in Corollary \ref{cor2} are  substantial conditions. When these conditions are not satisfied some of the exponents in a Gaeta-type resolution may become zero. As a result, we can no longer ensure that we can find $L$ such that for the resulting complete family $\VV_t/T$ of $\OO(1,1)$-prioritary sheaves admitting an $L$-Gaeta-type resolution we have $$\rk (\Pic^G(T^{ss}))\geq\rho(M_{H_m}(\bv)).$$This way, the homomorphism $$\Pic(M_{H_m}({\bf v})) \xrightarrow{\phi^*_{\cc{V}_t|_{T^{ss}}}} \Pic^G(T^{ss})$$ may no longer be injective.

\end{remark}

\section{Bad Chern characters}\label{last section}
In this section we show that when an $H_m$-semistable character ${\bf v}$ is \emph{bad}, the Picard number of $M_{H_m}({\bf v})$ is no longer controlled only by  the position of $\bv$ relative to the $\DLP^{<r}$-surface. One also needs to take into account the presence of the the $\Delta_i=\frac{1}{2}$-strata in complete families that force additional characters to be in the kernel of the Donaldson homomorphism $\lambda$.

We start with a continuation of Example \ref{bad character}.
\begin{example}\label{bad example} Let $m=1+\epsilon,$ where $\epsilon \in \bb{Q}$ and $0<\epsilon \ll 1$. Consider  character ${\bf v}=(4,-\frac{1}{4}E-\frac{1}{4}F, \frac{9}{16})$ from Example \ref{bad character}. In that example we considered the  family $\VV_t/T$ of  $\OO(1,1)$-prioritary sheaves of character ${\bf v}$ admitting an $\OO$-Gaeta-type resolution \begin{equation}\label{Gaeta3}0 \to \OO(-1,-1)^{2} \xrightarrow{\psi_t} \OO(-1,0)^{ 3} \oplus \OO(0,-1)^{ 3} \to \VV_t \to 0,\end{equation} where $$T \subset \mathbb{H}=\Hom \left(\OO(-1,-1)^{2},  \OO(-1,0)^{3} \oplus \OO(0,-1)^{3}  \right)$$ is the open subset parameterizing injective sheaf maps with torsion-free cokernel. We showed that $T$ is not empty, $\codim_{\bb{H}} (\bb{H} \setminus T)\geq 2$, the family $\cc{V}_t/T$ is complete  and \emph{any} $H_m$-semistable $\VV \in M_{H_m}({\bf v})$ is equal to $\cc{V}_t$ for some $t \in T$. This last property implies that the classifying morphism $$T^{ss} \xrightarrow{\phi_{\cc{V}_t|_{T^{ss}}}} M_{H_m}({\bf v})$$realizes $M_{H_m}({\bf v})=M^{s}_{H_m}({\bf v})$ as a geometric quotient of $T^{ss}$ under the action of $$\overline{G}=(GL(2)\times GL(3) \times GL(3))/ \bb{C}^*(Id, Id, Id)=G/\bb{C}^*(Id, Id, Id),$$
see \cite[Proposition 2.6]{DL85}.
Thus, by \cite[p. 32]{MR1304906} $$\Pic(M_{H_m}({\bf v}))\cong\Pic^{\overline{G}}(T^{ss}).$$

As before, we can compute the latter group  using the exact sequence from Proposition \ref{crossed morphisms}: \begin{equation}\label{crossed crossed}\OO^*(T^{ss})\to \Char(\overline{G}) \to \Pic^{\overline{G}}(T^{ss}) \to 0.\end{equation} We claim that the first map is not zero.  Take the $\Delta_i=\frac{1}{2}$-stratum $S_T=S_{H_m}({\bf v}_1,{\bf v}_2)$ described in Example \ref{bad character}.  Its closure $\overline{S_T}$ is a Weil divisor in $\bb{H}$,  so it is given by a  polynomial $f$: $$\overline{S_T}=V(f), \ f \in \bb{C}[\{ x_{ij} \} ].$$ Since $\overline{S_T}$ is $\overline{G}$-invariant, the complement $\bb{H} \setminus \overline{S_T}$ is  $\overline{G}$-invariant too, and the polynomial $f$ defines an invertible function on it, which by a remark after Proposition \ref{crossed morphisms} satisfies  $$f(\overline{g} h)=\eta_f(\overline{g}) f(h) \ \text{for some} \ \eta_f \in \Char(\overline{G}) \ \text{and any} \ \overline{g} \in \overline{G}, \ h \in \bb{H} \setminus \overline{S_T}.$$ Note that since $f(h)=f(\overline{g}h)=0$ for $h \in \overline{S}$ the above equation in fact holds for all $h \in \bb{H}$. 

We  show that $\eta_f$ is a nontrivial character, which would establish our claim. Assume, on the contrary, that $\eta_f$ is a trivial character so that $f$ is $\overline{G}$-invariant and, consequently, $G$-invariant:$$f \in \bb{C}[\{ x_{ij} \}]^G.$$ As the closure of any $G$-orbit contains the zero morphism $0 \in \bb{H}$, all $G$-invariant functions are constant  $$\bb{C}[\{ x_{ij} \}]^G = \bb{C}^*.$$But $f$ defines a non-empty divisor, so this is a contradiction.

Now, sequence \eqref{crossed crossed} gives $$\bb{Z}^2/\bb{Z}\eta_f \surj  \Pic^{\overline{G}}(T^{ss}) \cong \Pic(M_{H_m}({\bf v})).$$ Since the ample bundle generates a free $\bb{Z}$-submodule inside $\Pic({M_{H_m}}({\bf v}))$, it follows that the Picard number is equal to one $$\rho(M_{H_m}({\bf v}))=1.$$

Note, that an explicit computation of $\eta_f$ along the lines of remark \ref{explicit character} does not work in this case. The closure of the divisorial Shatz stratum is now described as
$$\overline{S_T} = \{ \psi_t \in T \ | \ \Hom(F_1, \cc{V}_t)\neq 0 \ \text{for some} \ F_1 \in M_{H_m}({\bf v}_1) \},$$ and compared to Remark \ref{explicit character} the computation is obstructed by the fact that $F_1$ is not a fixed bundle, but varies along its one-dimensional moduli space. 

However, note that by Remark \ref{primitive}, character ${\bf v}=(r, c_1, \chi)$ is primitive,
so for a generic choice of $m=\frac{p}{q}$ we have$$\gcd (r, c_1 \cdot (qH_m),\chi)=1.$$ Applying Proposition \ref{Maiorana} we get that $\Pic(M_{H_m}({\bf v}))$ is torsion-free and therefore $$\Pic(M_{H_m}({\bf v})) \cong \ZZ.$$ 

Let us also remark that using Proposition \ref{Yoshioka irreducibility} we can argue as in the proof of Proposition \ref{sixth prop} and show that $S_T$ is an irreducible subvariety of $T$.

\end{example}

\begin{example}\label{bad example 2} Let $m=1+\epsilon,$ where $\epsilon \in \bb{Q}$ and $0<\epsilon \ll 1$. Let $\{{\bw}_k\}_{k\in \bb{N}}$ be one of the infinite sequences of bad Chern characters constructed in Examples \ref{other bad characters} and \ref{other bad characters 2}. The same argument can be applied verbatim to the complete family $\cc{W}_t/T$ from Example \ref{other bad characters} to conclude that $$\Pic(M_{H_m}(\bw_k))\cong \ZZ$$ for any $k\in \bb{N}$.\end{example}  

It turns out that the techniques of the previous two examples allow us to tackle bad $H_m$-semistable Chern characters whenever they lie on a branch of the $\DLP$-surface given by a \emph{line bundle}. Note that $H_m$-semistable characters ${\bf v}$ with $r=2$ are always good (see Definition \ref{bad character}), so we can assume $r\geq 3$.

\begin{theorem}\label{Main theorem 2} Let ${\bf v}=(r, \nu, \Delta)\in K(X)$ be a character with $r \geq 3, \Delta>\frac{1}{2}$. Let $\epsilon \in \bb{Q}$ be sufficiently small (depending on $r$), $0 < |\epsilon| \ll 1$, and set $m=1+\epsilon$.

If $\bv$ is a bad character with $\Delta=\DLP^{<r}_H(\nu)$ with a single exceptional bundle $L$ associated to ${\bf v}$ with $r(L)=1$, then $$\Pic(M_{H_m}({\bf v})) \cong \ZZ,$$and $\lambda$ is an epimorphism with $$\ker \lambda \supsetneq \ZZ [\overline{L}].$$

\end{theorem}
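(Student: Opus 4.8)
The plan is to adapt the argument of Example~\ref{bad example} to this generality, the only genuinely new input being a uniform resolvability statement. First I would reduce to the case $\mu_{H_m}(\bv)\leq\mu_{H_m}(L)$, the opposite case being handled by passing to the dual $L$-Gaeta-type resolution exactly as in Proposition~\ref{fourth prop}. Since $L$ is the unique exceptional bundle associated to $\bv$ and has rank one, the exponents of an $L$-Gaeta-type resolution of a general semistable sheaf satisfy $\alpha,\beta,\gamma>0$ and $\delta=\chi(\bv\otimes L^{\dual})=0$, as in Proposition~\ref{fourth prop}. The crucial point is that \emph{every} $\mu_{H_m}$-semistable sheaf $\VV$ of character $\bv$ admits such a resolution: twisting by $L^{\dual}$ reduces to the branch of $\OO$, and for a semistable $\VV$ on this branch semistability and Serre duality force $\Hom(L(1,1),\VV)=0$, $\Ext^2(L(a,b),\VV)=0$ for $(a,b)\in\{(-1,-1),(-1,0),(0,-1)\}$, and $\Hom(L,\VV)=\Ext^1(L,\VV)=\Ext^2(L,\VV)=0$. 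These vanishings collapse Dr\'ezet's Beilinson-type spectral sequence (\cite[Proposition 5.1]{drezet:hal-01175951}) to the $L$-Gaeta-type resolution with $\delta=0$. Consequently the complete family $\VV_t/T$ of Proposition~\ref{Gaeta parameterizations} realizes $M_{H_m}(\bv)=M^s_{H_m}(\bv)$ (here $\bv$ is primitive by Remark~\ref{primitive}, so $M=M^s$ for generic $m$) as a geometric quotient $T^{ss}/\overline{G}_{\hat{\delta}}$, whence $\Pic(M_{H_m}(\bv))\cong\Pic^{\overline{G}_{\hat{\delta}}}(T^{ss})$ by \cite[p.~32]{MR1304906}.

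Next I would locate a divisorial Shatz stratum. Because $\bv$ is bad, Definition~\ref{bad definition} supplies a decomposition $\bv=\bv_1+\bv_2$ with $\chi(\bv_1,\bv_2)=-1$; choosing $F_1\in M_{H_1}(\bv_1)$ and $F_2\in M_{H_1}(\bv_2)$ as in Example~\ref{bad character}, the sheaf $F_1\oplus F_2$ is $\mu_{H_1}$-semistable, is therefore resolved by the $L$-Gaeta-type resolution, and becomes $\mu_{H_m}$-unstable with a two-step Harder--Narasimhan filtration of type $(\bv_1,\bv_2)$. Thus the stratum $S_T=S_{H_m}(\bv_1,\bv_2)$ is nonempty, and by Proposition~\ref{Shatz2} together with Lemma~\ref{preparatory} it is a divisor in $T$; the other potential divisorial stratum $0\subset L\subset\VV_t$ is empty since $\Hom(L,\VV_t)=0$ for all $t$. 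Its closure $\overline{S_T}=V(f)$ is $\overline{G}_{\hat{\delta}}$-invariant, so $f$ is a semi-invariant, $f(\overline{g}\,h)=\eta_f(\overline{g})f(h)$, with $\eta_f\in\Char(\overline{G}_{\hat{\delta}})$; moreover $\eta_f\neq0$, for otherwise $f$ would be $G_{\hat{\delta}}$-invariant, hence constant (every $G_{\hat{\delta}}$-orbit closure contains the zero morphism), contradicting that $V(f)$ is a nonempty divisor.

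Finally I would run the exact sequence of Proposition~\ref{crossed morphisms} for $Y=T^{ss}$:
\begin{equation*}
\OO^*(T^{ss})\to\Char(\overline{G}_{\hat{\delta}})\to\Pic^{\overline{G}_{\hat{\delta}}}(T^{ss})\to0 .
\end{equation*}
Since $\mathbb{H}$ is affine space and the divisorial part of $\mathbb{H}\setminus T^{ss}$ consists exactly of the closures of the finitely many nonempty $\Delta_i=\frac{1}{2}$-strata, the image of the first map is the subgroup $\langle\eta_{f_1},\dots,\eta_{f_s}\rangle\subseteq\Char(\overline{G}_{\hat{\delta}})\cong\ZZ^2$, which is nonzero by the previous paragraph. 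Hence $\Pic(M_{H_m}(\bv))\cong\Char(\overline{G}_{\hat{\delta}})/\langle\eta_{f_j}\rangle$ has rank at most $1$; as $M_{H_m}(\bv)$ is projective of positive dimension, its Picard rank is exactly $1$, so $\rho(M_{H_m}(\bv))=1$. Primitivity of $\bv$ and Theorem~\ref{Maiorana} (for a generic $m=p/q$ with $\gcd(r,c_1\cdot qH_m,\chi)=1$) make $\Pic(M_{H_m}(\bv))$ torsion-free, giving $\Pic(M_{H_m}(\bv))\cong\ZZ$; and $\lambda$ is an epimorphism by Theorem~\ref{Yoshioka surjectivity}. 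Since $\lambda\colon\bv^{\perp}\cong\ZZ^3\twoheadrightarrow\ZZ$ then has a rank-two kernel while $\ZZ[\overline{L}]\subseteq\ker\lambda$ (by \S\ref{easy kernel}) has rank one, we conclude $\ker\lambda\supsetneq\ZZ[\overline{L}]$.

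The hard part will be the uniform resolvability in the first paragraph: in contrast to Theorem~\ref{Gaeta theorem}, which only resolves a \emph{general} semistable sheaf, I need \emph{every} semistable sheaf on the $L$-branch to admit the resolution, so that $M_{H_m}(\bv)$ is literally the quotient $T^{ss}/\overline{G}_{\hat{\delta}}$ and $\Pic(M_{H_m}(\bv))\cong\Pic^{\overline{G}_{\hat{\delta}}}(T^{ss})$ is an isomorphism rather than a mere injection. This is precisely what the total vanishing of $\Ext^{\bullet}(L,\VV)$ (forced by $r(L)=1$ and the single-branch hypothesis) buys me, exactly as the Beilinson spectral sequence did in Example~\ref{bad character}; verifying that the collapsed spectral sequence indeed outputs an honest $L$-Gaeta-type resolution with $\delta=0$ is the one computation I would need to carry out in detail.
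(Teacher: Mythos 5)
Your proposal is correct and follows essentially the same route as the paper: reduce to one slope inequality, use the vanishing of $\Ext^{\bullet}(L,\VV)$ to collapse the Beilinson-type spectral sequence so that \emph{every} semistable sheaf admits the $L$-Gaeta-type resolution with $\delta=0$, realize $M_{H_m}(\bv)$ as the geometric quotient $T^{ss}/\overline{G}_{\hat{\delta}}$, and then run the argument of Example \ref{bad example} (nonempty divisorial $\Delta_i=\frac{1}{2}$-stratum giving a nontrivial semi-invariant character, hence Picard rank one, with torsion-freeness from primitivity and Theorem \ref{Maiorana}). The only difference is that you spell out details the paper delegates to "repeat the argument of Example \ref{bad example}."
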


\begin{proof} Assume first that $\mu_{H_m}(L) \geq \mu_{H_m}({\bf v})$. For a semistable $\VV$ of character $\bv$ we have $$\Ext^i(L,\VV)=0, \ i=0,1,2,$$by semistability and the fact that $L$ is associated to $\bv$.

Using this, one checks that  the  Beilinson-type resolution from  \cite[Proposition 5.1]{drezet:hal-01175951} coincides  with the $L$-Gaeta-type resolution and   \emph{every} $H_m$-semistable sheaf $\VV$ of character ${\bf v}$ is resolved as $$0 \to L(-1,-1)^{\alpha} \to L(-1,0)^{\beta} \oplus L(0,-1)^{\gamma} \to \VV \to 0.$$ %Since ${\bf v}$ is a bad character, pick a decomposition %$${\bf v}={\bf v_1}+{\bf v_2}$$from definition \ref{bad character}. Since %$$\chi({\bf v}, {\bf v_2})=\chi({\bf v_1},{\bf v_2})+\chi({\bf v_2}, {\bf %v_2})=\chi({\bf v_1}, {\bf v_2})=-1,$$ character ${\bf v}$ is primitive, %and 
We can then repeat the argument of Example \ref{bad example} to conclude \begin{equation*}\Pic(M_{H_m}({\bf v}))\cong \ZZ.\end{equation*}

When $\mu_{H_m}(L)<\mu_{H_m}({\bf v})$, the Beilinson-type resolution coincides with the dual version of the $L$-Gaeta type resolution and \emph{every} $H_m$-semistable sheaf $\VV$ of character ${\bf v}$ is resolved as $$0 \to  \VV \to  L(1,0)^{\alpha} \oplus L(0,1)^{\beta} \to L(1,1)^{\delta}\to 0 .$$ We can also repeat the argument of Example \ref{bad example} with straightforward modifications.
\end{proof}

It is interesting to further explore the geometry of $M_{H_m}(\bv)$ for bad characters $\bv$ as in the previous theorem, taking into account that these are unirational varieties (see \S2.6) with Picard number $\rho=1$. As a step in this direction, we consider the character $\bv$ from example \ref{bad example}.   
\begin{example}\label{bad example 3} We claim that for ${\bf v}=(4,-\frac{1}{4}E-\frac{1}{4}F, \frac{9}{16})$ and $m=1+\epsilon$ with $\epsilon \in \bb{Q}$, \  $0<\epsilon \ll 1$, we in fact have $$M_{H_m}(\bv) \cong \PP^3.$$

First, note that for a generic $\VV \in M_{H_m}(\bv)$ with the corresponding Gaeta-type resolution $$0 \to \OO(-1,-1)^{2} \xrightarrow{\psi} \OO(-1,0)^{ 3} \oplus \OO(0,-1)^{ 3} \to \VV \to 0$$the map $$pr_{\OO(0,-1)^{ 3}} \circ \psi: \OO(-1,-1)^{ 2} \to \OO(-1,0)^{ 3}$$ is an injective map of vector bundles. Therefore, we can expand the Gaeta-type resolution of $\VV$ into the following commutative diagram
\begin{center}
\begin{tikzcd}
            &                       & 0 \arrow[d]           & 0 \arrow[d]           &   \\
            &                       & \OO(0,-1)^3 \arrow[d] \arrow[r, equal] & \OO(0,-1)^3  \arrow[d]           &   \\
0 \arrow[r] & \OO(-1,-1)^2 \arrow[r, "\psi"] \arrow[d, equal] & \OO(-1,0)^3 \oplus \OO(0,-1)^3 \arrow[r] \arrow[d, "pr_{\OO(-1,0)^3}"] & \VV \arrow[r] \arrow[d] & 0 \\
0 \arrow[r] & \OO(-1,-1)^2 \arrow[r]           & \OO(-1,0)^3 \arrow[r] \arrow[d] & \OO(-1,2) \arrow[r] \arrow[d] & 0. \\
            &                       & 0                     & 0                     &  
\end{tikzcd}
\end{center}

So, next we consider extensions $$\xi:=[0 \to \OO(0,-1)^3 \to \VV_{\xi} \to \OO(-1,2) \to 0 ]$$ with $\xi=(\xi_1,\xi_2,\xi_3) \in \Ext^1(\OO(-1,2),\OO(0,-1))^{\oplus 3}.$ We assert that $\VV_{\xi}$ is $H_m$-semistable if and only if the corresponding vectors $\xi_1,\xi_2,\xi_3$ are linearly independent.

Indeed, suppose without loss of generality that $\xi_1=a\xi_2+b\xi_3.$ Consider the morphism $$\OO(0,-1)^2 \xrightarrow{A} \OO(0,-1)^3$$ given by the matrix $$A=\begin{bmatrix} a & 1 & 0 \\ b & 0 & 1 \end{bmatrix}.$$Then the induced map $$\Ext^1(\OO(-1,2),\OO(0,-1))^{\oplus 2} \xrightarrow{A_*}\Ext^1(\OO(-1,2),\OO(0,-1))^{\oplus 3}$$ sends $(\xi_2,\xi_3)$ to $(\xi_1, \xi_2,\xi_3)$. This fact translates into the following commutative diagram

\begin{center}
\begin{tikzcd}
            
0 \arrow[r] & \OO(0,-1)^2 \arrow[r] \arrow[d, hook, "A"] & \VV_{(\xi_2,\xi_3)} \arrow[r] \arrow[d, hook] & \OO(-1,2) \arrow[r] \arrow[d, equal] & 0 \\
0 \arrow[r] & \OO(0,-1)^3 \arrow[r]           & \VV_{(\xi_1,\xi_2,\xi_3)} \arrow[r] & \OO(-1,2) \arrow[r]  & 0.   
\end{tikzcd}
\end{center}
Since $$\mu_{H_m}(\VV_{(\xi_2,\xi_3)})=-\frac{1}{3}-\frac{\varepsilon}{3}>-\frac{1}{2}-\frac{\varepsilon}{4}=\mu_{H_m}(\VV_{(\xi_1,\xi_2,\xi_3)}),$$we conclude that $\VV_{(\xi_1,\xi_2,\xi_3)}$ is unstable.

Conversely, suppose $\VV_{\xi}$ is unstable. A rank $4$ bundle can be destabilized by subbundles or quotient bundles of rank $1$ or $2$. We will only sketch the argument in the case of a destabilizing subbundle of rank $2$ and leave the similar routine checks for the other cases to the reader. Suppose there is a destabilizing subbundle $$\cc{W} \subset \VV_{\xi}$$ with $r(\cc{W})=2.$ Since \begin{equation}\label{ineq} \mu_{H_m}(\cc{W})\geq \mu_{H_m}(\VV_{\xi})>\mu_{H_m}(\OO(0,-1)^3),\end{equation} there is no maps $\cc{W} \to \OO(0,-1)^3$ and, therefore, the composition $\cc{W} \to \VV_{\xi} \to \OO(-1,2)$ is not zero. We will further assume that this composition is surjective, leaving the check in the other case to the reader. In this case we have the following commutative diagram

\begin{center}
\begin{tikzcd}
    
0 \arrow[r] & \OO(a,b) \arrow[r] \arrow[d, hook, "B"] & \cc{W} \ \arrow[r] \arrow[d, hook] & \OO(-1,2) \arrow[r] \arrow[d, equal] & 0 \\
0 \arrow[r] & \OO(0,-1)^3 \arrow[r]           & \VV_{\xi} \arrow[r]  & \OO(-1,2) \arrow[r]  & 0 
\end{tikzcd}
\end{center}
with $a\leq 0$ and  $b\leq -1$. Furthermore, one checks that \eqref{ineq} is satisfied only if   $(a,b)=(0,-1)$. In this case, denote the extension defining $\cc{W}$ by $\zeta$ and write $$B=\begin{bmatrix} b_1 \\ b_2 \\ b_3 \end{bmatrix}, \quad b_i \in \bb{C}.$$ The induced map $$\Ext^1(\OO(-1,2),\OO(0,-1)) \xrightarrow{B_*}\Ext^1(\OO(-1,2),\OO(0,-1))^{\oplus 3}$$ sends $\zeta$ to $\xi=(\xi_1, \xi_2,\xi_3)=(b_1 \eta, b_2 \eta, b_3 \eta)$. Thus, we see that $\xi_1, \xi_2, \xi_3$ are linearly dependent.

Denote the locus of $\xi$ with linearly independent component vectors $\xi_1,\xi_2,\xi_3$ by $$U \subset \Ext^1(\OO(-1,2),\OO(0,-1))^{\oplus 3}.$$ By the above discussion, the universal extension over $U \times X$ defines a dominant morphism $$U \to M_{H_m}(\bv).$$Note that the isomorphism class of $\VV_{\xi}$ only depends on the hyperplane spanned by $\xi_1,\xi_2, \xi_3$ in the four-dimensional space $\Ext^1(\OO(-1,2),\OO(0,-1))$, so the above map factors through $$\PP(\Ext^1(\OO(-1,2),\OO(0,-1))^{\dual})\to M_{H_m}(\bv),$$as claimed. 

It is also interesting to note that the extensions $$0 \to F_2 \to \VV \to F_1 \to 0$$with $F_i \in M_{H_m}(\bv_i), \ \bv_1=(2,-\frac{1}{2}F, \frac{1}{2}), \ \bv_2=(2, -\frac{1}{2}E, \frac{1}2{}),$ give an embedding of a quadric into the moduli space $M_{H_m}(\bv)\cong \PP^3$: $$\PP^1 \times \PP^1 \hookrightarrow \PP^3.$$ Indeed, $M_{H_m}(\bv_i)\cong \PP^1$ by Theorem \ref{Main theorem} (3.b). The isomorphism class of $\VV$ is uniquely determined by $F_1$ and $F_2$ because  $\ext^1(F_1, F_2)=1$. Finally, the stability of $\VV$ follows from \cite[Lemma 10.8]{coskun2019existence} and $$\mu_{H_m}(F_2)<\mu_{H_m}(\VV)<\mu_{H_m}(F_1).$$
\end{example}

\begin{question}\label{interesting question} One can also show that $M_{H_m}(\bv)$ is isomorphic to a projective space for the bad characters $\bv$ of small rank listed in Example \ref{other bad characters} using the same method as in the previous example. However, it takes more and more work to directly check semistability for characters of higher and higher rank. An interesting question is whether $M_{H_m}(\bv)$ is isomorphic to a projective space for \emph{all} bad characters in the infinite sequence of Example \ref{other bad characters}. One can pose the same question for the bad characters constructed  in Example \ref{other bad characters 2} and, more generally, for \emph{all bad characters lying on a single branch of the $\DLP^{<r}$-surface} (keeping in mind our discussion in Question \ref{question}).  

\end{question}

We finish this paper by making the following conjecture about $\Pic(M_{H_m}(\bv))$ for \emph{all} characters $\bv$ with positive-dimensional moduli space, taking into account our remarks in Question \ref{question}. 
\begin{conjecture}\label{conjecture}Let ${\bf v}=(r, \nu, \Delta)\in K(\XX)$ be a character with $r \geq 2$ and $\Delta\geq \frac{1}{2}$. Let $\epsilon \in \bb{Q}$ be sufficiently small (depending on $r$), $0 < |\epsilon| \ll 1$, and set $m=1+\epsilon$.

\begin{enumerate}\item If    $\Delta > \DLP_{H_m}^{<r}(\nu)$,  then $$\Pic(M_{H_m}({\bf v}))\cong \ZZ^3$$ and $\lambda$ is an isomorphism.

\item \begin{enumerate} \item If ${\bf v}$ is a good character with  $\Delta= \DLP^{<r}_{H_m}(\nu)$ with a single exceptional bundle $E$ associated to $\bv$, then $$\Pic(M_{H_m}({\bf v})) \cong \ZZ^2$$ and $\lambda$ is an epimorphism with $$\ker \lambda \cong\ZZ [\overline{E}].$$

\item If ${\bf v}$ is a bad character with  $\Delta=  \DLP^{<r}_{H_m}(\nu)$ with a single exceptional bundle $E$ associated to $\bv$, then $$\Pic(M_{H_m}({\bf v})) \cong \ZZ$$ and $\lambda$ is an epimorphism with $$\ker \lambda\supsetneq\ZZ [\overline{E}].$$ 
\end{enumerate}

\item \begin{enumerate} \item If $\Delta=\DLP^{<r}_{H_m}(\nu)>\frac{1}{2}$ with at least two different exceptional bundles $E_1, E_2$ associated to $\bv$, then $$\Pic(M_{H_m}({\bf v})) \cong \ZZ$$ and $\lambda$ is an epimorphism with $$\ker \lambda\cong\ZZ [\overline{E_1}] + \ZZ [\overline{E_2}].$$

\item If $\Delta=\frac{1}{2}$, then $M_{H_m}({\bf v})$ is a projective space and$$\Pic(M_{H_m}({\bf v})) \cong \ZZ.$$  
\end{enumerate}
\end{enumerate}
\end{conjecture}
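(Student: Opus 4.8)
The plan is to deduce the conjecture from the two main theorems already proved, isolating exactly the inputs that are still missing, and to organize the argument according to cases (1)--(3). Observe first that case (3.b), the good--character portions of (1) and (3.a), and all of (2.a) are precisely Theorem \ref{Main theorem}, while (2.b) for a branch given by a line bundle is Theorem \ref{Main theorem 2}. Thus the conjecture is equivalent to removing the hypotheses distinguishing those theorems from their conjectural forms, and three gaps remain: (i) case (1) without the auxiliary conditions on $\bv'=(r,\nu,\Delta-\tfrac1r)$ and without assuming $\bv$ good; (ii) case (2.b) for an associated exceptional bundle $E$ with $r(E)>1$; and (iii) case (3.a) without primitivity and without the exceptional--pair hypothesis.

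For (i), the first step is to prove the purely numerical assertion of Question \ref{question}(1), that \emph{every character lying strictly above the $\DLP^{<r}_{H_m}$-surface is good}. Granting this, case (1) follows by combining Propositions \ref{second prop} and \ref{fifth prop}: if $\Delta-\tfrac1r\ge\DLP^{<r}_{H_m}(\nu)$ and $\Delta-\tfrac1r>\tfrac12$ one applies Proposition \ref{second prop} directly, while in the remaining subcases either $\bv'$ is primitive (Proposition \ref{second prop}) or $\Delta-\tfrac1r\le\tfrac12$ and goodness lets Proposition \ref{fifth prop} apply. I would attempt the numerical assertion by the Riemann--Roch bookkeeping of the proof of Proposition \ref{Shatz}: a bad decomposition $\bv=\bv_1+\bv_2$ with $\chi(\bv_1,\bv_2)=-1$ and some $\Delta_i=\tfrac12$ forces the relation $\Delta_1+\Delta_2=P(\nu_2-\nu_1)+\tfrac1{r_1r_2}$ together with $\nu_2-\nu_1=\tfrac{k}{r_1r_2}(E-F)$, and one would show such a configuration is incompatible with $\Delta(\bv)>\DLP^{<r}_{H_m}(\nu)$, so a bad character necessarily lies \emph{on} the surface. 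This is a finite but delicate optimization over the admissible lattice of $(r_1,r_2,k)$ against the branch structure of the surface, which is why Question \ref{question}(1) is currently only supported by computer evidence.

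For (iii) the only remaining issue is torsion: the discussion of \S\ref{easy kernel} already places the independent classes $\ZZ[\overline{E_1}]+\ZZ[\overline{E_2}]$ in $\ker\lambda$ and forces $\rho(M_{H_m}(\bv))=1$, so it suffices to prove $\Pic(M_{H_m}(\bv))$ is torsion--free when $\bv$ is imprimitive and $(E_1,E_2)$ is not an exceptional pair, the two situations where Theorem \ref{Maiorana} and the full--collection argument of Proposition \ref{third prop} fail. Here I would bypass Theorem \ref{Maiorana} by analyzing $\Pic^{\overline G}(T^{ss})$ directly through Proposition \ref{crossed morphisms}, as in the quotient presentation used in Proposition \ref{first prop} and Example \ref{bad example}, showing the relevant character--lattice quotient is free, and by verifying that $\ZZ[\overline{E_1}]+\ZZ[\overline{E_2}]$ is saturated in $\bv^\perp$ by an explicit $K$--theory computation even without the exceptional--pair structure.

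The main obstacle is case (ii), the bad higher--rank branch. The proof of Theorem \ref{Main theorem 2} relied decisively on the associated bundle being a \emph{line} bundle: then it is one of the summands of the $L$--Gaeta resolution, the Beilinson resolution agrees with that resolution for \emph{every} semistable sheaf, $M_{H_m}(\bv)$ is a \emph{geometric} quotient of $T^{ss}$, and $\Pic(M_{H_m}(\bv))\cong\Pic^{\overline G}(T^{ss})$ can be computed from the single extra invariant divisor $\overline{S_T}$. For $r(E)>1$ none of this persists: a general semistable sheaf need not have a Gaeta resolution whose line--bundle summands detect $E$, the quotient is only good rather than geometric, and the $\Delta_i=\tfrac12$--stratum is no longer the vanishing locus of a single determinant. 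The plan would be to adapt the irreducibility machinery of Proposition \ref{Yoshioka irreducibility} to prove the relevant $\Delta_i=\tfrac12$--stratum is an irreducible divisor contributing a class to $\ker\lambda$ independent of $[\overline E]$, thereby forcing $\rho=1$ exactly as in Example \ref{bad example}. The hard point is to realize that divisor inside a complete family whose equivariant Picard group remains computable, since now the defining extensions vary over a positive--dimensional moduli space of Jordan--H\"{o}lder factors rather than over a fixed exceptional bundle.
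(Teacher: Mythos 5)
The statement you are proving is Conjecture \ref{conjecture}, which the paper explicitly leaves \emph{open}: there is no proof of it in the paper, only the remark that verifying it would likely require finer properties of full exceptional collections on $\XX$ and a classification of semistable characters with $\Delta=\frac{1}{2}$. Your proposal correctly identifies which cases reduce to Theorems \ref{Main theorem} and \ref{Main theorem 2} and correctly isolates the three remaining gaps, but it does not close any of them, so it is a research plan rather than a proof. Concretely: (i) your reduction of case (1) rests on the assertion that every character strictly above the $\DLP^{<r}_{H_m}$-surface is good, which is precisely Question \ref{question}(1); the paper states this is supported only by computer evidence, and you yourself concede the ``delicate optimization'' over $(r_1,r_2,k)$ is not carried out. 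Without it, a bad character above the surface with $\Delta-\frac{1}{r}\leq\frac{1}{2}$ and $\bv'$ imprimitive falls through every proven case. (ii) For case (3.a) the entire content of dropping the primitivity/exceptional-pair hypotheses is the torsion-freeness of $\Pic(M_{H_m}(\bv))$ and the saturation of $\ZZ[\overline{E_1}]+\ZZ[\overline{E_2}]$ in $\bv^{\perp}$; you assert one ``would'' verify these by a direct analysis of $\Pic^{\overline G}(T^{ss})$ and an explicit $K$-theory computation, but no such computation is given, and the geometric-quotient presentation you invoke from Example \ref{bad example} requires that \emph{every} semistable sheaf admit the Gaeta resolution, which does not hold in general. (iii) For case (2.b) with $r(E)>1$ you accurately diagnose why the proof of Theorem \ref{Main theorem 2} breaks down, and then explicitly leave the ``hard point'' unresolved.

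In short, the proposal's reductions to the proven results are sound and match how the paper itself motivates the conjecture, but every step that goes beyond Theorems \ref{Main theorem} and \ref{Main theorem 2} is deferred to an unproven numerical claim or an unexecuted computation. Since those are exactly the points at which the paper stops and declares the statement a conjecture, the proposal does not constitute a proof.
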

In other words, this conjecture states that the Picard number of $M_{H_m}(\bv)$ is determined by the position of $\bv$ relative to the $\DLP^{<r}_{H_m}$-surface and by whether character $\bv$ is good or bad.

It is likely that in order to verify this conjecture one needs to study some fine properties of full exceptional collections on $\XX$ along the lines of \cite{Rudakov_1989}, which could allow one to build Beilinson-type resolutions better suited for studying semistable sheaves of a given Chern character $\bv$. Another interesting problem in this direction is the problem of classifying $H_m$-semistable Chern characters with $\Delta=\frac{1}{2}$. This could further lead to a classification of bad $H_m$-semistable Chern characters, thus answering  Question \ref{question}. Finally, it remains an open question how to explicitly describe the second generator of $\ker \lambda$ in Theorem \ref{Main theorem 2} and  Conjecture \ref{conjecture} (2.b).

\newpage
%+Bibliography
\bibliographystyle{alpha}
\bibliography{References}{}
%-Bibliography

\end{document}